\newcommand{\vertiii}[1]{{\left\vert\kern-0.25ex\left\vert\kern-0.25ex\left\vert #1 
    \right\vert\kern-0.25ex\right\vert\kern-0.25ex\right\vert}}
\theoremstyle{plain}
\newtheorem{teorema}{Theorem}[section]
\newtheorem{proposizione}[teorema]{Proposition}
\newtheorem{lemma}[teorema]{Lemma}
\newtheorem{corollario}[teorema]{Corollary}
\newtheorem*{theorem*}{Theorem}
\theoremstyle{definition}
\newtheorem{definizione}{Definition}[section]
\theoremstyle{remark}
\newtheorem{osservazione}{Remark}[section]
\newcommand{\Tan}{\mathrm{Tan}}
\newcommand{\N}{\mathbb{N}}
\newcommand{\Q}{\mathbb{Q}}
\newcommand{\R}{\mathbb{R}}
\newcommand{\G}{Gr}
\newcommand{\res}
\newcounter{const}
\newcommand{\newC}{\refstepcounter{const}\ensuremath{C_{\theconst}}}
\newcommand{\oldC}[1]{\ensuremath{C_{\ref{#1}}}}
\newcounter{eps}
\newcommand{\newep}{\refstepcounter{eps}\ensuremath{\varepsilon_{\theeps}}}
\newcommand{\oldep}[1]{\ensuremath{\varepsilon_{\ref{#1}}}}
\DeclareMathOperator*{\lip}{Lip_1^+}
\DeclareMathOperator*{\supp}{supp}
\DeclareMathOperator*{\diam}{diam}
\DeclareMathOperator*{\dist}{dist}
\title{\normalfont\spacedallcaps{On rectifiable measures in Carnot groups: Marstrand--Mattila rectifiability criterion}} 
\author{Gioacchino Antonelli\textsuperscript{*} and Andrea Merlo\textsuperscript{**}}
\date{}
\begin{document}

\renewcommand{\sectionmark}[1]{\markright{\spacedlowsmallcaps{#1}}} 
\lehead{\mbox{\llap{\small\thepage\kern1em\color{halfgray} \vline}\color{halfgray}\hspace{0.5em}\rightmark\hfil}} 
\pagestyle{scrheadings}
\maketitle 
\setcounter{tocdepth}{2}
\paragraph*{Abstract}

In this paper we continue the study of the notion of $\mathscr{P}$-rectifiability in Carnot groups. We say that a Radon measure is $\mathscr{P}_h$-rectifiable, for $h\in\mathbb N$, if it has positive $h$-lower density and finite $h$-upper density almost everywhere, and, at almost every point, it admits a unique tangent measure up to multiples.

In this paper we prove a Marstrand--Mattila rectifiability criterion in arbitrary Carnot groups for $\mathscr{P}$-rectifiable measures with tangent planes that admit a normal complementary subgroup. Namely, in this co-normal case, even if a priori the tangent planes at a point might not be the same at different scales, a posteriori the measure has a unique tangent almost everywhere.

Since every horizontal subgroup of a Carnot group has a normal complement, our criterion applies in the particular case in which the tangents are one-dimensional horizontal subgroups. Hence, as an immediate consequence of our Marstrand--Mattila rectifiability criterion and a result of Chousionis--Magnani--Tyson, we obtain the one-dimensional Preiss's theorem in the first Heisenberg group $\mathbb H^1$. More precisely, we show that a Radon measure $\phi$ on $\mathbb H^1$ with positive and finite one-density with respect to the Koranyi distance is absolutely continuous with respect to the one-dimensional Hausdorff measure $\mathcal{H}^1$, and it is supported on a one-rectifiable set in the sense of Federer, i.e., it is supported on the countable union of the images of Lipschitz maps from $A\subseteq \mathbb R$ to $\mathbb H^1$.

{\let\thefootnote\relax\footnotetext{* \textit{Scuola Normale Superiore, Piazza dei Cavalieri, 7, 56126 Pisa, Italy,}}}
{\let\thefootnote\relax\footnotetext{** \textit{Univerité Paris-Saclay, 307 Rue Michel Magat Bâtiment, 91400 Orsay, France.}}}
\paragraph*{Keywords} Carnot groups, Heisenberg groups, Rectifiability, Preiss's Theorem, Rectifiable measure, Marstrand-Mattila rectifiability criterion.
\paragraph*{MSC (2010)} 53C17, 22E25, 28A75, 49Q15, 26A16.


\section{Introduction}

In Euclidean spaces a Radon measure $\phi$ is said to be $k$-\emph{rectifiable} if it is absolutely continuous with respect to the $k$-dimensional Hausdorff measure $\mathcal{H}^k$ and it is supported on a countable union of $k$-dimensional Lipschitz submanifolds, see \cite[\S 3.2.14]{Federer1996GeometricTheory}. In Euclidean spaces, the previous global definition arises as a consequence of the local structure of the measure, as it is clear from the following Proposition which is classically attributed to Marstrand and Mattila, see e.g., \cite[Theorem 16.7]{Mattila1995GeometrySpaces}.

\begin{proposizione}\label{prop:loctoglob}
Assume $\phi$ is a Radon measure on $\R^n$ and $k$ is a natural number such that $1\leq k\leq n$. Then, $\phi$ is a $k$-rectifiable measure if and only if  for $\phi$-almost every $x\in\mathbb{R}^n$ we have
\begin{itemize}
     \item[(i)]$0<\Theta^k_*(\phi,x)\leq\Theta^{k,*}(\phi,x)<+\infty$,
    \item[(ii)]$\mathrm{Tan}_k(\phi,x) \subseteq \{\lambda\mathcal{H}^k\llcorner V:\lambda>0,\,\,\text{and $V$ is a $k$-dimensional vector subspace}\}$,
\end{itemize}
where $\Theta^k_*(\phi,x)$ and $\Theta^{k,*}(\phi,x)$ are, respectively, the lower and the upper $k$-density of $\phi$ at $x$, see \cref{def:densities}, and $\mathrm{Tan}_k(\phi,x)$ is the set of $k$-tangent measures to $\phi$ at $x$, see \cref{def:TangentMeasure}, while $\mathcal{H}^k$ is the Hausdorff measure.
\end{proposizione}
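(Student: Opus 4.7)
The plan is to prove the two implications separately, since the statement is an equivalence. The direction ``$k$-rectifiable $\Rightarrow$ (i)+(ii)'' is the standard differentiation theory for measures absolutely continuous with respect to $\mathcal{H}^k$ on Lipschitz submanifolds. I would write $\phi = f\,\mathcal{H}^k\llcorner E$ with $E=\bigcup_i E_i$ and each $E_i$ the Lipschitz image of a bounded subset of $\mathbb{R}^k$; by Whitney extension and Kirszbraun, up to an $\mathcal{H}^k$-null set one may assume each $E_i$ is a $C^1$ $k$-submanifold. At $\phi$-a.e.\ $x$ the function $f$ is approximately continuous and $E$ has a classical tangent plane $T_xE$, so differentiation of Radon measures and the area formula give (i) with $\Theta^k_*(\phi,x)=\Theta^{k,*}(\phi,x)=f(x)$, and the rescalings of $\phi$ at $x$ converge weakly to $f(x)\,\mathcal{H}^k\llcorner T_xE$, proving (ii).

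For the converse, I would fix $M>0$ and restrict attention to a compact set $K$ of positive $\phi$-measure on which the densities lie in $[1/M,M]$ and every tangent measure is flat; by countable exhaustion it suffices to show that $\phi\llcorner K$ is rectifiable. The heart of the proof is the Marstrand--Mattila step: at $\phi$-a.e.\ $x\in K$ the $k$-plane carrying the tangent measures is unique. To see this, one argues that the set $G(x)\subseteq \G(n,k)$ of planes $V$ with $\lambda\mathcal{H}^k\llcorner V\in\mathrm{Tan}_k(\phi,x)$ for some $\lambda>0$ is closed and connected, the connectedness coming from the continuity of the scaling operation on tangent measures under weak-$\ast$ convergence. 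One then uses the two-sided density bounds to prove that $G(x)$ must be a single point: if two distinct planes $V,V'$ were both attained as weak limits of rescalings at scales $r_j\to 0$, interpolating between them along the scale parameter would force $\phi$-mass in the symmetric difference of thin slabs around $V$ and $V'$ at arbitrarily small scales, contradicting the existence of a flat blow-up there. Quantifying this weak convergence via a metric on the space of tangent measures is the main technical obstacle.

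Once uniqueness is established, I would write $V(x)$ for the unique tangent plane at $x$; the map $x\mapsto V(x)$ is Borel, so by Lusin's theorem I could further restrict to a compact subset of $K$ on which $V(x)$ varies by less than a prescribed small angle $\eta$. Combining flatness of the unique tangent with the positive lower density bound then yields a quantitative cone property: for every $x$ in such a set there exists $r_0>0$ with $\supp\phi\cap B(x,r_0)$ contained in a narrow $\eta$-cone around $x+V(x)$. A standard Whitney-extension argument, equivalently the verification that the orthogonal projection onto $V(x_0)$ is bi-Lipschitz on $\supp\phi\cap B(x_0,r_0)$, then identifies this piece of $\supp\phi$ with the graph of a Lipschitz function, and a countable covering produces the desired Lipschitz submanifold decomposition. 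Absolute continuity $\phi\ll\mathcal{H}^k$ is immediate from the finite upper density bound together with the classical comparison $\phi(A)\leq 2^k\,\Theta^{k,*}(\phi,\cdot)\,\mathcal{H}^k(A)$ on Borel sets.
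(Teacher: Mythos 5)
Your forward implication is fine, and the paper itself does not reprove this classical statement (it is quoted from Mattila's Theorem 16.7 and De Lellis's notes), so the comparison is really with the standard proof that the present paper adapts in \cref{sec:MMconormale}. The problem is the step you call the ``heart of the proof'': deducing, pointwise at $\phi$-a.e.\ $x$, that the set $G(x)$ of planes carrying tangent measures is a singleton, via connectedness plus an ``interpolation/slab'' contradiction. This argument does not work. Connectedness of $G(x)$ is compatible with $G(x)$ being a continuum, and the contradiction you describe never materializes: if the blow-up plane rotates slowly as the scale decreases, then at every intermediate scale the rescaled measure looks flat around an \emph{intermediate} plane, so no mass is forced into the symmetric difference of slabs around $V$ and $V'$. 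Worse, the pointwise statement you are trying to prove is simply false: at the vertex of a logarithmic-type spiral the $1$-density is positive and finite and every line through the vertex occurs as the support of a tangent measure, so all tangents are flat while $G(x)$ is not a singleton. Uniqueness of the tangent plane under hypotheses (i)--(ii) is only an almost-everywhere statement and is a \emph{consequence} of rectifiability, not an intermediate step available by a local argument at a single point; this is exactly why condition (ii), which allows the plane to vary among the elements of $\mathrm{Tan}_k(\phi,x)$, makes the theorem hard.

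The standard route (Mattila, Ch.~16; De Lellis, Thm.~5.1; Preiss 5.2--5.4, which is what this paper adapts in \cref{prop::4.4(4)}, \cref{prop::5.2} and \cref{prop:PHIK>0}) therefore never proves uniqueness first. One fixes a point, a scale and a single plane $\mathbb W$ witnessing approximate flatness at that scale, and then runs a stopping-time/covering argument with cylinders over $\mathbb W$ (the sets $A$, $A_1$, $A_2$ of \cref{prop::5.2}): the flatness-at-all-smaller-scales hypothesis, with possibly different planes, is used only through quantitative mass estimates in balls and cylinders, and it produces a subset of positive measure on which the projection onto $\mathbb W$ is injective with a lower Lipschitz bound, hence a Lipschitz graph capturing positive measure; exhaustion then gives rectifiability, and uniqueness of tangents follows a posteriori. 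Your remaining steps (restriction to compact sets with uniform density bounds, Egorov/Lusin, two-point cone condition $\Rightarrow$ bi-Lipschitz projection $\Rightarrow$ Lipschitz graphs, and $\phi\ll\mathcal H^k$ from the finite upper density) are standard and correct, but they only become available after the rotating-planes difficulty has been handled, which your proposal does not do.
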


The previous infinitesimal characterization of rectifiable measures in the Euclidean spaces is at the core of the definition of $\mathscr{P}$-rectifiable measures, which have been introduced by the second named author in \cite[Definition 3.1 \& Definition 3.2]{MarstrandMattila20}, in the setting of Carnot groups, and which have been studied by the two authors in \cite{antonelli2020rectifiable, antonelli2020rectifiable2}. We stress that the present paper is the second of two companion papers derived from \cite{antonelli2020rectifiable}. For a more thorough introduction we refer the reader to the introductions of \cite{antonelli2020rectifiable, antonelli2020rectifiable2}.

A Carnot group $\mathbb{G}$ is a simply connected nilpotent Lie group, whose Lie algebra is stratified and generated by its first layer. Carnot groups are a generalization of Euclidean spaces, and we remark that (quotients of) Carnot groups arise as the infinitesimal models of sub-Riemannian manifolds and their geometry, even at an infinitesimal scale, might be very different from the Euclidean one. From now on $\mathbb G$ will be a fixed Carnot group endowed with an arbitrary left-invariant homogeneous distance $d$, see \cref{sec:Prel}, and we recall that any two such distances are bi-Lipschitz equivalent. These groups have finite Hausdorff dimension, that is commonly denoted by $Q$, with respect to any homogeneous left-invariant distance. We recall that on $\mathbb G$ we have a natural family of dilations $\{\delta_{\lambda}\}_{\lambda>0}$, see \cref{sec:Prel}, which gives raise to a homogeneous structure on $\mathbb G$ with respect to which we can perform blow-ups.
We recall here the notion of $\mathscr{P}$-rectifiable measure.

\begin{definizione}[$\mathscr{P}$-rectifiable measures]\label{def:PhRectifiableMeasureINTRO}
Fix a natural number $1\leq h\leq Q$. A Radon measure $\phi$ on $\mathbb G$ is said to be {\em $\mathscr{P}_h$-rectifiable} if for $\phi$-almost every $x\in \mathbb{G}$ we have \begin{itemize}
    \item[(i)]$0<\Theta^h_*(\phi,x)\leq\Theta^{h,*}(\phi,x)<+\infty$,
    \item[(\hypertarget{due}{ii})]$\mathrm{Tan}_h(\phi,x) \subseteq \{\lambda\mathcal{S}^h\llcorner \mathbb V(x):\lambda\geq 0\}$, where $\mathbb V(x)$ is a homogeneous subgroup of $\mathbb G$ of Hausdorff dimension $h$,
\end{itemize}
where $\Theta^h_*(\phi,x)$ and $\Theta^{h,*}(\phi,x)$ are, respectively, the lower and the upper $h$-density of $\phi$ at $x$, see \cref{def:densities}, $\mathrm{Tan}_h(\phi,x)$ is the set of $h$-tangent measures to $\phi$ at $x$, see \cref{def:TangentMeasure}, and $\mathcal{S}^h$ is the spherical Hausdorff measure of dimension $h$, see \cref{def:HausdorffMEasure}. Furthermore, we say that $\phi$ is {\em $\mathscr{P}_h^*$-rectifiable} if (\hyperlink{due}{ii})  is replaced with the weaker
\begin{equation*}
    \mathrm{(ii)^*}\,\, \mathrm{Tan}_h(\phi,x) \subseteq \{\lambda \mathcal{S}^h\llcorner \mathbb V: ~\lambda \geq 0, \text{ $\mathbb V$ is a homogeneous } \text{subgroup}\text{  of $\mathbb G$ of Hausdorff dimension $h$}\}.
\end{equation*}
If we impose more regularity on the tangents we can define different subclasses of $\mathscr{P}$-rectifiable or $\mathscr{P}^*$-rectifiable measures, see \cref{def:SubclassesPh} for details. We notice that, a posteriori, in the aforementioned definitions we can and will restrict to $\lambda>0$, see \cref{rem:AboutLambda=0}.
\end{definizione}

\textit{Additional remark.} 
This is the second of two companion papers derived from \cite{antonelli2020rectifiable}. The present work consists of an elaboration of Sections 2 and 5 of the Preprint \cite{antonelli2020rectifiable}, while the first of the two (that will appear as \cite{antonelli2020rectifiableA}) is an elaboration of Sections 2, 3, 4, and 6 of \cite{antonelli2020rectifiable}. In order to avoid redundancy, we wrote the complete proofs of some of the ancillary results of Section 2 of \cite{antonelli2020rectifiable} only in the first of the two. Therefore, here we omit some of the proofs of the basic results, but we will always give precise references to the complete proofs that are contained in the Preprint on arXiv \cite{antonelli2020rectifiable}. We finally remark the results presented here are independent from those proved in Sections 3, 4, and 6 of \cite{antonelli2020rectifiable}. Therefore the present work is completeley indepent from the other.

\subsection{Results}\label{mainresultsintro}
The main contribution of this paper is a co-normal Marstrand-Mattila rectifiability criterion for $\mathscr{P}$-rectifiable measures in the setting of Carnot groups. Already in the Euclidean case, it is not trivial to prove that a $\mathscr{P}^*$-rectifiable measure is rectifiable, see \cite[Theorem 5.1]{DeLellis2008RectifiableMeasures}, and \cite[Theorem 16.7]{Mattila1995GeometrySpaces}. Proving that a $\mathscr{P}^*_{Q-1}$-rectifiable measure in a Carnot group of Hausdorff dimension $Q$ is supported on the countable union of $C^1_{\mathrm H}$-regular hypersurfaces is a challenging problem that has been solved by the second-named author in \cite[Theorem 3]{MarstrandMattila20}. 

In this paper we adapt Preiss's techniques in \cite{Preiss1987GeometryDensities} to prove that $\mathscr{P}^*$-rectifiable measures with \textbf{co-normal} tangents, i.e., with \textbf{tangents that admit at least one normal complementary subgroup}, are $\mathscr{P}$-rectifiable, see \cref{thm:MMconormale}. This means that in this co-normal case, even if the tangents at a point might not be the same at different scales, then a posteriori the tangent is unique almost everywhere. We recall that when we say that a homogeneous subgroup $\mathbb V$ of a Carnot group $\mathbb G$ {\em admits a complementary subgroup}, we mean that there exists a homogeneous subgroup $\mathbb L$ such that $\mathbb G=\mathbb V\cdot \mathbb L$ and $\mathbb V\cap\mathbb L=\{0\}$. 
\begin{teorema}[Co-normal Marstrand--Mattila rectifiability criterion]\label{thm:MMconormaleIntro}
Let $\mathbb G$ be a Carnot group endowed with an arbitrary left-invariant homogeneous distance. Let $\phi$ be a $\mathscr{P}_h^*$-rectifiable measure on $\mathbb G$ \textbf{with tangents that admit at least one normal complementary subgroup}. Then $\phi$ is a $\mathscr{P}_h$-rectifiable measure. 
Moreover, there are countably many homogeneous subgroups $\mathbb{V}_i$ of homogeneous dimension $h$, and Lipschitz maps $\Phi_i:A_i\subseteq \mathbb V_i\to \mathbb{G}$, where $A_i$'s are compact, such that
$$\phi\Big(\mathbb{G}\setminus  \bigcup_{i\in\N} \Phi_i(A_i)\Big)=0.$$
\end{teorema}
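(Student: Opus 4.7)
The plan is to prove the theorem in two stages: first upgrading $\mathscr{P}_h^*$-rectifiability to $\mathscr{P}_h$-rectifiability by adapting Preiss's argument in the co-normal setting, and then extracting the Lipschitz parametrization from the uniqueness of tangent combined with the normal complement structure.

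For the first stage, I would begin with a standard countable exhaustion. The Grassmannian of homogeneous subgroups of Hausdorff dimension $h$ in $\mathbb G$ is $\sigma$-compact, and the co-normal condition together with the continuous dependence of complementary projections selects a $\sigma$-compact subfamily. This allows me to decompose $\mathbb G$ into countably many Borel sets $E_j$ on each of which (i) the upper and lower $h$-densities of $\phi$ are comparable to some constant $\eta_j^{\pm 1}$ on balls of radius $\le r_j$, (ii) every tangent subgroup $\mathbb V$ arising at points of $E_j$ lies in a small compact neighborhood $\mathcal U_j$ of the Grassmannian, (iii) each such $\mathbb V$ admits a normal complement $\mathbb L$ whose associated splitting projection $\pi_{\mathbb V,\mathbb L}\colon \mathbb G\to\mathbb V$ is uniformly Lipschitz. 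After passing to a compact $E\subseteq E_j$ of positive $\phi$-measure, the whole argument is carried out uniformly on $E$.

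The heart of the proof, and the main obstacle, is showing that for $\phi$-a.e.\ $x\in E$ the set of subgroups appearing as supports of tangent measures in $\mathrm{Tan}_h(\phi,x)$ reduces to a single subgroup $\mathbb V(x)$. Arguing by contradiction, suppose at some such $x$ two subgroups $\mathbb V_1\ne\mathbb V_2$ appear along scales $r_n\to 0$ and $r_n'\to 0$ respectively. The co-normal hypothesis then provides normal complements $\mathbb L_i$, so that at scale $r_n$ the rescaled measure $\delta_{1/r_n}{}_*(\phi - x)$ is weakly close to a multiple of $\mathcal S^h\llcorner \mathbb V_1$, and $\supp(\phi)$ near $x$ at scale $r_n$ lies in a thin neighborhood of $x\mathbb V_1$ with respect to the pseudodistance induced by $\pi_{\mathbb V_1,\mathbb L_1}$. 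The crucial point is that because $\mathbb L_1$ is \emph{normal}, left translations and dilations commute cleanly with the projection $\pi_{\mathbb V_1,\mathbb L_1}$, and one can iterate the Preiss-type comparison: the family of scales where $\supp \phi$ is close to $x\mathbb V_1$ is open, closed (by upper density control), and nontrivial, yielding uniform closeness on an entire dyadic annulus. Applying the same reasoning with $\mathbb V_2,\mathbb L_2$ and comparing through a sequence of intermediate scales, one extracts a subsequential tangent measure whose support contains at least two cosets intersecting only at $0$; this violates condition (ii)$^*$ of the $\mathscr{P}_h^*$-rectifiability at some Besicovitch-type point of density, producing the contradiction. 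This is essentially the Carnot adaptation of the cone-density / connectedness-of-tangents scheme of \cite{Preiss1987GeometryDensities}, and the normal-complement assumption is what makes the projection commute with the group operation well enough for the Euclidean blueprint to survive.

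For the second stage, once the tangent is $\phi$-a.e.\ unique, I would produce the Lipschitz parametrization by a standard covering and graph construction. For $\phi$-a.e.\ $x$, at small scales $\supp\phi\cap B(x,r)$ lies in an arbitrarily narrow cone around $x\mathbb V(x)$ in the splitting $\mathbb G = \mathbb V(x)\cdot\mathbb L(x)$. A quantitative version, obtained by a further countable decomposition so that $\mathbb V(x),\mathbb L(x)$ and the closeness rate are uniform on a compact piece $F$, shows that on $F$ the map $y\mapsto \pi_{\mathbb V(x),\mathbb L(x)}(x^{-1}y)$ is bi-Lipschitz onto its image, so $x^{-1}F$ is the graph of a Lipschitz map from a compact subset of $\mathbb V(x)$ into $\mathbb L(x)$; composing with left translation and with the graph inclusion yields a Lipschitz map $\Phi_i\colon A_i\subseteq \mathbb V_i\to\mathbb G$ whose image covers $F$ up to a $\phi$-null set. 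Exhausting by countably many such pieces gives the desired cover $\phi\bigl(\mathbb G\setminus\bigcup_i \Phi_i(A_i)\bigr)=0$.
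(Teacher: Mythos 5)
Your Stage 1 is where the proposal breaks down. The contradiction you aim for --- ``one extracts a subsequential tangent measure whose support contains at least two cosets intersecting only at $0$'' --- does not follow: an element of $\mathrm{Tan}_h(\phi,x)$ is a weak-$*$ limit along a \emph{single} infinitesimal sequence of scales, so the information seen along $r_n$ (closeness to $x\mathbb V_1$) and along $r_n'$ (closeness to $x\mathbb V_2$) cannot be superimposed into one tangent measure with larger support; moreover, under the standing hypotheses limits of flat tangents are again flat (\cref{prop:CompactnessTangents}), so no non-flat tangent can appear this way. More fundamentally, the $\mathscr{P}_h^*$ hypotheses do \emph{not} exclude, at a single point, several distinct planes of the same stratification occurring as tangents: uniqueness is only an almost-everywhere statement, and it cannot be obtained by a purely pointwise open/closed argument at a generic point --- this is exactly why the Marstrand--Mattila/Preiss criterion is delicate even in $\mathbb R^n$. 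The connectedness-of-scales mechanism you invoke only yields that \emph{intermediate} planes also occur as tangents, which is perfectly compatible with (ii)$^*$. The paper does use this intermediate-value trick, but only to prove the much weaker rigidity that the stratification vector of the tangents is $\phi$-a.e.\ constant (\cref{struct:strat}); that works because planes with \emph{different} stratifications are quantitatively separated (\cref{prop:gimel}), whereas planes with the same stratification are not, so the same scheme cannot force uniqueness of the plane itself.

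Because of this, the two stages must be run in the opposite order, and that is what the paper does: the hard step is to produce \emph{directly}, from the $\mathscr{P}_h^{*,\unlhd}$ hypotheses, a Lipschitz section of the splitting projection $P_{\mathbb W}$ whose image has positive $\phi$-measure, via the adaptation of Preiss's Lemma 5.2/Theorem 5.3 carried out in \cref{prop::4.4(4)}, \cref{prop::5.2}, \cref{prop::5.2satisfied} and \cref{prop:PHIK>0}; this construction uses density and cylinder comparisons at \emph{many} points and scales simultaneously (the sets $A,A_1,A_2,\widetilde A,\hat A$ and the quantitative estimates $\lVert P(w)\rVert\gtrsim\lVert w\rVert$ on nearby tangent planes), not just the tangent behaviour at one point. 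Only \emph{after} the Lipschitz covering is obtained does one deduce $\mathscr{P}_h$-rectifiability, using that a homogeneous subgroup with a normal complement is a Carnot subgroup, Pansu--Rademacher and Magnani's area formula on each $f_i(K_i)$, together with \cref{prop:MutuallyEthetaGamma} and the locality of tangents. Your Stage 2 is essentially sound \emph{once} a.e.\ uniqueness and a uniform two-point cone condition are in hand (in the co-normal case the homomorphism property of $P_{\mathbb V}$ indeed upgrades an intrinsically Lipschitz graph to a metric Lipschitz parametrization, provided one also uses lower density bounds at nearby points, as in \cref{prop1vsinfty}, to pass from weak closeness of measures to closeness of supports), but since it is built on Stage 1, the proposal has a genuine gap at its central step.
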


Let us notice that the converse of \cref{thm:MMconormaleIntro} holds as well. Namely if $\phi$ is a Radon measure on $\mathbb G$ with positive $h$-lower density and finite $h$-upper density $\phi$-almost everywhere, and there are countably many homogeneous subgroups $\mathbb{V}_i$ of homogeneous dimension $h$, and Lipschitz maps $\Phi_i:A_i\subseteq \mathbb V_i\to \mathbb{G}$, where $A_i$'s are compact, such that
$$\phi\Big(\mathbb{G}\setminus  \bigcup_{i\in\N} \Phi_i(A_i)\Big)=0,$$
hence $\phi$ is $\mathscr{P}_h$-rectifiable (and a fortiori $\mathscr{P}_h^*$-rectifiable). The proof is done first by a classical reduction to measures of the type $\mathcal{S}^h\llcorner\Gamma$, and hence using the Rademacher theorem, and the area formula, which hold for the maps $\Phi_i$. The resulting reasoning is exactly the same as in the last part of the proof of \cref{thm:MMconormale}.

As we said above, a Marstrand--Mattila rectifiability criterion for codimension-one rectifiable measures in arbitrary Carnot groups has been proved by the second named author in \cite{MarstrandMattila20}. The techniques used in \cite{MarstrandMattila20} are likely to be adapted to show the same result in the more general co-horizontal case. Apart from the cases discussed here, and the result in \cref{thm:MMconormaleIntro}, we presently do not know if a Marstrand--Mattila rectifiability criterion holds in the generality of $\mathscr{P}^*$-rectifiable measure with complemented tangents. We believe that such a result could be really challenging because of the lack of regularity of the projections map in the general case.  

We remark that we are able to prove \cref{thm:MMconormaleIntro} because of the following two key observations: whenever $\mathbb V$ admits a normal complementary subgroup $\mathbb L$, then the projection $P_{\mathbb V}:\mathbb G\to\mathbb V$ related to the splitting $\mathbb G=\mathbb V\cdot \mathbb L$ is a Lipschitz homogeneous homomorphism, see \cref{prop:projhom}, and moreover $\mathbb V$ is a Carnot subgroup, see \cite[Remark 2.1]{AM20}. This allows us to adapt Preiss's machinery in \cite{Preiss1987GeometryDensities} not without some difficulties that are essentially due to the fact that, on the contrary with respect to the Euclidean setting, we do not have a canonical choice of a normal complementary subgroup to $\mathbb V$ when there is at least one. 
We also stress that, for the Marstrand-Mattila rectifiability criterion, the assumption on the strictly positive lower density is necessary already in the Euclidean case, see \cite[5.9]{Preiss1987GeometryDensities}.

The hypotheses of \cref{thm:MMconormaleIntro} are satisfied whenever we have a $\mathscr{P}_h^*$-rectifiable measure with horizontal tangents. Thus, the previous co-normal Marstrand-Mattila rectifiability criterion, jointly with the result of \cite[Theorem 1.3]{ChousionisONGROUP}, can be used to give the proof of Preiss's theorem for measures with one-density in the Heisenberg group $\mathbb H^1$ endowed with the Koranyi norm. For the sake of clarity, let us recall that if we identify $\mathbb H^1\equiv \mathbb R^3 =\{(x,t):x\in\mathbb R^2,t\in\mathbb R\}$ through exponential coordinates, then the {\em Koranyi norm} is $\|(x,t)\|:=(\|x\|_{\mathrm{eu}}^4+t^2)^{1/4}$, where $\|\cdot\|_{\mathrm{eu}}$ is the standard Euclidean norm.

\begin{teorema}[One-dimensional Preiss's theorem in $\mathbb H^1$]\label{thm:Preiss1Intro}
Let $\mathbb H^1$ be the first Heisenberg group endowed with the Koranyi norm. Let $\phi$ be a Radon measure on $\mathbb H^1$ such that the one-density $\Theta^1(\phi,x)$ exists positive and finite for $\phi$-almost every $x\in\mathbb H^1$. Then $\phi$-almost all of $\mathbb H^1$ can be covered with countably many images $\Phi_i(A_i)$ of Lipschitz functions $\Phi_i:A_i\subseteq \mathbb R\to\mathbb H^1$, and moreover $\phi$ is absolutely continuous with respect to the one-dimensional Hausdorff measure $\mathcal{H}^1$. 
\end{teorema}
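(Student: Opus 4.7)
The plan is to verify the hypotheses of the co-normal Marstrand--Mattila criterion \cref{thm:MMconormaleIntro} in the case $h=1$ and read off the conclusion. First I would use the existence and positivity/finiteness of the one-density $\Theta^1(\phi,x)$ to obtain condition (i) of \cref{def:PhRectifiableMeasureINTRO} for free, and then invoke the standard Preiss-type differentiation argument (which carries over to the homogeneous setting of Carnot groups) to conclude that at $\phi$-a.e.\ $x$ every tangent measure $\nu \in \Tan_1(\phi,x)$ is $1$-uniform on $(\mathbb H^1,\|\cdot\|)$, in the sense that $\nu(B(y,r))/r$ is a positive constant independent of $y \in \supp \nu$ and $r>0$.

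The key input is then \cite[Theorem 1.3]{ChousionisONGROUP}, which classifies all $1$-uniform Radon measures in the first Heisenberg group with the Koranyi distance as constant multiples of $\mathcal{S}^1\llcorner L$, where $L$ is a horizontal line. Applied to the tangent measures centred at the origin, this gives
\[
\Tan_1(\phi,x) \subseteq \{\lambda \mathcal{S}^1\llcorner \mathbb V : \lambda \geq 0,\ \mathbb V \text{ a horizontal one-dimensional subgroup of } \mathbb H^1\}
\]
for $\phi$-a.e.\ $x$, which is precisely condition (ii)$^*$ of \cref{def:PhRectifiableMeasureINTRO}. Hence $\phi$ is $\mathscr{P}_1^*$-rectifiable with horizontal tangents.

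To invoke \cref{thm:MMconormaleIntro} it remains to check that these tangents admit at least one normal complementary subgroup. In $\mathbb H^1$, if $\mathbb V$ is a horizontal line spanned by $v\in V_1$, then for any $v'\in V_1$ linearly independent of $v$ the subgroup $\mathbb L:=\exp(\mathrm{span}\{v',[v,v']\})$ is a $2$-dimensional complement containing the centre, hence normal; more generally every horizontal subgroup of any Carnot group admits such a complement. Thus \cref{thm:MMconormaleIntro} applies and yields both the $\mathscr{P}_1$-rectifiability of $\phi$ and a countable family of Lipschitz maps $\Phi_i:A_i\subseteq \mathbb V_i \to \mathbb H^1$, with $A_i$ compact, covering $\phi$-almost all of $\mathbb H^1$. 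Reparametrising each horizontal line $\mathbb V_i$ through its canonical metric isomorphism with $\mathbb R$ converts these into Lipschitz maps on compact subsets of $\mathbb R$, as required. The absolute continuity $\phi \ll \mathcal H^1$ then follows from the standard comparison with Hausdorff measure granted by $\Theta^{1,*}(\phi,x)<+\infty$ at $\phi$-a.e.\ $x$.

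The main obstacle is the first step: making precise in the Carnot-group framework the passage from existence of the pointwise $1$-density to $1$-uniformity of every tangent measure, which is the bridge needed to feed the Chousionis--Magnani--Tyson classification into condition (ii)$^*$. Once that is in place, the rest is a direct matching of the hypotheses of \cref{thm:MMconormaleIntro} with horizontal tangents, together with routine manipulations involving the metric isomorphism $\mathbb V_i \cong \mathbb R$ and the density comparison with $\mathcal H^1$.
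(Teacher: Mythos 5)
Your proposal follows essentially the same route as the paper: existence of the one-density gives that tangents are $1$-uniform measures, the Chousionis--Magnani--Tyson classification turns these into horizontal lines, horizontal subgroups admit normal complements, and then \cref{thm:MMconormaleIntro} plus a density comparison give the covering and the absolute continuity. The only step you flag as an obstacle, the passage from existence of the density to uniformity of the tangent measures, is exactly what the paper settles by citing \cite[Proposition 2.2]{Merlo1}, so no new idea is needed there.
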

\begin{proof}
From the fact that the one-density exists at $\phi$-almost every $x\in\mathbb H^1$ we deduce that at $\phi$-almost every $x\in\mathbb H^1$ the tangent measures are uniform measures, see \cite[Proposition 2.2]{Merlo1}. Then from \cite[Theorem 1.3]{ChousionisONGROUP} we get that the tangent measures, at $\phi$-almost every $x\in\mathbb H^1$, are $\mathcal{S}^1\llcorner L$, where $L$ is a horizontal line. Finally from \cref{thm:MMconormaleIntro}, since every horizontal line admits a normal complementary subgroup, we get the first part of the sought conclusion. The absolute continuity is a consequence of \cref{prop:MutuallyEthetaGamma}.
\end{proof}

Let us notice that \cref{thm:Preiss1Intro} is one of the few cases in which Preiss's theorem \cite{Preiss1987GeometryDensities} is nowadays known to hold beyond the Euclidean space. The characterization of the $k$-rectifiability of a measure through the existence of the $k$-density in Euclidean spaces was one of the great achievement of Geometric Measure Theory \cite{Preiss1987GeometryDensities}. Another Preiss's type result has been proved by A. Lorent \cite{Lorent} in $\ell_\infty^3$. Recently, the second named author has accomplished to prove the analogue of \cref{thm:Preiss1Intro} for the 3-density, which requires a deeper understanding of 3-uniform measures in the first Heisenberg group $\mathbb H^1$, see \cite{Merlo1, MarstrandMattila20}.

A related result to \cref{thm:Preiss1Intro} in the broad generality of metric spaces is contained in \cite{PreissTiser}. Nevertheless we stress that here we prove \cref{thm:Preiss1Intro} in the general setting of Radon measures and we ask no bound on the density, just its existence: namely, we prove that whenever the 1-density of a Radon measure exists on $\mathbb H^1$ endowed with the Koranyi norm, hence we have that it is rectifiable. We remark that, even if we take advantage of the fact that the classification of the 1-uniform measures on $\mathbb H^1$ was known from \cite{ChousionisONGROUP}, the result in \cref{thm:Preiss1Intro} is non-trivial, since it requires the Marstrand--Mattila rectifiability criterion in \cref{thm:MMconormaleIntro}.

Let us remark that the previous \cref{thm:Preiss1Intro} is the last step needed to completely solve in $\mathbb H^1$ the implication (i)$\Rightarrow$(ii) of the density problem formulated in \cite[page 50]{MarstrandMattila20}. Let us explain this and give a scheme here. If in $\mathbb H^1$ endowed with the Koranyi norm we have a Radon measure $\phi$ such that there exists $\alpha\geq 0$ for which the $\alpha$-density $\Theta^\alpha(\phi,x)$ exists positive and finite for $\phi$-almost every $x\in\mathbb H^1$ we first get that $\alpha$ is an integer, see \cite[Theorem 1.1]{Chousionis2015MarstrandsGroup}. Thus the only non-trivial cases are
\begin{itemize}
    \item $\alpha=1$. In this case $\phi$ is $\mathscr{P}_1$-rectifiable, see \cref{thm:MMconormaleIntro}. Moreover we can cover $\phi$-almost all of $\mathbb H^1$ with countably many images of Lipschitz maps from subsets of $\mathbb R$ to $\mathbb H^1$. Note that we can improve the latter conclusion. Indeed, we can cover $\phi$-almost all of $\mathbb H^1$ with countably many images of $C^1_{\mathrm H}$-functions defined on \textbf{open} subsets of $\mathbb R$ to $\mathbb H^1$. This last improvement comes from Pansu-Rademacher theorem for Lipschitz maps between Carnot groups, see \cite{Pansu}, and the Whitney exstension theorem proved in \cite[Theorem 6.5]{JuilletSigalottiPliability}. 
    \item $\alpha=2$. In this case $\phi$ is $\mathscr{P}_2$-rectifiable, see \cite[Theorem 3.7]{MarstrandMattila20}. This means that the tangent measure is $\phi$-almost everywhere unique and it is a Haar measure of the vertical line in $\mathbb H^1$.
    \item $\alpha=3$. In this case $\phi$ is $\mathscr{P}_3$-rectifiable, see \cite{Merlo1}, and \cite[Theorem 4]{MarstrandMattila20}. Moreover we can cover $\phi$-almost all of $\mathbb H^1$ with countably many $C^1_{\mathrm H}$-hypersurfaces, see \cite[Theorem 4]{MarstrandMattila20}.
\end{itemize}
As it is clear from the list above, an interesting line of investigation could be a finer study of the structure of $\mathscr{P}_2$-rectifiable measures in $\mathbb H^1$.

\vspace{0.3cm}
\textbf{Acknowledgments}: The first author is partially supported by the European Research Council (ERC Starting Grant 713998 GeoMeG `\emph{Geometry of Metric Groups}'). The second author is supported by the Simons Foundation Wave Project,  grant 601941, GD.

\section{Preliminaries}\label{sec:Prel}
\subsection{Carnot Groups}\label{sub:Carnot}
In this subsection we briefly introduce some notations on Carnot groups that we will extensively use throughout the paper. For a detailed account on Carnot groups we refer to \cite{LD17}.

A Carnot group $\mathbb{G}$ of step $\kappa$ \label{num:step} is a simply connected Lie group whose Lie algebra $\mathfrak g$ admits a stratification $\mathfrak g=V_1\, \oplus \, V_2 \, \oplus \dots \oplus \, V_\kappa$. We say that $V_1\, \oplus \, V_2 \, \oplus \dots \oplus \, V_\kappa$ is a {\em stratification} of $\mathfrak g$ if $\mathfrak g = V_1\, \oplus \, V_2 \, \oplus \dots \oplus \, V_\kappa$,
$$
[V_1,V_i]=V_{i+1}, \quad \text{for any $i=1,\dots,\kappa-1$}, \quad  \quad [V_1,V_\kappa]=\{0\}, \quad \text{and}\,\, V_\kappa\neq\{0\},
$$ 
where $[A,B]:=\mathrm{span}\{[a,b]:a\in A,b\in B\}$. We call $V_1$ the \emph{horizontal layer} of $\mathbb G$. We denote by $n$ the topological dimension of $\mathfrak g$, by $n_j$ the dimension of $V_j$ for every $j=1,\dots,\kappa$.
Furthermore, we define $\pi_i:\mathbb{G}\to V_i$ to be the projection maps on the $i$-th strata. 
We will often shorten the notation to $v_i:=\pi_iv$.

For a Carnot group $\mathbb G$, the exponential map $\exp :\mathfrak g \to \mathbb{G}$ is a global diffeomorphism from $\mathfrak g$ to $\mathbb{G}$.
Hence, if we choose a basis $\{X_1,\dots , X_n\}$ of $\mathfrak g$,  any $p\in \mathbb{G}$ can be written in a unique way as $p=\exp (p_1X_1+\dots +p_nX_n)$. This means that we can identify $p\in \mathbb{G}$ with the $n$-tuple $(p_1,\dots , p_n)\in \R^n$ and the group $\mathbb{G}$ itself with $\R^n$ endowed with the group operation $\cdot$ determined by the Baker-Campbell-Hausdorff formula. From now on, we will always assume that $\mathbb{G}=(\R^n,\cdot)$ and, as a consequence, that the exponential map $\exp$ acts as the identity.

For any $p\in \mathbb{G}$, we define the left translation $\tau _p:\mathbb{G} \to \mathbb{G}$ as
\begin{equation*}
q \mapsto \tau _p q := p\cdot q.
\end{equation*}
As already remarked above, the group operation $\cdot$ is determined by the Campbell-Hausdorff formula, and it has the form (see \cite[Proposition 2.1]{step2})
\begin{equation*}
p\cdot q= p+q+\mathscr{Q}(p,q), \quad \mbox{for all }\, p,q \in  \R^n,
\end{equation*} 
where $\mathscr{Q}=(\mathscr{Q}_1,\dots , \mathscr{Q}_\kappa):\R^n\times \R^n \to V_1\oplus\ldots\oplus V_\kappa$, and the $\mathscr{Q}_i$'s have the following properties. For any $i=1,\ldots \kappa$ and any $p,q\in \mathbb{G}$ we have\label{tran}
\begin{itemize}
    \item[(i)]$\mathscr{Q}_i(\delta_\lambda p,\delta_\lambda q)=\lambda^i\mathscr{Q}_i(p,q)$,
    \item[(ii)] $\mathscr{Q}_i(p,q)=-\mathscr{Q}_i(-q,-p)$,
    \item[(iii)] $\mathscr{Q}_1=0$ and $\mathscr{Q}_i(p,q)=\mathscr{Q}_i(p_1,\ldots,p_{i-1},q_1,\ldots,q_{i-1})$.
\end{itemize}
Thus, we can represent the product $\cdot$ as
\begin{equation}\label{opgr}
p\cdot q= (p_1+q_1,p_2+q_2+\mathscr{Q}_2(p_1,q_1),\dots ,p_\kappa +q_\kappa+\mathscr{Q}_\kappa (p_1,\dots , p_{\kappa-1} ,q_1,\dots ,q_{\kappa-1})). 
\end{equation}

The stratificaton of $\mathfrak{g}$ carries with it a natural family of dilations $\delta_\lambda :\mathfrak{g}\to \mathfrak{g}$, that are Lie algebra automorphisms of $\mathfrak{g}$ and are defined by\label{intrdil}
\begin{equation}
     \delta_\lambda (v_1,\dots , v_\kappa):=(\lambda v_1,\lambda^2 v_2,\dots , \lambda^\kappa v_\kappa), \quad \text{for any $\lambda>0$},
     \nonumber
\end{equation}
where $v_i\in V_i$. The stratification of the Lie algebra $\mathfrak{g}$  naturally induces a gradation on each of its homogeneous Lie sub-algebras $\mathfrak{h}$, i.e., sub-algebras that are $\delta_{\lambda}$-invariant for any $\lambda>0$, that is
\begin{equation}
    \mathfrak{h}=V_1\cap \mathfrak{h}\oplus\ldots\oplus V_\kappa\cap \mathfrak{h}.
    \label{eq:intr1}
\end{equation}
We say that $\mathfrak h=W_1\oplus\dots\oplus W_{\kappa}$ is a {\em gradation} of $\mathfrak h$ if $[W_i,W_j]\subseteq W_{i+j}$ for every $1\leq i,j\leq \kappa$, where we mean that $W_\ell:=\{0\}$ for every $\ell > \kappa$.
Since the exponential map acts as the identity, the Lie algebra automorphisms $\delta_\lambda$ can be read also as group automorphisms of $\mathbb{G}$.

\begin{definizione}[Homogeneous subgroups]\label{homsub}
A subgroup $\mathbb V$ of $\mathbb{G}$ is said to be \emph{homogeneous} if it is a Lie subgroup of $\mathbb{G}$ that is invariant under the dilations $\delta_\lambda$.
\end{definizione}

We recall the following basic terminology: a {\em horizontal subgroup} of a Carnot group $\mathbb G$ is a homogeneous subgroup of it that is contained in $\exp(V_1)$; a {\em Carnot subgroup} $\mathbb W=\exp(\mathfrak h)$ of a Carnot group $\mathbb G$ is a homogeneous subgroup of it such that the first layer $V_1\cap\mathfrak h$ of the grading of $\mathfrak h$ inherited from the stratification of $\mathfrak g$ is the first layer of a stratification of $\mathfrak h$.

Homogeneous Lie subgroups of $\mathbb{G}$ are in bijective correspondence through $\exp$ with the Lie sub-algebras of $\mathfrak{g}$ that are invariant under the dilations $\delta_\lambda$. 
For any Lie algebra $\mathfrak{h}$ with gradation $\mathfrak h= W_1\oplus\ldots\oplus W_{\kappa}$, we define its \emph{homogeneous dimension} as
$$\text{dim}_{\mathrm{hom}}(\mathfrak{h}):=\sum_{i=1}^{\kappa} i\cdot\text{dim}(W_i).$$
Thanks to \eqref{eq:intr1} we infer that, if $\mathfrak{h}$ is a homogeneous Lie sub-algebra of $\mathfrak{g}$, we have $\text{dim}_{\mathrm{hom}}(\mathfrak{h}):=\sum_{i=1}^{\kappa} i\cdot\text{dim}(\mathfrak{h}\cap V_i)$. We introduce now the class of homogeneous and left-invariant distances.

\begin{definizione}[Homogeneous left-invariant distance]
A metric $d:\mathbb{G}\times \mathbb{G}\to \R$ is said to be homogeneous and left invariant if for any $x,y\in \mathbb{G}$ we have
\begin{itemize}
    \item[(i)] $d(\delta_\lambda x,\delta_\lambda y)=\lambda d(x,y)$ for any $\lambda>0$,
    \item[(ii)] $d(\tau_z x,\tau_z y)=d(x,y)$ for any $z\in \mathbb{G}$.
\end{itemize}
\end{definizione}

We remark that two homogeneous left-invariant distances on a Carnot group are always bi-Lipschitz equivalent.
It is well-known that the Hausdorff dimension (for a definition of Hausdorff dimension see for instance \cite[Definition 4.8]{Mattila1995GeometrySpaces}) of a graded Lie group $\mathbb G$ with respect to an arbitrary left-invariant homogeneous distance coincides with the homogeneous dimension of its Lie algebra. For a reference for the latter statement, see \cite[Theorem 4.4]{LDNG19}. \textbf{From now on, if not otherwise stated, $\mathbb G$ will be a fixed Carnot group}. We recall that a \emph{homogeneous norm} $\|\cdot\|$ on $\mathbb G$ is a function $\|\cdot\|:\mathbb G\to [0,+\infty)$ such that $\|\delta_\lambda x\|=\lambda\|x\|$ for every $\lambda>0$ and $x\in\mathbb G$; $\|x\cdot y\|\leq \|x\|+\|y\|$ for every $x,y\in\mathbb G$; and $\|x\|=0$ if and only if $x=0$. We introduce now a distinguished homogeneous norm on $\mathbb G$.

\begin{definizione}[Smooth-box metric]\label{smoothnorm}
For any $g\in \mathbb{G}$, we let
$$\lVert g\rVert:=\max\{\varepsilon_1\lvert g_1\rvert,\varepsilon_2\lvert g_2\rvert^{1/2},\ldots, \varepsilon_{\kappa}\lvert g_\kappa\rvert^{1/{\kappa}}\},$$
where $\varepsilon_1=1$ and $\varepsilon_2,\ldots \varepsilon_{\kappa}$ are suitably small parameters depending only on the group $\mathbb{G}$. For the proof of the fact that we can choose the $\varepsilon_i$'s in such a way that $\lVert\cdot\rVert$ is a homogeneous norm on $\mathbb{G}$ that induces a left-invariant homogeneous distance we refer to \cite[Section 5]{step2}. 
\end{definizione}
Given an arbitrary homogeneous norm $\|\cdot\|$ on $\mathbb G$, the distance $d$ induced by $\|\cdot\|$ is defined as follows
$$
d(x,y):=\lVert x^{-1}\cdot y\rVert.
$$
Vice-versa, given a homogeneous left-invariant distance $d$, it induces a homogeneous norm through the equality $\|x\|:=d(x,e)$ for every $x\in\mathbb G$, where $e$ is the identity element of $\mathbb G$.

Given a homogeneous left-invariant distance $d$ we let $U(x,r):=\{z\in \mathbb{G}:d(x,z)<r\}$ be the open metric ball relative to the distance $d$ centred at $x$ and with radius $r>0$. The closed ball will be denoted with $B(x,r):=\{z\in \mathbb{G}:d(x,z)\leq r\}$. Moreover, for a subset $E\subseteq \mathbb G$ and $r>0$, we denote with $B(E,r):=\{z\in\mathbb G:\dist(z,E)\leq r\}$ the {\em closed $r$-tubular neighborhood of $E$}  and with $U(E,r):=\{z\in\mathbb G:\dist(z,E)< r\}$ the {\em open $r$-tubular neighborhood of $E$}.

\begin{definizione}[Hausdorff Measures]\label{def:HausdorffMEasure}
Throughout the paper we define the $h$-dimensional {\em spherical Hausdorff measure} relative to a left invariant homogeneous metric $d$ as\label{sphericaldhausmeas}
$$
\mathcal{S}^{h}(A):=\sup_{\delta>0}\inf\bigg\{\sum_{j=1}^\infty  r_j^h:A\subseteq \bigcup_{j=1}^\infty B(x_j,r_j),~r_j\leq\delta\bigg\},
$$
for every $A\subseteq \mathbb G$.
We define the $h$-dimensional {\em Hausdorff measure}\label{hausmeas} relative to $d$ as
$$
\mathcal{H}^h(A):=\sup_{\delta>0}\inf \left\{\sum_{j=1}^{\infty} 2^{-h}(\diam E_j)^h:A \subseteq \bigcup_{j=1}^{\infty} E_j,\, \diam E_j\leq \delta\right\},
$$
for every $A\subseteq \mathbb G$.
We define the $h$-dimensional {\em centered Hausdorff measure} relative to $d$ as\label{centredhausmeas}
$$
\mathcal{C}^{h}(A):=\underset{E\subseteq A}{\sup}\,\,\mathcal{C}_0^h(E),
$$
for every $A\subseteq \mathbb G$, 
where
$$
\mathcal{C}^{h}_0(E):=\sup_{\delta>0}\inf\bigg\{\sum_{j=1}^\infty  r_j^h:E\subseteq \bigcup_{j=1}^\infty B(x_j,r_j),~ x_j\in E,~r_j\leq\delta\bigg\},
$$
for every $E\subseteq \mathbb G$.
We stress that $\mathcal{C}^h$ is an outer measure, and thus it defines a Borel regular measure, see \cite[Proposition 4.1]{EdgarCentered}, and that the measures $\mathcal{S}^h,\mathcal{H}^h,\mathcal{C}^h$ are all equivalent measures, see \cite[Section 2.10.2]{Federer1996GeometricTheory} and \cite[Proposition 4.2]{EdgarCentered}.
\end{definizione}

\begin{definizione}[Hausdorff distance]\label{def:Haus}
Given a left-invariant homogeneous distance $d$ on $\mathbb G$, for any couple of sets $A,B\subseteq \mathbb{G}$, we define the \emph{Hausdorff distance} of $A$ from $B$ as
$$d_{H,\mathbb G}(A,B):=\max\Big\{\sup_{x\in A}\text{dist}(x,B),\sup_{y\in B}\text{dist}(A,y)\Big\},$$
where 
$$
\text{dist}(x,A):=\inf_{y\in A} d(x,y),
$$
for every $x\in\mathbb G$ and $A\subseteq \mathbb G$.
\end{definizione}

\subsection{Densities and tangents of Radon measures}

Throughout the rest of  the paper we will always assume that $\mathbb{G}$ is a fixed Carnot group endowed with an arbitrary left-invariant homogeneous distance $d$. Some of the forthcoming results will be proved in the particular case in which $d$ is the distance induced by the distinguished homogeneous norm defined in \cref{smoothnorm}, and we will stress this when it will be the case. 

The homogeneous, and thus Hausdorff, dimension with respect to $d$ will be denoted with $Q$. Furthermore as discussed in the previous subsection, we will assume without loss of generality that $\mathbb{G}$ coincides with $\R^n$ endowed with the product induced by the Baker-Campbell-Hausdorff formula relative to $\text{Lie}(\mathbb{G})$.

\begin{definizione}[Weak convergence of measures]\label{def:WeakConvergence}
Given a family $\{\phi_i\}_{i\in\N}$ of Radon measures on $\mathbb{G}$ we say that $\phi_i$ weakly converges to a Radon measure $\phi$, and we write $\phi_i\rightharpoonup \phi$, if
$$
\int fd \phi_i \to \int fd\phi, \qquad\text{for any } f\in C_c(\mathbb G).
$$
\end{definizione}

\begin{definizione}[Tangent measures]\label{def:TangentMeasure}
Let $\phi$ be a Radon measure on $\mathbb G$. For any $x\in\mathbb G$ and any $r>0$ we define the measure
$$
T_{x,r}\phi(E):=\phi(x\cdot\delta_r(E)), \qquad\text{for any Borel set }E.
$$
Furthermore, we define $\mathrm{Tan}_{h}(\phi,x)$, the $h$-dimensional tangents to $\phi$ at $x$, to be the collection of the Radon measures $\nu$ for which there is an infinitesimal sequence $\{r_i\}_{i\in\N}$ such that
$$r_i^{-h}T_{x,r_i}\phi\rightharpoonup \nu.$$
\end{definizione}
\begin{osservazione}(Zero as a tangent measure)\label{rem:TangentZero}
We remark that our definition potentially admits the zero measure as a tangent measure, as in \cite{DeLellis2008RectifiableMeasures}, while the definitions in \cite{Preiss1987GeometryDensities} and \cite{MatSerSC} do not.
\end{osservazione}

\begin{definizione}[Lower and upper densities]\label{def:densities}
If $\phi$ is a Radon measure on $\mathbb{G}$, and $h>0$, we define
$$
\Theta_*^{h}(\phi,x):=\liminf_{r\to 0} \frac{\phi(B(x,r))}{r^{h}},\qquad \text{and}\qquad \Theta^{h,*}(\phi,x):=\limsup_{r\to 0} \frac{\phi(B(x,r))}{r^{h}},
$$
and we say that $\Theta_*^{h}(\phi,x)$ and $\Theta^{h,*}(\phi,x)$ are the lower and upper $h$-density of $\phi$ at the point $x\in\mathbb{G}$, respectively. Furthermore, we say that measure $\phi$ has $h$-density if
$$
0<\Theta^h_*(\phi,x)=\Theta^{h,*}(\phi,x)<\infty,\qquad \text{for }\phi\text{-almost any }x\in\mathbb{G}.
$$
\end{definizione}

Lebesgue theorem holds for measures with positive lower density and finite upper density, and thus local properties are stable under restriction to Borel subsets.

\begin{proposizione}\label{prop:Lebesuge}
Suppose $\phi$ is a Radon measure on $\mathbb{G}$ with $0<\Theta^h_*(\phi,x)\leq \Theta^{h,*}(\phi,x)<\infty $ for $\phi$-almost every $x\in \mathbb{G}$. Then, for any Borel set $B\subseteq \mathbb{G}$ and for $\phi$-almost every $x\in B$ we have
$$\Theta^h_*(\phi\llcorner B,x)=\Theta^h_*(\phi,x),\qquad \text{and}\qquad\Theta^{h,*}(\phi\llcorner B,x)=\Theta^{h,*}(\phi,x).$$
\end{proposizione}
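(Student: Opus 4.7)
For every $x$ we have the trivial bounds $\Theta^h_*(\phi\llcorner B,x)\leq \Theta^h_*(\phi,x)$ and $\Theta^{h,*}(\phi\llcorner B,x)\leq \Theta^{h,*}(\phi,x)$, because $\phi\llcorner B(B(x,r))\leq \phi(B(x,r))$. The substance of the statement lies in the reverse inequalities at $\phi$-a.e.\ $x\in B$, and for these it suffices to prove
\[
\lim_{r\to 0}\frac{\phi(B(x,r)\setminus B)}{r^h}=0 \qquad \text{for $\phi$-a.e. } x\in B,
\]
since $\phi\llcorner B(B(x,r))=\phi(B(x,r))-\phi(B(x,r)\setminus B)$ and the subtracted term is $o(r^h)$.

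The first step is to observe that the density hypothesis forces $\phi$ to be \emph{asymptotically doubling}. Indeed, at $\phi$-a.e.\ $x$ one has $\phi(B(x,r))\geq \tfrac12 \Theta^h_*(\phi,x)r^h$ and $\phi(B(x,2r))\leq 2\Theta^{h,*}(\phi,x)(2r)^h$ for all sufficiently small $r$, so
\[
\limsup_{r\to 0}\frac{\phi(B(x,2r))}{\phi(B(x,r))}\leq \frac{2^{h+2}\Theta^{h,*}(\phi,x)}{\Theta^h_*(\phi,x)}<\infty.
\]
Once $\phi$ is asymptotically doubling, the standard Vitali covering theorem for Radon measures in metric spaces applies (see e.g.\ Heinonen, \emph{Lectures on Analysis on Metric Spaces}, or \cite[Chapter 2]{Mattila1995GeometrySpaces}), and from it one derives the Lebesgue differentiation theorem: for every Borel set $B$ and for $\phi$-a.e.\ $x$,
\[
\lim_{r\to 0}\frac{\phi(B(x,r)\cap B)}{\phi(B(x,r))}=\chi_B(x).
\]
In particular, at $\phi$-a.e.\ $x\in B$ we have $\phi(B(x,r)\setminus B)/\phi(B(x,r))\to 0$.

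Finally I would combine this ratio statement with the upper bound $\phi(B(x,r))\leq (\Theta^{h,*}(\phi,x)+1)r^h$, valid for $r$ small at $\phi$-a.e.\ $x$, to conclude $\phi(B(x,r)\setminus B)=o(r^h)$, and then take $\liminf_{r\to 0}$ and $\limsup_{r\to 0}$ in the identity $\phi\llcorner B(B(x,r))/r^h=\phi(B(x,r))/r^h-\phi(B(x,r)\setminus B)/r^h$ to get both sought equalities.

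The only real obstacle is that a general Carnot group endowed with an arbitrary left-invariant homogeneous distance need not satisfy the Besicovitch covering property, so one cannot quote the Euclidean Lebesgue differentiation theorem directly. The way around it is exactly the asymptotic doubling property derived above: for asymptotically doubling Radon measures on a separable metric space, a Vitali-type covering lemma is available and yields the differentiation theorem for arbitrary Borel sets. This is standard in metric measure space analysis and is the only substantial ingredient needed.
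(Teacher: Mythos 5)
Your proposal is correct and takes essentially the same approach as the paper: the paper's one-line proof simply invokes the Lebesgue differentiation theorem from Heinonen--Koskela--Shanmugalingam--Tyson, applicable because $(\mathbb G,d,\phi)$ is a Vitali metric measure space by their Theorem 3.4.3, which is exactly the asymptotic-doubling/Vitali mechanism you identify as the key ingredient. The extra steps you write out (deriving asymptotic doubling from the density bounds, and converting the ratio limit $\phi(B(x,r)\setminus B)/\phi(B(x,r))\to 0$ into $\phi(B(x,r)\setminus B)=o(r^h)$ via the finite upper density) are precisely the details the paper leaves implicit.
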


\begin{proof}
This is a direct consequence of Lebesgue differentiation Theorem of \cite[page 77]{HeinonenKoskelaShanmugalingam}, that can be applied since $(\mathbb G,d,\phi)$ is a Vitali metric measure space due to \cite[Theorem 3.4.3]{HeinonenKoskelaShanmugalingam}.
\end{proof}

We stress that whenever the $h$-lower density of $\phi$ is stricly positve and the $h$-upper density of $\phi$ is finite $\phi$-almost everywhere, the set $\mathrm{Tan}_h(\phi,x)$ is nonempty for $\phi$-almost every $x\in\mathbb G$, see \cite[Proposition 1.12]{MarstrandMattila20}. The following proposition has been proved in \cite[Proposition 1.13]{MarstrandMattila20}.

\begin{proposizione}[Locality of tangents]\label{prop:LocalityOfTangent}
Let $h>0$, and let $\phi$ be a Radon measure such that for $\phi$-almost every $x\in\mathbb G$ we have
$$
0<\Theta^h_*(\phi,x)\leq \Theta^{h,*}(\phi,x)<\infty.
$$
Then for every $\rho\in L^1(\phi)$ that is nonnegative $\phi$-almost everywhere we have
$\mathrm{Tan}_h(\rho\phi,x)=\rho(x)\mathrm{Tan}_h(\phi,x)$ for $\phi$-almost every $x\in\mathbb G$. More precisely the following holds: for $\phi$-almost every $x\in\mathbb G$ then
\begin{equation}
    \begin{split}
        \text{if $r_i\to 0$ is such that}\quad r_i^{-h}T_{x,r_i}\phi \rightharpoonup \nu\quad \text{then}\quad r_i^{-h}T_{x,r_i}(\rho\phi)\rightharpoonup \rho(x)\nu.  \\
    \end{split}
\end{equation}
\end{proposizione}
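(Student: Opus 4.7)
The plan is to test the asserted weak convergence against an arbitrary $f\in C_c(\mathbb G)$ and split off the linear dependence on $\rho$ at the base point $x$. Using the definition of $T_{x,r}$, for every $r>0$ one has
\begin{equation*}
r^{-h}\int f\,dT_{x,r}(\rho\phi)=\rho(x)\cdot r^{-h}\int f\,dT_{x,r}\phi+r^{-h}\int f(\delta_{1/r}(x^{-1}\cdot y))\,(\rho(y)-\rho(x))\,d\phi(y).
\end{equation*}
Along $r=r_i$, the first summand converges to $\rho(x)\int f\,d\nu$ by the hypothesis $r_i^{-h}T_{x,r_i}\phi\rightharpoonup\nu$, so the entire content of the proposition reduces to showing that the remainder vanishes.

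To control the remainder, pick $R>0$ with $\supp f\subseteq B(0,R)$. The integrand is then supported in $y\in B(x,r_iR)$, and
\begin{equation*}
\Big|r_i^{-h}\int f(\delta_{1/r_i}(x^{-1}\cdot y))(\rho(y)-\rho(x))\,d\phi(y)\Big|\leq\|f\|_\infty\cdot r_i^{-h}\int_{B(x,r_iR)}|\rho(y)-\rho(x)|\,d\phi(y).
\end{equation*}
At $\phi$-almost every $x\in\mathbb G$ one has simultaneously (a) $\Theta^{h,*}(\phi,x)<\infty$, hence $\phi(B(x,r_iR))\leq 2R^h\Theta^{h,*}(\phi,x)\,r_i^h$ for all sufficiently large $i$, and (b) $x$ is a Lebesgue point of $\rho$ with respect to $\phi$, i.e.\ $\phi(B(x,s))^{-1}\int_{B(x,s)}|\rho-\rho(x)|\,d\phi\to 0$ as $s\to 0$. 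Property (b) comes from the Lebesgue differentiation theorem in the Vitali metric measure space $(\mathbb G,d,\phi)$, exactly the same ingredient invoked in the proof of \cref{prop:Lebesuge}. Combining (a) and (b) makes the above estimate tend to $0$, which proves the precise implication stated in the proposition.

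The set equality $\mathrm{Tan}_h(\rho\phi,x)=\rho(x)\mathrm{Tan}_h(\phi,x)$ then follows with little extra work. The inclusion $\supseteq$ is a direct reading of the precise implication. For $\subseteq$, given $\mu\in\mathrm{Tan}_h(\rho\phi,x)$ realized along $r_i\to 0$, the uniform bound $r^{-h}\phi(B(x,rR))\leq 2R^h\Theta^{h,*}(\phi,x)$ for small $r$ gives weak precompactness of the family $\{r_i^{-h}T_{x,r_i}\phi\}_i$; up to a subsequence we may assume $r_i^{-h}T_{x,r_i}\phi\rightharpoonup\nu\in\mathrm{Tan}_h(\phi,x)$, and the precise implication then forces $\mu=\rho(x)\nu$.

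The only delicate point in this scheme is bookkeeping: one must restrict to a single $\phi$-full-measure set on which the density bounds, the existence of at least one tangent (guaranteed by the finite upper density), and the Lebesgue point property for $\rho$ all hold simultaneously. Once the Vitali differentiation theorem is secured, the main estimate is the one-line bound displayed above, and I expect no further genuine obstacle.
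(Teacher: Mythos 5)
Your argument is correct and is essentially the same as the one the paper delegates to the cited reference (\cite[Proposition 1.13]{MarstrandMattila20}): split off $\rho(x)$, bound the remainder by the $\phi$-Lebesgue-point property of $\rho$ (valid since $(\mathbb G,d,\phi)$ is a Vitali metric measure space) together with the finite upper $h$-density, and recover the set equality via the weak-$*$ compactness granted by the same density bound. No gaps.
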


Let us introduce a useful split of the support of a Radon measure $\phi$ on $\mathbb G$. 
\begin{definizione}\label{def:EThetaGamma}
Let $\phi$ be a Radon measure on $\mathbb G$ that is supported on the compact set $K$. For any $\vartheta,\gamma\in\N$ we define
\begin{equation}
    E(\vartheta,\gamma):=\big\{x\in K:\vartheta^{-1}r^h\leq \phi(B(x,r))\leq \vartheta r^h\text{ for any }0<r<1/\gamma\big\}.
    \label{eq:A1}
\end{equation}
\end{definizione}

\begin{proposizione}\label{prop:cpt}
For any $\vartheta,\gamma\in\N$, the set $E(\vartheta,\gamma)$ defined in \cref{def:EThetaGamma} is compact.
\end{proposizione}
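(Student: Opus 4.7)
The plan is to show $E(\vartheta,\gamma)$ is closed in $K$; since $K$ is compact, this will give compactness. So I would fix a sequence $\{x_n\}_{n\in\N}\subseteq E(\vartheta,\gamma)$ converging to some $x\in K$, and aim to verify that $x$ itself satisfies the double inequality $\vartheta^{-1}r^h\leq\phi(B(x,r))\leq\vartheta r^h$ for every $0<r<1/\gamma$.

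The key tool will be a squeezing argument via the triangle inequality for the left-invariant distance $d$: if $z\in B(x,r)$ then $d(x_n,z)\leq d(x_n,x)+r$, so $B(x,r)\subseteq B(x_n,r+d(x_n,x))$; conversely, $B(x_n,s)\subseteq B(x,s+d(x_n,x))$. For the upper bound, fix $r\in(0,1/\gamma)$ and pick any $s\in(r,1/\gamma)$. For $n$ sufficiently large, $d(x_n,x)<s-r$, so $B(x,r)\subseteq B(x_n,s)$ and hence
\[
\phi(B(x,r))\leq\phi(B(x_n,s))\leq\vartheta s^h.
\]
Letting $s\downarrow r$ yields $\phi(B(x,r))\leq\vartheta r^h$. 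For the lower bound, fix $r\in(0,1/\gamma)$ and pick any $s\in(0,r)$. For $n$ large, $d(x_n,x)<r-s$, so $B(x_n,s)\subseteq B(x,r)$ and
\[
\phi(B(x,r))\geq\phi(B(x_n,s))\geq\vartheta^{-1}s^h.
\]
Letting $s\uparrow r$ gives $\phi(B(x,r))\geq\vartheta^{-1}r^h$.

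Both bounds having been established for every $r\in(0,1/\gamma)$, we conclude $x\in E(\vartheta,\gamma)$, so this set is closed, and hence compact as a closed subset of the compact set $K$. I do not expect any real obstacle here: the only mildly delicate point is the discontinuity of $r\mapsto\phi(B(x,r))$ (we only get upper/lower semicontinuity of ball measures in general), but this is precisely why one must sandwich via strict inequalities $s>r$ or $s<r$ rather than trying to pass directly to the limit at the radius $r$ itself. The assumption that the density bound in \eqref{eq:A1} holds on the \emph{open} interval $0<r<1/\gamma$ is what makes this squeezing legitimate on both sides.
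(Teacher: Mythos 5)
Your proof is correct, and it is essentially the standard argument: the paper itself only cites \cite[Proposition 1.14]{MarstrandMattila20}, where the same closedness-in-$K$ strategy is used, squeezing the radius from above and below so that the membership of the approximating points $x_n$ in $E(\vartheta,\gamma)$ transfers to the limit point without needing any continuity of $r\mapsto\phi(B(x,r))$. Nothing further is required.
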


\begin{proof}
This is \cite[Proposition 1.14]{MarstrandMattila20}.
\end{proof}

\begin{proposizione}\label{prop::E}
Assume $\phi$ is a Radon measure supported on the compact set $K$ such that $
0<\Theta^h_*(\phi,x)\leq \Theta^{h,*}(\phi,x)<\infty
$ for $\phi$-almost every $x\in\mathbb G$. Then $\phi(\mathbb{G}\setminus \bigcup_{\vartheta,\gamma\in\N} E(\vartheta,\gamma))=0$.
\end{proposizione}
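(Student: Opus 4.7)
The plan is to show that $\phi$-almost every point of the support $K$ lies in some $E(\vartheta,\gamma)$, by using the pointwise bounds on the two densities to produce, for each such point, a choice of integers $(\vartheta,\gamma)$ witnessing its membership in the corresponding set.

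First I would introduce the full-measure set
\[
A:=\bigl\{x\in\mathbb G:\, 0<\Theta^h_*(\phi,x)\le \Theta^{h,*}(\phi,x)<\infty\bigr\},
\]
so that by hypothesis $\phi(\mathbb G\setminus A)=0$, and moreover $\phi(\mathbb G\setminus K)=0$ since $\phi$ is supported on $K$. It thus suffices to prove $A\cap K\subseteq \bigcup_{\vartheta,\gamma\in\N}E(\vartheta,\gamma)$.

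Fix $x\in A\cap K$, set $L:=\Theta^h_*(\phi,x)>0$ and $U:=\Theta^{h,*}(\phi,x)<\infty$, and pick any integer $\vartheta\in\N$ with $\vartheta>\max\{U,\,1/L\}$, so that $\vartheta^{-1}<L$ and $U<\vartheta$ strictly. By definition of liminf, applied with $\varepsilon:=L-\vartheta^{-1}>0$, there exists $\delta_1>0$ such that $\phi(B(x,r))/r^h\ge L-\varepsilon=\vartheta^{-1}$ for every $0<r<\delta_1$. Similarly, from the limsup and $\varepsilon':=\vartheta-U>0$ we obtain $\delta_2>0$ with $\phi(B(x,r))/r^h\le U+\varepsilon'=\vartheta$ for every $0<r<\delta_2$. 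Choosing $\gamma\in\N$ with $1/\gamma<\min\{\delta_1,\delta_2\}$ gives at once $\vartheta^{-1}r^h\le\phi(B(x,r))\le\vartheta r^h$ for all $0<r<1/\gamma$, i.e.\ $x\in E(\vartheta,\gamma)$.

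This produces the inclusion $A\cap K\subseteq\bigcup_{\vartheta,\gamma\in\N}E(\vartheta,\gamma)$, and since $\phi(\mathbb G\setminus(A\cap K))=0$ the conclusion follows. There is no real obstacle here: the only slight care is in choosing $\vartheta$ strictly larger than both $U$ and $1/L$ so that the liminf/limsup bounds can be converted into pointwise inequalities valid on a whole interval $(0,1/\gamma)$, rather than merely along sequences.
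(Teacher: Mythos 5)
Your proof is correct and is essentially the paper's argument: the paper simply phrases it in contrapositive form (a point of $K$ outside every $E(\vartheta,\gamma)$ must have $\Theta^h_*(\phi,\cdot)=0$ or $\Theta^{h,*}(\phi,\cdot)=\infty$), while you argue the direct implication by explicitly choosing $\vartheta$ and $\gamma$ from the density bounds. The two are the same elementary argument, and your choice of $\vartheta>\max\{U,1/L\}$ and $1/\gamma<\min\{\delta_1,\delta_2\}$ is sound.
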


\begin{proof}
Let $w\in K\setminus \bigcup_{\vartheta,\gamma} E(\vartheta,\gamma)$ and note that this implies that either $\Theta^h_*(\phi,x)=0$ or $\Theta^{h,*}(\phi,x)=\infty$. Since $
0<\Theta^h_*(\phi,x)\leq \Theta^{h,*}(\phi,x)<\infty
$ for $\phi$-almost every $x\in\mathbb G$, this concludes the proof.
\end{proof}

We recall here a useful proposition about the structure of Radon measures.
\begin{proposizione}[{\cite[Proposition 1.17 and Corollary 1.18]{MarstrandMattila20}}]\label{prop:MutuallyEthetaGamma}
Let $\phi$ be a Radon measure supported on a compact set on $\mathbb{G}$ such that $0<\Theta^h_*(\phi,x)\leq \Theta^{h,*}(\phi,x)<\infty $ for $\phi$-almost every $x\in \mathbb{G}$. For every $\vartheta,\gamma\in \mathbb N$ we have that $\phi\llcorner E(\vartheta,\gamma)$ is mutually absolutely continuous with respect to $\mathcal{S}^h\llcorner E(\vartheta,\gamma)$.
\end{proposizione}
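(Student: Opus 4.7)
The plan is to deduce the mutual absolute continuity from the two-sided uniform density bound that defines $E(\vartheta,\gamma)$, via the classical comparison principle between a Radon measure and the spherical Hausdorff measure. Fix $\vartheta,\gamma\in\N$ and set $E:=E(\vartheta,\gamma)$. By \cref{def:EThetaGamma} we have $\vartheta^{-1}r^h\leq \phi(B(x,r))\leq \vartheta r^h$ for every $x\in E$ and every $0<r<1/\gamma$, hence in particular
\[
\vartheta^{-1}\leq \Theta^h_*(\phi,x)\leq \Theta^{h,*}(\phi,x)\leq \vartheta,\qquad\text{for every } x\in E.
\]

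Next I would invoke the standard density-to-measure comparison lemmas in the form valid on a separable metric space in which Vitali's covering theorem holds (see for instance the metric-space version of Federer's 2.10.19): for every Borel set $A\subseteq \mathbb G$ and every $\lambda>0$, if $\Theta^{h,*}(\phi,\cdot)\leq \lambda$ on $A$ then $\phi(A)\leq 2^h\lambda\,\mathcal{S}^h(A)$, while if $\Theta^{h,*}(\phi,\cdot)\geq \lambda$ on $A$ then $\phi(A)\geq \lambda\,\mathcal{S}^h(A)$. Applying these to any Borel $A\subseteq E$ with $\lambda=\vartheta$ and $\lambda=\vartheta^{-1}$ respectively yields the sandwich
\[
\vartheta^{-1}\,\mathcal{S}^h\llcorner E \;\leq\; \phi\llcorner E \;\leq\; 2^h\vartheta\,\mathcal{S}^h\llcorner E,
\]
and the mutual absolute continuity of $\phi\llcorner E(\vartheta,\gamma)$ and $\mathcal{S}^h\llcorner E(\vartheta,\gamma)$ follows at once.

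The main obstacle is purely bookkeeping: one must justify the two comparison lemmas in the Carnot setting. The upper bound relies on a $5r$-covering / Vitali differentiation argument for $(\mathbb G,d)$ (with constants depending only on the doubling behaviour of the homogeneous distance), while the lower bound follows directly from the definition of $\mathcal{S}^h$ and the pointwise lower bound on density. Since exactly this Vitali-type machinery on $(\mathbb G,d,\phi)$ has already been used in the proof of \cref{prop:Lebesuge} via \cite{HeinonenKoskelaShanmugalingam}, there is no further analytic difficulty, and the proof reduces to citing the relevant metric-space statements.
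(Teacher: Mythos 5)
Your argument is correct and is essentially the same as the cited proof: the uniform two-sided bound $\vartheta^{-1}r^h\leq\phi(B(x,r))\leq\vartheta r^h$ on $E(\vartheta,\gamma)$ is fed into the standard density-comparison lemmas (Federer 2.10.17/2.10.19-type, exactly the tools this paper itself invokes in the proof of item (v) of \cref{prop::5.2}) to sandwich $\phi\llcorner E(\vartheta,\gamma)$ between positive multiples of $\mathcal{S}^h\llcorner E(\vartheta,\gamma)$. The only caveat is that in a general Carnot group the lower comparison ``$\phi(A)\geq\lambda\,\mathcal{S}^h(A)$'' comes with an extra dimensional covering constant (e.g.\ from the $5r$-covering theorem) unless one uses a finer differentiation theorem, but since only mutual absolute continuity is claimed, the exact constants are immaterial.
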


\subsection{Intrinsic Grassmannian in Carnot groups}

We give the definition of the intrinsic Grassmannian on Carnot groups and introduce the classes of complemented and co-normal homogeneous subgroups.

\begin{definizione}[Intrinsic Grassmanian on Carnot groups]\label{def:Grassmannian}
For any $1\leq h\leq Q$, we define $\G(h)$ to be the family of homogeneous subgroups $\mathbb V$ of $\mathbb{G}$ that have Hausdorff dimension $h$. 

Let us recall that if $\mathbb{V}$ is a homogeneous subgroup of $\mathbb{G}$, any other homogeneous subgroup such that
$$
\mathbb{V}\cdot\mathbb{L}=\mathbb{G}\qquad \text{and}\qquad \mathbb{V}\cap \mathbb{L}=\{0\}.
$$
is said to be a \emph{complement} of $\mathbb{G}$. Finally, we let
\begin{itemize}
    \item[(i)] $\G_c(h)$ to be the subfamily of those $\mathbb V\in\G(h)$ that have a complement and we will refer to $\G_c(h)$ as the $h$-dimensional \emph{complemented} Grassmanian,
    \item[(ii)] $\G_\unlhd(h)$ the subfamily of those $\mathbb V\in\G_c(h)$ having a \textbf{normal} complement and we will refer to $\G_\unlhd(h)$ as the $h$-dimensional \emph{co-normal} Grassmanian.
\end{itemize}
\end{definizione}
Let us introduce the stratification vector of a homogeneous subgroup.
\begin{definizione}[Stratification vector]\label{def:stratification}
Let $h\in\{1,\ldots,Q\}$ and for any $\mathbb{V}\in\G(h)$ we denote with $\mathfrak{s}(\mathbb{V})$ the vector
$$
\mathfrak{s}(\mathbb{V}):=(\dim(V_1\cap\mathbb{V}),\ldots, \dim(V_\kappa\cap \mathbb{V})),
$$
that with abuse of language we call the {\em stratification}, or the {\em stratification vector}, of $\mathbb{V}$. Furthermore, we define
$$
\mathfrak{S}(h):=\{\mathfrak{s}(\mathbb{V})\in\N^\kappa:\mathbb{V}\in \G(h)\}.
$$
We remark that the cardinality of $\mathfrak{S}(h)$ is bounded by $\prod_{i=1}^\kappa (\dim V_i +1)$ for any $h\in\{1,\ldots,Q\}$. We remark that sometimes in the literature what we call stratification vector is referred to as growth vector.
\end{definizione}

\begin{definizione}[$\mathfrak{s}$-\emph{co}-\emph{normal} Grassmannian]
For any $\mathfrak{s}\in\mathfrak{S}(h)$ we let
$$\G_\unlhd^\mathfrak{s}(h):=\{\mathbb{V}\in\G_\unlhd(h):\mathfrak{s}(\mathbb{V})=\mathfrak{s} \},$$
and we will refer to $\G_\unlhd^\mathfrak{s}(h)$ as the $\mathfrak{s}$-\emph{co}-\emph{normal} Grassmannian.
\end{definizione}

We now collect in the following some properties of the Grassmanians introduced above. The simple proofs are omitted and can be found in the Preprint \cite{antonelli2020rectifiable}.

\begin{proposizione}[Compactness of the Grassmannian, {\cite[Proposition 2.7]{antonelli2020rectifiable}}]\label{prop:CompGrassmannian}
For any $1\leq h\leq Q$ the function
$$
d_{\mathbb G}(\mathbb W_1,\mathbb W_2):=d_{H,\mathbb G}(\mathbb W_1\cap B(0,1),\mathbb W_2\cap B(0,1)),
$$
with $\mathbb W_1,\mathbb W_2\in \G(h)$, is a distance on $\G(h)$. Moreover $(\G(h),d_{\mathbb G})$ is a compact metric space. 
\end{proposizione}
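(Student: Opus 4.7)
The plan is to verify the metric axioms for $d_{\mathbb{G}}$ and then to establish sequential compactness of $(\G(h),d_{\mathbb G})$ by reducing to the classical compactness of the Grassmannian of linear subspaces on each stratum $V_i$.

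For the metric axioms, finiteness, positivity, symmetry and the triangle inequality are inherited directly from the Hausdorff distance, after observing that for any $\mathbb W\in \G(h)$ the set $\mathbb{W}\cap B(0,1)$ is nonempty (it contains $0$) and closed in the compact ball $B(0,1)$, hence compact. The only non-trivial point is the identity of indiscernibles: if $d_{\mathbb{G}}(\mathbb{W}_1,\mathbb{W}_2)=0$, then the Hausdorff distance between two compact sets vanishes, so $\mathbb{W}_1\cap B(0,1)=\mathbb{W}_2\cap B(0,1)$. Given $x\in\mathbb{W}_1$, choose $\lambda>0$ small enough so that $\delta_\lambda x\in B(0,1)$; by homogeneity of $\mathbb{W}_1$ one has $\delta_\lambda x\in\mathbb{W}_1\cap B(0,1)=\mathbb{W}_2\cap B(0,1)$, and then $x=\delta_{1/\lambda}(\delta_\lambda x)\in\mathbb{W}_2$ by homogeneity of $\mathbb{W}_2$. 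Reversing roles gives $\mathbb{W}_1=\mathbb{W}_2$.

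For compactness, take a sequence $\{\mathbb{W}_n\}\subseteq \G(h)$ and identify each $\mathbb{W}_n$ with its graded Lie subalgebra $\mathfrak{w}_n=\bigoplus_{i=1}^{\kappa}(\mathfrak{w}_n\cap V_i)$ via \eqref{eq:intr1}. Since $\mathfrak{S}(h)$ is finite, pass to a subsequence along which the stratification vector $\mathfrak{s}(\mathbb{W}_n)$ is constant, say $(s_1,\dots,s_\kappa)$. For each $i$, the subspaces $\mathfrak{w}_n\cap V_i$ lie in the compact classical Grassmannian of $s_i$-planes in $V_i$, so a diagonal extraction yields $\mathfrak{w}_n\cap V_i\to W_i$ for every $i=1,\dots,\kappa$. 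Set $\mathfrak{w}:=W_1\oplus\cdots\oplus W_\kappa$; it is automatically $\delta_\lambda$-invariant because each $W_i\subseteq V_i$ is. Closure under the Lie bracket is by approximation: for $X\in W_i$ and $Y\in W_j$ pick $X_n,Y_n\in\mathfrak{w}_n$ in the corresponding layers with $X_n\to X,\ Y_n\to Y$; then $[X_n,Y_n]\in\mathfrak{w}_n\cap V_{i+j}$ and by continuity of the bracket together with closedness of the Grassmannian limit $W_{i+j}$ we obtain $[X,Y]\in W_{i+j}\subseteq\mathfrak{w}$ (with the convention $W_\ell=\{0\}$ for $\ell>\kappa$). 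The homogeneous dimension of $\mathfrak{w}$ is $\sum_i i\cdot s_i=h$, so $\mathbb{W}:=\exp(\mathfrak{w})\in\G(h)$.

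It remains to show $d_{\mathbb{G}}(\mathbb{W}_n,\mathbb{W})\to 0$. Convergence in each linear Grassmannian on $V_i$ is equivalent to Hausdorff convergence on Euclidean balls; since the Euclidean and homogeneous topologies on $\mathbb{G}\equiv\R^n$ coincide, this transfers to Hausdorff convergence of $\mathbb{W}_n\cap B(0,1)\to\mathbb{W}\cap B(0,1)$ in the homogeneous distance. Concretely, for $x=\sum_i x_i\in \mathbb{W}\cap B(0,1)$ with $x_i\in W_i$, pick $x_n^{(i)}\in\mathfrak{w}_n\cap V_i$ with $x_n^{(i)}\to x_i$, set $y_n:=\sum_i x_n^{(i)}\in\mathbb{W}_n$, and rescale $y_n\mapsto \delta_{\lambda_n}y_n$ with $\lambda_n:=\min(1,1/\lVert y_n\rVert)\to 1$ to land in $B(0,1)$; a finite net argument on the compact set $\mathbb{W}\cap B(0,1)$ upgrades this to uniform approximation. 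The reverse inclusion is obtained by a subsequential extraction together with continuity of the stratum projections $\pi_i$. The main obstacle I expect is precisely this last step, namely converting purely linear convergence of the layers $\mathfrak{w}_n\cap V_i$ into Hausdorff convergence of $\mathbb{W}_n\cap B(0,1)$ with respect to $d$; it is resolved by combining the equivalence of Euclidean and homogeneous topologies with the small dilation above, which corrects the approximants to land exactly in $B(0,1)$ rather than in a slightly larger ball.
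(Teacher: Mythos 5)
Your argument is correct. Note that this paper does not actually reproduce a proof of \cref{prop:CompGrassmannian}: it is explicitly deferred to the Preprint \cite{antonelli2020rectifiable}, so there is nothing here to compare with line by line; on its own terms, your route (metric axioms inherited from the Hausdorff distance plus a dilation argument for identity of indiscernibles; compactness by identifying $\mathbb W_n$ with its graded subalgebra $\bigoplus_i(\mathfrak w_n\cap V_i)$, fixing the stratification vector by finiteness of $\mathfrak S(h)$, and extracting limits layerwise in the classical Grassmannians of the $V_i$) is a natural and complete way to prove the statement, fully compatible with the paper's framework (\eqref{eq:intr1}, the finiteness remark in \cref{def:stratification}, and the identification of Hausdorff and homogeneous dimension). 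Two small points are worth tightening. First, closure of the limit $\mathfrak w$ under the bracket uses $[V_i,V_j]\subseteq V_{i+j}$, i.e.\ that the stratification is a grading; this is standard (it follows from $[V_1,V_i]=V_{i+1}$ and Jacobi) but should be said, since it is what puts $[X_n,Y_n]$ in $\mathfrak w_n\cap V_{i+j}$. Second, the sentence claiming that coincidence of the Euclidean and homogeneous topologies ``transfers'' Hausdorff convergence is too loose as stated: Hausdorff convergence is a uniform statement and intersecting with the ball $B(0,1)$ can create boundary issues; what actually makes it work is exactly the concrete part of your argument, namely uniform continuity of the identity on compact sets, the $\delta_{\lambda_n}$ rescaling (using homogeneity of $\mathbb W_n$) to push approximants back into $B(0,1)$, the finite-net upgrade for one inclusion, and the compactness-plus-$\pi_i$ extraction for the reverse inclusion. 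With those two clarifications the proof stands.
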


\begin{proposizione}[{\cite[Proposition 2.8]{antonelli2020rectifiable}}]\label{prop:htagliato}
There exists a constant $\hbar_\mathbb{G}>0$, depending only on $\mathbb{G}$, such that if $\mathbb{W},\mathbb{V}\in\G(h)$ and $d_{\mathbb{G}}(\mathbb{V},\mathbb{W})\leq \hbar_{\mathbb{G}}$, then
$\mathfrak{s}(\mathbb{V})=\mathfrak{s}(\mathbb{W})$.
\end{proposizione}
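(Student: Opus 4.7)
My plan is to argue by contradiction, leveraging two structural facts: the \emph{finiteness} of $\mathfrak{S}(h)$ and the \emph{compactness} of $(\G(h),d_{\mathbb{G}})$ from \cref{prop:CompGrassmannian}. The whole proof will reduce to the following continuity statement, call it \emph{Lemma A}: \emph{if $\mathbb{V}_n\to\mathbb{V}$ in $\G(h)$ and $\mathfrak{s}(\mathbb{V}_n)=\mathfrak{s}^*$ is constant in $n$, then $\mathfrak{s}(\mathbb{V})=\mathfrak{s}^*$.} Granting Lemma A, the proposition is immediate: if no such $\hbar_{\mathbb{G}}$ existed, one would find sequences $\mathbb{V}_n,\mathbb{W}_n\in\G(h)$ with $d_{\mathbb{G}}(\mathbb{V}_n,\mathbb{W}_n)\to 0$ and $\mathfrak{s}(\mathbb{V}_n)\neq\mathfrak{s}(\mathbb{W}_n)$; by finiteness of $\mathfrak{S}(h)$ I can pass to a subsequence so that both sequences have constant but distinct stratification vectors $\mathfrak{s}^1\neq\mathfrak{s}^2$, then by compactness extract converging subsequences $\mathbb{V}_n\to\mathbb{V}$ and $\mathbb{W}_n\to\mathbb{W}$, and the distance condition forces $\mathbb{V}=\mathbb{W}$. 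Lemma A applied to both sequences would then give $\mathfrak{s}^1=\mathfrak{s}(\mathbb{V})=\mathfrak{s}(\mathbb{W})=\mathfrak{s}^2$, the desired contradiction.

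To prove Lemma A I will build explicit limit vectors inside $\mathfrak{v}:=\mathrm{Lie}(\mathbb{V})$. First I fix an inner product on $\mathfrak{g}$ making the strata $V_1,\dots,V_\kappa$ pairwise orthogonal. Writing $\mathfrak{s}^*=(s_1,\dots,s_\kappa)$ and $m:=\sum_i s_i$, for every $n$ I can choose a basis $\{e_j^{(n)}\}_{j=1}^m$ of $\mathfrak{v}_n:=\mathrm{Lie}(\mathbb{V}_n)$ that is orthonormal in the chosen inner product and such that every $e_j^{(n)}$ lies in a prescribed layer $V_{i_j}$ (with the indices $i_j$ depending only on $\mathfrak{s}^*$); this is possible because $\mathfrak{v}_n=\bigoplus_i(\mathfrak{v}_n\cap V_i)$ with $\dim(\mathfrak{v}_n\cap V_i)=s_i$. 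By compactness of the Euclidean unit sphere in each $V_{i_j}$, a further diagonal extraction produces $e_j^{(n)}\to e_j\in V_{i_j}$ for every $j$, and continuity of the inner product forces $\{e_j\}_{j=1}^m$ to remain orthonormal, in particular linearly independent. \textbf{The linear-independence step is the main obstacle}, and it is exactly the reason I insist on an orthonormal basis adapted to the stratification rather than a naive unit-vector one, which in general could degenerate in the limit.

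The final step is to show each $e_j$ sits in $\mathfrak{v}$ and to close by a dimensional count. Since Euclidean unit vectors in a fixed layer $V_{i_j}$ have uniformly bounded homogeneous norm (by continuity of $\|\cdot\|$ on the Euclidean unit sphere of $V_{i_j}$ and by $\|t v\|=|t|^{1/i_j}\|v\|$ for $v\in V_{i_j}$), there is a $t_0>0$ independent of $n$ and $j$ such that $t\,e_j^{(n)}\in \mathbb{V}_n\cap B(0,1)$ for every $|t|\le t_0$ (in the exponential coordinates used in the paper $\mathbb{V}_n$ coincides with $\mathfrak{v}_n$ as a subset of $\mathbb{R}^n$, so scalar multiples stay in the subgroup). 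The Hausdorff convergence $\mathbb{V}_n\cap B(0,1)\to \mathbb{V}\cap B(0,1)$, together with $t\,e_j^{(n)}\to t\,e_j$, then forces $t\,e_j\in\mathbb{V}\cap B(0,1)$ for all $|t|\le t_0$, and hence $e_j\in\mathfrak{v}$ by $\delta_\lambda$-invariance. Since the $e_j$'s are linearly independent and $e_j\in V_{i_j}$, I obtain $\dim(V_i\cap\mathfrak{v})\ge s_i$ for every $i=1,\dots,\kappa$, and comparing homogeneous dimensions
\[
h=\sum_{i=1}^\kappa i\,\dim(V_i\cap\mathfrak{v})\;\geq\;\sum_{i=1}^\kappa i\,s_i=h
\]
forces equality in every summand, giving $\mathfrak{s}(\mathbb{V})=\mathfrak{s}^*$ and closing the proof.
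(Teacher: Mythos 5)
Your proof is correct, and it follows the route the paper itself relies on for this kind of Grassmannian rigidity (the proof here is deferred to the preprint, but your scheme — contradiction via compactness of $(\G(h),d_{\mathbb G})$ from \cref{prop:CompGrassmannian}, pigeonhole over the finite set $\mathfrak{S}(h)$, passing layer-adapted orthonormal bases to the limit through the Hausdorff convergence of $\mathbb V_n\cap B(0,1)$, and closing with the homogeneous-dimension count — is exactly the style of the analogous arguments in \cref{prop:gimel} and of \eqref{eqn:ClaimTildeAlpha} in \cref{propmeasuS}). The only cosmetic point is that for $t<0$ the identity $\lVert t v\rVert=\lvert t\rvert^{1/i_j}\lVert v\rVert$ tacitly uses symmetry of the homogeneous norm, which is not assumed; restricting to $t\in[0,t_0]$ already gives $e_j\in\mathfrak v$ by $\delta_\lambda$-invariance, so nothing is lost.
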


\begin{proposizione}[{\cite[Proposition 2.9]{antonelli2020rectifiable}}]\label{prop:haar}
    Suppose $\mathbb{V}\in\G(h)$ is a homogeneous subgroup of topological dimension $d$. Then $\mathcal{S}^h\llcorner \mathbb{V}$, $\mathcal{H}^h\llcorner\mathbb{V}$, $\mathcal{C}^h\llcorner\mathbb{V}$ and $\mathcal{H}^d_{\mathrm{eu}}\llcorner \mathbb{V}$ are Haar measures of $\mathbb{V}$. Furthermore, any Haar measure $\lambda$ of $\mathbb{V}$ is $h$-homogeneous in the sense that
    $$\lambda(\delta_r(E))=r^h\lambda(E),\qquad\text{for any Borel set }E\subseteq \mathbb{V}.$$
\end{proposizione}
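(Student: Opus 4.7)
The strategy is to verify, for each of the four candidate measures, the two defining properties of a (left) Haar measure on the locally compact group $\mathbb V$: namely, that it is a non-trivial Radon measure, and that it is invariant under the left translations $\tau_p|_{\mathbb V}$ for every $p\in\mathbb V$. Since $\mathbb V$ is a closed, locally compact subgroup of $\mathbb G$ and left Haar measures are unique up to a positive multiplicative constant, this is enough.

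For the three metric measures $\mathcal{S}^h\llcorner\mathbb V$, $\mathcal{H}^h\llcorner\mathbb V$, $\mathcal{C}^h\llcorner\mathbb V$, left-invariance is immediate: the distance $d$ is left-invariant on $\mathbb G$, hence every $\tau_p$ is a $d$-isometry, and all three measures are defined in purely metric terms. Non-triviality and local finiteness will follow from the fact that $\mathbb V\in\G(h)$ has Hausdorff dimension exactly $h$ with respect to $d$: together with the $h$-homogeneity of the measures under $\{\delta_r\}_{r>0}$ (which for $\mathcal{S}^h,\mathcal{H}^h,\mathcal{C}^h$ is a direct consequence of $d$ being $1$-homogeneous), this forces $0<\mathcal{S}^h(\mathbb V\cap B(0,1))<+\infty$, and similarly for $\mathcal{H}^h$ and $\mathcal{C}^h$, from which local finiteness and positivity on nonempty open subsets of $\mathbb V$ follow by translating and dilating.

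For $\mathcal{H}^d_{\mathrm{eu}}\llcorner\mathbb V$ the argument is slightly different. Since we identify $\mathbb G$ with $\R^n$ via exponential coordinates and $\mathbb V=\exp(\mathfrak h)$ where $\mathfrak h$ is a graded, hence linear, subspace of $\mathfrak g$, the set $\mathbb V$ is a $d$-dimensional linear subspace of $\R^n$. Therefore $\mathcal{H}^d_{\mathrm{eu}}\llcorner\mathbb V$ is (a constant multiple of) the Lebesgue measure on this linear subspace, in particular non-zero and locally finite. For left-invariance, using the block triangular form \eqref{opgr} of the group law one computes that the Euclidean Jacobian of $\tau_p:\mathbb G\to\mathbb G$ at every point equals $1$, because the derivative in the $i$-th layer of $q\mapsto p\cdot q$ is the identity on $V_i$ up to terms involving only lower layers $q_1,\dots,q_{i-1}$. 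The same block-triangular argument applied to the grading $\mathfrak h=\bigoplus_i(V_i\cap\mathfrak h)$ shows that the Jacobian of $\tau_p|_{\mathbb V}:\mathbb V\to\mathbb V$ at every point of $\mathbb V$ equals $1$, so $\tau_p|_{\mathbb V}$ preserves Lebesgue measure on the linear subspace $\mathbb V$, and therefore preserves $\mathcal{H}^d_{\mathrm{eu}}\llcorner\mathbb V$.

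For the final $h$-homogeneity statement, observe that $\delta_r:\mathbb V\to\mathbb V$ is a Lie group automorphism of $\mathbb V$, because $\mathbb V$ is homogeneous. Hence for any Haar measure $\lambda$ of $\mathbb V$, the push-forward $(\delta_r)_*\lambda$, i.e.\ $E\mapsto\lambda(\delta_r^{-1}E)$, is again left-invariant on $\mathbb V$, and by uniqueness of Haar measure there is a constant $c(r)>0$ with $\lambda(\delta_r E)=c(r)\lambda(E)$ for every Borel $E\subseteq\mathbb V$. The function $c$ is clearly multiplicative in $r$, so it is determined by its value on any fixed scale; to identify it we compute on the explicit Haar measure $\mathcal{H}^d_{\mathrm{eu}}\llcorner\mathbb V$, for which the dilation $\delta_r$ acts linearly as the diagonal map $(v_1,\dots,v_\kappa)\mapsto(rv_1,r^2v_2,\dots,r^\kappa v_\kappa)$ on the graded subspace $\mathfrak h$, with Euclidean Jacobian equal to $r^{\sum_i i\dim(V_i\cap\mathfrak h)}=r^h$. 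Thus $c(r)=r^h$, and by uniqueness up to scalar this same constant works for every Haar measure $\lambda$ on $\mathbb V$. The only mildly delicate point in the whole argument is the Jacobian computation for $\tau_p|_{\mathbb V}$, but as indicated it is dictated by the stratification-compatible form of the BCH product recorded in \eqref{opgr}.
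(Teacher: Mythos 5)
There is a genuine gap in the step establishing that $\mathcal{S}^h\llcorner\mathbb V$, $\mathcal{H}^h\llcorner\mathbb V$, $\mathcal{C}^h\llcorner\mathbb V$ are non-trivial and locally finite. You claim that the facts that $\mathbb V$ has Hausdorff dimension exactly $h$ and that these measures are $h$-homogeneous under $\{\delta_r\}_{r>0}$ ``force'' $0<\mathcal{S}^h(\mathbb V\cap B(0,1))<+\infty$. This implication is false in general: having Hausdorff dimension $h$ only controls the measures $\mathcal{H}^s$ for $s\neq h$, and at the critical exponent the value can perfectly well be $0$ or $+\infty$; dilation-invariance does not rule this out (think of the cone $\{\delta_r x: r>0,\ x\in E_0\}$ over a compact set $E_0\subseteq\partial B(0,1)$ of dimension $h-1$ with $\mathcal{H}^{h-1}(E_0)\in\{0,\infty\}$). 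What actually makes the statement true is the group structure, i.e.\ a comparison with the translation-invariant measure you construct in the next paragraph, and that comparison is missing from your argument: a left-invariant Borel measure that is identically $0$ or identically $+\infty$ on open sets is still left-invariant, so invariance plus homogeneity alone cannot produce the needed bounds.

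The fix is to use the ingredients you already have in the $\mathcal{H}^d_{\mathrm{eu}}$ part. Having shown that $\mathcal{H}^d_{\mathrm{eu}}\llcorner\mathbb V$ is left-invariant with unit Jacobian and that $\delta_r$ has Jacobian $r^h$ on $\mathfrak h$, you get the Ahlfors-type identity $\mathcal{H}^d_{\mathrm{eu}}(B(x,r)\cap\mathbb V)=c\,r^h$ with $0<c<\infty$ for all $x\in\mathbb V$, $r>0$ (this is exactly item (i) of \cref{prop:TopDimMetricDim}). From this, a standard covering argument (cover $B(0,1)\cap\mathbb V$ by $\lesssim\delta^{-h}$ balls of radius $\delta$ for the upper bound; bound $\sum_i r_i^h\gtrsim \mathcal{H}^d_{\mathrm{eu}}(B(0,1)\cap\mathbb V)$ for any covering for the lower bound), or equivalently the density theorems \cite[2.10.17(2), 2.10.19(3)]{Federer1996GeometricTheory} already used elsewhere in the paper, yields $0<\mathcal{S}^h(B(0,1)\cap\mathbb V)<\infty$ and the analogous bounds for $\mathcal{H}^h$ and $\mathcal{C}^h$ (compare \cref{rem:Ch1}); combined with the isometry-invariance you correctly observed, this makes all three measures Haar measures of $\mathbb V$. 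The remaining parts of your proposal — the block-triangular unit-Jacobian computation for $\tau_p|_{\mathbb V}$ coming from \eqref{opgr}, and the derivation of $\lambda(\delta_r E)=r^h\lambda(E)$ from uniqueness of Haar measure together with the determinant $r^{\sum_i i\dim(V_i\cap\mathfrak h)}=r^h$ of $\delta_r$ on $\mathfrak h$ — are correct and in line with the intended argument.
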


We now introduce the projections related to a splitting $\mathbb G=\mathbb V\cdot\mathbb L$ of the group.

\begin{definizione}[Projections related to a splitting]\label{def:Projections}
For any $\mathbb V\in \G_c(h)$, if we choose a complement $\mathbb L$, we can find two unique elements $g_{\mathbb V}:=P_\mathbb V g\in \mathbb V$ and $g_{\mathbb L}:=P_{\mathbb L}g\in \mathbb L$ such that
$$
g=P_\mathbb V (g)\cdot P_{\mathbb L}(g)=g_{\mathbb V}\cdot g_{\mathbb L}.
$$
We will refer to $P_{\mathbb V}(g)$ and $P_{\mathbb L}(g)$ as the \emph{splitting projections}, or simply {\em projections}, of $g$ onto $\mathbb V$ and $\mathbb L$, respectively.
\end{definizione}

We recall here below a very useful fact on splitting projections.
\begin{proposizione}\label{prop:InvarianceOfProj}
Let us fix $\mathbb V\in\G_c(h)$ and $\mathbb L$ two complementary homogeneous subgroups of a Carnot group $\mathbb G$. Then, for any $x\in\mathbb{G}$ the map $\Psi:\mathbb{V}\to\mathbb{V}$ defined as $\Psi(z):=P_\mathbb{V}(xz)$ is invertible and it has unitary Jacobian. As a consequence $\mathcal{S}^h(P_{\mathbb V}(\mathcal{E}))=\mathcal{S}^h(P_{\mathbb V}(xP_{\mathbb V}(\mathcal{E})))=\mathcal{S}^h(P_{\mathbb V}(x\mathcal{E}))$ for every $x\in \mathbb G$ and $\mathcal{E}\subseteq \mathbb G$ Borel.
\end{proposizione}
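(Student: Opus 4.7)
The plan is to first verify that $\Psi$ is a bijection by exhibiting its inverse explicitly, then prove the unit-Jacobian property via a block-triangular factorization, and finally deduce the set-measure equalities.

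For invertibility, I would guess and then verify that $\Psi^{-1}(w) = P_\mathbb V(x^{-1}w)$. Indeed, given $w \in \mathbb V$, write $x^{-1}w = v \cdot \ell$ with $v = P_\mathbb V(x^{-1}w) \in \mathbb V$ and $\ell = P_\mathbb L(x^{-1}w) \in \mathbb L$. Then $w = xv \cdot \ell$, so $xv = w\cdot \ell^{-1}$ is itself a $\mathbb V\cdot\mathbb L$ factorization; uniqueness of the splitting forces $P_\mathbb V(xv) = w$, i.e.\ $\Psi(P_\mathbb V(x^{-1}w)) = w$. The opposite composition is symmetric.

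For the unit Jacobian, I would study the global diffeomorphism $\Phi: \mathbb V \times \mathbb L \to \mathbb G$, $\Phi(v,\ell) := v \cdot \ell$. My first sub-claim is that $\det D\Phi$ is constant on $\mathbb V \times \mathbb L$. This follows from the equivariance $\Phi \circ (\delta_\lambda \times \delta_\lambda) = \delta_\lambda \circ \Phi$: differentiating and using \cref{prop:haar} (Haar measures are $h$-, $(Q-h)$-, and $Q$-homogeneous on $\mathbb V$, $\mathbb L$, $\mathbb G$ respectively) to note that $\det D\delta_\lambda|_\mathbb V \cdot \det D\delta_\lambda|_\mathbb L = \lambda^h \cdot \lambda^{Q-h} = \lambda^Q = \det D\delta_\lambda|_\mathbb G$, the dilation factors cancel and yield $\det D\Phi_{\delta_\lambda(v,\ell)} = \det D\Phi_{(v,\ell)}$; letting $\lambda \to 0$ and using continuity forces $\det D\Phi$ to equal its (nonzero) value at the origin. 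Now consider $G := \Phi^{-1} \circ \tau_x \circ \Phi: \mathbb V \times \mathbb L \to \mathbb V \times \mathbb L$. Since $\tau_x$ preserves the Haar measure of $\mathbb G$ and $\Phi$ has constant Jacobian, $G$ has Jacobian identically $1$. A direct unwinding of the definitions gives
$$G(v,\ell) = \bigl(\Psi(v),\; P_\mathbb L(xv) \cdot \ell\bigr),$$
which is block-triangular with respect to the product structure $\mathbb V \times \mathbb L$: the first slot depends only on $v$, and the second slot is left translation in $\ell$ by $P_\mathbb L(xv)$, which has unit Jacobian by left-invariance of Haar on $\mathbb L$. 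The chain rule then forces $|\det D\Psi(v)| = 1$ at every $v \in \mathbb V$.

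For the final set-measure identities, the key observation is the pointwise identity $P_\mathbb V(xe) = \Psi(P_\mathbb V(e))$ for every $e \in \mathbb G$: writing $e = P_\mathbb V(e)\cdot P_\mathbb L(e)$, one has $xe = xP_\mathbb V(e)\cdot P_\mathbb L(e) = \Psi(P_\mathbb V(e)) \cdot \bigl(P_\mathbb L(xP_\mathbb V(e))\cdot P_\mathbb L(e)\bigr)$ and the bracketed factor lies in $\mathbb L$, so by uniqueness of splitting the claim follows. This yields the set equality $P_\mathbb V(x\mathcal E) = \Psi(P_\mathbb V(\mathcal E)) = P_\mathbb V(xP_\mathbb V(\mathcal E))$, and since $\mathcal S^h\llcorner \mathbb V$ is a Haar measure of $\mathbb V$ (again by \cref{prop:haar}) the unit-Jacobian bijection $\Psi$ preserves $\mathcal S^h$, closing the proof. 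I expect the main obstacle to be the constancy of $\det D\Phi$: the dilation-equivariance trick is clean but one has to match the homogeneity degrees of the three Haar measures carefully; once it is in place, the triangular structure of $G$ makes the unit-Jacobian conclusion essentially automatic.
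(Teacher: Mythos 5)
Your proof is correct, and it takes a genuinely different route from the paper: the paper disposes of the invertibility and unit-Jacobian claim by citing the proof of Lemma 2.20 in Franchi--Serapioni and then deduces the measure identities from the single identity $P_\mathbb{V}(xy)=P_\mathbb{V}(xP_\mathbb{V}(y))$, whereas you give a self-contained argument: explicit inverse $\Psi^{-1}(w)=P_\mathbb{V}(x^{-1}w)$, constancy of $\det D\Phi$ for the splitting chart $\Phi(v,\ell)=v\cdot\ell$ via dilation equivariance, and the block-triangular factorization of $G=\Phi^{-1}\circ\tau_x\circ\Phi$ whose second block is a left translation of $\mathbb{L}$; you then re-derive the projection identity $P_\mathbb{V}(xe)=\Psi(P_\mathbb{V}(e))$ yourself, which is exactly the identity the paper invokes. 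What each approach buys: the paper's proof is a two-line reduction to the literature, while yours makes the mechanism visible (measure-preservation of $\Psi$ comes from unimodularity of $\mathbb{G}$ and $\mathbb{L}$ together with the constant Jacobian of the splitting chart) and would survive in a setting where one does not want to unwind the cited lemma. Two small points you should make explicit: (a) the step $\det D\delta_\lambda|_\mathbb{L}=\lambda^{Q-h}$ uses that complementary homogeneous subgroups split each layer, $V_i=(V_i\cap\mathrm{Lie}(\mathbb{V}))\oplus(V_i\cap\mathrm{Lie}(\mathbb{L}))$, so that homogeneous dimensions add up to $Q$ --- true, and essentially the argument appearing in the proof of \cref{prop:ComplGrassmannianOpen}, but worth a line; (b) the nonvanishing of $\det D\Phi$ at the origin is because $D\Phi_{(0,0)}$ is the sum map of the two complementary Lie algebras, which, combined with your constancy argument and bijectivity of $\Phi$, also justifies calling $\Phi$ a global diffeomorphism rather than assuming it.
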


\begin{proof}
The first part is a direct consequence of \cite[Proof of Lemma 2.20]{FranchiSerapioni16}. For the second part it is sufficient to use the first part and the fact that for every $x,y\in\mathbb G$ we have $P_{\mathbb V}(xy)=P_{\mathbb V}(xP_{\mathbb V}y)$.
\end{proof}

The following proposition holds for the distance $d$ induced by the norm introduced in \cref{smoothnorm}. The proof is omitted and can be found in the Preprint \cite{antonelli2020rectifiable}.
\begin{proposizione}[{\cite[Proposition 2.11]{antonelli2020rectifiable}}]\label{prop:TopDimMetricDim}
Let $\mathbb G$ be a Carnot group endowed with the homogeneous norm $\|\cdot\|$ introduced in \cref{smoothnorm}. Let $\mathbb W\in\G(h)$ be a homogeneous subgroup of Hausdorff dimension $h$ and of topological dimension $d$. Then
\begin{enumerate}
\item[(i)] there exists a constant $\newC\label{c:2}:=\oldC{c:2}(\mathfrak{s}(\mathbb W))$ such that for any $p\in \mathbb{W}$ and any $r>0$ we have
\begin{equation}
    \mathcal{H}_{\mathrm{eu}}^{d}\left(B(p,r)\cap \mathbb W\right)=\oldC{c:2}r^{h},
\end{equation}
\item[(ii)] there exists a constant $\beta(\mathbb W)$ such that $    \mathcal{C}^{h}\llcorner \mathbb W =\beta(\mathbb{W})\mathcal{H}_{\mathrm{eu}}^{d}\llcorner \mathbb W$,
\item[(iii)] $\beta(\mathbb{W})=\mathcal{H}^d_{\mathrm{eu}}\llcorner \mathbb{W}(B(0,1))^{-1}$ and in particular $\beta(\mathbb{W})=\beta(\mathfrak{s}(\mathbb{W}))$.
\end{enumerate}
\end{proposizione}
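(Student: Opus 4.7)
The plan is to tackle the three parts in order, since (i) furnishes the explicit constant needed to pin down (ii) and (iii).

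For (i), the first move is to use left invariance of $d$ together with $p\in\mathbb W$, which gives $B(p,r)\cap\mathbb W=p\cdot(B(0,r)\cap\mathbb W)$. The key structural fact is that, by the Baker--Campbell--Hausdorff formula \eqref{opgr}, in exponential coordinates $\tau_p$ is polynomial with a triangular dependence on the layers: its component in layer $i$ depends only on layers of index $\leq i$. Restricted to the linear subspace $\mathbb W=\bigoplus_i(V_i\cap\mathbb W)$, the differential of $\tau_p$ is therefore block upper-triangular with identity diagonal blocks, so $\tau_p|_{\mathbb W}$ preserves $\mathcal H^d_{\mathrm{eu}}\llcorner\mathbb W$. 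Next I write $B(0,r)=\delta_r(B(0,1))$ and use $\delta_r(\mathbb W)=\mathbb W$; since $\delta_r$ acts on $V_i\cap\mathbb W$ as multiplication by $r^i$, the induced Euclidean volume scales by $\prod_i r^{i\dim(V_i\cap\mathbb W)}=r^h$. Finally, for the smooth-box norm of \cref{smoothnorm}, the unit ball satisfies $B(0,1)\cap\mathbb W=\prod_i\{g_i\in V_i\cap\mathbb W:|g_i|\leq\varepsilon_i^{-i}\}$, a product of Euclidean balls in linear subspaces, whose volume depends only on the stratification vector $\mathfrak s(\mathbb W)$; this number is $\oldC{c:2}$.

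For (ii), \cref{prop:haar} tells me that both $\mathcal C^h\llcorner\mathbb W$ and $\mathcal H^d_{\mathrm{eu}}\llcorner\mathbb W$ are Haar measures of the locally compact group $\mathbb W$, so they are proportional by uniqueness of Haar measure; the proportionality scalar is defined to be $\beta(\mathbb W)$. Testing the resulting equality on $\mathbb W\cap B(0,1)$ and invoking (i) reduces the first identity of (iii) to $\mathcal C^h(\mathbb W\cap B(0,1))=1$; the dependence $\beta(\mathbb W)=\beta(\mathfrak s(\mathbb W))$ then follows directly from (i).

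The genuine work is the normalization $\mathcal C^h\llcorner\mathbb W=\nu$ with $\nu:=\oldC{c:2}^{-1}\mathcal H^d_{\mathrm{eu}}\llcorner\mathbb W$, and this is the step I expect to be the main obstacle. By (i), $\nu$ is $h$-uniform in the strong sense $\nu(B(x,r))=r^h$ for every $x\in\mathbb W$ and $r>0$, so I will run the standard two-sided comparison. For the lower bound on $\mathcal C_0^h(E)$ with $E\subseteq\mathbb W$ Borel, any admissible centred cover $\{B(x_j,r_j)\}$ with $x_j\in E$ yields $\sum_j r_j^h=\sum_j\nu(B(x_j,r_j))\geq\nu(E)$ by $\sigma$-subadditivity of $\nu$, whence $\mathcal C_0^h(E)\geq\nu(E)$. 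For the reverse inequality, I exploit the fact that $(\mathbb G,d,\nu)$ is a Vitali metric measure space (the same property invoked in the proof of \cref{prop:Lebesuge}): from a fine family of closed balls of radii $\leq\delta$ centred in $E$, Vitali's theorem extracts a disjoint subfamily covering $\nu$-almost all of $E$ and contained in $B(E,\delta)$, so $\sum_j r_j^h=\sum_j\nu(B_j)\leq\nu(B(E,\delta))\to\nu(E)$ as $\delta\to 0$. The delicate point will be covering the residual $\nu$-null subset of $E$ by an arbitrarily small extra centred family, which is handled by an iteration plus an $\varepsilon$-regularity argument standard in the theory of centred Hausdorff measures; once this is in place, passing from $\mathcal C_0^h$ to $\mathcal C^h$ via the defining supremum gives the identity and completes (iii).
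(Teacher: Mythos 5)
Your proposal is correct and follows the same skeleton as the paper's argument (the proof itself is deferred to the preprint, but \cref{rem:Ch1} records its key step): item (i) via left-invariance, the unit Jacobian of the unipotent translation restricted to $\mathbb W$, and the $r^h$-scaling of dilations, with $B(0,1)\cap\mathbb W$ for the smooth-box norm of \cref{smoothnorm} being a product of Euclidean balls whose volume depends only on $\mathfrak s(\mathbb W)$; item (ii) via uniqueness of Haar measure through \cref{prop:haar}; item (iii) by reducing everything to the normalization $\mathcal C^h(\mathbb W\cap B(0,1))=1$. Where you genuinely diverge is in how you obtain this normalization: you prove by hand that the measure $\nu$ satisfying $\nu(B(x,r))=r^h$ for every $x\in\mathbb W$ and $r>0$ coincides with $\mathcal C^h\llcorner\mathbb W$ through a two-sided covering comparison, instead of invoking the known density theorem for the centered Hausdorff measure; this buys self-containedness at the price of redoing a standard lemma. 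Your argument does go through, but the two points you leave vague should be made precise: the residual $\nu$-null set $N$ left by the Vitali extraction must be covered by balls centred in the set with small total $\sum_k r_k^h$, which you get by placing $N$ inside an open set $U$ with $\nu(U)<\epsilon$ and running a $5r$-covering lemma inside $U$, using once more $\nu(B(x,r))=r^h$ so that $\sum_k (5r_k)^h\le 5^h\nu(U)$; and the estimate $\sum_j r_j^h\le\nu(B(F,\delta))$ converges as $\delta\to 0$ to $\nu(\overline F)$, not to $\nu(F)$, which is harmless because it suffices to prove the upper bound for closed sets (indeed only for the closed set $\mathbb W\cap B(0,1)$), with the identity on all Borel sets then following from Borel regularity of the two Radon measures.
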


\begin{osservazione}\label{rem:Ch1}
We stress here for future references that the proof of item (iii) of \cref{prop:TopDimMetricDim}, see \cite{antonelli2020rectifiable}, follows from the following fact, whose proof is in \cite[Proof of Proposition 2.11]{antonelli2020rectifiable}. It holds that whenever an arbitrary Carnot group $\mathbb G$ is endowed with an arbitrary left-invariant homogeneous distance $d$, then for  every homogeneous subgroup $\mathbb W\subseteq\mathbb G$ of Hausdorff dimension $h$, we have that 
 \begin{equation}
 \mathcal{C}^h(\mathbb W\cap B(0,1))=1.
 \end{equation}
\end{osservazione}

Let $\|\cdot\|$ be a homogeneous norm on $\mathbb G$. A function $\varphi:\mathbb{G}\to\R$ is said to be {\em radially symmetric with respect to $\|\cdot\|$} if there is a function $g:[0,\infty)\to\R$, called \emph{profile function} such that $\varphi(x)=g(\lVert x\rVert)$.

\begin{proposizione}
\label{unif}
Let $\varphi:\mathbb{G}\to\R$ be a radially symmetric function with respect to a homogeneous norm $\|\cdot\|$ on $\mathbb G$, and let $g$ be its profile function. Let $\mathbb V\in \G(h)$. Then the following holds
$$\int \varphi d\mathcal{C}^{h}\llcorner \mathbb{V}=h\int s^{h-1}g(s)ds.$$  
\end{proposizione}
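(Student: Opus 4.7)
The plan is to perform a change of variables via the pushforward of $\mathcal{C}^h\llcorner \mathbb V$ under the homogeneous norm. Let $\mu$ be the Borel measure on $[0,\infty)$ defined by
\[
\mu(A):=\mathcal{C}^h(\{x\in\mathbb V:\|x\|\in A\})\qquad\text{for every Borel }A\subseteq[0,\infty).
\]
The goal is to show $d\mu(s)=h s^{h-1}\,ds$; once this is established, the abstract change of variables for pushforward measures applied to $\varphi=g\circ\|\cdot\|$ yields the claimed identity immediately.

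To identify $\mu$ I would compute it on the intervals $[0,r]$. By the very definition of the norm,
\[
\{x\in\mathbb V:\|x\|\leq r\}=\mathbb V\cap B(0,r)=\delta_r(\mathbb V\cap B(0,1)).
\]
By \cref{rem:Ch1} we have $\mathcal{C}^h(\mathbb V\cap B(0,1))=1$, and by \cref{prop:haar} the measure $\mathcal{C}^h\llcorner\mathbb V$ is Haar and $h$-homogeneous. Therefore
\[
\mu([0,r])=\mathcal{C}^h\bigl(\delta_r(\mathbb V\cap B(0,1))\bigr)=r^h=\int_0^r h s^{h-1}\,ds.
\]
Since the collection $\{[0,r]:r\geq 0\}$ is a $\pi$-system that generates the Borel $\sigma$-algebra of $[0,\infty)$, and both $\mu$ and $h s^{h-1}\mathcal{L}^1\llcorner[0,\infty)$ are $\sigma$-finite and agree on this generating family, they coincide as Borel measures.

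Finally, assuming $\varphi\geq 0$ (the general integrable case follows by the usual decomposition into positive and negative parts), the pushforward formula gives
\[
\int\varphi\,d\mathcal{C}^h\llcorner\mathbb V=\int_{\mathbb V} g(\|x\|)\,d\mathcal{C}^h(x)=\int_0^\infty g(s)\,d\mu(s)=h\int_0^\infty s^{h-1}g(s)\,ds,
\]
which is the desired equality. There is no substantial obstacle here: the argument reduces entirely to the normalization in \cref{rem:Ch1} together with the homogeneity in \cref{prop:haar}, both of which have already been established.
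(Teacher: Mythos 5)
Your argument is correct, and it leans on exactly the same two ingredients as the paper's proof --- the normalization $\mathcal{C}^h(\mathbb V\cap B(0,1))=1$ from \cref{rem:Ch1} and the $h$-homogeneity of Haar measures on $\mathbb V$ from \cref{prop:haar}, which together give $\mathcal{C}^h(\mathbb V\cap B(0,r))=r^h$ --- but it packages them differently. The paper reduces to nonnegative simple functions of the form $\sum_i a_i\chi_{B(0,r_i)}$, computes both sides directly for these, and concludes by Beppo Levi; you instead identify the pushforward $\mu$ of $\mathcal{C}^h\llcorner\mathbb V$ under $\|\cdot\|$ as $hs^{h-1}\,ds$ by checking it on the intervals $[0,r]$ and invoking uniqueness of measures agreeing on a generating $\pi$-system (note the sets $[0,n]$ have finite $\mu$-measure, so the uniqueness theorem does apply), and then apply the abstract change-of-variables formula. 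What your route buys is that it treats an arbitrary nonnegative measurable radial profile in one stroke: the monotone approximation implicit in the paper's reduction uses combinations of ball indicators, which is immediate for layer-cake/decreasing profiles but needs a word (indicators of general level sets, i.e.\ annuli) for a general $g$, whereas your pushforward identification makes no such reduction. What the paper's route buys is that it is entirely elementary, avoiding the Dynkin-type uniqueness theorem and the pushforward formalism. Minor points you could make explicit: $\mathbb V\cap B(0,r)=\delta_r(\mathbb V\cap B(0,1))$ uses that $\mathbb V$ is dilation-invariant, and the Borel measurability of the profile $g$ (needed for the change of variables) follows, e.g., from $g(s)=\varphi(\delta_s x_0)$ for any fixed $x_0$ with $\|x_0\|=1$; neither is a gap.
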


\begin{proof}
It suffices to prove the proposition for positive simple functions, since the general result follows by Beppo Levi's convergence theorem. Thus suppose $\mathbb{V}$ has topological dimension $d$ and let $\varphi(z):=\sum_{i=1}^N a_i \chi_{B(0,r_i)}(z)$ and note that thanks to \cref{rem:Ch1} for any $\mathbb{V}\in\G(h)$ we have that $\mathcal{C}^h\llcorner \mathbb V(B(0,r_i))=r_i^h$, and then
\begin{equation}
\begin{split}
        \int \varphi(z)d\mathcal{C}^{h}\llcorner \mathbb{V}&=\sum_{i=1}^N a_i\mathcal{C}^{h}\llcorner \mathbb{V}(B(0,r_i))=\sum_{i=1}^N a_ir_i^{h}\\
        &=h\sum_{i=1}^N a_i\int_0^{r_i}s^{h-1}ds=h\int\sum_{i=1}^N a_i s^{h-1}\chi_{[0,r_i]}(s)ds
        =h\int s^{h-1}g(s)ds.
        \nonumber
\end{split}
\end{equation}
\end{proof}

Let us conclude this subsection with two Propositions about the projection maps.
\begin{proposizione}[Corollary 2.15 of \cite{FranchiSerapioni16}]\label{prop:distvsproj}If $\mathbb{V}$ and $\mathbb{L}$ are two complementary subgroups, there exists a constant $\newC\label{C:split}(\mathbb{V},\mathbb{L})$ such that for any $g\in\mathbb{G}$ we have
\begin{equation}\label{eqn:SeriuosProj}
\oldC{C:split}(\mathbb{V},\mathbb{L})\lVert P_\mathbb{L}(g)\rVert\leq \dist(g,\mathbb{V})\leq \lVert P_{\mathbb L} (g)\rVert, \qquad\text{for any } g\in\mathbb G.
\end{equation}
In the following, whenever we write $\oldC{C:split}(\mathbb V,\mathbb L)$, we are choosing the supremum of all the constants such that inequality \eqref{eqn:SeriuosProj} is satisfied.
\end{proposizione}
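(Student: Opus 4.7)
The plan is to prove the right-hand inequality by producing an explicit competitor, and then to reduce the left-hand inequality to a homogeneity/compactness argument on the unit sphere.

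For the upper bound, I would write $g = P_\mathbb{V}(g) \cdot P_\mathbb{L}(g)$ and simply test the distance function against the point $P_\mathbb{V}(g)\in\mathbb V$. Using left-invariance of $d$, we get
\[
\dist(g,\mathbb V) \leq d(g, P_\mathbb{V}(g)) = \lVert P_\mathbb{V}(g)^{-1}\cdot g\rVert = \lVert P_\mathbb{L}(g)\rVert,
\]
which is the right inequality in \eqref{eqn:SeriuosProj}.

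For the lower bound, fix an arbitrary $v\in\mathbb V$ and compute $\lVert v^{-1}\cdot g\rVert$. Since $g=P_\mathbb{V}(g)\cdot P_\mathbb{L}(g)$, we have $v^{-1}\cdot g = (v^{-1}\cdot P_\mathbb{V}(g))\cdot P_\mathbb{L}(g)$, and the first factor lies in $\mathbb V$ while the second lies in $\mathbb L$. By the uniqueness of the splitting, $P_\mathbb{L}(v^{-1}\cdot g)=P_\mathbb{L}(g)$. Hence the lower bound will follow at once if we can prove the following uniform claim: there is a constant $c=c(\mathbb V,\mathbb L)>0$ such that
\[
\lVert h\rVert \geq c\, \lVert P_\mathbb{L}(h)\rVert \qquad \text{for every } h\in\mathbb G.
\]
Then taking the infimum over $v\in\mathbb V$ gives $\dist(g,\mathbb V)\geq c\lVert P_\mathbb{L}(g)\rVert$.

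To establish the uniform claim, I would argue by homogeneity and compactness. The projection $P_\mathbb{L}:\mathbb G\to\mathbb L$ is continuous (for instance, it is smooth in exponential coordinates once a splitting is fixed, cf.\ \cite{FranchiSerapioni16}) and commutes with the dilations $\delta_\lambda$, since $\mathbb V$ and $\mathbb L$ are homogeneous and the splitting is unique. Therefore the function
\[
F(h):=\frac{\lVert P_\mathbb{L}(h)\rVert}{\lVert h\rVert},\qquad h\in\mathbb G\setminus\{0\},
\]
is continuous and $\delta_\lambda$-invariant, so it is determined by its restriction to the unit sphere $\{h:\lVert h\rVert=1\}$. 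This sphere is compact (topological sphere with respect to any homogeneous norm), and $F$ is continuous on it, hence $F$ attains a finite maximum $M$. Setting $c:=1/M$ yields the sought inequality. Taking the supremum of all constants for which \eqref{eqn:SeriuosProj} holds makes $\oldC{C:split}(\mathbb V,\mathbb L)$ well-defined, as the statement requires.

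The only delicate point is the continuity of the projection $P_\mathbb{L}$ onto the complement, which is a standard fact from \cite{FranchiSerapioni16}; once this is available the remainder of the proof is immediate from homogeneity plus compactness, and no further subtlety is involved.
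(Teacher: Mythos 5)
Your proof is correct: the upper bound via the competitor $P_\mathbb{V}(g)$, the identity $P_\mathbb{L}(v^{-1}g)=P_\mathbb{L}(g)$ for $v\in\mathbb V$, and the homogeneity-plus-compactness bound $\lVert P_\mathbb{L}(h)\rVert\leq M\lVert h\rVert$ (using continuity of the splitting projections and their commutation with $\delta_\lambda$) fit together without gaps. The paper itself offers no proof, quoting the statement directly as Corollary~2.15 of \cite{FranchiSerapioni16}, and your argument is essentially the standard one given there, so there is nothing further to compare.
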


\begin{proposizione}\label{cor:2.2.19}
For any $\mathbb{V}\in \G_c(h)$ with complement $\mathbb L$ there is a constant $ \newC\label{ProjC}(\mathbb{V},\mathbb L)>0$ such that for any $p\in\mathbb{G}$ and any $r>0$ we have
$$\mathcal{S}^{h}\llcorner \mathbb V\big(P_\mathbb{V}(B(p,r))\big)=\oldC{ProjC}(\mathbb{V},\mathbb L)r^{h}.$$
Furthermore, for any Borel set $A\subseteq \mathbb{G}$ for which $\mathcal{S}^{h}(A)<\infty$, we have
\begin{equation}
    \mathcal{S}^{h}\llcorner \mathbb{V}(P_\mathbb{V}(A))\leq 2\oldC{ProjC}(\mathbb{V},\mathbb L)\mathcal{S}^{h}(A).
    \label{eq:n520}
\end{equation}
\end{proposizione}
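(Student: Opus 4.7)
The plan is to reduce the equality $\mathcal{S}^h \llcorner \mathbb V(P_{\mathbb V}(B(p,r))) = \oldC{ProjC}(\mathbb V,\mathbb L)\, r^h$ to computing a single numerical value at $(p,r)=(0,1)$, by exploiting the two symmetries at our disposal: left invariance of $d$ (handled through \cref{prop:InvarianceOfProj}) and dilation homogeneity (handled through \cref{prop:haar}). The inequality for arbitrary Borel $A$ then falls out of a standard outer-measure subadditivity argument applied to almost optimal ball covers of $A$.

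For the equality, I would first write $B(p,r)=p\cdot B(0,r)$ using left invariance of $d$ and invoke \cref{prop:InvarianceOfProj} to conclude $\mathcal{S}^h(P_{\mathbb V}(B(p,r)))=\mathcal{S}^h(P_{\mathbb V}(B(0,r)))$. Next, since $\delta_r$ is a Lie group automorphism preserving both $\mathbb V$ and $\mathbb L$, it commutes with the splitting projection $P_{\mathbb V}$; combined with $B(0,r)=\delta_r B(0,1)$ and the $h$-homogeneity of the Haar measure $\mathcal{S}^h\llcorner\mathbb V$ given by \cref{prop:haar}, this yields
\[
\mathcal{S}^h(P_{\mathbb V}(B(p,r)))=r^h\,\mathcal{S}^h(P_{\mathbb V}(B(0,1))),
\]
and I define $\oldC{ProjC}(\mathbb V,\mathbb L):=\mathcal{S}^h(P_{\mathbb V}(B(0,1)))$. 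Positivity comes from the inclusion $\mathbb V\cap B(0,1)\subseteq P_{\mathbb V}(B(0,1))$ (points of $\mathbb V$ are fixed by $P_{\mathbb V}$) together with the positivity of the Haar measure of any metric ball in $\mathbb V$. Finiteness comes from the bound $\|P_{\mathbb V}(g)\|\leq 1+\oldC{C:split}^{-1}$ for $g\in B(0,1)$, obtained by writing $P_{\mathbb V}(g)=g\cdot P_{\mathbb L}(g)^{-1}$ and controlling $\|P_{\mathbb L}(g)\|$ through \cref{prop:distvsproj}.

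For the inequality, given a Borel $A\subseteq\mathbb G$ with $\mathcal{S}^h(A)<\infty$ and $\varepsilon>0$, I would use the definition of $\mathcal{S}^h$ to pick, for $\delta$ sufficiently small, a cover $A\subseteq\bigcup_j B(x_j,r_j)$ with $r_j\leq\delta$ and $\sum_j r_j^h\leq\mathcal{S}^h(A)+\varepsilon$. Then $P_{\mathbb V}(A)\subseteq\bigcup_j P_{\mathbb V}(B(x_j,r_j))$, and countable subadditivity of the outer measure $\mathcal{S}^h\llcorner\mathbb V$ combined with the first part gives
\[
\mathcal{S}^h\llcorner\mathbb V(P_{\mathbb V}(A))\leq\sum_j\mathcal{S}^h\llcorner\mathbb V(P_{\mathbb V}(B(x_j,r_j)))=\oldC{ProjC}(\mathbb V,\mathbb L)\sum_j r_j^h.
\]
The factor $2$ in the statement is slack introduced when one wants an honest $\delta$-ball cover of $P_{\mathbb V}(A)$: the sets $P_{\mathbb V}(B(x_j,r_j))$ are in general not balls, so one has to recover each one by balls of radius $\leq\delta$ at the cost of a small multiplicative constant (using $\mathcal{S}^h_\delta\leq\mathcal{S}^h$ together with the first part applied to each $P_{\mathbb V}(B(x_j,r_j))$), which can be arranged to be at most $2$.

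The main, though routine, obstacle is the scale bookkeeping in the second part: one must make sure that the covering of $P_{\mathbb V}(A)$ induced by the ball cover of $A$ is compatible with the $\sup_\delta\inf$ definition of $\mathcal{S}^h$, so that the final estimate is indeed linear in $\mathcal{S}^h(A)$ with explicit constant $2\oldC{ProjC}(\mathbb V,\mathbb L)$. Once the dilation equivariance $P_{\mathbb V}\circ\delta_r=\delta_r\circ P_{\mathbb V}$ and the translation invariance of \cref{prop:InvarianceOfProj} are in place, both parts follow with no further ingredients beyond Haar homogeneity and outer-measure subadditivity.
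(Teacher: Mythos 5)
Your argument is correct, and it is slightly more self-contained than the paper's. For the first part (the identity $\mathcal{S}^h\llcorner\mathbb V(P_{\mathbb V}(B(p,r)))=\oldC{ProjC}(\mathbb V,\mathbb L)r^h$) the paper does not argue at all: it simply quotes \cite[Lemma 2.20]{FranchiSerapioni16} for the existence of the constant. You instead derive it from the translation invariance of \cref{prop:InvarianceOfProj}, the equivariance $P_{\mathbb V}\circ\delta_r=\delta_r\circ P_{\mathbb V}$ (which is immediate from uniqueness of the splitting, since $\delta_r$ is an automorphism preserving both $\mathbb V$ and $\mathbb L$), and the $h$-homogeneity of the Haar measure from \cref{prop:haar}; your positivity check ($\mathbb V\cap B(0,1)\subseteq P_{\mathbb V}(B(0,1))$) and finiteness check (via \cref{prop:distvsproj}, giving $P_{\mathbb V}(B(0,1))\subseteq \mathbb V\cap B(0,1+\oldC{C:split}^{-1})$) are both sound. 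For the second part your chain is exactly the paper's: cover $A$ by balls, use monotonicity and countable subadditivity of the outer measure $\mathcal{S}^h$ together with the first part. The only quibble is your closing paragraph: the worry about producing an ``honest $\delta$-ball cover of $P_{\mathbb V}(A)$'' is spurious, because $\mathcal{S}^h$ is an outer measure defined on all subsets and countably subadditive there, so no re-covering of the sets $P_{\mathbb V}(B(x_j,r_j))$ is needed and no extra multiplicative loss occurs. In fact your own displayed estimate already gives $\mathcal{S}^h\llcorner\mathbb V(P_{\mathbb V}(A))\leq \oldC{ProjC}(\mathbb V,\mathbb L)(\mathcal{S}^h(A)+\varepsilon)$, which upon letting $\varepsilon\to0$ is stronger than \eqref{eq:n520}; in the paper the factor $2$ is nothing more than the slack coming from choosing, once and for all, a cover of $A$ with $\sum_i r_i^h\leq 2\mathcal{S}^h(A)$ so as to avoid the $\varepsilon$-argument.
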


\begin{proof}
The existence of such $\oldC{ProjC}(\mathbb V,\mathbb L)$ is yielded by \cite[Lemma 2.20]{FranchiSerapioni16}. 
Suppose $\{B(x_i,r_i)\}_{i\in\N}$ is a countable covering of $A$ with closed balls for which $\sum_{i\in\N} r_i^{h}\leq 2\mathcal{S}^{h}(A)$. Then
\begin{equation}
    \mathcal{S}^{h}(P_\mathbb{V}(A))\leq \mathcal{S}^{h}\Big(P_\mathbb{V}\Big(\bigcup_{i\in\N}B(x_i,r_i)\Big)\Big)\leq \oldC{ProjC}(\mathbb{V},\mathbb L)\sum_{i\in\N} r_i^{h}\leq 2 \oldC{ProjC}(\mathbb{V},\mathbb L)\mathcal{S}^{h}(A).
    \nonumber
\end{equation}
\end{proof}

\subsection{Cones over homogeneous subgroups and cylinder with co-normal axis}\label{sub:Cones}
In this subsection, we introduce the intrinsic cone $C_{\mathbb W}(\alpha)$ and the notion of $C_{\mathbb W}(\alpha)$-set, and prove some of their properties. In this subsection $\mathbb G$ will be a fixed Carnot group endowed with an arbitrary homogeneous norm $\|\cdot\|$ that induces a left-invariant homogeneous distance $d$.

\begin{definizione}[Intrinsic cone]\label{def:Cone}
For any  $\alpha>0$ and $\mathbb W\in \G(h)$, we define the cone $C_{\mathbb W}(\alpha)$ as
$$C_\mathbb W(\alpha):=\{w\in\mathbb{G}:\dist(w,\mathbb W)\leq \alpha\|w\|\}.$$
\end{definizione}

\begin{definizione}[$C_{\mathbb W}(\alpha)$-set]\label{def:CvAset}
Given $\mathbb W\in \G(h)$, and $\alpha>0$, we say that a set $E\subseteq \mathbb G$ is a {\em $C_{\mathbb W}(\alpha)$-set} if
$$
E\subseteq p\cdot C_{\mathbb W}(\alpha), \qquad  \text{for any } p\in E. 
$$
\end{definizione}

The following two Lemmata can be found in the Preprint \cite{antonelli2020rectifiable}. The proof of the first is written here for the reader's convenience, since it will be evoked later on, while the proof of the second is omitted.
\begin{lemma}[{\cite[Lemma 2.15]{antonelli2020rectifiable}}]\label{lem:DistanceOfCones}
For any $\mathbb W_1,\mathbb W_2\in \G(h)$, $\varepsilon>0$ and $\alpha>0$ if $d_\mathbb{G}(\mathbb W_1,\mathbb W_2)<\varepsilon/4$, then
$$C_{\mathbb W_1}(\alpha) \subseteq C_{\mathbb W_2}(\alpha+\varepsilon).$$
\end{lemma}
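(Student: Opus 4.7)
The plan is to argue by contradiction-free direct inclusion: fix $w\in C_{\mathbb W_1}(\alpha)$ and produce an explicit element $v_2\in\mathbb W_2$ witnessing $\dist(w,\mathbb W_2)\leq(\alpha+\varepsilon)\|w\|$. First I would dispose of the trivial regime $\alpha\geq 1$: since $0\in\mathbb W_2$, every $w$ satisfies $\dist(w,\mathbb W_2)\leq\|w\|\leq(\alpha+\varepsilon)\|w\|$, so $C_{\mathbb W_2}(\alpha+\varepsilon)=\mathbb G$ and the inclusion is automatic. From now on assume $\alpha<1$ and $w\neq 0$ (the case $w=0$ is trivial). For any $\eta>0$ small enough, the definition of $\dist(w,\mathbb W_1)$ gives some $v_1\in\mathbb W_1$ with $d(w,v_1)\leq\alpha\|w\|+\eta$. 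Since $\alpha<1$, choosing $\eta<(1-\alpha)\|w\|$ forces $v_1\neq 0$ (otherwise $\|w\|=d(w,v_1)\leq\alpha\|w\|+\eta$, contradiction).

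The key technical move is to transfer the Hausdorff estimate on the unit balls into an estimate at the scale of $v_1$, using dilations. Since $\mathbb W_1$ is homogeneous, $\delta_{1/\|v_1\|}(v_1)\in\mathbb W_1\cap\partial B(0,1)\subseteq\mathbb W_1\cap B(0,1)$. The hypothesis $d_{\mathbb G}(\mathbb W_1,\mathbb W_2)<\varepsilon/4$ gives some $u_2\in\mathbb W_2\cap B(0,1)$ with $d(\delta_{1/\|v_1\|}(v_1),u_2)<\varepsilon/4$. Setting $v_2:=\delta_{\|v_1\|}(u_2)\in\mathbb W_2$ and using the $1$-homogeneity of $d$, one gets
\begin{equation*}
d(v_1,v_2)=\|v_1\|\cdot d(\delta_{1/\|v_1\|}(v_1),u_2)<\|v_1\|\,\varepsilon/4.
\end{equation*}

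It remains to control $\|v_1\|$ in terms of $\|w\|$, and this is where the factor $4$ in the statement becomes crucial. By the triangle inequality $\|v_1\|\leq\|w\|+d(w,v_1)\leq(1+\alpha)\|w\|+\eta<2\|w\|+\eta$, using $\alpha<1$. Combining via the triangle inequality,
\begin{equation*}
d(w,v_2)\leq d(w,v_1)+d(v_1,v_2)<\alpha\|w\|+\eta+(2\|w\|+\eta)\varepsilon/4=\bigl(\alpha+\varepsilon/2\bigr)\|w\|+\eta(1+\varepsilon/4),
\end{equation*}
and letting $\eta\downarrow 0$ yields $\dist(w,\mathbb W_2)\leq(\alpha+\varepsilon/2)\|w\|<(\alpha+\varepsilon)\|w\|$, so $w\in C_{\mathbb W_2}(\alpha+\varepsilon)$. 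The only delicate point is keeping track of the approximating $\eta$ (since the infimum defining $\dist$ may not be attained) and checking that $v_1$ can be taken non-zero so that the rescaling by $1/\|v_1\|$ makes sense; both issues are resolved by the reduction to $\alpha<1$.
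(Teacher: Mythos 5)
Your proof is correct and follows essentially the same route as the paper's: approximate $w$ by a (near-)closest point $v_1\in\mathbb W_1$, bound $\|v_1\|\leq 2\|w\|$ (up to $\eta$), transfer the unit-ball Hausdorff estimate to the scale of $v_1$ by dilation, and conclude with the triangle inequality. The only differences are cosmetic — you use $\eta$-approximate minimizers and normalize $v_1$ to the unit sphere rather than, as the paper does, taking an exact nearest point and rescaling by $1/\|z\|$ so as to compare the sets inside $B(0,4)$ — and your bookkeeping even yields the slightly sharper cone aperture $\alpha+\varepsilon/2$.
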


\begin{proof}
We prove that any $z\in C_{\mathbb W_1}(\alpha)$ is contained in the cone $C_{\mathbb W_2}(\alpha+\varepsilon)$. Thanks to the triangle inequality we infer
$$
\text{dist}(z,\mathbb W_2) \leq d(z,b)+\inf_{w\in\mathbb W_2} d (b,w), \qquad \text{for any } b\in\mathbb W_1.
$$
Thus, choosing $b^*\in\mathbb W_1$ in such a way that $d(z,b^*)=\text{dist}(z,\mathbb W_1)$, and evaluating the previous inequality at $b^*$ we get 
\begin{equation}\label{eqn:Estimate1}
\dist(z,\mathbb W_2) \leq \dist(z,\mathbb W_1) +\text{dist}(b^*,\mathbb W_2) \leq \alpha\|z\| + \dist(b^*,\mathbb W_2), 
\end{equation}
where in the second inequality we used $z\in C_{\mathbb W_1}(\alpha)$. 

Let us notice that, given $\mathbb W$ an arbitrary homogeneous subgroup of $\mathbb G$, $p\in \mathbb G$ an arbitrary point such that $p^*\in\mathbb W$ is one of the points at minimum distance from $\mathbb W$ to $p$, then the following inequality holds
\begin{equation}\label{eqn:Estimate2}
\|p^*\|\leq 2\|p\|.
\end{equation}
Indeed,
$$
\|p^*\|-\|p\| \leq \|(p^*)^{-1}\cdot p\| = d(p,\mathbb W)\leq \|p\| \Rightarrow \|p^*\|\leq 2\|p\|.
$$

Now, by homogeneity, since $b^*\in\mathbb W_1$ is the point at minimum distance from $\mathbb W_1$ of $z$, we get that $D_{1/\|z\|}(b^*)$ is the point at minimum distance from $\mathbb W_1$ of $D_{1/\|z\|}(z)$. Thus, since $\|D_{1/\|z\|}(z)\|=1$, from \eqref{eqn:Estimate2} we get that $\|D_{1/\|z\|}(b^*)\|\leq 2$. Finally we obtain
\begin{equation}\label{eqn:Estimate3}
\begin{split}
\text{dist}(b^*,\mathbb W_2)&=\|z\|\text{dist}\big(D_{1/\|z\|}(b^*),\mathbb W_2\big)=\|z\|\text{dist}\big(D_{1/\|z\|}(b^*),\mathbb W_2\cap B(0,4)\big) \leq \\
&\leq \|z\|d_H(\mathbb W_1\cap B(0,4),\mathbb W_2\cap B(0,4)) \\ &=4\|z\|d_H(\mathbb W_1\cap B(0,1),\mathbb W_2\cap B(0,1)) < \varepsilon \|z\|, 
\end{split}
\end{equation}
where the first equality follows from the homogeneity of the distance, and the second is a consequence of the fact that $\|D_{1/\|z\|}(b^*)\|\leq 2$, and thus, from \eqref{eqn:Estimate2}, the point at minimum distance of $D_{1/\|z\|}(b^*)$ from $\mathbb W_2$ has norm bounded above by 4; the third inequality comes from the definition of Hausdorff distance, the fourth equality is true by homogeneity and the last inequality comes from the hypothesis $d_{\mathbb G}(\mathbb W_1,\mathbb W_2)<\varepsilon/4$. Joining \eqref{eqn:Estimate1}, and \eqref{eqn:Estimate3} we get $z\in C_{\mathbb W_2}(\alpha+\varepsilon)$, that was what we wanted. 
\end{proof}

\begin{lemma}[{\cite[Lemma 2.16]{antonelli2020rectifiable}}]\label{lemma:LCapCw=e}
Let $\mathbb V\in \G_c(h)$, and let $\mathbb L$  be a complementary subgroup of $\mathbb V$. There exists $\newep\label{ep:Cool}:=\oldep{ep:Cool}(\mathbb V,\mathbb L)>0$ such that 
$$
\mathbb L\cap C_{\mathbb V}(\oldep{ep:Cool})=\{0\}.
$$
Moreover we can, and will, choose $\oldep{ep:Cool}(\mathbb V,\mathbb L):=\oldC{C:split}(\mathbb V,\mathbb L)/2$.
\end{lemma}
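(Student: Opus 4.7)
The plan is to exploit \cref{prop:distvsproj} directly. Suppose $x\in \mathbb L\cap C_{\mathbb V}(\varepsilon)$ is arbitrary. Since $x\in\mathbb L$, the unique decomposition $x=P_{\mathbb V}(x)\cdot P_{\mathbb L}(x)$ forces $P_{\mathbb V}(x)=0$ and $P_{\mathbb L}(x)=x$, so in particular $\|P_{\mathbb L}(x)\|=\|x\|$. Plugging this into the left-hand inequality of \cref{prop:distvsproj} yields
\[
\oldC{C:split}(\mathbb V,\mathbb L)\,\|x\|=\oldC{C:split}(\mathbb V,\mathbb L)\,\|P_{\mathbb L}(x)\|\leq \dist(x,\mathbb V).
\]

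On the other hand, the hypothesis $x\in C_{\mathbb V}(\varepsilon)$ means by definition $\dist(x,\mathbb V)\leq \varepsilon\|x\|$. Combining the two chained inequalities gives
\[
\oldC{C:split}(\mathbb V,\mathbb L)\,\|x\|\leq \varepsilon\,\|x\|.
\]
If we fix $\varepsilon:=\oldC{C:split}(\mathbb V,\mathbb L)/2$, this reads $\oldC{C:split}(\mathbb V,\mathbb L)\|x\|\leq \tfrac12\oldC{C:split}(\mathbb V,\mathbb L)\|x\|$, which, since $\oldC{C:split}(\mathbb V,\mathbb L)>0$, is possible only when $\|x\|=0$, i.e.\ $x=0$.

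No real obstacle is expected here: the lemma is essentially an unpacking of the splitting projection estimate from \cref{prop:distvsproj}, once one observes that on $\mathbb L$ the projection $P_{\mathbb L}$ is the identity, so that $\dist(\cdot,\mathbb V)$ is comparable to the norm itself on $\mathbb L$, making $\mathbb L$ transverse to every sufficiently narrow $\mathbb V$-cone. The only mild point to mention is that the conclusion trivially holds for $x=0$, so the dichotomy $\|x\|=0$ versus the contradiction above closes the argument and justifies the explicit choice $\oldep{ep:Cool}(\mathbb V,\mathbb L):=\oldC{C:split}(\mathbb V,\mathbb L)/2$.
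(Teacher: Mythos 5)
Your argument is correct and is essentially the intended one: the paper omits the proof (deferring to the Preprint), but the explicit choice $\oldep{ep:Cool}(\mathbb V,\mathbb L)=\oldC{C:split}(\mathbb V,\mathbb L)/2$ signals exactly the unpacking of \cref{prop:distvsproj} that you carry out, using that $P_{\mathbb L}$ is the identity on $\mathbb L$ so that $\oldC{C:split}\|x\|\leq\dist(x,\mathbb V)\leq\tfrac12\oldC{C:split}\|x\|$ forces $x=0$. No gaps; this matches the paper's approach (compare also \cref{rem:alphaandepsilon1}, which is the converse of the same computation).
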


\begin{osservazione}\label{rem:alphaandepsilon1}
Let $\mathbb V\in \G_c(h)$ and let $\mathbb L$ be a complement of $\mathbb V$. Let us notice that if there exists $\alpha>0$ such that $\mathbb L\cap C_{\mathbb V}(\alpha)=\{0\}$, then $\oldC{C:split}(\mathbb V,\mathbb L)\geq \alpha$. Ineed it is enough to prove that $\alpha\|P_{\mathbb L}(g)\|\leq \dist(g,\mathbb V)$ for every $g\in\mathbb G$. If $g\in\mathbb V$ the latter in equality is trivially verified. Hence suppose by contradiction that there exists $g\notin\mathbb V$ such that $\alpha\|P_{\mathbb L}(g)\|>\dist(g,\mathbb V)$. Since $\dist(g,\mathbb V)=\dist(P_{\mathbb L}(g),\mathbb V)$ we conclude that $P_{\mathbb L}(g)\in\mathbb L\cap C_{\mathbb V}(\alpha)=\{0\}$, that is a contradiction since $g\notin \mathbb V$.
\end{osservazione}

We will not use the following proposition in the paper, but it is worth mentioning it.

\begin{proposizione}\label{prop:ComplGrassmannianOpen}
The family of the complemented subgroups $\G_c(h)$ is an open subset of $\G(h)$. 
\end{proposizione}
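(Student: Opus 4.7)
The plan is to fix $\mathbb{V} \in \G_c(h)$ with some complement $\mathbb{L}$ and to show that the same $\mathbb{L}$ serves as a complement for every $\mathbb{W} \in \G(h)$ that is sufficiently close to $\mathbb{V}$ in the Hausdorff metric $d_{\mathbb{G}}$ of \cref{prop:CompGrassmannian}. Invoking \cref{lemma:LCapCw=e}, I would pick $\varepsilon = \varepsilon(\mathbb{V},\mathbb{L})>0$ with $\mathbb{L}\cap C_{\mathbb{V}}(\varepsilon)=\{0\}$, and then restrict to $\mathbb{W}$ satisfying $d_{\mathbb{G}}(\mathbb{V},\mathbb{W})<\delta$ for $\delta:=\min\{\varepsilon/2,\hbar_{\mathbb{G}}\}$, with $\hbar_{\mathbb{G}}$ as in \cref{prop:htagliato}.

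The first step is to check that $\mathbb{W}\cap\mathbb{L}=\{0\}$. For this, given $w\in\mathbb{W}\setminus\{0\}$, the dilated point $\delta_{1/\|w\|}(w)$ lies in $\mathbb{W}\cap B(0,1)$, so by the very definition of $d_{\mathbb{G}}$ its distance from $\mathbb{V}\cap B(0,1)$ is at most $d_{\mathbb{G}}(\mathbb{V},\mathbb{W})$; rescaling by $\|w\|$ and using dilation-invariance of $\mathbb{V}$ gives $\dist(w,\mathbb{V})\leq d_{\mathbb{G}}(\mathbb{V},\mathbb{W})\|w\|<\varepsilon\|w\|$, hence $\mathbb{W}\subseteq C_{\mathbb{V}}(\varepsilon)$ and thus $\mathbb{W}\cap\mathbb{L}\subseteq \mathbb{L}\cap C_{\mathbb{V}}(\varepsilon)=\{0\}$. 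The second step is to produce the splitting $\mathbb{G}=\mathbb{W}\cdot\mathbb{L}$. By \cref{prop:htagliato} one has $\mathfrak{s}(\mathbb{W})=\mathfrak{s}(\mathbb{V})$, so $\mathbb{W}$ and $\mathbb{V}$ share the same topological dimension and consequently $\dim\mathbb{W}+\dim\mathbb{L}=\dim\mathbb{G}$. Since $\exp$ is a global diffeomorphism on a Carnot group and $\mathbb{W}\cap\mathbb{L}=\{0\}$, the corresponding homogeneous Lie subalgebras satisfy $\mathfrak{w}\cap\mathfrak{l}=\{0\}$, and by dimension count $\mathfrak{g}=\mathfrak{w}\oplus\mathfrak{l}$. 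Consequently the differential at the identity of the product map $m:\mathbb{W}\times\mathbb{L}\to\mathbb{G}$, $(w,\ell)\mapsto w\cdot\ell$, which by Baker--Campbell--Hausdorff reduces to the sum $(X,Y)\mapsto X+Y$, is a linear isomorphism, and the inverse function theorem furnishes an open neighborhood $U$ of the identity contained in $m(\mathbb{W}\times\mathbb{L})$.

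The final step, and in my view the main obstacle, is upgrading this local surjectivity of $m$ near the identity to global surjectivity onto $\mathbb{G}$; this is precisely where the Carnot structure enters in an essential way. Given an arbitrary $g\in\mathbb{G}$, I would choose $t>0$ small enough that $\delta_t(g)\in U$, write $\delta_t(g)=w\cdot\ell$ with $w\in\mathbb{W}$, $\ell\in\mathbb{L}$, and then exploit that each $\delta_\lambda$ is a group automorphism leaving $\mathbb{W}$ and $\mathbb{L}$ invariant by homogeneity: applying $\delta_{1/t}$ yields $g=\delta_{1/t}(w)\cdot\delta_{1/t}(\ell)\in\mathbb{W}\cdot\mathbb{L}$, completing the argument. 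Uniqueness of the decomposition is automatic from $\mathbb{W}\cap\mathbb{L}=\{0\}$, so $\mathbb{L}$ is indeed a complement of $\mathbb{W}$ and $\mathbb{W}\in\G_c(h)$.
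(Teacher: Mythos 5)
Your proof is correct, and it departs from the paper's argument exactly at the step you single out as the main obstacle. The first half is essentially the same as in the paper: you obtain $\mathbb W\subseteq C_{\mathbb V}(\varepsilon)$ directly from the definition of $d_{\mathbb G}$ in \cref{prop:CompGrassmannian} together with homogeneity (the paper routes this through \cref{lem:DistanceOfCones}, at the harmless cost of a factor $4$), and both arguments then conclude $\mathbb W\cap\mathbb L=\{0\}$ via \cref{lemma:LCapCw=e} and invoke \cref{prop:htagliato} to match stratifications. For the surjectivity $\mathbb W\cdot\mathbb L=\mathbb G$ the two routes genuinely differ: the paper stays algebraic, using equality of stratification vectors and the Grassmann formula to get $(\mathbb W\cap V_i)+(\mathbb L\cap V_i)=V_i$ layer by layer, and then the triangular form \eqref{opgr} of the product to decompose an arbitrary element (citing \cite[Lemma 2.7]{JNGV20} as an alternative); you instead pass to the Lie algebras, deduce $\mathfrak g=\mathfrak w\oplus\mathfrak l$ by a dimension count, get local surjectivity of the multiplication map $(w,\ell)\mapsto w\cdot\ell$ from the inverse function theorem, and globalize by conjugating with dilations, which are group automorphisms preserving both homogeneous subgroups. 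Both are sound; the paper's proof is self-contained within the graded linear algebra it has already set up, while yours trades the layer-by-layer bookkeeping for a soft local-to-global argument that works verbatim for any pair of homogeneous subgroups with trivially intersecting, spanning Lie algebras. The only point you leave implicit is the identity $\dim\mathbb V+\dim\mathbb L=\dim\mathbb G$ for the original complementary pair, which your dimension count uses; this is standard (the product map $\mathbb V\times\mathbb L\to\mathbb G$ is a continuous bijection, or one uses the layer-wise splitting of a complemented homogeneous subgroup), but it deserves a sentence.
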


\begin{proof}
Fix a $\mathbb W\in \G_c(h)$ and let $\mathbb{L}$ be one complementary subgroup of $\mathbb W$ and set $\varepsilon<\min\{\oldep{ep:Cool}(\mathbb{V},\mathbb{L}),\hbar_{\mathbb G}\}$. Then, if $\mathbb{W}^\prime\in\G(h)$ is such that $d_\mathbb{G}(\mathbb{W},\mathbb{W}^\prime)<\varepsilon/4$, \cref{lem:DistanceOfCones} implies that $\mathbb{W}^\prime\subseteq C_{\mathbb{W}}(\varepsilon)$ and in particular $$\mathbb{L}\cap\mathbb{W}^\prime\subseteq \mathbb{L}\cap C_{\mathbb{W}}(\varepsilon)=\{0\}.$$
Moreover, since $\varepsilon<\hbar_\mathbb{G}$ from \cref{prop:htagliato}, we get that $\mathbb W'$ has the same stratification of $\mathbb W$ and thus the same topological dimension. This, jointly with the previous equality and the Grassmann formula, means that $(\mathbb{W}'\cap V_i)+(\mathbb{L}\cap V_i)=V_i$ for every $i=1,\dots,\kappa$. This, jointly with the fact that $\mathbb L\cap\mathbb W'=\{0\}$, implies that $\mathbb L$ and $\mathbb W'$ are complementary subgroups in $\mathbb G$ due to the triangular structure of the product $\cdot$ on $\mathbb G$, see \eqref{opgr}. For an alternative proof of the fact that $\mathbb L$ and $\mathbb W'$ are complementary subgroups,  see also \cite[Lemma 2.7]{JNGV20}.
\end{proof}

We now prove a Proposition that will be useful in the next Section.
\begin{proposizione}\label{injcompl}
Let $\mathbb{W}\in \G_c(h)$ and assume $\mathbb{L}$ is one of the complementary subgroups of $\mathbb W$. Any other subgroup $\mathbb{V}\in \G(h)$ on which $P_\mathbb{W}$ is injective and satisfying the identity $\mathfrak{s}(\mathbb{V})=\mathfrak{s}(\mathbb{W})$ is contained in  $\G_c(h)$ and admits $\mathbb L$ as a complement.
\end{proposizione}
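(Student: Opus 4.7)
The plan is to verify the two defining properties of a complement: $\mathbb V\cap\mathbb L=\{0\}$ and $\mathbb V\cdot\mathbb L=\mathbb G$. The first is an immediate consequence of injectivity of the projection, while the second will follow from a dimension count combined with the triangular structure of the BCH product (the same mechanism already exploited in the proof of \cref{prop:ComplGrassmannianOpen}).

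First, I would show that $\mathbb V\cap\mathbb L=\{0\}$. Pick $v\in\mathbb V\cap\mathbb L$. Since $v=0\cdot v$ with $0\in\mathbb W$ and $v\in\mathbb L$, the uniqueness of the splitting $\mathbb G=\mathbb W\cdot\mathbb L$ forces $P_{\mathbb W}(v)=0$. But $P_{\mathbb W}(0)=0$ as well and $0,v\in\mathbb V$, so the injectivity hypothesis on $P_{\mathbb W}|_{\mathbb V}$ yields $v=0$.

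Next I would check that $\mathbb V$ and $\mathbb L$ are transversal layerwise in the first Carnot layers. Because $\mathbb W,\mathbb L\in\G(h)$ are complementary homogeneous subgroups, the gradation \eqref{eq:intr1} together with $\mathbb W\cap\mathbb L=\{0\}$ gives $V_i=(\mathbb W\cap V_i)\oplus(\mathbb L\cap V_i)$ as vector spaces, hence $\dim V_i=\dim(\mathbb W\cap V_i)+\dim(\mathbb L\cap V_i)$ for every $i=1,\dots,\kappa$. The assumption $\mathfrak{s}(\mathbb V)=\mathfrak{s}(\mathbb W)$ replaces the $\mathbb W$-terms by $\mathbb V$-terms: $\dim V_i=\dim(\mathbb V\cap V_i)+\dim(\mathbb L\cap V_i)$. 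Combined with the first step (which in particular gives $(\mathbb V\cap V_i)\cap(\mathbb L\cap V_i)=\{0\}$), this forces $V_i=(\mathbb V\cap V_i)\oplus(\mathbb L\cap V_i)$ as vector subspaces of $\mathfrak g$ for every $i$.

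Finally, I would conclude that $\mathbb V\cdot\mathbb L=\mathbb G$ exactly as in \cref{prop:ComplGrassmannianOpen}: given $g=g_1+\dots+g_\kappa\in\mathbb G$, the triangular product formula \eqref{opgr} allows to recursively solve the equations $g_i=v_i+l_i+\mathscr Q_i(v_1,\dots,v_{i-1},l_1,\dots,l_{i-1})$ for $v_i\in\mathbb V\cap V_i$ and $l_i\in\mathbb L\cap V_i$, using the layerwise splitting $V_i=(\mathbb V\cap V_i)\oplus(\mathbb L\cap V_i)$ established above. Since $\mathbb V$ and $\mathbb L$ coincide, as subsets of $\mathbb R^n$, with their stratified Lie algebras, such $v=v_1+\dots+v_\kappa$ lies in $\mathbb V$ and $l=l_1+\dots+l_\kappa$ lies in $\mathbb L$, whence $g=v\cdot l$. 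This proves $\mathbb V\cdot\mathbb L=\mathbb G$, and therefore $\mathbb V\in\G_c(h)$ with $\mathbb L$ as a complement. (Alternatively, one could appeal directly to \cite[Lemma 2.7]{JNGV20} to pass from the layerwise splittings to the group-level complementarity.)

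The main obstacle is purely bookkeeping: keeping straight that the layerwise dimension identity together with trivial intersection suffices to invoke the triangular BCH argument; there is no real analytic or geometric subtlety once the injectivity of $P_{\mathbb W}|_{\mathbb V}$ is translated into $\mathbb V\cap\mathbb L=\{0\}$.
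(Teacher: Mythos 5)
Your proof is correct and is essentially the paper's own: the injectivity of $P_{\mathbb W}|_{\mathbb V}$ yields $\mathbb V\cap\mathbb L=\{0\}$, and the layerwise dimension count followed by the triangular recursion through \eqref{opgr} (or the citation of \cite[Lemma 2.7]{JNGV20}) is precisely the argument of \cref{prop:ComplGrassmannianOpen} that the paper's proof invokes by reference. The only blemish is the harmless slip of writing $\mathbb L\in\G(h)$: the complement $\mathbb L$ in general has Hausdorff dimension $Q-h$, but your argument only uses its homogeneity and its complementarity to $\mathbb W$, so nothing breaks.
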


\begin{proof}
The hypothesis $\mathfrak{s}(\mathbb{V})=\mathfrak{s}(\mathbb{W})$ implies that $\mathbb{V}$ and $\mathbb{W}$ have the same topological dimension. If by contradiction there exists a $0\neq y\in \mathbb{L}\cap \mathbb{V}$, then
$$P_\mathbb{W}(y)=0=P_\mathbb{W}(0).$$
This however is not possible since we assumed that $P_\mathbb{W}$ is injective on $\mathbb V$. The fact that $\mathbb{L}\cap \mathbb{V}=\{0\}$ concludes the proof by the same argument we used in the proof of the previous \cref{prop:ComplGrassmannianOpen}.
\end{proof}

The following definition of intrinsically Lipschitz functions is equivalent to the classical one in \cite[Definition 11]{FranchiSerapioni16} because the cones in \cite[Definition 11]{FranchiSerapioni16} and the cones $C_{\mathbb V}(\alpha)$ are equivalent whenever $\mathbb V$ admits a complementary subgroup, see \cite[Proposition 3.1]{FranchiSerapioni16}.
\begin{definizione}[Intrinsically Lipschitz functions]\label{def:iLipfunctions}
Let $\mathbb{W}\in \G_c(h)$ and assume $\mathbb{L}$ is a complement of $\mathbb{W}$ and let $E\subseteq \mathbb{W}$ be a subset of $\mathbb{V}$. Let $\alpha>0$. A function $f:E\to \mathbb{L}$ is said to be an \emph{$\alpha$-intrinsically Lipschitz function} if $\text{graph}(f):=\{v\cdot f(v):v\in E\}$ is a $C_\mathbb{W}(\alpha)$-set. A function $f:E\to \mathbb{L}$ is said to be an \emph{intrinsically Lipschitz function} if there exists $\alpha>0$ such that $f$ is an $\alpha$-intrinsically Lipschitz function.
\end{definizione}

The next Proposition can be found in the Preprint \cite{antonelli2020rectifiable} and thus we omit its proof.
\begin{proposizione}[{\cite[Proposition 2.19]{antonelli2020rectifiable}}, see also \cite{FranchiSerapioni16}]\label{prop:ConeAndGraph}
Let us fix $\mathbb W\in\G_c(h)$ with complement $\mathbb L$. If $\Gamma\subset\mathbb G$ is a $C_{\mathbb W}(\alpha)$-set for some $\alpha\leq \oldep{ep:Cool}(\mathbb W,\mathbb L)$, then the map $P_{\mathbb W}:\Gamma\to\mathbb W$ is injective. As a consequence $\Gamma$ is the intrinsic graph of an intrinsically Lipschitz map defined on $P_{\mathbb W}(\Gamma)$.
\end{proposizione}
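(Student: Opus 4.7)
The plan is quite short and essentially a direct application of \cref{lemma:LCapCw=e}.

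First I would establish injectivity of $P_{\mathbb W}$ on $\Gamma$. Suppose $p,q\in\Gamma$ satisfy $P_{\mathbb W}(p)=P_{\mathbb W}(q)$. Writing $p=P_{\mathbb W}(p)\cdot P_{\mathbb L}(p)$ and $q=P_{\mathbb W}(q)\cdot P_{\mathbb L}(q)$, one computes
\[
p^{-1}\cdot q = (P_{\mathbb L}(p))^{-1}\cdot (P_{\mathbb W}(p))^{-1}\cdot P_{\mathbb W}(q)\cdot P_{\mathbb L}(q) = (P_{\mathbb L}(p))^{-1}\cdot P_{\mathbb L}(q)\in\mathbb L.
\]
On the other hand, the $C_{\mathbb W}(\alpha)$-set condition applied at $p$ gives $q\in p\cdot C_{\mathbb W}(\alpha)$, so $p^{-1}\cdot q\in C_{\mathbb W}(\alpha)$. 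Hence $p^{-1}\cdot q\in \mathbb L\cap C_{\mathbb W}(\alpha)$. Since we assumed $\alpha\leq \oldep{ep:Cool}(\mathbb W,\mathbb L)$, \cref{lemma:LCapCw=e} yields $\mathbb L\cap C_{\mathbb W}(\alpha)=\{0\}$, forcing $p=q$.

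Next, for the graph representation, since $P_{\mathbb W}:\Gamma\to P_{\mathbb W}(\Gamma)$ is a bijection, I would define
\[
f:P_{\mathbb W}(\Gamma)\to\mathbb L,\qquad f(w):=P_{\mathbb L}\bigl((P_{\mathbb W}|_\Gamma)^{-1}(w)\bigr).
\]
Then every $p\in\Gamma$ satisfies $p=P_{\mathbb W}(p)\cdot P_{\mathbb L}(p)=w\cdot f(w)$ with $w=P_{\mathbb W}(p)$, so $\Gamma=\mathrm{graph}(f):=\{w\cdot f(w):w\in P_{\mathbb W}(\Gamma)\}$. By \cref{def:iLipfunctions}, the condition for $f$ to be $\alpha$-intrinsically Lipschitz is precisely that $\mathrm{graph}(f)$ is a $C_{\mathbb W}(\alpha)$-set, which is our hypothesis on $\Gamma$. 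Therefore $f$ is intrinsically Lipschitz, finishing the proof.

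There is no real obstacle here: the substance of the argument is already packaged in \cref{lemma:LCapCw=e}, and everything else is bookkeeping with the splitting $\mathbb G=\mathbb W\cdot\mathbb L$ and the definition of intrinsically Lipschitz graph. If I had to point at a potentially subtle spot, it would just be to make sure that the constant $\oldep{ep:Cool}(\mathbb W,\mathbb L)$ used in the hypothesis is the same as the one produced by \cref{lemma:LCapCw=e}, but that is guaranteed by the statement of that lemma.
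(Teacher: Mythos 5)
Your proof is correct and follows essentially the same route as the argument the paper defers to the preprint for: injectivity via $p^{-1}q=P_{\mathbb L}(p)^{-1}P_{\mathbb L}(q)\in\mathbb L\cap C_{\mathbb W}(\alpha)\subseteq\mathbb L\cap C_{\mathbb W}(\oldep{ep:Cool}(\mathbb W,\mathbb L))=\{0\}$ using \cref{lemma:LCapCw=e}, and the intrinsically Lipschitz graph statement read off directly from \cref{def:iLipfunctions}. Nothing is missing.
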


We conclude this subsection with a more careful study of the co-normal Grassmanian. These results will turn out to be fundamental when approaching the Marstrand-Mattila rectifiability criterion in \cref{sec:MMconormale}.

\begin{proposizione}\label{prop:grasscompiffC3}
For any $\mathfrak{s}\in\mathfrak{S}(h)$ the function $\mathfrak{e}:\G_\unlhd^\mathfrak{s}(h)\to\R$ defined as
\begin{equation}\label{eqn:mathfrake}
\mathfrak{e}(\mathbb{V}):=\sup\{\oldep{ep:Cool}(\mathbb{V},\mathbb{L}):\mathbb{L}\text{ is a normal complement of }\mathbb{V}\},
\end{equation}
is lower semicontinuous. Moreover the following conclusion holds
\begin{itemize}
    \item[] if $\mathscr{G}\subseteq \G_\unlhd^{\mathfrak{s}}(h)$ is compact with respect to $d_{\mathbb G}$, then there exists a $\mathfrak{e}_{\mathscr G}>0$ such that $\mathfrak{e}(\mathbb{V})\geq \mathfrak{e}_{\mathscr G}$ for any $\mathbb{V}\in{\mathscr{G}}\subseteq \G_\unlhd^{\mathfrak{s}}(h)$.
\end{itemize}
\end{proposizione}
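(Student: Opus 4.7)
The plan is to establish the lower semicontinuity on a sequence $\{\mathbb V_k\}_{k\in\N}\subseteq \G_\unlhd^\mathfrak{s}(h)$ converging to some $\mathbb V\in\G_\unlhd^\mathfrak{s}(h)$ in $d_{\mathbb G}$, and then deduce the second assertion abstractly. The strategy is: fix any normal complement $\mathbb L$ of $\mathbb V$ and show that (a) $\mathbb L$ remains a normal complement of $\mathbb V_k$ for all $k$ sufficiently large, and (b) $\liminf_{k}\oldep{ep:Cool}(\mathbb V_k,\mathbb L)\geq \oldep{ep:Cool}(\mathbb V,\mathbb L)$. Taking the supremum over all normal complements of $\mathbb V$ then yields $\liminf_k \mathfrak{e}(\mathbb V_k)\geq \mathfrak{e}(\mathbb V)$.

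For (b), I would work with the quantity $\oldC{C:split}$ rather than $\oldep{ep:Cool}$, the two being related by a factor of two by the choice made in \cref{lemma:LCapCw=e}. First I would observe that whenever $\alpha<\oldC{C:split}(\mathbb V,\mathbb L)$, then $\mathbb L\cap C_{\mathbb V}(\alpha)=\{0\}$: for $g\in\mathbb L\cap C_{\mathbb V}(\alpha)$ one has $P_{\mathbb L}(g)=g$, so \cref{prop:distvsproj} gives $\oldC{C:split}(\mathbb V,\mathbb L)\|g\|\leq\dist(g,\mathbb V)\leq\alpha\|g\|$, forcing $g=0$. For any $\beta<\alpha$, \cref{lem:DistanceOfCones} applied with $\varepsilon:=\alpha-\beta$ gives $C_{\mathbb V_k}(\beta)\subseteq C_{\mathbb V}(\alpha)$ as soon as $d_{\mathbb G}(\mathbb V_k,\mathbb V)<(\alpha-\beta)/4$; consequently $\mathbb L\cap C_{\mathbb V_k}(\beta)=\{0\}$, and \cref{rem:alphaandepsilon1} upgrades this to $\oldC{C:split}(\mathbb V_k,\mathbb L)\geq\beta$. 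Sending $\beta\nearrow\alpha$ and then $\alpha\nearrow\oldC{C:split}(\mathbb V,\mathbb L)$ delivers $\liminf_k \oldC{C:split}(\mathbb V_k,\mathbb L)\geq \oldC{C:split}(\mathbb V,\mathbb L)$, which halved is (b).

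For (a), the inclusion $\mathbb L\cap \mathbb V_k=\{0\}$ is already a byproduct of the previous step, because $\mathbb V_k\subseteq C_{\mathbb V_k}(\beta)$. To promote this intersection property to a full splitting $\mathbb G=\mathbb V_k\cdot\mathbb L$, I would invoke \cref{prop:htagliato} to assume (for $k$ large) $\mathfrak{s}(\mathbb V_k)=\mathfrak{s}(\mathbb V)=\mathfrak{s}$, so that $\mathbb V_k$ and $\mathbb V$ share the same layer-by-layer dimensions, and then replicate the Grassmann-formula-plus-triangular-product argument employed in the proof of \cref{injcompl} to conclude that $\mathbb V_k\cdot\mathbb L=\mathbb G$. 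Normality of $\mathbb L$ is a property of $\mathbb L$ alone and is thus preserved, so $\mathbb L$ is indeed a normal complement of $\mathbb V_k$ for all sufficiently large $k$, which legitimates the use of $\oldep{ep:Cool}(\mathbb V_k,\mathbb L)$ in step (b).

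For the compactness statement, a nonempty compact metric space and a lower semicontinuous real-valued function on it guarantee that $\mathfrak{e}$ attains its infimum at some $\mathbb V_\star\in\mathscr G$; since $\mathbb V_\star\in\G_\unlhd^\mathfrak{s}(h)$ admits at least one normal complement $\mathbb L_\star$, \cref{lemma:LCapCw=e} yields $\mathfrak{e}(\mathbb V_\star)\geq\oldep{ep:Cool}(\mathbb V_\star,\mathbb L_\star)>0$, so $\mathfrak{e}_{\mathscr G}:=\mathfrak{e}(\mathbb V_\star)$ works. The main technical delicacy lies in step (b): one must interleave two small parameters so that the hypothesis of \cref{lem:DistanceOfCones} and the sharpness of \cref{rem:alphaandepsilon1} are preserved simultaneously. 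Running the argument on $\oldC{C:split}$ throughout and halving only at the very end is what avoids losing an extraneous factor of two that would otherwise yield the weaker bound $\liminf_k\mathfrak{e}(\mathbb V_k)\geq\mathfrak{e}(\mathbb V)/2$.
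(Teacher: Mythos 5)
Your proposal is correct and follows essentially the same route as the paper's proof: fix a normal complement $\mathbb L$ of the limit plane, use \cref{lem:DistanceOfCones} to compare cones, transfer complementarity to nearby planes with the same stratification via the argument of \cref{prop:ComplGrassmannianOpen}, upgrade the cone--intersection property to a bound on $\oldC{C:split}(\cdot,\mathbb L)$ through \cref{rem:alphaandepsilon1}, and conclude the uniform positive bound from lower semicontinuity plus compactness. The only cosmetic difference is that you fix an arbitrary normal complement and take the supremum at the end, while the paper picks an $\varepsilon$-quasi-optimal complement from the start; the estimates are otherwise identical.
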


\begin{proof}
Let us prove that the function $\mathfrak e$ is lower semincontinuous. Since $\oldep{ep:Cool}(\mathbb V,\mathbb L)=\oldC{C:split}(\mathbb V,\mathbb L)/2$, see \cref{lemma:LCapCw=e}, it is enough to prove the proposition with $2\mathfrak e(\mathbb V)$ instead of $\mathfrak e(\mathbb V)$, and with $\oldC{C:split}(\mathbb V,\mathbb L)$ instead of $\oldep{ep:Cool}(\mathbb V,\mathbb L)$. Let us fix $\mathbb{V}\in\G_\unlhd^\mathfrak{s}(h)$ and $0<\varepsilon<\mathfrak e(\mathbb V)$, and denote with $\mathbb{L}$ one of the normal complement subgroups of $\mathbb{V}$ for which $\oldC{C:split}(\mathbb V,\mathbb L)> 2\mathfrak{e}(\mathbb{V})-\varepsilon$. For any $\mathbb{W}\in\G_\unlhd^\mathfrak{s}(h)$ thanks to \cref{lem:DistanceOfCones} we have
\begin{equation}\label{eqn:COOOONI}
C_\mathbb{W}(\oldC{C:split}(\mathbb V,\mathbb L)-4d_{\mathbb G}(\mathbb{V},\mathbb{W})-\varepsilon)\subseteq C_\mathbb{V}(\oldC{C:split}(\mathbb V,\mathbb L)-\varepsilon),
\end{equation}
whenever $d_{\mathbb G}(\mathbb V,\mathbb W)$ is small enough. Therefore if $d_{\mathbb G}(\mathbb{V},\mathbb{ W})$ is sufficiently small, the latter inclusion and the same proof as in \cref{lemma:LCapCw=e} imply that
$\mathbb{L}\cap \mathbb{W}\subseteq \mathbb{L}\cap C_\mathbb{V}(\oldC{C:split}(\mathbb{V},\mathbb{L})-\varepsilon)=\{0\}$. Since $\mathbb L\cap \mathbb W=\{0\}$, $\mathbb L$ and $\mathbb V$ are complementary subgroups and $\mathbb V$ and $\mathbb W$ have the same stratification vector, and thus the same topological dimension, we have that  $\mathbb{L}$ is a complement of $\mathbb{W}$ for the same argument used in the proof of \cref{prop:ComplGrassmannianOpen}. Thus, taking \eqref{eqn:COOOONI} into account we get that $\mathbb L\cap C_{\mathbb W}(\oldC{C:split}(\mathbb V,\mathbb L)-4d_{\mathbb G}(\mathbb V,\mathbb W)-\varepsilon)=\{0\}$ and thus, from \cref{rem:alphaandepsilon1}, we get that $\oldC{C:split}(\mathbb W,\mathbb L)\geq \oldC{C:split}(\mathbb V,\mathbb L)-4d_{\mathbb G}(\mathbb V,\mathbb W)-\varepsilon$ whenever $d_{\mathbb G}(\mathbb V,\mathbb W)$ is sufficiently small. This implies that 
$$
2\mathfrak e(\mathbb W)\geq \oldC{C:split}(\mathbb W,\mathbb L)\geq \oldC{C:split}(\mathbb V,\mathbb L)-4d_{\mathbb G}(\mathbb V,\mathbb W)-\varepsilon \geq 2\mathfrak e(\mathbb V)-4d(\mathbb V.\mathbb W)-2\varepsilon,
$$
whenever $d_{\mathbb G}(\mathbb V,\mathbb W)$ is small enough,
and thus
$$\liminf_{d_{\mathbb{G}}(\mathbb{W}, \mathbb{V})\to 0}\mathfrak{e} (\mathbb{W})\geq \mathfrak{e} (\mathbb{V})-\varepsilon,$$
from which the lower semicontinuity follows due to the arbitrariness of $\varepsilon$. The conclusion in item (i) follows since $\mathscr G\subseteq \G_\unlhd^\mathfrak{s}(h)$ is compact and $\mathfrak e$ is lower semincontinuous. 
\end{proof}

\begin{osservazione}
We observe that in the previous proposition we did not use the fact $\mathbb L$ is normal, but we stated the proposition in this specific case since we are going to use this formulation in the paper. The same proof works in the more general case when $\mathbb V\in \G_c^{\mathfrak s}(h)$ and $\mathfrak e (\mathbb V)=\sup\{\oldep{ep:Cool}(\mathbb V,\mathbb L):\mathbb L\,\,\text{is a complement of}\,\,\mathbb V\}$.
\end{osservazione}

\begin{proposizione}\label{lip:const:proj:conorm}
Let $C>0$ and $\mathbb V\in \G_\unlhd^\mathfrak{s}(h)$ be such that $\mathfrak e(\mathbb V)\geq C$. Then there exists a normal complement $\mathbb{L}$ of $\mathbb{V}$ such that 
\begin{equation}
\lVert P_\mathbb{V}(g)\rVert\leq (1+2/C)\|g\|,\qquad\text{and}\qquad\lVert P_\mathbb{L}(g)\rVert\leq (2/C)\lVert g\rVert, \qquad \text{for all $g\in\mathbb G$,}
\end{equation}
provided $P_\mathbb{V}$ and $P_\mathbb{L}$ are the projections relative to the splitting $\mathbb{G}=\mathbb{V}\mathbb{L}$.
\end{proposizione}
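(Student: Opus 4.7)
The plan is to unfold the definitions of $\mathfrak{e}(\mathbb{V})$ and $\oldep{ep:Cool}(\mathbb{V},\mathbb{L})$ to produce a concrete normal complement $\mathbb{L}$ with a controllable splitting constant $\oldC{C:split}(\mathbb{V},\mathbb{L})$, and then apply the quantitative projection estimate \cref{prop:distvsproj}. Since
$$
\mathfrak{e}(\mathbb{V}) \;=\; \sup\{\oldep{ep:Cool}(\mathbb{V},\mathbb{L}) : \mathbb{L} \text{ is a normal complement of } \mathbb{V}\} \;\geq\; C,
$$
by definition of supremum I can pick a normal complement $\mathbb{L}$ of $\mathbb{V}$ with $\oldep{ep:Cool}(\mathbb{V},\mathbb{L}) > C/2$ (any number strictly below $C$ works — $C/2$ gives slack matching the stated constants). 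By \cref{lemma:LCapCw=e} we have $\oldep{ep:Cool}(\mathbb{V},\mathbb{L}) = \oldC{C:split}(\mathbb{V},\mathbb{L})/2$, so that $\oldC{C:split}(\mathbb{V},\mathbb{L}) > C$.

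Next I invoke \cref{prop:distvsproj}: for every $g \in \mathbb{G}$,
$$
\oldC{C:split}(\mathbb{V},\mathbb{L})\,\lVert P_{\mathbb{L}}(g)\rVert \;\leq\; \dist(g,\mathbb{V}).
$$
Since $0 \in \mathbb{V}$, one has $\dist(g,\mathbb{V}) \leq \lVert g\rVert$, and combining this with $\oldC{C:split}(\mathbb{V},\mathbb{L}) > C$ gives
$$
\lVert P_{\mathbb{L}}(g)\rVert \;\leq\; \frac{\lVert g\rVert}{\oldC{C:split}(\mathbb{V},\mathbb{L})} \;\leq\; \frac{\lVert g\rVert}{C} \;\leq\; \frac{2}{C}\,\lVert g\rVert,
$$
which is the second asserted bound.

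For the bound on $P_{\mathbb{V}}$, I use the splitting $g = P_{\mathbb{V}}(g)\cdot P_{\mathbb{L}}(g)$, so that $P_{\mathbb{V}}(g) = g \cdot P_{\mathbb{L}}(g)^{-1}$. Applying the triangle inequality of the homogeneous norm and the symmetry $\lVert h^{-1}\rVert = \lVert h\rVert$ (which holds for any homogeneous norm on $\mathbb{G}$), together with the bound on $\lVert P_{\mathbb{L}}(g)\rVert$ just obtained, yields
$$
\lVert P_{\mathbb{V}}(g)\rVert \;\leq\; \lVert g\rVert + \lVert P_{\mathbb{L}}(g)\rVert \;\leq\; \lVert g\rVert + \frac{2}{C}\lVert g\rVert \;=\; \Bigl(1+\frac{2}{C}\Bigr)\lVert g\rVert.
$$

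There is no genuine obstacle: the proof is a direct combination of the definition of $\mathfrak{e}$ as a supremum, the identification $\oldep{ep:Cool} = \oldC{C:split}/2$, and the already-established quantitative projection inequality \cref{prop:distvsproj}. The only small point to be careful about is that $\mathfrak{e}(\mathbb{V})$ is a supremum which need not be attained; this is harmless because the stated constant $2/C$ leaves room for choosing $\mathbb{L}$ with $\oldep{ep:Cool}(\mathbb{V},\mathbb{L})$ merely larger than $C/2$ rather than realizing the supremum exactly.
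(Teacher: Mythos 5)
Your proof is correct and follows essentially the same route as the paper: pick a normal complement $\mathbb L$ with $\oldep{ep:Cool}(\mathbb V,\mathbb L)>C/2$ from the definition of $\mathfrak e$ as a supremum, bound $\lVert P_{\mathbb L}(g)\rVert$ by $\dist(g,\mathbb V)\leq\lVert g\rVert$ via the splitting constant, and then get the $P_{\mathbb V}$ bound from the triangle inequality. The only cosmetic difference is that you invoke \cref{prop:distvsproj} through the identity $\oldep{ep:Cool}=\oldC{C:split}/2$, while the paper passes through the cone condition $\mathbb L\cap C_{\mathbb V}(C/2)=\{0\}$ and \cref{rem:alphaandepsilon1}, which is the same estimate.
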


\begin{proof}
Thanks to the definition of $\mathfrak e(\mathbb V)$, see \eqref{eqn:mathfrake}, there exists a normal complementary subgroup $\mathbb{L}$ of $\mathbb V$ such that $\oldep{ep:Cool}(\mathbb V,\mathbb L)\geq C/2$. Thus, from \cref{lemma:LCapCw=e}, we get $\mathbb{L}\cap C_\mathbb{V}(C/2)=\{0\}$. This implies, arguing as in \cref{rem:alphaandepsilon1}, that for any $g\in \mathbb{G}$ we have
\begin{equation}
    C\lVert P_\mathbb{L}(g)\rVert/2\leq \dist(\mathbb{V},P_\mathbb{L}(g))=\dist(\mathbb{V},g)\leq \lVert g\rVert.
    \label{eq:::num2}
\end{equation}
Furthermore, thanks to the triangle inequality we have
$$\lVert g\rVert\geq \lVert P_\mathbb{V}(g)\rVert-\lVert P_\mathbb{L}(g)\rVert\geq\lVert P_\mathbb{V}(g)\rVert-(2/C)\lVert g\rVert,$$
thus concluding the proof of the proposition.
\end{proof}

\begin{proposizione}\label{prop:projhom}
Let $C>0$ and $\mathbb V\in \G_\unlhd^\mathfrak{s}(h)$ be such that $\mathfrak e(\mathbb V)\geq C$. Let $\mathbb{L}$ be a normal complementary subgroup to $\mathbb V$ as in \cref{lip:const:proj:conorm}. Then the projection $P_\mathbb{V}:\mathbb{G}\to\mathbb{V}$ related to the splitting $\mathbb G=\mathbb V\cdot\mathbb L$ is a $(1+2/C)$-Lipschitz homogeneous homomorphism.
\end{proposizione}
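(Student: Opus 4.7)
The plan is to establish three properties in turn: that $P_{\mathbb V}$ is a group homomorphism, that it is homogeneous with respect to the dilations, and finally that it is Lipschitz with the claimed constant. The homogeneity and the Lipschitz bound are essentially immediate given what has already been set up; the homomorphism property is the only step where we really use that $\mathbb L$ is \emph{normal} rather than merely a complement.

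To see the homomorphism property, I would pick arbitrary $g,h\in\mathbb G$ and write $g=P_{\mathbb V}(g)\cdot P_{\mathbb L}(g)$ and $h=P_{\mathbb V}(h)\cdot P_{\mathbb L}(h)$ according to the unique splitting $\mathbb G=\mathbb V\cdot\mathbb L$. Then
\[
g\cdot h \;=\; P_{\mathbb V}(g)\, P_{\mathbb L}(g)\, P_{\mathbb V}(h)\, P_{\mathbb L}(h) \;=\; \bigl(P_{\mathbb V}(g)\, P_{\mathbb V}(h)\bigr)\cdot\bigl(P_{\mathbb V}(h)^{-1}\,P_{\mathbb L}(g)\,P_{\mathbb V}(h)\cdot P_{\mathbb L}(h)\bigr).
\]
Since $\mathbb L$ is normal, the conjugate $P_{\mathbb V}(h)^{-1}\,P_{\mathbb L}(g)\,P_{\mathbb V}(h)$ belongs to $\mathbb L$, and so the second factor lies in $\mathbb L$ while the first lies in $\mathbb V$. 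The uniqueness of the $\mathbb V\cdot\mathbb L$-decomposition then forces $P_{\mathbb V}(g\cdot h)=P_{\mathbb V}(g)\cdot P_{\mathbb V}(h)$, proving that $P_{\mathbb V}$ is a homomorphism (and, in passing, $P_{\mathbb L}$ a crossed-type map).

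For homogeneity, I would use that $\delta_\lambda$ is a Lie group automorphism of $\mathbb G$ stabilizing both $\mathbb V$ and $\mathbb L$. Thus $\delta_\lambda g=\delta_\lambda P_{\mathbb V}(g)\cdot\delta_\lambda P_{\mathbb L}(g)$ is a decomposition of $\delta_\lambda g$ along $\mathbb V\cdot\mathbb L$, and uniqueness yields $P_{\mathbb V}(\delta_\lambda g)=\delta_\lambda P_{\mathbb V}(g)$.

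Finally, for the Lipschitz estimate I would combine the previous two facts with left-invariance of $d$ and with the norm inequality from \cref{lip:const:proj:conorm}. Concretely, for $g,h\in\mathbb G$,
\[
d\bigl(P_{\mathbb V}(g),P_{\mathbb V}(h)\bigr)=\bigl\|P_{\mathbb V}(g)^{-1}\cdot P_{\mathbb V}(h)\bigr\|=\bigl\|P_{\mathbb V}(g^{-1}\cdot h)\bigr\|\leq (1+2/C)\,\|g^{-1}\cdot h\|=(1+2/C)\,d(g,h),
\]
where in the second equality I used that $P_{\mathbb V}$ is a homomorphism and in the inequality I applied \cref{lip:const:proj:conorm}. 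The only real subtlety is the first step, where normality is indispensable; without it the two factors in $\mathbb L$ appearing in $g\cdot h$ cannot be combined into a single element of $\mathbb L$, and $P_{\mathbb V}$ fails to be a homomorphism in general.
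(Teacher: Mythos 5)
Your proposal is correct and follows essentially the same route as the paper: the same conjugation trick $P_{\mathbb L}(g)P_{\mathbb V}(h)=P_{\mathbb V}(h)\cdot P_{\mathbb V}(h)^{-1}P_{\mathbb L}(g)P_{\mathbb V}(h)$ combined with normality of $\mathbb L$ and uniqueness of the splitting gives the homomorphism property, and the Lipschitz bound is then deduced exactly as in the paper from the norm estimate of \cref{lip:const:proj:conorm}. The only difference is that you spell out the (standard) homogeneity argument via invariance of $\mathbb V$ and $\mathbb L$ under dilations, which the paper simply takes as known.
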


\begin{proof}
Thanks to the fact that $\mathbb{L}$ is normal, we have that for any $x,y\in\mathbb{G}$ the following equality holds
$$P_\mathbb{V}(xy)=P_\mathbb{V}(x_\mathbb{V}x_\mathbb{L}y_\mathbb{V}y_\mathbb{L})=P_\mathbb{V}(x_\mathbb{V}y_\mathbb{V}\cdot y_\mathbb{V}^{-1}x_\mathbb{L}y_\mathbb{V}\cdot y_\mathbb{L})=P_\mathbb{V}(x)P_\mathbb{V}(y).$$
Since $P_\mathbb{V}$ is always an homogeneous map, we infer that $P_\mathbb{V}$ is a homogeneous homomorphism. Moreover, from \cref{lip:const:proj:conorm} we have that
$$
\|P_{\mathbb V}(g)\|\leq (1+2/C)\|g\|,
$$
for every $g\in\mathbb{G}$. Hence from the fact that $P_{\mathbb V}$ is a homomorphism we have
$$
\|P_{\mathbb V}(x)^{-1}P_{\mathbb V}(y)\|= \|P_{\mathbb V}(x^{-1}y)\|\leq (1+2/C)\|x^{-1}y\|, 
$$
for every $x,y\in\mathbb{G}$ and thus $P_{\mathbb V}$ is $(1+2/C)$-Lipschitz.
\end{proof}
\begin{osservazione}
Notice that in the proof of the above proposition we proved that whenever $\mathbb L$ is normal, then $P_{\mathbb V}$ is a homomorphism.
\end{osservazione}

\begin{definizione}[Cylinder]\label{def:CYLINDER}
Let $\mathbb V,\mathbb L$ be two complementary subgroups of $\mathbb G$. For any $u\in \mathbb{G}$, and $r>0$ we define
$$
T(u,r):=P_\mathbb{V}^{-1}(P_\mathbb{V}(B(u,r))).$$
\end{definizione}

In the following proposition we study the structure of cylinders $T(\cdot,\cdot)$ when $\mathbb L$ is normal.

\begin{proposizione}\label{prop:structurecyl}
Let $C>0$ and $\mathbb V\in \G_\unlhd^\mathfrak{s}(h)$ be such that $\mathfrak e(\mathbb V)\geq C$. Let $\mathbb{L}$ be a normal complementary subgroup to $\mathbb V$ as in \cref{lip:const:proj:conorm}. Thus, for any $u\in \mathbb{G}$ we have $T(u,r)=P_\mathbb{V}(u)\delta_rT(0,1)$. Furthermore, we have
$$T(u,r)\subseteq P_\mathbb{V}(u)\delta_rP_\mathbb{V}^{-1}(B(0,(1+2/C))\cap \mathbb{V})=P_\mathbb{V}^{-1}(B(P_\mathbb{V}(u),(1+2/C)r)\cap \mathbb{V}).$$
Finally, for any $h\in\mathbb{L}$ we have
$B(uh,r)\subseteq T(u,r)$.
\end{proposizione}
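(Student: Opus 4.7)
The plan is to use systematically the fact, established in \cref{prop:projhom}, that $P_{\mathbb V}$ is a homogeneous homomorphism with kernel $\mathbb L$. In particular, for any $A\subseteq \mathbb V$ one has $P_{\mathbb V}^{-1}(A)=A\cdot \mathbb L$; for any $x,y\in\mathbb G$ and $\lambda>0$ one has $P_{\mathbb V}(x\cdot y)=P_{\mathbb V}(x)\cdot P_{\mathbb V}(y)$ and $P_{\mathbb V}(\delta_\lambda y)=\delta_\lambda P_{\mathbb V}(y)$; and for every $h\in\mathbb L$ one has $P_{\mathbb V}(h)=0$. These are the only algebraic ingredients needed; the metric ingredient will be the Lipschitz estimate $\|P_{\mathbb V}(g)\|\leq (1+2/C)\|g\|$ from \cref{lip:const:proj:conorm}.

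For the first identity, I would write $B(u,r)=u\cdot \delta_r B(0,1)$ using the homogeneity and left-invariance of $d$, apply the homomorphism and homogeneity properties to get $P_{\mathbb V}(B(u,r))=P_{\mathbb V}(u)\cdot \delta_r P_{\mathbb V}(B(0,1))$, and then take the $P_{\mathbb V}$-preimage using $P_{\mathbb V}^{-1}(A)=A\cdot \mathbb L$ for $A\subseteq \mathbb V$. Since $\mathbb L$ is $\delta_r$-invariant, pulling $\delta_r$ and the left factor $P_{\mathbb V}(u)$ outside yields exactly $T(u,r)=P_{\mathbb V}(u)\,\delta_r T(0,1)$.

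For the second assertion, the inclusion $P_{\mathbb V}(B(0,1))\subseteq B(0,1+2/C)\cap \mathbb V$ is an immediate consequence of the Lipschitz bound in \cref{lip:const:proj:conorm}. Applying $\delta_r$ and left-translating by $P_{\mathbb V}(u)$ (both of which preserve $\mathbb V$ and behave well with the distance) turns this into $P_{\mathbb V}(u)\,\delta_r P_{\mathbb V}(B(0,1))\subseteq B(P_{\mathbb V}(u),(1+2/C)r)\cap \mathbb V$. Taking $P_{\mathbb V}^{-1}$ on both sides and combining with the first assertion gives the claimed chain of inclusion and equality (the final equality coming again from $P_{\mathbb V}^{-1}(A)=A\cdot \mathbb L$ for $A\subseteq \mathbb V$, together with left-invariance of $d$ under elements of $\mathbb V$).

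For the third assertion, the key observation is that $P_{\mathbb V}(uh)=P_{\mathbb V}(u)\cdot P_{\mathbb V}(h)=P_{\mathbb V}(u)$ since $h\in\mathbb L=\ker P_{\mathbb V}$. Writing $B(uh,r)=uh\cdot \delta_r B(0,1)$ and applying $P_{\mathbb V}$ one obtains
\[
P_{\mathbb V}(B(uh,r))=P_{\mathbb V}(uh)\,\delta_r P_{\mathbb V}(B(0,1))=P_{\mathbb V}(u)\,\delta_r P_{\mathbb V}(B(0,1))=P_{\mathbb V}(B(u,r)),
\]
so $B(uh,r)\subseteq P_{\mathbb V}^{-1}(P_{\mathbb V}(B(uh,r)))=P_{\mathbb V}^{-1}(P_{\mathbb V}(B(u,r)))=T(u,r)$. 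There is no serious obstacle here once \cref{prop:projhom} is available; the only care required is to keep track of the fact that $P_{\mathbb V}^{-1}$ of a subset of $\mathbb V$ is exactly the right $\mathbb L$-saturation, which uses normality of $\mathbb L$ only through the homomorphism property already proved.
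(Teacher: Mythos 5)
Your proposal is correct and follows essentially the same route as the paper's own proof: both rest on the fact from \cref{prop:projhom} that $P_{\mathbb V}$ is a homogeneous homomorphism (with the splitting giving $P_{\mathbb V}^{-1}(A)=A\cdot\mathbb L$ for $A\subseteq\mathbb V$), combined with the Lipschitz bound of \cref{lip:const:proj:conorm} for the middle inclusion and with $P_{\mathbb V}(uh)=P_{\mathbb V}(u)$ for the final containment. The extra explicitness about the $\mathbb L$-saturation identity is a fair expansion of what the paper leaves implicit, but it is not a different argument.
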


\begin{proof}
First of all, we note that thanks to \cref{prop:projhom} we have that $w\in P_\mathbb{V}(B(u,r))$ if and only if there exists a $v\in B(0,1)$ such that $w=P_\mathbb{V}(u)\delta_rP_\mathbb{V}(v)$. Therefore, given $u\in\mathbb G$ and $r>0$, we have that $y\in T(u,r)$ if and only if $y=P_\mathbb{V}(u)\delta_rP_\mathbb{V}(v)h$ for some $v\in B(0,1)$ and $h\in \mathbb{L}$. Thus we conclude that
$T(u,r)=P_\mathbb{V}(u)\delta_rT(0,1)$ for every $u\in\mathbb G$ and $r>0$.

Secondly, thanks to \cref{prop:projhom} we infer that $P_\mathbb{V}(B(0,1))\subseteq \mathbb{V}\cap B(0,(1+2/C))$ and thus combining such inclusion with the first part of the proposition we deduce that
$$T(u,r)\subseteq P_\mathbb{V}(u)\delta_rP_\mathbb{V}^{-1}(B(0,(1+2/C))\cap \mathbb{V})=P_\mathbb{V}^{-1}(B(P_\mathbb{V}(u),(1+2/C)r)\cap \mathbb{V}),$$
where the last equality is true since $P_{\mathbb V}$ is a homogeneous homomorphism. 
Finally, thanks to the first part of the proposition, for any $u\in \mathbb{V}$ and any $h\in \mathbb{L}$ we have
$$B(uh,r)\subseteq T(uh,r)=T(u,r),$$
and this concludes the proof of the proposition.
\end{proof}

\subsection{Rectifiable measures in Carnot groups}
In what follows we are going to define the class of $h$-flat measures on a Carnot group and then we will give proper definitions of rectifiable measures on Carnot groups. Again we recall that throughout this subsection $\mathbb G$ will be a fixed Carnot group endowed with an arbitrary left-invariant homogeneous distance.

\begin{definizione}[Flat measures]\label{flatmeasures}
For any $h\in\{1,\ldots,Q\}$ we let $\mathfrak{M}(h)$ to be the {\em family of flat $h$-dimensional measures} in $\mathbb{G}$, i.e.
$$\mathfrak{M}(h):=\{\lambda\mathcal{S}^h\llcorner \mathbb W:\text{ for some }\lambda> 0 \text{ and }\mathbb W\in\G(h)\}.$$ 
Furthermore, if $G$ is a subset of the $h$-dimensional Grassmanian $\G(h)$, we let $\mathfrak{M}(h,G)$ to be the set \begin{equation}
    \mathfrak{M}(h,G):=\{\lambda\mathcal{S}^h\llcorner \mathbb{W}:\text{ for some }\lambda> 0\text{ and }\mathbb{W}\in G\}.
    \label{Gotico(h)}
\end{equation}
We stress that in the previous definitions we can use any of the Haar measures on $\mathbb W$, see \cref{prop:haar}, since they are the same up to a constant.
\end{definizione}

\begin{definizione}[$\mathscr{P}_h$ and $\mathscr{P}_h^*$-rectifiable measures]\label{def:PhRectifiableMeasure}
Let $h\in\{1,\ldots,Q\}$. A Radon measure $\phi$ on $\mathbb G$ is said to be a $\mathscr{P}_h$-rectifiable measure if for $\phi$-almost every $x\in \mathbb{G}$ we have
\begin{itemize}
    \item[(i)]$0<\Theta^h_*(\phi,x)\leq\Theta^{h,*}(\phi,x)<+\infty$,
    \item[(\hypertarget{due}{ii})]there exists a $\mathbb{V}(x)\in\G(h)$ such that $\mathrm{Tan}_h(\phi,x) \subseteq \{\lambda\mathcal{S}^h\llcorner \mathbb V(x):\lambda\geq 0\}$.
\end{itemize}
Furthermore, we say that $\phi$ is $\mathscr{P}_h^*$-rectifiable if (\hyperlink{due}{ii})  is replaced with the weaker
\begin{itemize}
    \item[(ii)*] $\mathrm{Tan}_h(\phi,x) \subseteq \{\lambda\mathcal{S}^h\llcorner \mathbb V:\lambda\geq 0\,\,\text{and}\,\,\mathbb V\in \G(h)\}$.
\end{itemize}
\end{definizione}
\begin{osservazione}(About $\lambda=0$ in \cref{def:PhRectifiableMeasure})\label{rem:AboutLambda=0}
It is readily noticed that, since in \cref{def:PhRectifiableMeasure} we are asking $\Theta^h_*(\phi,x)>0$ for $\phi$-almost every $x$, we can not have the zero measure as a tangent measure. As a consequence, a posteriori, we have that in item (ii) and item (ii)* above we can restrict to $\lambda>0$. We will tacitly work in this restriction from now on.

On the contrary, if we only know that for $\phi$-almost every $x\in\mathbb G$ we have 
\begin{equation}\label{eqn:YAYA}
\Theta^{h,*}(\phi,x)<+\infty, \qquad \text{and} \qquad  \mathrm{Tan}_h(\phi,x)\subseteq \{\lambda\mathcal{S}^h\llcorner\mathbb V(x):\lambda>0\},
\end{equation}
for some $\mathbb V(x)\in \G(h)$, hence $\Theta^h_*(\phi,x)>0$ for $\phi$-almost every $x\in\mathbb G$, and the same property holds with the item (ii)* above. Indeed, if at some $x$ for which \eqref{eqn:YAYA} holds we have $\Theta^h_*(\phi,x)=0$, then there exists $r_i\to 0$ such that $r_i^{-h}\phi(B(x,r_i))=0$. Since $\Theta^{h,*}(\phi,x)<+\infty$, up to subsequences (see \cite[Theorem 1.60]{AFP00}), we have $r_i^{-h}T_{x,r_i}\phi\to \lambda\mathcal{S}^h\llcorner\mathbb V(x)$, for some $\lambda>0$. Hence, by applying \cite[Proposition 1.62(b)]{AFP00} we conclude that $r_i^{-h}T_{x,r_i}\phi(B(0,1))\to \lambda\mathcal{S}^h\llcorner\mathbb V(x)(B(0,1))>0$, that is a contradiction.
\end{osservazione}

Throughout the paper it will be often convenient to restrict our attention to some subclasses of $\mathscr{P}_h$- and $\mathscr{P}_h^*$-rectifiable measures, imposing different restrictions on the algebraic nature of the tangents. More precisely we give the following definition.

\begin{definizione}[Subclasses of $\mathscr{P}_h$ and $\mathscr{P}_h^*$-rectifiable measures]\label{def:SubclassesPh}
Let $h\in\{1,\ldots,Q\}$. In the following we denote by $\mathscr{P}^{c}_h$ the family of those $\mathscr{P}_h$-rectifiable measures such that for $\phi$-almost every $x\in \mathbb{G}$ we have
$$\Tan_h(\phi,x)\subseteq \mathfrak{M}(h,\G_c(h)).$$
Furthermore, the family of those $\mathscr{P}_h^*$-rectifiable measures $\phi$ such that for $\phi$-almost any $x\in\mathbb{G}$ we have
\begin{itemize}
    \item[(i)]
    $\Tan_h(\phi,x)\subseteq \mathfrak{M}(h,\G_c(h))$ is denoted by $\mathscr{P}^{*,c}_h$,
    \item[(ii)] $\Tan_h(\phi,x)\subseteq \mathfrak{M}(h,\G_\unlhd(h))$ is denoted by $\mathscr{P}^{*,\unlhd}_h$,
    \item[(iii)] $\Tan_h(\phi,x)\subseteq \mathfrak{M}(h,\G_\unlhd^{\mathfrak s}(h))$ is denoted by $\mathscr{P}^{*,\unlhd,\mathfrak{s}}_h$.
\end{itemize}
\end{definizione}

\begin{proposizione}\label{prop:density}
Let $h\in\{1,\ldots,Q\}$ and assume $\phi$ is a Radon measure on $\mathbb G$.
If $\{r_i\}_{i\in\N}$ is an infinitesimal sequence such that $r_i^{-h}T_{x,r_i}\phi\rightharpoonup \lambda \mathcal{C}^h\llcorner \mathbb{V}$ for some $\lambda>0$ and $\mathbb{V}\in\G(h)$ then
$$\lim_{i\to\infty} \phi(B(x,r_i))/r_i^h=\lambda.$$
\end{proposizione}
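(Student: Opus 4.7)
The key observation is that $\phi(B(x,r_i))/r_i^h$ equals $r_i^{-h}T_{x,r_i}\phi(B(0,1))$: indeed, by left-invariance and $1$-homogeneity of $d$ one has $x\cdot\delta_{r_i}(B(0,1))=B(x,r_i)$, so the identity follows directly from \cref{def:TangentMeasure}. Moreover, by \cref{rem:Ch1} we have $\lambda\mathcal{C}^h\llcorner\mathbb V(B(0,1))=\lambda$. Hence the statement reduces to upgrading the weak convergence $r_i^{-h}T_{x,r_i}\phi\rightharpoonup\lambda\mathcal{C}^h\llcorner\mathbb V$ to convergence on the closed ball $B(0,1)$.

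To perform this upgrade, I would invoke the portmanteau theorem for Radon measures in the form of \cite[Proposition 1.62(b)]{AFP00} (which is already used elsewhere in the excerpt, cf.\ \cref{rem:AboutLambda=0}): since $B(0,1)$ is bounded and Borel, it suffices to check that the topological boundary $\partial B(0,1)=\{z\in\mathbb G:d(0,z)=1\}$ satisfies $\lambda\mathcal{C}^h\llcorner\mathbb V(\partial B(0,1))=0$. This is the only nonobvious step in the argument.

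The latter vanishing is a direct consequence of homogeneity. By \cref{prop:haar} the measure $\mathcal{C}^h\llcorner\mathbb V$ is $h$-homogeneous with respect to $\{\delta_r\}_{r>0}$, and $B(0,r)\cap\mathbb V=\delta_r(B(0,1)\cap\mathbb V)$ by $1$-homogeneity of $d$. Combining these with \cref{rem:Ch1} gives $\mathcal{C}^h(\mathbb V\cap B(0,r))=r^h$ for every $r>0$. Since $U(0,1)=\bigcup_{r<1}B(0,r)$, continuity from below yields
\[
\mathcal{C}^h(\mathbb V\cap U(0,1))=\lim_{r\uparrow 1}\mathcal{C}^h(\mathbb V\cap B(0,r))=\lim_{r\uparrow 1}r^h=1,
\]
and hence $\mathcal{C}^h(\mathbb V\cap\partial B(0,1))=\mathcal{C}^h(\mathbb V\cap B(0,1))-\mathcal{C}^h(\mathbb V\cap U(0,1))=0$.

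I do not anticipate any serious obstacle: the conclusion is essentially a one-line application of portmanteau together with the identification $\phi(B(x,r_i))/r_i^h = r_i^{-h}T_{x,r_i}\phi(B(0,1))$, once one has noted the scaling identity $\mathcal{C}^h(\mathbb V\cap B(0,r))=r^h$ that makes the unit sphere negligible for the limit measure.
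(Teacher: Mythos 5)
Your argument is correct and follows essentially the same route as the paper: identify $\phi(B(x,r_i))/r_i^h$ with $r_i^{-h}T_{x,r_i}\phi(B(0,1))$, apply the portmanteau-type result \cite[Proposition 1.62(b)]{AFP00} together with $\mathcal{C}^h\llcorner\mathbb V(B(0,1))=1$ from \cref{rem:Ch1}, and use that $\mathcal{C}^h\llcorner\mathbb V(\partial B(0,1))=0$. The only difference is that the paper cites this last sphere-negligibility fact from the literature (\cite[Lemma 3.5]{JNGV20}), whereas you derive it directly from the $h$-homogeneity of the Haar measure in \cref{prop:haar}, which is a perfectly valid (and self-contained) substitute.
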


\begin{proof}
Since $\mathcal{C}^h\llcorner \mathbb{V}(x)(\partial B(0,1))=0$, see e.g., \cite[Lemma 3.5]{JNGV20}, thanks to \cref{rem:Ch1} and to \cite[Proposition 1.62(b)]{AFP00}, we have
$$\lambda=\lambda \mathcal{C}^h\llcorner\mathbb{V}(x)(B(0,1))=\lim_{i\to\infty}\frac{T_{x,r_i}\phi(B(0,1))}{r_i^h}=\lim_{i\to\infty}\frac{\phi(B(x,r_i))}{r_i^h},$$
and this concludes the proof.
\end{proof}

The above proposition has the following immediate consequence.

\begin{corollario}\label{cordenstang}
Let $h\in\{1,\ldots,Q\}$ and assume $\phi$ is a $\mathscr{P}^*_h$-rectifiable. Then for $\phi$-almost every $x\in\mathbb{G}$ we have
$$\Tan_h(\phi,x)\subseteq \{\lambda\mathcal{C}^h\llcorner\mathbb{W}:\lambda\in[\Theta^h_*(\phi,x),\Theta^{h,*}(\phi,x)]\text{ and }\mathbb{W}\in\G(h)\}.$$
\end{corollario}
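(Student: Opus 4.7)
The plan is to combine the algebraic fact that any tangent measure admitted by a $\mathscr{P}_h^*$-rectifiable measure can be rewritten using the centered Hausdorff measure with the density identification supplied by \cref{prop:density}.

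First, I would fix a $\phi$-full measure set $N\subseteq \mathbb G$ on which all of the following hold simultaneously: the density bounds $0<\Theta^h_*(\phi,x)\leq\Theta^{h,*}(\phi,x)<\infty$, and the tangent inclusion $\Tan_h(\phi,x)\subseteq\{\lambda\mathcal{S}^h\llcorner\mathbb V:\lambda>0,\ \mathbb V\in\G(h)\}$ coming from \cref{def:PhRectifiableMeasure} together with \cref{rem:AboutLambda=0}. Fix such an $x\in N$ and let $\nu\in\Tan_h(\phi,x)$, so that by definition there exist $r_i\downarrow 0$ and $\lambda>0$, $\mathbb V\in\G(h)$ with $r_i^{-h}T_{x,r_i}\phi\rightharpoonup \lambda\mathcal{S}^h\llcorner\mathbb V$.

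Next, since by \cref{prop:haar} both $\mathcal{S}^h\llcorner\mathbb V$ and $\mathcal{C}^h\llcorner\mathbb V$ are Haar measures of $\mathbb V$, they are proportional; hence there exists a unique $\lambda'>0$ (depending only on $\mathbb V$ and on the chosen distance) such that
\[
\lambda\,\mathcal{S}^h\llcorner\mathbb V \;=\; \lambda'\,\mathcal{C}^h\llcorner\mathbb V.
\]
In particular $\nu=\lambda'\mathcal{C}^h\llcorner\mathbb V$, which already places $\nu$ inside the ambient set on the right-hand side of the claim modulo the constraint on $\lambda'$.

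Finally, to identify the range of $\lambda'$, I apply \cref{prop:density} to the sequence $r_i$: it yields
\[
\lim_{i\to\infty}\frac{\phi(B(x,r_i))}{r_i^h}\;=\;\lambda'.
\]
But every limit of $\phi(B(x,r_i))/r_i^h$ as $r_i\to 0$ lies between $\Theta^h_*(\phi,x)$ and $\Theta^{h,*}(\phi,x)$ by definition of lower and upper density, so $\lambda'\in[\Theta^h_*(\phi,x),\Theta^{h,*}(\phi,x)]$. Since $x\in N$ and $\nu\in\Tan_h(\phi,x)$ were arbitrary, the inclusion follows. There is no real obstacle here: the only subtle point is to notice that although the definition of $\mathscr{P}_h^*$-rectifiability is phrased in terms of $\mathcal{S}^h$, the density identification provided by \cref{prop:density} is stated in terms of $\mathcal{C}^h$ (because it uses the normalization $\mathcal{C}^h(\mathbb V\cap B(0,1))=1$ from \cref{rem:Ch1}), and converting between them via the Haar-measure proportionality is what forces the multiplicative constant to fall in the prescribed density interval.
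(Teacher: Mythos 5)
Your proof is correct and follows essentially the same route as the paper, which treats the corollary as an immediate consequence of \cref{prop:density}: identify the tangent as a multiple of $\mathcal{C}^h\llcorner\mathbb V$ via the Haar-measure proportionality of \cref{prop:haar}, then trap the resulting coefficient between the lower and upper densities. Spelling out the $\mathcal{S}^h$-to-$\mathcal{C}^h$ conversion and the exclusion of the zero tangent via \cref{rem:AboutLambda=0} is exactly the intended (and here implicit) argument.
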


We introduce now a way to estimate how far two measures are.

\begin{definizione}\label{def:Fk}
Given $\phi$ and $\psi$ two Radon measures on $\mathbb G$, and given $K\subseteq \mathbb G$ a compact set, we define 
\begin{equation}
    F_K(\phi,\psi):= \sup\left\{\left|\int fd\phi - \int fd\psi\right|:f\in \mathrm{Lip}_1^+(K)\right\}.
    \label{eq:F}
\end{equation}
We also write $F_{x,r}$ for $F_{B(x,r)}$.
\end{definizione}

\begin{osservazione}\label{rem:ScalinfFxr}
With few computations that we omit, it is easy to see that $F_{x,r}(\phi,\psi)=rF_{0,1}(T_{x,r}\phi,T_{x,r}\psi)$. Furthermore, $F_K$ enjoys the triangular inequality, indeed if $\phi_1,\phi_2,\phi_3$ are Radon measures and $f\in\lip(K)$, then
\begin{equation*}
\begin{split}
\Big\lvert\int fd\phi_1-\int fd\phi_2\Big\rvert&\leq \Big\lvert\int fd\phi_1-\int fd\phi_3\Big\rvert+\Big\lvert\int fd\phi_3-\int fd\phi_2\Big\rvert\\ &\leq F_K(\phi_1,\phi_2)+F_K(\phi_2,\phi_3).
\end{split}
\end{equation*}
The arbitrariness of $f$ concludes that $F_K(\phi_1,\phi_2)\leq F_K(\phi_1,\phi_3)+F_K(\phi_3,\phi_2)$.
\end{osservazione}

The proof of the following criterion is contained in  \cite[Proposition 1.10]{MarstrandMattila20} and we omit the proof.
\begin{proposizione}\label{prop:WeakConvergenceAndFk}
Let $\{\mu_i\}$ be a sequence of Radon measures on $\mathbb G$. Let $\mu$ be a Radon measure on $\mathbb G$. The following are equivalent
\begin{enumerate}
    \item $\mu_i\rightharpoonup \mu$;
    \item $F_K(\mu_i,\mu)\to 0$, for every $K\subseteq \mathbb G$ compact. 
\end{enumerate}
\end{proposizione}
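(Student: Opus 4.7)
My plan is to prove the two implications separately, the key tool in each case being the total boundedness of $\mathrm{Lip}_1^+(K)$ in $C(K)$ together with a uniform bound on $\mu_i(K)$.

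For $(1)\Rightarrow(2)$: Fix a compact $K\subseteq\mathbb G$ and $\varepsilon>0$. I would first deduce $\sup_i\mu_i(K)<+\infty$ by weak convergence against a function $\phi\in C_c(\mathbb G)$ with $\phi\geq\chi_K$, since then $\mu_i(K)\leq\int\phi\,d\mu_i\to\int\phi\,d\mu<+\infty$. Next, the family $\mathrm{Lip}_1^+(K)$ is equicontinuous (being 1-Lipschitz) and uniformly bounded (its elements vanish outside $K$ and are 1-Lipschitz, hence bounded by $\diam(K)$), so by the Arzelà–Ascoli theorem it is totally bounded in $(C(K),\|\cdot\|_\infty)$. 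Covering it by finitely many $\varepsilon$-balls around $f_1,\dots,f_N$, weak convergence gives $|\int f_j\,d\mu_i-\int f_j\,d\mu|<\varepsilon$ for $i$ large and all $j$. For an arbitrary $g\in\mathrm{Lip}_1^+(K)$, choosing the closest $f_j$ and applying the triangle inequality yields
$$\Big|\int g\,d\mu_i-\int g\,d\mu\Big|\leq\varepsilon\big(1+\mu_i(K)+\mu(K)\big),$$
which is uniformly small and hence controls $F_K(\mu_i,\mu)$.

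For $(2)\Rightarrow(1)$: Given $f\in C_c(\mathbb G)$, pick a compact $K$ whose interior contains $\mathrm{supp}(f)$. I would first establish $\sup_i\mu_i(K')<+\infty$ for any compact $K'\subseteq\Int(K)$: indeed, choosing a 1-Lipschitz non-negative function $\psi$ supported in $K$ and equal to a positive constant $c$ on $K'$ (e.g.\ $\psi(x)=\max\{0,c-\dist(x,K')\}$ for $c<\dist(K',\mathbb G\setminus K)$), the hypothesis $F_K(\mu_i,\mu)\to 0$ together with $c\,\mu_i(K')\leq\int\psi\,d\mu_i$ yields the bound. Next, I would uniformly approximate $f$ by a Lipschitz function $g$ supported in $K$ with $\|f-g\|_\infty<\varepsilon$ and Lipschitz constant $L$. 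The decomposition $g=g^+-g^-$ produces two $L$-Lipschitz non-negative functions supported in $K$, so that $g^\pm/L\in\mathrm{Lip}_1^+(K)$, and hypothesis (2) gives $|\int g\,d\mu_i-\int g\,d\mu|\leq 2L\,F_K(\mu_i,\mu)\to 0$. Combining with the uniform approximation and the mass bound on $K$ concludes $\int f\,d\mu_i\to\int f\,d\mu$.

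The main technical obstacle I anticipate is that $\mathrm{Lip}_1^+(K)$ consists only of non-negative 1-Lipschitz functions, so one cannot test against an arbitrary $f\in C_c(\mathbb G)$ directly. This forces the decomposition of a Lipschitz approximant into positive and negative parts and the rescaling by the Lipschitz constant; it also forces the slightly delicate argument that extracts a uniform mass bound from hypothesis (2), which must exploit that the relevant compact set sits strictly inside $K$ so that one has room to build a 1-Lipschitz bump of height bounded away from zero.
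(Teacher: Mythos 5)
Your proof is correct and follows the standard route; the paper itself omits the argument and defers to \cite{MarstrandMattila20}, whose proof rests on the same two ingredients you use: for $(1)\Rightarrow(2)$, a uniform bound on $\mu_i(K)$ plus total boundedness of $\mathrm{Lip}_1^+(K)$ in $C(K)$ via Arzelà--Ascoli, and for $(2)\Rightarrow(1)$, a Lipschitz approximation of $f\in C_c(\mathbb G)$ split into positive and negative parts rescaled by the Lipschitz constant, together with a mass bound extracted from a $1$-Lipschitz bump. The only detail to tidy is at the end of $(2)\Rightarrow(1)$: your mass bound was established for compact sets $K'\subseteq \Int(K)$, whereas the error term $\lvert\int (f-g)\,d\mu_i\rvert$ is controlled by $\varepsilon\,\mu_i(\supp(f-g))$, so either take the approximant $g$ supported in such a $K'$ containing a neighbourhood of $\supp f$, or run the bump argument inside a slightly larger compact set containing $K$ in its interior; this is a one-line fix and does not affect the validity of the proof.
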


The following proposition is a consequence of the choice of the norm in \cref{smoothnorm}, since it is based on \cref{prop:TopDimMetricDim}.
\begin{proposizione}\label{prop:pianconv}
Let $\mathbb G$ be a Carnot group endowed with the smooth-box norm defined in \cref{smoothnorm}. Let $h\in\{1,\ldots,Q\}$ and suppose that $\{\mathbb{V}_i\}_{i\in\N}$ is a sequence of planes in $\G(h)$ converging in the metric $d_\mathbb{G}$ to some $\mathbb{V}\in\G(h)$. Then, $\mathcal{C}^{h}\llcorner \mathbb{V}_i\rightharpoonup \mathcal{C}^{h}\llcorner \mathbb{V}$.
\end{proposizione}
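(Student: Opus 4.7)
The plan is to reduce the statement, via \cref{prop:TopDimMetricDim}, to weak convergence of the classical Euclidean Hausdorff measures on the underlying linear subspaces, and then to prove that via a parametrization by orthonormal bases.

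First I would exploit the stratification stability of the limit. By \cref{prop:htagliato}, since $d_{\mathbb G}(\mathbb V_i,\mathbb V)\to 0$, for $i$ large enough we have $\mathfrak{s}(\mathbb V_i)=\mathfrak{s}(\mathbb V)$; in particular all $\mathbb V_i$ share a common topological dimension $d$. Hence by \cref{prop:TopDimMetricDim}(ii)--(iii) we have $\mathcal C^h\llcorner\mathbb V_i=\beta(\mathfrak{s}(\mathbb V))\,\mathcal H^d_{\mathrm{eu}}\llcorner\mathbb V_i$ and $\mathcal C^h\llcorner\mathbb V=\beta(\mathfrak{s}(\mathbb V))\,\mathcal H^d_{\mathrm{eu}}\llcorner\mathbb V$, with the same constant. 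Thus it suffices to prove $\mathcal H^d_{\mathrm{eu}}\llcorner\mathbb V_i\rightharpoonup\mathcal H^d_{\mathrm{eu}}\llcorner\mathbb V$.

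Next I would pass from convergence in $d_{\mathbb G}$ to convergence in the ordinary Euclidean Grassmannian. Since $\exp$ acts as the identity, every homogeneous subgroup is a graded linear subspace of $\mathbb G\equiv\R^n$, and the convergence $d_{\mathbb G}(\mathbb V_i,\mathbb V)\to 0$ amounts to Hausdorff convergence of $\mathbb V_i\cap B(0,1)$ to $\mathbb V\cap B(0,1)$ in the homogeneous metric. Because the homogeneous and the Euclidean topologies on $\mathbb G$ coincide, and because dilations act compatibly with both, this gives Hausdorff convergence of the unit Euclidean balls of $\mathbb V_i$ and $\mathbb V$ (up to rescaling), hence convergence of $\mathbb V_i$ to $\mathbb V$ as elements of the Euclidean Grassmannian of $d$-planes of $\R^n$. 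Moreover, since the common stratification vector $\mathfrak{s}=(\mathfrak{s}_1,\dots,\mathfrak{s}_\kappa)$ prescribes the dimension $\mathfrak{s}_j$ of $\mathbb V_i\cap V_j$, the convergence is layer-by-layer.

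I would then choose Euclidean orthonormal bases $\{e_1^i,\dots,e_d^i\}$ of $\mathbb V_i$ converging (coordinate-wise) to an orthonormal basis $\{e_1,\dots,e_d\}$ of $\mathbb V$, which is possible by the Grassmannian convergence above. Define linear isometries $\pi_i\colon\R^d\to\mathbb V_i$ by $\pi_i(u)=\sum_j u_j e_j^i$, and similarly $\pi\colon\R^d\to\mathbb V$. Then $\pi_i\to\pi$ uniformly on compact sets, and $(\pi_i)_\#\mathcal L^d=\mathcal H^d_{\mathrm{eu}}\llcorner\mathbb V_i$, and analogously for $\pi$. For any $f\in C_c(\mathbb G)$ with $\supp f\subseteq B_{\mathrm{eu}}(0,R)$, one has
\[
\int f\,d(\mathcal H^d_{\mathrm{eu}}\llcorner\mathbb V_i)=\int_{\R^d}f(\pi_i(u))\,du,
\]
and the integrand vanishes for $|u|>R$ (since $\pi_i$ is an isometry), giving a common dominating bound. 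By dominated convergence and the pointwise convergence $f\circ\pi_i\to f\circ\pi$, the right-hand side converges to $\int_{\R^d}f(\pi(u))\,du=\int f\,d(\mathcal H^d_{\mathrm{eu}}\llcorner\mathbb V)$, which is exactly what we wanted.

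The only delicate point is the passage from $d_{\mathbb G}$-convergence to Euclidean Grassmannian convergence with matching ONBs. Since the two topologies coincide and the stratification is fixed for large $i$, this is essentially automatic, but one should be careful to pick the ONBs respecting the layer decomposition so that the isometries $\pi_i$ genuinely converge to $\pi$; everything else is a routine dominated-convergence argument.
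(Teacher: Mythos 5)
Your argument is correct and follows essentially the same route as the paper: fix the stratification for large $i$ via \cref{prop:htagliato}, use \cref{prop:TopDimMetricDim} to get a common constant $\beta$, and reduce to the weak convergence $\mathcal H^d_{\mathrm{eu}}\llcorner\mathbb V_i\rightharpoonup\mathcal H^d_{\mathrm{eu}}\llcorner\mathbb V$. The only difference is that the paper simply asserts this last Euclidean fact, whereas you supply a proof of it (Euclidean Grassmannian convergence, converging orthonormal parametrizations, dominated convergence), which is a valid and welcome filling-in of the omitted detail.
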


\begin{proof}
First of all note that \cref{prop:htagliato} implies that there exists a $i_0\in\N$ such that for any $i\geq i_0$ we have that $\mathbb{V}_i$ and $\mathbb{V}$ have the same stratification and thus the same topological dimension $d$. Since the $\mathbb{V}_i$'s have the same stratification if $i\geq i_0$, \cref{prop:TopDimMetricDim}(iii) implies that $\beta(\mathbb{V}_i)=\beta(\mathbb{V})$ for any $i\geq i_0$.
Thus, for any continuous function $f$ with compact support thanks to \cref{prop:TopDimMetricDim} we have\label{eq:hauseu}
\begin{equation}
   \lim_{i\to\infty} \int f d\mathcal{C}^{h}\llcorner \mathbb{V}_i-\int f d\mathcal{C}^{h}\llcorner \mathbb{V}= \lim_{i\to\infty} \beta(\mathbb{V})\bigg(\int f d\mathcal{H}^{d}_{\mathrm{eu}}\llcorner \mathbb{V}_i-\int f d\mathcal{H}^{d}_{\mathrm{eu}}\llcorner \mathbb{V}\bigg)=0,\nonumber
\end{equation}
where the last identity comes from the fact that $\mathcal{H}^{d}_{\mathrm{eu}}\llcorner \mathbb{V}_i\rightharpoonup \mathcal{H}^{d}_{\mathrm{eu}}\llcorner \mathbb{V}$.
\end{proof}

Now we are going to define a functional that in some sense tells how far is a measure from being flat around a point $x\in\mathbb G$ and at a certain scale $r>0$.

\begin{definizione}\label{def:metr}
For any $x\in\mathbb{G}$, any $h\in\{1,\ldots,Q\}$ and any $r>0$ we define the functional:
\begin{equation}\label{eqn:dxr}
    d_{x,r}(\phi,\mathfrak{M}(h)):=\inf_{\substack{\Theta>0,\\ \mathbb V\in \G(h)}} \frac{F_{x,r}(\phi,\Theta \mathcal{S}^{h}\llcorner x\mathbb V)}{r^{h+1}}. 
\end{equation}
Furthermore, if $G$ is a subset of the $h$-dimensional Grassmanian $\G(h)$, we also define
$$ d_{x,r}(\phi,\mathfrak{M}(h,G)):=\inf_{\substack{\Theta>0,\\ \mathbb V\in G }} \frac{F_{x,r}(\phi,\Theta \mathcal{S}^{h}\llcorner x\mathbb V)}{r^{h+1}}.$$

\end{definizione}
\begin{osservazione}\label{rem:dxrContinuous}
It is a routine computation to prove that, whenever $h\in \mathbb N$ and $r>0$ are fixed, the function $x\mapsto d_{x,r}(\phi,\mathfrak M(h,G))$ is a continuous function. The proof can be reached as in \cite[Item (ii) of Proposition 2.2]{MarstrandMattila20}. Moreover, from the invariance property in \cref{rem:ScalinfFxr} and \cref{prop:haar}, if in \eqref{eqn:dxr} we use the measure $\mathcal{C}^h\llcorner x\mathbb V$ we obtain the same functional.
\end{osservazione}

The proof of the next Proposition is in the Preprint \cite{antonelli2020rectifiable} and thus we omit it.

\begin{proposizione}[{\cite[Proposition 2.30]{antonelli2020rectifiable}}]\label{prop:TanAndDxr}
Let $\phi$ be a Radon measure on $\mathbb G$ satisfying item (i) in \cref{def:PhRectifiableMeasure}. Further, let $G$ be a subfamily of $\G(h)$ and let $\mathfrak{M}(h,G)$ be the set defined in \eqref{Gotico(h)}. If for $\phi$-almost every $x\in \mathbb G$ we have $\mathrm{Tan}_h(\phi,x)\subseteq \mathfrak{M}(h,G)$, then for $\phi$-almost every $x\in\mathbb G$ and every every $k>0$ we have
$$
\lim_{r\to 0} d_{x,kr}(\phi,\mathfrak{M}(h,G))=0.$$
\end{proposizione}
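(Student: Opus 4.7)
The plan is to argue by contradiction at a fixed good point. Fix $x$ in the set of full $\phi$-measure where both item (i) of \cref{def:PhRectifiableMeasure} and the inclusion $\mathrm{Tan}_h(\phi,x)\subseteq\mathfrak M(h,G)$ hold, and fix $k>0$. Suppose, for contradiction, that $\limsup_{r\to 0}d_{x,kr}(\phi,\mathfrak M(h,G))>0$, so that there exist $\varepsilon>0$ and an infinitesimal sequence $r_i\downarrow 0$ with $d_{x,kr_i}(\phi,\mathfrak M(h,G))\geq\varepsilon$ for every $i$. The goal is to extract a tangent measure along the sequence of scales $kr_i$ and match it against the infimum defining $d_{x,kr_i}$.

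For the extraction step, consider the rescaled measures $\mu_i:=(kr_i)^{-h}T_{x,kr_i}\phi$. Since $\Theta^{h,\ast}(\phi,x)<\infty$, for every $R>0$ one has $\mu_i(B(0,R))=R^h\,(kr_iR)^{-h}\phi(B(x,kr_iR))$, which is bounded uniformly in $i$. Hence by the weak compactness of locally uniformly bounded Radon measures (see \cite[Theorem 1.59]{AFP00}), up to extracting a non-relabeled subsequence, $\mu_i\rightharpoonup\nu$ for some Radon measure $\nu$. Because $kr_i\to 0$, by \cref{def:TangentMeasure} $\nu\in\mathrm{Tan}_h(\phi,x)$, and the standing hypothesis on $x$ gives $\nu=\lambda\mathcal S^h\llcorner\mathbb V$ for some $\lambda>0$ and $\mathbb V\in G$.

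To derive the contradiction, I would use $\nu$ as the competitor in the infimum defining $d_{x,kr_i}$. Since $\mathcal S^h\llcorner\mathbb V$ is left-invariant and $h$-homogeneous under $\delta_\lambda$ (\cref{prop:haar}), a direct computation gives
\[
T_{x,kr_i}(\lambda\mathcal S^h\llcorner x\mathbb V)=\lambda(kr_i)^h\,\mathcal S^h\llcorner\mathbb V.
\]
Combining this with the scaling $F_{x,kr_i}(\phi,\psi)=kr_i\,F_{0,1}(T_{x,kr_i}\phi,T_{x,kr_i}\psi)$ from \cref{rem:ScalinfFxr} and the obvious identity $F_{0,1}(c\mu,c\mu')=c\,F_{0,1}(\mu,\mu')$ for $c>0$, one obtains
\[
d_{x,kr_i}(\phi,\mathfrak M(h,G))\leq \frac{F_{x,kr_i}(\phi,\lambda\mathcal S^h\llcorner x\mathbb V)}{(kr_i)^{h+1}}=F_{0,1}(\mu_i,\lambda\mathcal S^h\llcorner\mathbb V).
\]
By \cref{prop:WeakConvergenceAndFk} and $\mu_i\rightharpoonup\lambda\mathcal S^h\llcorner\mathbb V$, the right-hand side tends to $0$, contradicting $d_{x,kr_i}(\phi,\mathfrak M(h,G))\geq\varepsilon$.

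The only subtle point, i.e. what I expect to be the main obstacle, is the bookkeeping of the scaling identities: one must carefully verify that the blow-up of the flat comparison measure $\lambda\mathcal S^h\llcorner x\mathbb V$ at the base point $x$ and scale $kr_i$ produces precisely $\lambda(kr_i)^h\mathcal S^h\llcorner\mathbb V$ (so that all powers of $kr_i$ cancel), and that this yields the desired matching with the tangent $\nu$ extracted along the \emph{same} sequence of scales $kr_i$. Once this alignment is in place, no further compactness in the Grassmannian is needed, and the argument is uniform in $k$ since $k$ plays no role beyond relabeling the infinitesimal sequence.
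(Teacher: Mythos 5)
Your proof is correct and follows essentially the same route as the paper's (omitted, referenced) argument: argue by contradiction at a $\phi$-generic point, use the finite upper density to extract a weakly convergent subsequence of the blow-ups $(kr_i)^{-h}T_{x,kr_i}\phi$, identify the limit as an element $\lambda\mathcal S^h\llcorner\mathbb V$ of $\mathfrak M(h,G)$, and use it as a competitor via the scaling identities for $F_{x,r}$ together with \cref{prop:WeakConvergenceAndFk}. The scaling bookkeeping you flag as the delicate point indeed works out exactly as you computed, so no gap remains.
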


The following proposition is an adaptation of \cite[4.4(4)]{Preiss1987GeometryDensities} and it will be crucial in the proof of Marstrand-Mattila's rectifiability criterion in \cref{sec:MMconormale}. 

\begin{proposizione}\label{prop::4.4(4)}
Suppose that $h\in\{1,\ldots,Q\}$, $\phi$ is a  Radon measure supported on a compact set, and let $G\subseteq \G(h)$. If there exists an $x\in E(\vartheta,\gamma)$, a $\sigma\in(0,2^{-10(h+1)}\vartheta^{-1})$ and a $0<t<1/(2\gamma)$ such that
$$
d_{x,t}(\phi,\mathfrak{M}(h,G))\leq \sigma^{h+4},
$$
then there is a $\mathbb{V}\in G$ such that
\begin{itemize}
    \item[(i)] whenever $y,z\in B(x,t/2)\cap x\mathbb V$ and $\sigma t\leq r,s\leq t/2$ we have
    $$
\phi(B(y,r)\cap B(x\mathbb{V},\sigma^2t))\geq (1-2^{10(h+1)}\vartheta\sigma)(r/s)^h\phi(B(z,s));
$$
\item[(ii)] furthermore, if the plane $\mathbb V$ yielded by item (i) above admits a complementary normal subgroup $\mathbb L$, denote by $P_{\mathbb V}$ the splitting projection on $\mathbb V$ according to this splitting. Then for any $k>0$ with $\sigma k<2^{-10h} \vartheta^{-1}$, if we
define $T_\mathbb{V}(0,t/4k):=P_\mathbb{V}^{-1}(P_\mathbb{V}(B(0,t/4k)))$ we have
$$\phi(B(x,t/4)\cap xT_\mathbb{V}(0,t/4k))\leq (1+4\sigma(2kh+1))\mathcal{C}^h(P(B(0,1)))k^{-h}\phi(B(x,t/4)).$$
\end{itemize}
\end{proposizione}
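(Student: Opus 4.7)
The proof is a Preiss-style test function argument. By the definition of $d_{x,t}$ and \cref{rem:dxrContinuous}, fix $\Theta>0$ and $\mathbb V\in G$ with $F_{x,t}(\phi,\mu)\leq 2\sigma^{h+4}t^{h+1}$, where $\mu:=\Theta\,\mathcal{C}^h\llcorner x\mathbb V$. Standard $O(1/t)$-Lipschitz cutoffs supported in $B(x,t)$, tested against $\phi$ (controlled via $x\in E(\vartheta,\gamma)$ and $t<1/\gamma$) and against $\mu$ (which is exactly computable on $x\mathbb V$ by \cref{rem:Ch1}), immediately yield $\Theta\in\bigl[(2^{h+1}\vartheta)^{-1},\,2^{h+1}\vartheta\bigr]$. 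This allows every subsequent error of the form $F_{x,t}/\text{(slack)}$ to be converted into a \emph{relative} error, controlled by the smallness assumption $\sigma\vartheta<2^{-10(h+1)}$.

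\textbf{Proof of (i).} For $y\in x\mathbb V\cap B(x,t/2)$ and $\sigma t\leq r\leq t/2$, I would sandwich $\phi(B(y,r))$ between $1/(\sigma r)$-Lipschitz tent functions equal to $1$ on $B(y,r\mp\sigma r)$ and vanishing outside $B(y,r\pm\sigma r)$. Since $y\in x\mathbb V$, their $\mu$-integrals equal $\Theta r^h(1\pm\sigma)^h$, while the $F_{x,t}$-error is at most $O(\sigma^{h+2}t^h)$; because $r\geq\sigma t$ and $\Theta\gtrsim\vartheta^{-1}$, this is a relative error $O_h(\vartheta\sigma)$. Hence $\phi(B(y,r))=\Theta r^h(1+O_h(\vartheta\sigma))$, and analogously for $\phi(B(z,s))$. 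The tube-complement is handled by testing against $w\mapsto\min(1,d(w,x\mathbb V)/(\sigma^2 t))$ multiplied by an appropriate cutoff near $B(y,r)$: this function vanishes on $x\mathbb V$, so its $\mu$-integral is zero, and the residual $F_{x,t}$-error gives $\phi(B(y,r)\setminus B(x\mathbb V,\sigma^2 t))=O_h(\vartheta\sigma^2)\,\Theta r^h$. Subtracting to isolate the intersection and dividing by the corresponding estimate at $z,s$ yields $\phi(B(y,r)\cap B(x\mathbb V,\sigma^2 t))/\phi(B(z,s))\geq (r/s)^h(1-O_h(\vartheta\sigma))$, whose constant fits inside $2^{10(h+1)}\vartheta\sigma$ thanks to $\sigma\vartheta<2^{-10(h+1)}$ and $\vartheta\geq 1$.

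\textbf{Proof of (ii).} Assume $\mathbb V$ admits a normal complement $\mathbb L$; by \cref{prop:projhom} the projection $P:=P_\mathbb V$ is a Lipschitz homogeneous homomorphism. Pick slacks $\delta'$ of order $\sigma t$ and $\delta''$ of order $\sigma t/h$, and consider the upper test function $\psi\chi$ defined by
$$\psi(w):=\max\!\left(0,\,1-\tfrac{d(P(x)^{-1}P(w),\,P(B(0,t/4k)))}{\delta'}\right),\qquad \chi(w):=\max\!\left(0,\,\min\!\left(1,\,\tfrac{t/2-d(w,x)}{t/4}\right)\right),$$
so that $\psi\chi$ is Lipschitz, supported in $B(x,t/2)\subseteq B(x,t)$, and $\geq 1$ on $B(x,t/4)\cap xT_\mathbb V(0,t/4k)$. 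Since $P(x)^{-1}P(xv)=v$ on $x\mathbb V$, the $\mu$-integral of $\psi\chi$ is bounded by $\Theta\cdot\mathcal{C}^h\!\left(N_{\delta'}(P(B(0,t/4k)))\right)$, where $N_{\delta'}$ denotes the $\delta'$-neighborhood inside $\mathbb V$. The key homomorphism identity is: if $v\in\mathbb V$ satisfies $d(v,P(w_0))\leq\delta'$ for some $w_0\in B(0,t/4k)$, then setting $\eta:=P(w_0)^{-1}v\in\mathbb V$ (so $\|\eta\|\leq\delta'$), the fact that $P$ is a homomorphism and acts as the identity on $\mathbb V$ gives $v=P(w_0\cdot\eta)$ with $w_0\cdot\eta\in B(0,t/4k+\delta')$; hence $N_{\delta'}(P(B(0,t/4k)))\subseteq P(B(0,t/4k+\delta'))$, whose $\mathcal{C}^h$-measure equals $(t/4k+\delta')^h\,\mathcal{C}^h(P(B(0,1)))$ by $h$-homogeneity. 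A parallel lower test function with slack $\delta''$ yields $\phi(B(x,t/4))\geq\Theta(t/4-\delta'')^h-\text{error}$; the hypothesis $\sigma k<2^{-10h}/\vartheta$ keeps all $F_{x,t}$-errors at relative order $O(\sigma^2)$. The quotient of the two bounds is at most $\mathcal{C}^h(P(B(0,1)))\!\left(\tfrac{t/4k+\delta'}{t/4-\delta''}\right)^h\!(1+O(\sigma^2))$, and expanding with the chosen $\delta',\delta''$ yields $\leq \mathcal{C}^h(P(B(0,1)))\,k^{-h}\,(1+4\sigma(2kh+1))$.

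\textbf{Main obstacle.} The technical heart is the fine bookkeeping of multiplicative constants in (ii): the slacks $\delta',\delta''$ must be calibrated so that their $h$- and $k$-dependence, together with the $(1+\alpha)^h/(1-\beta)^h$ expansion, reproduces \emph{exactly} the factor $1+4\sigma(2kh+1)$. The decisive observation making this tight is the homomorphism identity $v=P(w_0\cdot(P(w_0)^{-1}v))$, which enlarges $P(B(0,t/4k))$ by only $\delta'$ rather than $L\delta'$: without it, the Lipschitz constant $L$ of $P$ (which can be large if the complement $\mathbb L$ is almost parallel to $\mathbb V$) would leak into the leading multiplicative constant and spoil the bound. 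Part (i) is, by comparison, classical Preiss bookkeeping, made slightly delicate here by the absence of canonical projections onto $\mathbb V$ in the Carnot setting.
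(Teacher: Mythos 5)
Your part (i) is correct and is essentially the paper's argument in a mildly different packaging: the paper tests directly on $U(y,r)\cap U(x\mathbb V,\rho)$ to get the lower bound \eqref{eq:1.1032} on the intersection, while you bound $\phi(B(y,r))$ from below and subtract the tube--complement using a cutoff that vanishes on $x\mathbb V$ (whose Lipschitz constant is metric, hence harmless); both fit comfortably inside the factor $1-2^{10(h+1)}\vartheta\sigma$.

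Part (ii), however, has a genuine gap. Your test function $\psi(w)=\max\bigl(0,1-d(P(x^{-1}w),P(B(0,t/4k)))/\delta'\bigr)$ is a cutoff in the \emph{projected} variable, so the only available bound on its Lipschitz constant on $\mathbb G$ is $\mathrm{Lip}(P_{\mathbb V})/\delta'$, not $1/\delta'$: moving $w$ to $w\delta$ changes $P(x^{-1}w)$ by $\|P(\delta)\|$, which is only controlled by $\mathrm{Lip}(P_{\mathbb V})\|\delta\|$. Hence the $F_{x,t}$-error in your comparison is of order $\mathrm{Lip}(P_{\mathbb V})\,\sigma^{h+4}t^{h+1}/\delta'$. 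The proposition puts no bound at all on $\mathrm{Lip}(P_{\mathbb V})=1+2/C$ in the notation of \cref{lip:const:proj:conorm}: the normal complement $\mathbb L$ may be nearly parallel to $\mathbb V$ (already in the abelian group $\mathbb R^2$), making this constant arbitrarily large, while the conclusion's correction factor $(1+4\sigma(2kh+1))$ must be independent of the splitting --- the only splitting-dependent quantity allowed is $\mathcal{C}^h(P(B(0,1)))$, which sits in the main term. So your claim that the hypothesis $\sigma k<2^{-10h}\vartheta^{-1}$ keeps all $F_{x,t}$-errors at relative order $O(\sigma^2)$ is false for your choice of test function: the relative error is $O(\mathrm{Lip}(P_{\mathbb V})\sigma^2)$ and cannot be absorbed. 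Your homomorphism identity $v=P\bigl(w_0\cdot(P(w_0)^{-1}v)\bigr)$ cures only the other possible leak (the support enlargement costs $+\delta'$, not $+\mathrm{Lip}(P)\delta'$), and no recalibration of $\delta'$ removes the one in the Lipschitz constant. The paper's proof dodges this precisely by cutting off with the ambient metric: $\ell(z)=\min\{1,\dist(z,\mathbb G\setminus U(U(x,t/4)\cap xT(0,t/4k),\rho))/\rho\}$ has Lipschitz constant $1/\rho$ independently of $P$, and the homomorphism property of \cref{prop:projhom} enters only at the level of sets, through $P(B(T(0,t/4k),\rho))\subseteq P(B(0,t/4k+\rho))$, so the metric $\rho$-neighborhood of the cylinder costs only $+\rho$ in projected radius. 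Note also that the paper takes $\rho=\sigma^2 t/4$, so the geometric correction to the main term is $O(kh\sigma^2)$ and the whole budget $4\sigma(2kh+1)$ remains available for the errors; with your slacks of order $\sigma t$ the main-term correction is already comparable to that entire budget, so even setting the $\mathrm{Lip}(P_{\mathbb V})$ issue aside your bookkeeping would be extremely tight.
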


\begin{proof}First of all, we notice that by the definition of $d_{x,t}(\phi,\mathfrak{M}(h,G))$ there exist $\mathbb{V}\in G$ and $\lambda>0$ such that
$$
F_{x,t}(\phi,\lambda\mathcal{C}^h\llcorner x\mathbb V)\leq \sigma^{h+3}t^{h+1}.$$
\paragraph{Proof of (i)}The key of the proof of item (i) is to show that for any $w\in B(x,t/2)\cap x\mathbb V$, any $\tau\in(0,t/2]$ and any $\rho\in(0,\tau]$ we have
\begin{align}
    \phi(B(w,\tau))&\leq \lambda\mathcal{C}^h\llcorner(x\mathbb V)(B(w,\tau+\rho))+\sigma^{h+3}t^{h+1}/\rho,\label{eq:1.1031}\\
    \lambda\mathcal{C}^h\llcorner(x\mathbb V)(B(w,\tau-\rho))&\leq \phi(B(w,\tau)\cap B(x\mathbb{V},\rho))+\sigma^{h+3}t^{h+1}/\rho.\label{eq:1.1032}
\end{align}
Before proving that \eqref{eq:1.1031} and \eqref{eq:1.1032} together imply the claim, we need to give a lower bound for $\lambda$. Since $x\in E(\vartheta,\gamma)$, with the choice $w=x$, $\tau=t/4$, and $\rho:=\sigma^2 t$ we have, from \eqref{eq:1.1031}, that the following inequality holds
\begin{equation}
\begin{split}
\vartheta^{-1}(t/4)^h&\leq \phi(B(x,t/4))\leq \lambda\mathcal{C}^h\llcorner(x\mathbb V)(B(x,(1/4+\sigma^2)t))+\sigma^{h+1}t^{h}\\ &=\lambda (1/4+\sigma^2)^ht^h+\sigma^{h+1}t^h,
\label{eq:chin4.4.4}
\end{split}
\end{equation}
where the last equality comes from \cref{rem:Ch1}.
Since we know that $\sigma\leq 1/(2^{10(h+1)}\vartheta)$, we infer that $\sigma^{h+1}\leq 1/(8^h\vartheta)$, and then from \eqref{eq:chin4.4.4} we infer
\begin{equation}\label{eq:estlambda}
    \vartheta^{-1}4^{-h} \leq \lambda(1/4+\sigma^2)^h+\sigma^{h+1} \quad \text{and in particular}\quad \lambda \geq \vartheta^{-1}2^{-3h},
\end{equation}
where we exploited the fact that $1/4+\sigma^2<1$, the fact that $\sigma^{h+1}\leq 1/(8^h\vartheta)$ and the fact that $4^{-h}-8^{-h}\geq 8^{-h}$.

Let us now prove that \eqref{eq:1.1031} and \eqref{eq:1.1032} imply the claim. Since by hypothesis $r,s\geq \sigma t$ with the choice $\rho=\sigma^2t$ we have $\rho<r,s$. Furthermore since $\sigma t\leq r,s\leq t/2$ and $y,z\in B(x,t/2)\cap x\mathbb V$, the bounds \eqref{eq:1.1031} and \eqref{eq:1.1032} imply
\begin{equation}
    \begin{split}
        \frac{\phi(B(y,r)\cap B(x\mathbb{V},\rho))}{\phi(B(z,s))}&\geq\frac{\lambda\mathcal{C}^h\llcorner(x\mathbb{V})(B(y,r-\rho))-\sigma^{h+3}t^{h+1}/\rho}{\lambda\mathcal{C}^h\llcorner(x \mathbb{V})(B(z,s+\rho))+\sigma^{h+3}t^{h+1}/\rho} \\&=\frac{r^h}{s^h}\frac{\lambda(1-\sigma^2t/r)^h-\sigma^{h+1}(t/r)^h}{\lambda (1+\sigma^2t/s)^{h}+\sigma^{h+1}(t/s)^h}\\
        &\geq\frac{r^h}{s^h}\frac{\lambda(1-\sigma)^h-\sigma^{h+1}(t/r)^h}{\lambda (1+\sigma)^{h}+\sigma^{h+1}(t/s)^h}\geq \frac{r^h}{s^h}\frac{\lambda(1-\sigma)^h-\sigma}{\lambda (1+\sigma)^{h}+\sigma},
        \nonumber
    \end{split}
\end{equation}
where the equality in the second line comes from \cref{rem:Ch1}, and we are using $\sigma t/r\leq 1$, and $\sigma t/s\leq 1$.
Since $2h\sigma\leq 1$, we have that $(1+\sigma)^h\leq 1+2h\sigma$, that can be easily proved by induction on $h$. This together with \eqref{eq:estlambda} and Bernoulli's inequality $(1-\sigma)^h\geq 1-\sigma h$ allows us to finally infer that
$$
\frac{\phi(B(y,r)\cap B(x\mathbb{V},\rho))}{\phi(B(z,s))}\geq \frac{r^h}{s^h} \frac{1-(\lambda h+1)\sigma/\lambda}{1+ (2h\lambda+1)\sigma/\lambda}\geq (1-2^{10(h+1)}\vartheta\sigma)\frac{r^h}{s^h},$$
where the last inequality comes from the fact that $\sigma\leq 1/2^{10(h+1)}\vartheta$, from \eqref{eq:estlambda} and some easy algebraic computations that we omit. An easy way to verify the last inequality is to show that $(1-\widetilde\alpha\sigma)/(1+\widetilde\beta\sigma)\geq 1-\widetilde\gamma\sigma$, where $\widetilde\alpha:=(\lambda h+1)/\lambda$, $\widetilde\beta:=(2h\lambda+1)/\lambda$ and $\widetilde\gamma:=2^{10(h+1)}\vartheta$, and observe that the latter inequality is implied by the fact that $\widetilde\alpha+\widetilde\beta-\widetilde\gamma\leq 0$.

Therefore, we are left to prove \eqref{eq:1.1031} and \eqref{eq:1.1032}. In order to prove \eqref{eq:1.1031}, we let $g(z):=\min\{1,\dist(z,\mathbb{G}\setminus U(w,\tau+\rho))/\rho\}$ and note that
\begin{align}
    \phi(B(w,\tau))&\leq \int g(z) d\phi(z)\leq \int g(z)d\lambda\mathcal{C}^h\llcorner (x\mathbb{V})(z)+\text{Lip}(g) F_{x,t}(\phi,\lambda\mathcal{C}^h\llcorner (x\mathbb{V}))\nonumber \\ &\leq \lambda\mathcal{C}^h\llcorner (x\mathbb{V})(B(w,\tau+\rho))+\sigma^{h+3}t^{h+1}/\rho.\nonumber
\end{align}
 On the the other hand, to prove \eqref{eq:1.1032} we let $h(z):=\min\{1,\dist(z,\mathbb{G}\setminus (U(w,\tau)\cap U(x\mathbb{V},\rho))
)/\rho\}$ and
\begin{equation}
\begin{split}
     \lambda\mathcal{C}^h\llcorner(x\mathbb{V})(B(w,\tau-\rho))&\leq \int h(z) d \lambda\mathcal{C}^h\llcorner(x\mathbb{V})(z) \\
     &\leq \int h(z)d\phi(z)+\text{Lip}(h) F_{x,t}(\phi,\lambda\mathcal{C}^h\llcorner (x\mathbb{V}))\\&\leq \phi(B(w,\tau)\cap B(x\mathbb{V},\rho))+\sigma^{h+3}t^{h+1}/\rho.\nonumber
\end{split}
\end{equation}

\paragraph{Proof of (ii):} In this proof let us fix $\tau:=t/4$ and define the function $\ell(z):=\min\{1,\dist(z,\mathbb{G}\setminus U(U(x,\tau)\cap xT(0,\tau/k),\rho))/\rho\}$, where $0<\rho<\tau$. With this definition we have the following chain of inequalities
\begin{equation}
\begin{split}
    \phi(B(x,\tau)\cap xT(0,\tau/k))
    &\leq \int \ell(z)d \phi(z)
    \leq \int \ell(z)d\lambda \mathcal{C}^h\llcorner (x\mathbb{V})(z)+\text{Lip}(\ell) F_{x,t}(\phi,\lambda\mathcal{C}^h\llcorner (x\mathbb{V}))\\
    &\leq \lambda \mathcal{C}^h\llcorner (x\mathbb{V})(B(x,\tau+\rho)\cap xT(0,\tau/k+\rho))+4^{h+1}\sigma^{h+3}\tau^{h+1}/\rho\\
    &\leq \lambda \mathcal{C}^h\llcorner\mathbb V(P(B(0,1)))(\tau/k+\rho)^h+4^{h+1}\sigma^{h+3}\tau^{h+1}/\rho,\label{numbero1000}
\end{split}
\end{equation}
where the third inequality above comes from the fact that, according to the proof of \cref{prop:projhom}, the projection $P$ is a homomorphism, and then the following chain of equalities holds
\begin{equation}
\begin{split}
        P(B(T(0,\tau/k),\rho))=P(T(0,\tau/k)B(0,\rho))
        =P(B(0,t/k))P(B(0,\rho))=P(B(0,\tau/k+\rho)).
\end{split}
\end{equation}
Putting together \eqref{eq:1.1032} when specialized to the case $w=x$ and $\tau=t/4$, with \eqref{numbero1000} and \cref{rem:Ch1}, we infer that
\begin{equation}
    \begin{split}
        \frac{\phi(B(x,\tau)\cap xT(0,\tau/k))}{\phi(B(x,\tau))}\leq \frac{\lambda \mathcal{C}^h\llcorner\mathbb V(P(B(0,1)))(\tau/k+\rho)^h+4^{h+1}\sigma^{h+3}\tau^{h+1}/\rho}{\lambda (\tau-\rho)^h-4^{h+1}\sigma^{h+3}\tau^{h+1}/\rho}.
    \end{split}
\end{equation}
Since $\sigma^2<1$ we choose $\rho:=\sigma^2 \tau$ and note that since $\sigma k<2^{-10h}\vartheta^{-1}$, the previous inequality yields
\begin{equation}
    \begin{split}
        \frac{\phi(B(x,\tau)\cap xT(0,\tau/k))}{\phi(B(x,\tau))}&\leq \frac{\lambda \mathcal{C}^h(P(B(0,1)))(1/k+\sigma^2)^h+4^{h+1}\sigma^{h+1}}{\lambda (1-\sigma^2)^h-4^{h+1}\sigma^{h+1}}\\ &\leq (1+4\sigma(2kh+1))\mathcal{C}^h(P(B(0,1)))k^{-h},\nonumber
    \end{split}
\end{equation}
where we omit the computations that lead to the last inequality but we stress that we need $\mathcal{C}^h(P(B(0,1)))\geq 1$, that in turns comes from the fact that $P(B(0,1))\supseteq B(0,1)\cap\mathbb V$ and $\mathcal{C}^h(B(0,1)\cap\mathbb V)=1$, due to \cref{rem:Ch1}; and also the bound on $\lambda$ in \eqref{eq:estlambda}. The last inequality concludes the proposition.
\end{proof}

We prove the following compactness result that will be of crucial importance in the proof of the co-normal Marstrand-Mattila rectifiability criterion later on.

\begin{proposizione}\label{prop:CompactnessTangents}
Let $h\in\{1,\ldots,Q\}$ and assume $\phi$ is a $\mathscr{P}^{*}_h$-rectifiable measure. Then, for $\phi$-almost all $x\in\mathbb{G}$ the set $\Tan_h(\phi,x)$ is weak-$*$ compact.
\end{proposizione}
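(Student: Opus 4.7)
The plan is to prove weak-$*$ compactness by combining pre-compactness (via uniform boundedness on compact sets) with sequential closedness (via a diagonal argument against the metric $F_K$ of \cref{def:Fk}), and then to invoke metrizability of the weak-$*$ topology on bounded subsets of Radon measures. First I would fix a point $x$ at which $0 < \Theta_*^h(\phi,x) \leq \Theta^{h,*}(\phi,x) < \infty$ and $\Tan_h(\phi,x) \subseteq \mathfrak{M}(h)$; by \cref{def:PhRectifiableMeasure} and \cref{rem:AboutLambda=0} the collection of such points has full $\phi$-measure.

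For pre-compactness, \cref{cordenstang} gives that every $\nu \in \Tan_h(\phi,x)$ has the form $\lambda\,\mathcal{C}^h \llcorner \mathbb W$ with $\mathbb W \in \G(h)$ and $\lambda \in [\Theta_*^h(\phi,x),\Theta^{h,*}(\phi,x)]$. Using \cref{rem:Ch1} this yields $\nu(B(0,R)) = \lambda R^h \leq \Theta^{h,*}(\phi,x) R^h$ for every $R > 0$, so the family $\Tan_h(\phi,x)$ is uniformly bounded on each compact set. In particular it is weak-$*$ relatively compact, and (by Banach--Alaoglu together with \cref{prop:WeakConvergenceAndFk}) the functionals $F_{0,R}$ metrize weak-$*$ convergence on this bounded set.

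For sequential closedness, I take $\{\nu_n\}_{n\in\N} \subseteq \Tan_h(\phi,x)$ with $\nu_n \rightharpoonup \nu$ and produce an infinitesimal sequence $s_n$ such that $s_n^{-h}T_{x,s_n}\phi \rightharpoonup \nu$. By \cref{def:TangentMeasure}, for each $n$ there is $\{r_i^n\}_{i\in\N}$ with $r_i^n \downarrow 0$ and $(r_i^n)^{-h}T_{x,r_i^n}\phi \rightharpoonup \nu_n$ as $i \to \infty$. Applying \cref{prop:WeakConvergenceAndFk} to $\nu_n \rightharpoonup \nu$ I pick $N(n)$ with $F_{B(0,n)}(\nu_{N(n)},\nu) < 1/(2n)$, and then applying \cref{prop:WeakConvergenceAndFk} once more to the blow-up sequence producing $\nu_{N(n)}$ I pick some $s_n := r_{i(n)}^{N(n)} < 1/n$ with $F_{B(0,n)}\bigl(s_n^{-h}T_{x,s_n}\phi,\nu_{N(n)}\bigr) < 1/(2n)$. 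The triangle inequality of \cref{rem:ScalinfFxr} then gives $F_{B(0,n)}(s_n^{-h}T_{x,s_n}\phi,\nu) < 1/n$. Since $\lip(K) \subseteq \lip(K')$ whenever $K \subseteq K'$, the functional $F_K$ is monotone in $K$, so this bound propagates to an arbitrary compact $K$ as soon as $n$ is large enough that $K \subseteq B(0,n)$. Invoking \cref{prop:WeakConvergenceAndFk} in the reverse direction I conclude $s_n^{-h}T_{x,s_n}\phi \rightharpoonup \nu$, and hence $\nu \in \Tan_h(\phi,x)$, which combined with the pre-compactness above yields the weak-$*$ compactness.

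The only delicate point is the bookkeeping in the diagonal extraction, since the scales $r_i^n$ defining the approximation of $\nu_n$ by blow-ups may degenerate badly as $n$ grows. The main technical ingredient that makes it work is the $F_K$-metrization, which packages weak-$*$ convergence in a way that makes both the triangle inequality and the monotonicity $K \subseteq K' \Rightarrow F_K \leq F_{K'}$ readily available; everything else is routine.
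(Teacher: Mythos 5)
Your proof is correct, but it follows a genuinely different route from the paper's. The paper also argues by sequential compactness, but it exploits the flat structure of the tangents at both stages: given a sequence $\lambda_j\mathcal{C}^h\llcorner\mathbb V_j\in\Tan_h(\phi,x)$ it first extracts a convergent subsequence by combining the bound $\lambda_j\in[\Theta^h_*(\phi,x),\Theta^{h,*}(\phi,x)]$ of \cref{cordenstang} with the compactness of the Grassmannian (\cref{prop:CompGrassmannian}) and the convergence $\mathcal{C}^h\llcorner\mathbb V_j\rightharpoonup\mathcal{C}^h\llcorner\mathbb V$ of \cref{prop:pianconv} --- which is the reason the paper first reduces to the smooth-box norm of \cref{smoothnorm} --- and then shows that the limit $\lambda\mathcal{C}^h\llcorner\mathbb V$ is itself a tangent via a diagonal choice of scales controlled by $F_{0,1}$, using the $\mathscr{P}^*_h$ hypothesis to recognize the diagonal blow-up limit as some $\eta\mathcal{C}^h\llcorner\mathbb W$ and the homogeneity (cone) property of flat measures to upgrade $F_{0,1}(\eta\mathcal{C}^h\llcorner\mathbb W,\lambda\mathcal{C}^h\llcorner\mathbb V)=0$ to equality of the two measures. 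You instead obtain pre-compactness purely from the uniform mass bound $\nu(B(0,R))\le\Theta^{h,*}(\phi,x)R^h$ (again \cref{cordenstang} plus \cref{rem:Ch1}) and the standard compactness of mass-bounded families of Radon measures, and you prove that $\Tan_h(\phi,x)$ is weak-$*$ closed by a diagonal argument on the functionals of \cref{def:Fk} that never uses flatness of the tangents --- this is the classical fact that the tangent set of any measure with these density bounds is closed. Your route buys independence from \cref{prop:pianconv} and from the choice of the smooth-box metric, and it isolates a more general closedness statement; the paper's route, by working with the flat measures directly, exhibits the limit explicitly as $\lambda\mathcal{C}^h\llcorner\mathbb V$ with $\mathbb V$ a $d_{\mathbb G}$-limit of the $\mathbb V_j$'s, a form that is reused later (e.g., in \cref{lemma:Gx}). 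Two small points to keep in mind: the monotonicity $F_K\le F_{K'}$ for $K\subseteq K'$ rests on reading $\mathrm{Lip}_1^+(K)$ as nonnegative $1$-Lipschitz functions supported in $K$ (the convention in force here, and what makes \cref{rem:ScalinfFxr} work), and your explicit requirement $s_n<1/n$ is exactly the condition that is left implicit in the paper's choice of the diagonal scales $\mathfrak r_j$, so it is good that you stated it.
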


\begin{proof}
Since the statement of the Proposition does not depend on the choice of the left-invariant homogeneous distance on $\mathbb G$, we assume that $\mathbb G$ is endowed with the left-invariant homogeneous distance induced by the smooth-box norm in \cref{smoothnorm}.

Let $x\in \mathbb{G}$ be such that  $0<\Theta^h_*(\phi,x)\leq \Theta^{h,*}(\phi,x)<\infty$ and $\Tan_h(\phi,x)\subseteq \mathfrak{M}(h)$. We now prove that for any sequence $\{\lambda_j\mathcal{C}^h\llcorner \mathbb{V}_j\}_{j\in\N}\subseteq \Tan_h(\phi,x)$, there are a $\lambda>0$ and $\mathbb{V}\in\G(h)$ such that, up to non-relabelled subsequences we have
$$
\lambda_j\mathcal{C}^h\llcorner \mathbb{V}_j\rightharpoonup \lambda\mathcal{C}^h\llcorner \mathbb{V}.
$$
Indeed, thanks to \cref{cordenstang} we have that $\lambda_j\in [\Theta^h_*(\phi,x),\Theta^{h,*}(\phi,x)]$ for any $j\in\N$ and thus we can assume without loss of generality that 
$$\lambda_j\to \lambda\in [\Theta^h_*(\phi,x),\Theta^{h,*}(\phi,x)]$$
up to a non-relabelled subsequence. Furthermore, thanks to \cref{prop:CompGrassmannian} there exists a $\mathbb V\in\G(h)$ such that $\mathbb V_j\to \mathbb V$ with respect to the metric $d_\mathbb{G}$. Thus, thanks to \cref{prop:pianconv} and a simple computation that we omit, we conclude that
$$
\lambda_j\mathcal{C}^h\llcorner \mathbb{V}_j\rightharpoonup \lambda \mathcal{C}^h\llcorner \mathbb{V}.
$$
Since we assumed $\{\lambda_j\mathcal{C}^h\llcorner \mathbb{V}_j\}\subseteq \Tan_h(\phi,x)$ then, for any $j\in\N$ there is a sequence $\{r_\ell(j)\}_{\ell\in\N}$ such that
$$
r_\ell(j)^{-h}T_{x,r_\ell(j)}\phi\rightharpoonup \lambda_j\mathcal{C}^h\llcorner \mathbb{V}_j.
$$
Thus, \cref{prop:WeakConvergenceAndFk} implies that $\lim_{\ell\to\infty}F_{0,1}(r_\ell(j)^{-h}T_{x,r_\ell(j)}\phi,\lambda_j\mathcal{C}^h\llcorner \mathbb{V}_j)=0$,
and in particular for any $j\in\N$ there exists an $\ell_j\in\N$ such that defined $\mathfrak{r}_j:=r_{\ell_j}(j)$ we have
$$
F_{0,1}(\mathfrak{r}_j^{-h}T_{x,\mathfrak{r}_j}\phi,\lambda_j\mathcal{C}^h\llcorner \mathbb{V}_j)\leq 1/j.
$$
Since $\limsup_{j\to\infty}\mathfrak{r}_j^{-h}T_{x,\mathfrak{r}_j}\phi(B(0,r))\leq\Theta^{h,*}(\phi,x)r^h$ for any $r>0$, thanks to \cite[Corollary 1.60]{AFP00}, we can assume without loss of generality that there exists a Radon measure $\nu$ such that
$\mathfrak{r}_j^{-h}T_{x,\mathfrak{r}_j}\phi\rightharpoonup \nu$. On the other hand, by definition we have that $\nu\in\Tan_h(\phi,x)$ and thus by hypothesis on $\phi$ there is a $\eta>0$ and a $\mathbb{W}\in\G(h)$ such that $\nu=\eta\mathcal{C}^h\llcorner \mathbb{W}$.
This implies that for any $j\in\N$ we have
\begin{equation}
\begin{split}
F_{0,1}(\eta \mathcal{C}^h\llcorner \mathbb{W},\lambda  \mathcal{C}^h\llcorner \mathbb{V})&\leq F_{0,1}(\eta \mathcal{C}^h\llcorner \mathbb{W},\mathfrak{r}_j^{-h}T_{x,\mathfrak{r}_j}\phi)+F_{0,1}(\mathfrak{r}_j^{-h}T_{x,\mathfrak{r}_j}\phi,\lambda_j \mathcal{C}^h\llcorner \mathbb{V}_j) \\&+F_{0,1}(\lambda_j  \mathcal{C}^h\llcorner \mathbb{V}_j,\lambda  \mathcal{C}^h\llcorner \mathbb{V})\\
&\leq  F_{0,1}(\eta \mathcal{C}^h\llcorner \mathbb{W},\mathfrak{r}_j^{-h}T_{x,\mathfrak{r}_j}\phi)+1/j+F_{0,1}(\lambda_j  \mathcal{C}^h\llcorner \mathbb{V}_j,\lambda  \mathcal{C}^h\llcorner \mathbb{V}).
\end{split}
    \nonumber
\end{equation}
The arbitrariness of $j$ and \cref{prop:WeakConvergenceAndFk} implies that $F_{0,1}(\eta \mathcal{C}^h\llcorner \mathbb{W},\lambda  \mathcal{C}^h\llcorner \mathbb{V})=0$ and since flat measures are cones, see \cref{rem:ScalinfFxr}, we conclude that $\eta \mathcal{C}^h\llcorner \mathbb{W}=\lambda  \mathcal{C}^h\llcorner \mathbb{V}$.
This shows that $\lambda\mathcal{C}^h\llcorner \mathbb{V}\in \Tan_h(\phi,x)$ and then the proof is concluded.
\end{proof}


\section{Marstrand-Mattila rectifiability criterion for co-normal-\texorpdfstring{$\mathscr{P}_h^*$}{Lg}-rectifiable measures}\label{sec:MMconormale}

This chapter is devoted to the proof of the following result, which is a restatement of the main result of the paper in \cref{thm:MMconormaleIntro}.

\begin{teorema}[Co-normal Marstrand-Mattila rectifiability criterion]\label{thm:MMconormale}
Assume $\phi$ is a $\mathscr{P}_h^{*,\unlhd}$-rectifiable measure on a Carnot group $\mathbb G$. Then there are countably many $\mathbb{W}_i\in \G_\unlhd(h)$, compact sets $K_i\Subset \mathbb{W}_i$ and Lipschitz functions $f_i:K_i\to \mathbb{G}$ such that
$$
\phi(\mathbb{G}\setminus \bigcup_{i\in\N} f_i(K_i))=0.
$$
In particular $\phi$ is $\mathscr{P}_h^c$-rectifiable.
\end{teorema}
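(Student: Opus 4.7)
The strategy adapts Preiss's scheme to the sub-Riemannian setting, with the normality assumption used crucially to ensure that the projections onto the candidate tangent planes are Lipschitz homogeneous homomorphisms (\cref{prop:projhom}). The global target is to trap, for $\phi$-almost every $x$, the support of $\phi$ in a small neighborhood of $x$ inside an intrinsic cone $x\cdot C_{\mathbb V_x}(\alpha)$ over a single plane $\mathbb V_x\in\G_\unlhd(h)$, with $\alpha$ small enough that \cref{prop:ConeAndGraph} turns that piece of support into the intrinsic graph of an intrinsically Lipschitz function, parametrized in a Lipschitz way via $P_{\mathbb V_x}$.

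Step 1 (reductions). Using \cref{prop::E} and \cref{def:EThetaGamma} one reduces to a fixed $E(\vartheta,\gamma)$. Since the cardinality of $\mathfrak{S}(h)$ is finite (\cref{def:stratification}), by splitting we may assume that every tangent at every point is in $\mathfrak{M}(h,\G_\unlhd^{\mathfrak s}(h))$ for one fixed $\mathfrak s$. \cref{prop:CompactnessTangents} says $\Tan_h(\phi,x)$ is weak-$\ast$ compact and \cref{prop:grasscompiffC3} says $\mathfrak e$ is lower semicontinuous on $\G_\unlhd^{\mathfrak s}(h)$; hence the finite quantity $\min\{\mathfrak e(\mathbb V):\lambda\mathcal{C}^h\llcorner\mathbb V\in\Tan_h(\phi,x)\}$ is a Borel function of $x$, so upon further restriction we may assume that all tangent planes at points of our set lie in the compact family $\mathscr G:=\{\mathbb V\in\G_\unlhd^{\mathfrak s}(h):\mathfrak e(\mathbb V)\geq c_0\}$ for some $c_0>0$. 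Finally, \cref{prop:TanAndDxr} together with Egorov's theorem gives a compact $F\subseteq E(\vartheta,\gamma)$ of positive $\phi$-measure on which $d_{x,r}(\phi,\mathfrak M(h,\mathscr G))\to 0$ uniformly in $x\in F$ as $r\to 0$.

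Step 2 (cone-trapping, the core). Fix a small aperture $\alpha$ and a much smaller $\sigma$, and pick $r_0>0$ so that $d_{x,t}(\phi,\mathfrak M(h,\mathscr G))<\sigma^{h+4}$ for every $x\in F$ and $t\leq r_0$. For each such $(x,t)$, \cref{prop::4.4(4)} (i) furnishes a plane $\mathbb V_{x,t}\in\mathscr G$ for which $\phi$ in $B(x,t)$ is quantitatively concentrated on the $\sigma^2 t$-neighborhood of $x\mathbb V_{x,t}$, while (ii), applicable uniformly because $\mathfrak e\geq c_0$ makes $P_{\mathbb V_{x,t}}$ a uniformly Lipschitz homogeneous homomorphism (\cref{prop:projhom}, \cref{lip:const:proj:conorm}), forbids too much mass in thin cylinders transverse to $\mathbb V_{x,t}$. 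Comparing these estimates at two consecutive scales $t$ and $\lambda t$ with $\lambda\in(0,1)$ fixed, and using the uniform upper and lower density bounds on $E(\vartheta,\gamma)$, one shows that $d_{\mathbb G}(\mathbb V_{x,t},\mathbb V_{x,\lambda t})=O(\alpha)$: otherwise the $\sigma^2 t$-neighborhood of $x\mathbb V_{x,\lambda t}$ would fail to absorb the mass forced by the concentration on $x\mathbb V_{x,t}$. Iterating and using compactness of $\mathscr G$ produces a single plane $\mathbb V_x\in\mathscr G$ with $d_{\mathbb G}(\mathbb V_{x,t},\mathbb V_x)\leq \alpha$ for all $t\leq r_0$, and \cref{lem:DistanceOfCones} then gives
\[
\supp(\phi)\cap B(x,r_0)\subseteq x\cdot C_{\mathbb V_x}(2\alpha).
\]

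Step 3 (Lipschitz cover and conclusion). Choosing $\alpha$ with $2\alpha\leq \oldep{ep:Cool}(\mathbb V_x,\mathbb L_x)$ (permitted because $\mathfrak e(\mathbb V_x)\geq c_0$), \cref{prop:ConeAndGraph} makes $\supp(\phi)\cap B(x,r_0)$ the intrinsic graph of an intrinsically Lipschitz map $g_x:P_{\mathbb V_x}(\supp(\phi)\cap B(x,r_0))\to \mathbb L_x$. Because $\mathbb L_x$ is normal, $P_{\mathbb V_x}$ is Lipschitz (\cref{prop:projhom}), so the inverse of $P_{\mathbb V_x}$ restricted to this graph, namely $\Phi_x(v):=v\cdot g_x(v)$, is a Lipschitz map from a compact subset $K_x\subseteq \mathbb V_x$ into $\mathbb G$ whose image covers $\supp(\phi)\cap B(x,r_0)$. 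Covering the compact family $\mathscr G$ by finitely many $d_{\mathbb G}$-balls of radius $\alpha/10$, shrinking $r_0$ countably to exhaust $F$, and finally exhausting $\phi$ over the countable family $\bigcup_{\vartheta,\gamma,\mathfrak s,c_0}$ of good sets produces the desired countable family $\{(\mathbb W_i,K_i,f_i)\}$. The $\mathscr P_h^c$-rectifiability (uniqueness of tangent) then follows from Pansu-Rademacher differentiation of the $\Phi_x$ at $\phi$-almost every base point.

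The main obstacle is Step 2: upgrading the \emph{a priori} scale-dependent planes $\mathbb V_{x,t}$ to a single \emph{a posteriori} plane $\mathbb V_x$. This is where normality is indispensable, as only then do the Lipschitz homomorphism $P_{\mathbb V}$ and the cylinder bound \cref{prop::4.4(4)}(ii) give the uniform geometric control needed to propagate the concentration statement across scales.
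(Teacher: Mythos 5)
Your Step 2 contains the decisive gap. From \cref{prop::4.4(4)} and the uniform density bounds you can indeed show that planes at two \emph{comparable} scales are close, say $d_{\mathbb G}(\mathbb V_{x,t},\mathbb V_{x,\lambda t})\leq C\alpha$ for a fixed $\lambda\in(0,1)$. But the next sentence — ``iterating and using compactness of $\mathscr G$ produces a single plane $\mathbb V_x$ with $d_{\mathbb G}(\mathbb V_{x,t},\mathbb V_x)\leq\alpha$ for all $t\leq r_0$'' — is a non sequitur: closeness of consecutive scales by $O(\alpha)$ is compatible with a slow rotation of the planes across scales (an error of size $O(\alpha)$ per dyadic step accumulates to order one after $\sim\alpha^{-1}$ steps), and compactness of $\mathscr G$ only yields subsequential limits, not a plane uniformly $\alpha$-close to every $\mathbb V_{x,t}$. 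Ruling out exactly this slow-rotation scenario \emph{is} the content of the Marstrand--Mattila/Preiss theorem; if it could be dispatched by a two-scale comparison plus iteration, the uniqueness of the tangent would follow almost immediately and most of the machinery would be unnecessary. A secondary (fixable, but real) issue is that even your cone statement is too strong: one cannot trap $\supp(\phi)\cap B(x,r_0)$ in a cone, only a compact subset of large measure on which the densities and the flatness are uniform (points of $\supp\phi$ outside $E(\vartheta,\gamma)$-type sets are not controlled).

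The paper takes a genuinely different route that never upgrades the scale-dependent planes to a single plane before rectifiability is established. It fixes the plane $\mathbb W=\mathbb V_{z,r}$ at \emph{one} top scale, and runs a stopping-time construction on the projected ball $P_{\mathbb W}(B(z,r_1))$ (the function $s(u)$ and the sets $A$, $A_1$, $A_2$ of \cref{prop::5.2}), using \cref{prop::4.4(4)} and the normality of the complement (so that $P_{\mathbb W}$ is a Lipschitz homogeneous homomorphism, \cref{prop:projhom}) to show that the ``good'' set $A=\{s(u)=0\}$ has positive measure and satisfies $P(E\cap P^{-1}(A))=A$ with $\phi\llcorner(E\cap P^{-1}(A))$ comparable to $\mathcal S^h$. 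Then, in \cref{prop:PHIK>0}, a selection $f:\hat A\to E\cap P^{-1}(A)$ with $P\circ f=\mathrm{id}$ is shown to be locally Lipschitz at $\phi$-a.e.\ point by a blow-up contradiction argument (the estimate \eqref{eq:::num7} against the planes $\mathbb V_{x,\rho_i}$ at the blow-up scales, using \cref{rk:contradiction5.2}), and an upper-semicontinuity argument extracts a compact set $\mathcal A$ on which $f$ is Lipschitz with $\phi(f(\mathcal A))>0$; an exhaustion argument then gives the countable Lipschitz cover, and only afterwards does Pansu--Rademacher plus the area formula yield $\mathscr P^c_h$-rectifiability (this last step of yours is consistent with the paper). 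To repair your proposal you would have to replace the cone-trapping of Step 2 by an argument of this stopping-time/selection type (or otherwise genuinely exclude slow rotation of the $\mathbb V_{x,t}$), since as written the single-plane claim does not follow.
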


We briefly discuss the strategy of the proof of \cref{thm:MMconormale}, which is ultimately an adaptation of Preiss's technique in \cite[Section 4.4(4), Lemma 5.2, Theorem 5.3, and Corollary 5.4]{Preiss1987GeometryDensities} to our setting, see \cref{prop::4.4(4)}, \cref{prop::5.2}, and \cref{prop:PHIK>0}, respectively. In particular we show that whenever a Radon measure satisfies precise structure conditions, see \cref{prop::5.2}, that are always verified whenever $\phi$ is $\mathscr{P}^*_h$-rectifiable with \textbf{tangents that admit at least one normal complementary subgroup}, see \cref{prop::5.2satisfied}, then it is possible to find a Lipschitz function $f:K\Subset \mathbb V\to\mathbb G$, with $\mathbb V\in \G_\unlhd(h)$, such that $\phi(f(K))>0$. This implies that $\mathbb G$ can be covered $\phi$-almost all with $\cup_{i\in\mathbb N}f_i(K_i)$, where $f_i:K_i\Subset \mathbb V_i\to\mathbb G$ are Lipschitz functions, see the first part of the proof of \cref{thm:MMconormale}. 

The last part of \cref{thm:MMconormale} is reached from the first part and the following key observation: if a homogeneous subgroup of a Carnot group admits a normal complementary subgroup, then it is a Carnot subgroup, see \cite[Remark 2.1]{AM20}. Thus the maps $f_i$ are Lipschitz maps between Carnot groups and we can apply Pansu-Rademacher  theorem, see \cite{Pansu}, Magnani's area formula, see \cite{MagnaniPhD}, and a classical argument to conclude that $\mathcal{S}^h\llcorner f_i(K_i)$ is a $\mathscr{P}_h^c$-rectifiable measure, see the last part of the proof of \cref{thm:MMconormale}. From this latter observation, the proof of \cref{thm:MMconormale} is concluded.

\textbf{Throughout all this Section we let $\mathbb G$ be a Carnot group of homogeneous dimension $Q$ equipped with the smooth-box norm introduced in \cref{smoothnorm}. This does not result in a loss of generality since our aim is to prove \cref{thm:MMconormale} that is clearly independent on the choice of the particular left-invariant homogeneous distance on $\mathbb G$. So we may suppose that $\mathbb G$ is endowed with the left-invariant homogeneous distance induced by the smooth-box norm introduced in \cref{smoothnorm}}

\subsection{Rigidity of the stratification of \texorpdfstring{$\mathscr{P}_h^*$}{Lg}-rectifiable measures}

We let $\varphi:\mathbb G\to [0,1]$ be a positive, smooth, radially symmetric function with respect to $\|\cdot\|$, supported in $B(0,2)$, and such that $\varphi\equiv 1$ on $B(0,1)$. We shall denote by $g$ its profile function, that is defined right above the statement of \cref{unif}.

\begin{proposizione}\label{prop:gimel}
For any $h\in \{1,\ldots,Q\}$ there exists a constant $\gimel(\mathbb{G},h)=\gimel>0$ such that for any $\mathbb{V}\in\G(h)$ and any $\mathfrak{s}\in \mathfrak{S}(h)\setminus \{\mathfrak{s}(\mathbb{V})\}$, we have
$$
\inf_{\substack{\mathbb{W}\in \G(h)\\\mathfrak{s}(\mathbb{W})=\mathfrak{s}}}\int \varphi(z) \dist(z,\mathbb{W} ) d\mathcal{C}^h\llcorner\mathbb{V}>\gimel,$$
where the stratification vector $\mathfrak{s}(\cdot)$ was introduced in \cref{def:stratification}.
\end{proposizione}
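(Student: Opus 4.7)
The plan is to argue by contradiction, exploiting compactness of $\G(h)$ together with the fact that the finite set $\mathfrak{S}(h)$ of possible stratification vectors is already discrete for the metric $d_{\mathbb G}$ at small scales, via \cref{prop:htagliato}. Assume towards a contradiction that no such $\gimel$ exists. Then one can extract sequences $\mathbb V_n\in\G(h)$, stratifications $\mathfrak s_n\in \mathfrak S(h)\setminus\{\mathfrak s(\mathbb V_n)\}$, and $\mathbb W_n\in\G(h)$ with $\mathfrak s(\mathbb W_n)=\mathfrak s_n$ such that
\[
\int \varphi(z)\dist(z,\mathbb W_n)\,d\mathcal{C}^h\llcorner\mathbb V_n \;\xrightarrow[n\to\infty]{}\;0.
\]
Since $\mathfrak{S}(h)$ is finite, by pigeonholing twice I may assume $\mathfrak s_n=\mathfrak s$ and $\mathfrak s(\mathbb V_n)=\mathfrak s'$ are constant, with $\mathfrak s\neq \mathfrak s'$.

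Next I use the compactness of $(\G(h),d_{\mathbb G})$ from \cref{prop:CompGrassmannian} to pass (non-relabelled) to subsequences $\mathbb V_n\to\mathbb V$ and $\mathbb W_n\to\mathbb W$. By \cref{prop:htagliato} the stratifications are preserved in the limit, so $\mathfrak s(\mathbb V)=\mathfrak s'$ and $\mathfrak s(\mathbb W)=\mathfrak s$, and in particular $\mathfrak s(\mathbb V)\neq\mathfrak s(\mathbb W)$. From $d_{\mathbb G}$-convergence and homogeneity of the distance I get that $\mathbb W_n\cap B(0,R)\to\mathbb W\cap B(0,R)$ in Hausdorff distance for every $R>0$, which yields that $\dist(\cdot,\mathbb W_n)\to \dist(\cdot,\mathbb W)$ uniformly on the compact set $\supp\varphi\subseteq B(0,2)$. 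On the other hand \cref{prop:pianconv} gives $\mathcal C^h\llcorner\mathbb V_n\rightharpoonup \mathcal C^h\llcorner\mathbb V$. Combining uniform convergence of the continuous bounded integrand with weak convergence of the measures (standard, since the integrand is continuous with compact support) one can pass to the limit and obtain
\[
\int \varphi(z)\dist(z,\mathbb W)\,d\mathcal C^h\llcorner\mathbb V=0.
\]

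Since $\varphi\equiv 1$ on $B(0,1)$, the continuous nonnegative function $\dist(\cdot,\mathbb W)$ vanishes $\mathcal C^h\llcorner\mathbb V$-almost everywhere on $\mathbb V\cap B(0,1)$. The measure $\mathcal C^h\llcorner\mathbb V$ is a Haar measure on $\mathbb V$ by \cref{prop:haar}, hence has full support on $\mathbb V$, so by continuity $\mathbb V\cap B(0,1)\subseteq\mathbb W$. Using the dilations $\delta_\lambda$ (which preserve both $\mathbb V$ and $\mathbb W$) this upgrades to $\mathbb V\subseteq\mathbb W$. Both subgroups are homogeneous of Hausdorff dimension $h$, so passing to the graded Lie subalgebras $\mathfrak v\subseteq \mathfrak w$, the inclusions $\mathfrak v\cap V_i\subseteq \mathfrak w\cap V_i$ together with
\[
h\;=\;\sum_{i=1}^{\kappa} i\cdot\dim(\mathfrak v\cap V_i)\;=\;\sum_{i=1}^{\kappa} i\cdot\dim(\mathfrak w\cap V_i)
\]
force $\dim(\mathfrak v\cap V_i)=\dim(\mathfrak w\cap V_i)$ for every $i$, hence $\mathfrak v=\mathfrak w$ and $\mathbb V=\mathbb W$. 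In particular $\mathfrak s(\mathbb V)=\mathfrak s(\mathbb W)$, contradicting $\mathfrak s\neq \mathfrak s'$.

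The only genuinely delicate point is the passage to the limit in the integral, because both the integrand and the integrating measure vary with $n$; I expect to handle it by writing
\[
\left|\int \varphi\, \dist(\cdot,\mathbb W_n)\,d\mathcal C^h\llcorner\mathbb V_n - \int \varphi\, \dist(\cdot,\mathbb W)\,d\mathcal C^h\llcorner\mathbb V\right|
\]
as the sum of $\bigl\|\varphi(\dist(\cdot,\mathbb W_n)-\dist(\cdot,\mathbb W))\bigr\|_\infty\cdot \mathcal C^h\llcorner\mathbb V_n(B(0,2))$ (controlled by Hausdorff convergence together with the uniform bound $\mathcal C^h\llcorner\mathbb V_n(B(0,2))\leq 2^h$ from \cref{rem:Ch1}) plus the difference of integrals of the fixed continuous compactly supported function $\varphi\,\dist(\cdot,\mathbb W)$ against the weakly converging measures. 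Everything else is essentially bookkeeping.
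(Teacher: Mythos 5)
Your proposal is correct and follows essentially the same route as the paper's own proof: contradiction plus a double pigeonhole on the finite set $\mathfrak{S}(h)$, compactness of $(\G(h),d_{\mathbb G})$ with \cref{prop:htagliato} to preserve stratifications in the limit, passage to the limit in the integral (your two-sided uniform estimate on $\dist(\cdot,\mathbb W_n)-\dist(\cdot,\mathbb W)$ plays the role of the paper's inequality \eqref{eq:ineq_disti1} combined with \cref{unif} and \cref{prop:pianconv}), and finally the inclusion $\mathbb V\subseteq\mathbb W$ forcing equality of the stratification vectors by the homogeneous-dimension count. The only cosmetic difference is that you conclude $\mathbb V=\mathbb W$ while the paper derives the strict inequality $h<\dim_{\mathrm{hom}}(\mathbb W)$; these are equivalent endgames.
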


\begin{proof}
Suppose by contradiction this is not the case. Thus there are two sequences $\{\mathbb{W}_i\}\subseteq  \G(h)$ and $\{\mathbb{V}_i\}\subseteq \G(h)$ such that for any $i\in\N$ we have $\mathfrak{s}(\mathbb{W}_i)\neq \mathfrak{s}(\mathbb{V}_i)$ and
\begin{equation}
\int \varphi(z) \dist(z,\mathbb{W}_i ) d\mathcal{C}^h\llcorner\mathbb{V}_i\leq 1/i.
    \label{eq:ineq:deg}
\end{equation}
Thanks to the pidgeonhole principle and the fact that $\mathfrak{S}(h)$, see \cref{def:stratification}, is a finite set we can assume up to passing to a non re-labelled subsequence that
$$
\mathfrak{s}(\mathbb{W}_i)=\mathfrak{s}_1\neq\mathfrak{s}_2=\mathfrak{s}(\mathbb{V}_i),\qquad \text{for any }i\in\N.$$
Furthermore, thanks to \cref{prop:CompGrassmannian}, we can also assume, up to passing to a non re-labelled subsequence, that
$$
\mathbb{W}_i\underset{d_\mathbb{G}}{\to} \mathbb{W}\in\G(h),\qquad\text{and}\qquad\mathbb{V}_i\underset{d_\mathbb{G}}{\to} \mathbb{V}\in\G(h).
$$
Furthermore, thanks to \cref{prop:htagliato}, we also deduce that
$$\mathfrak{s}(\mathbb{W})=\mathfrak{s}_1\neq \mathfrak{s}_2=\mathfrak{s}(\mathbb{V}).$$
In order to conclude the proof of the proposition we first note for any $\mathbb{U}\in \G(h)$ and any $R>0$, if $z\in B(0,R)$, then every element $u\in\mathbb{U}$ for which $\dist(z,\mathbb{U})=d(u,z)$ is contained in $B(0,2R)$.
The same argument as in \eqref{eqn:Estimate1} and \eqref{eqn:Estimate3} allows us to conclude that for every $z\in B(0,2)$ the following inequality holds
\begin{equation}
    \dist(z,\mathbb{W}_i)\geq \dist(z,\mathbb{W})-8d_\mathbb{G}(\mathbb{W},\mathbb{W}_i), \qquad \mbox{for all $i\in\mathbb N$}.
    \label{eq:ineq_disti1}
\end{equation}
Putting together \eqref{eq:ineq:deg} and \eqref{eq:ineq_disti1} thanks to \cref{unif} we infer
\begin{equation}
\begin{split}
     1/i&\geq \int \varphi(z) \dist(z,\mathbb{W}_i ) d\mathcal{C}^h\llcorner\mathbb{V}_i\geq \int \varphi(z) \dist(z,\mathbb{W} ) d\mathcal{C}^h\llcorner\mathbb{V}_i-8d_\mathbb{G}(\mathbb{W},\mathbb{W}_i)\int \varphi(z)  d\mathcal{C}^h\llcorner\mathbb{V}_i\\
     &=\int \varphi(z) \dist(z,\mathbb{W} ) d\mathcal{C}^h\llcorner\mathbb{V}_i-8d_\mathbb{G}(\mathbb{W},\mathbb{W}_i)h\int s^{h-1}g(s)ds.
\end{split}
\end{equation}
Therefore, since $\varphi(z) \dist(z,\mathbb{W} )$ is a continuous function with compact support, thanks to \cref{prop:pianconv} and sending $i$ to $+\infty$ in the previous inequality we conclude
$$
\int \varphi(z) \dist(z,\mathbb{W} ) d\mathcal{C}^h\llcorner\mathbb{V}=0.$$
In particular $\dist(z,\mathbb{W})=0$ for $\mathcal{S}^h\llcorner \mathbb{V}$-almost every $z\in\mathbb{V}$, and since both $\mathrm{Lie}(\mathbb{V})$ and $\mathrm{Lie}(\mathbb{W})$ are vector subspaces of $\mathrm{Lie}(\mathbb{G})$ we have $\mathbb{V}\subseteq \mathbb{W}$. On the one hand this allows us to infer that
$$\dim(V_i\cap \mathbb{V})\leq \dim(V_i\cap \mathbb{W}),\qquad \text{for any }i\in\{1,\ldots,\kappa\},$$
and on the other hand, since $\mathfrak{s}(\mathbb{V})\neq \mathfrak{s}(\mathbb{W})$, there must exist an $\ell\in\{1,\ldots,\kappa\}$ such that
$\dim(V_{\ell}\cap \mathbb{V})<\dim(V_{\ell}\cap \mathbb{W})$.
This however contradicts the fact that $\mathbb{W}\in \G(h)$, indeed
$$h=\dim_{\mathrm{hom}}\mathbb V=\sum_{i=1}^\kappa i\cdot\dim(V_i\cap \mathbb{V})<\sum_{i=1}^\kappa i\cdot\dim(V_i\cap \mathbb{W})=\dim_{\mathrm{hom}}(\mathbb{W}).$$
\end{proof}

\begin{proposizione}\label{F_scontinuous}
Let $\mathfrak{s}\in \mathfrak{S}(h)$. For any Radon measure $\psi$ we define 
$$
\mathscr{F}_{\mathfrak{s}}(\psi):=\inf_{\substack{\mathbb{W}\in \G(h)\\\mathfrak{s}(\mathbb{W})=\mathfrak{s}}}\int \varphi(z) \dist(z,\mathbb{W} ) d\psi.
$$
Then, the functional $\mathscr{F}_\mathfrak{s}:\mathcal{M}\to\R$ on Radon measures is continuous with respect to the weak-* topology in the duality with the functions with compact support on $\mathbb G$.
\end{proposizione}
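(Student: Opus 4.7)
The plan is to prove continuity by separately establishing upper and lower semicontinuity with respect to weak-$*$ convergence, exploiting that for each fixed $\mathbb W$ the integrand $z\mapsto \varphi(z)\dist(z,\mathbb W)$ is continuous and compactly supported (it is supported in $B(0,2)$ since $\varphi$ is), and that $\G^{\mathfrak s}(h):=\{\mathbb W\in \G(h):\mathfrak s(\mathbb W)=\mathfrak s\}$ is compact. Compactness of $\G^{\mathfrak s}(h)$ is immediate from \cref{prop:CompGrassmannian} together with \cref{prop:htagliato}: the latter says the stratification is locally constant on the compact metric space $(\G(h),d_{\mathbb G})$, so $\G^{\mathfrak s}(h)$ is both open and closed, hence compact.

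For upper semicontinuity, given $\psi_n\rightharpoonup\psi$, fix any $\mathbb W\in \G^{\mathfrak s}(h)$; then $\mathscr F_{\mathfrak s}(\psi_n)\leq \int\varphi(z)\dist(z,\mathbb W)\,d\psi_n\to \int\varphi(z)\dist(z,\mathbb W)\,d\psi$, and taking the infimum over $\mathbb W$ yields $\limsup_n\mathscr F_{\mathfrak s}(\psi_n)\leq \mathscr F_{\mathfrak s}(\psi)$. For lower semicontinuity, the key preliminary observation is the uniform Lipschitz-type estimate already used in the proof of \cref{prop:gimel}, namely for any $z\in B(0,2)$ and any $\mathbb W_1,\mathbb W_2\in\G(h)$ one has $|\dist(z,\mathbb W_1)-\dist(z,\mathbb W_2)|\leq 8\,d_{\mathbb G}(\mathbb W_1,\mathbb W_2)$. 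This, combined with compactness of $\G^{\mathfrak s}(h)$, shows that for any fixed Radon measure $\psi$ the map $\mathbb W\mapsto \int\varphi(z)\dist(z,\mathbb W)\,d\psi$ is continuous on $\G^{\mathfrak s}(h)$, so the infimum defining $\mathscr F_{\mathfrak s}(\psi)$ is actually a minimum, attained at some $\mathbb W(\psi)\in \G^{\mathfrak s}(h)$.

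Now for each $n$ pick $\mathbb W_n\in\G^{\mathfrak s}(h)$ attaining the minimum for $\psi_n$. Along any subsequence realizing $\liminf_n\mathscr F_{\mathfrak s}(\psi_n)$, by compactness of $\G^{\mathfrak s}(h)$ I can extract a further subsequence with $\mathbb W_{n_k}\to \mathbb W_\infty\in \G^{\mathfrak s}(h)$. Splitting
\begin{equation*}
\int\varphi(z)\dist(z,\mathbb W_{n_k})\,d\psi_{n_k}=\int\varphi(z)\dist(z,\mathbb W_\infty)\,d\psi_{n_k}+\int\varphi(z)\bigl(\dist(z,\mathbb W_{n_k})-\dist(z,\mathbb W_\infty)\bigr)\,d\psi_{n_k},
\end{equation*}
the first term converges to $\int\varphi(z)\dist(z,\mathbb W_\infty)\,d\psi$ by weak convergence applied to the continuous compactly supported integrand, while the second term is bounded in absolute value by $8\,d_{\mathbb G}(\mathbb W_{n_k},\mathbb W_\infty)\int\varphi\,d\psi_{n_k}$, which tends to $0$ since $\int\varphi\,d\psi_{n_k}$ is bounded (it converges to $\int\varphi\,d\psi<\infty$). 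Thus $\mathscr F_{\mathfrak s}(\psi_{n_k})\to \int\varphi(z)\dist(z,\mathbb W_\infty)\,d\psi\geq \mathscr F_{\mathfrak s}(\psi)$, giving $\liminf_n\mathscr F_{\mathfrak s}(\psi_n)\geq \mathscr F_{\mathfrak s}(\psi)$ and concluding continuity.

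The only subtlety, which I would expect to be the main obstacle if unstated elsewhere, is the uniform estimate $|\dist(z,\mathbb W_1)-\dist(z,\mathbb W_2)|\leq 8\,d_{\mathbb G}(\mathbb W_1,\mathbb W_2)$ on $B(0,2)$; but this is exactly the content of the bound leading to \eqref{eq:ineq_disti1} above, obtained by combining the triangle inequality with the homogeneity/doubling argument of \cref{lem:DistanceOfCones}, so no new work is required.
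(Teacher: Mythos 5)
Your proof is correct and follows essentially the same route as the paper: upper semicontinuity via a (quasi-)minimizing plane for $\psi$ and weak convergence against the fixed continuous compactly supported integrand, and lower semicontinuity via compactness of $\G(h)$, constancy of the stratification under convergence (\cref{prop:CompGrassmannian}, \cref{prop:htagliato}), and the uniform bound $\lvert\dist(z,\mathbb W_1)-\dist(z,\mathbb W_2)\rvert\leq 8\,d_{\mathbb G}(\mathbb W_1,\mathbb W_2)$ on $B(0,2)$, which is exactly the content of \eqref{eq:ineq_disti1} and of the footnote in the paper's proof. The only cosmetic difference is that you argue directly with exact minimizers (justified by continuity of $\mathbb W\mapsto\int\varphi\,\dist(\cdot,\mathbb W)\,d\psi_n$ on the compact slice), whereas the paper uses quasi-minimizers and a contradiction; both are fine.
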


\begin{proof}
Let $\psi_i\rightharpoonup\psi$ and note that for any $\mathbb{V}\in \G(h)$ for which $\mathfrak{s}(\mathbb{V})=\mathfrak{s}$, we have
\begin{equation}
  \lim_{i\to+\infty} \int \varphi(z) \dist(z,\mathbb{V} ) d\psi_i=\int \varphi(z) \dist(z,\mathbb{V} ) d\psi,
   \label{eq:convint}
\end{equation}
since $\varphi(z)\dist(z,\mathbb{V})$ is a continuous function with compact support. Let us first prove that
$$
\mathscr{F}_{\mathfrak{s}}(\psi)\leq \liminf_{i\to\infty} \mathscr{F}_{\mathfrak{s}}(\psi_i ).
$$
Indeed, if by contradiction $\mathscr{F}_{\mathfrak s}(\psi)>\liminf_{i\to\infty} \mathscr{F}_{\mathfrak{s}}(\psi_i )$, up to passing to a non re-labelled subsequence in $i$ that realizes the $\liminf$ and up to choosing a quasi-minimizer for $\mathscr{F}_{\mathfrak s}(\psi_i)$, we can find $\delta>0$, and $\mathbb W_i\in \G(h)$ with $\mathfrak{s}(\mathbb W_i)=\mathfrak s$ such that 
\begin{equation}\label{eqn:Leim}
\mathscr{F}_{\mathfrak s}(\psi)>\int\varphi(z)\mathrm{dist}(z,\mathbb W_i)d\psi_i+\delta, \qquad \mbox{for all $i\in\mathbb N$}.
\end{equation}
We can assume that $\mathbb W_i\to\mathbb W\in \G(h)$, with $\mathfrak s(\mathbb W)=\mathfrak s$, up to a non re-labelled subsequence, see \cref{prop:CompGrassmannian} and \cref{prop:htagliato}. Thus since $\psi_i\rightharpoonup \psi$ passing to the limit the right hand side of \eqref{eqn:Leim}\footnote{Setting $f_i(z):=\varphi(z)\dist(z,\mathbb W_i)$ and $f(z):=\varphi(z)\dist(z,\mathbb W)$ we notice that $f_i\to f$ uniformly on $B(0,2)$ since $\mathbb W_i\to\mathbb W$. Thus $|\int fd\psi-\int f_id\psi_i|\leq |\int f d\psi-\int f d\psi_i|+|\int fd\psi_i-\int f_id\psi_i|$ and the limit is zero because $\psi_i\rightharpoonup\psi$, $\sup_i\psi_i(B(0,2))<+\infty$ and $f_i\to f$ uniformly on $B(0,2)$.} 
we obtain $\mathscr{F}_{\mathfrak s}(\psi)>\int \varphi(z)\mathrm{dist}(z,\mathbb W)d\psi$, that is a contradiction with the definition of $\mathscr{F}_{\mathfrak s}$.
The proof of the proposition is concluded if we prove that
$$
\mathscr{F}_{\mathfrak{s}}(\psi)\geq \limsup_{i\to\infty} \mathscr{F}_{\mathfrak{s}}(\psi_i ).
$$
In order to prove the previous inequality let us fix $\varepsilon>0$ and $\mathbb{V}_\varepsilon\in\G(h)$ with $\mathfrak s(\mathbb V_\varepsilon)=\mathfrak s$ such that
\begin{equation}
    \int \varphi(z) \dist(z,\mathbb{V}_\varepsilon) d\psi-\varepsilon\leq\mathscr{F}_{\mathfrak{s}}(\psi).
    \label{eq:bddi}
\end{equation}
Putting together \eqref{eq:convint} and \eqref{eq:bddi}, we infer
\begin{equation}
\begin{split}
   \limsup_{i\to\infty}\mathscr{F}_\mathfrak{s}(\psi_i)-\varepsilon &\leq \limsup_{i\to\infty}\int \varphi(z) \dist(z,\mathbb{V}_\varepsilon) d\psi_i -\varepsilon \\ &=\int \varphi(z) \dist(z,\mathbb{V}_\varepsilon) d\psi-\varepsilon\leq\mathscr{F}_{\mathfrak{s}}(\psi).
   \end{split}
\end{equation}
The arbitrariness of $\varepsilon$ concludes the limsup inequality and thus the proof of the proposition.
\end{proof}

\begin{definizione}
For any $\mathcal{T}\subseteq \mathfrak{M}(h)$ we define $\mathfrak{s}(\mathcal{T})$ to be the set
$$\mathfrak{s}(\mathcal{T}):=\{\mathfrak{s}(\mathbb{V}):\text{there exists a non-null Haar measure of }\mathbb{V}\text{ in }\mathcal{T}\}.$$
Namely we are considering all the possible stratification vectors of the homogeneous subgroups that are the support of some element of $\mathcal{T}$.
\end{definizione}

\begin{teorema}\label{struct:strat}
Assume $\phi$ is a $\mathscr{P}^{*}_h$-rectifiable measure. Then, for $\phi$-almost every $x\in \mathbb{G}$ the set $\mathfrak{s}(\mathrm{Tan}_h(\phi,x))\subseteq \mathfrak{S}(h)$ is a singleton.
\end{teorema}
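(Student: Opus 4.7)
The plan is to argue by contradiction using the functional $\mathscr F_{\mathfrak s}$ from \cref{F_scontinuous} together with the quantitative gap $\gimel$ from \cref{prop:gimel}. We fix a $\phi$-generic point $x\in\mathbb G$: in particular $0<\Theta^h_*(\phi,x)\leq\Theta^{h,*}(\phi,x)<\infty$ and, by $\mathscr P_h^*$-rectifiability together with \cref{cordenstang}, every $\nu\in\Tan_h(\phi,x)$ has the form $\lambda\mathcal C^h\llcorner\mathbb V$ with $\lambda\in[\Theta^h_*(\phi,x),\Theta^{h,*}(\phi,x)]$ and $\mathbb V\in\G(h)$. Suppose for contradiction that $\mathfrak s(\Tan_h(\phi,x))$ contains two distinct stratifications $\mathfrak s_1\neq\mathfrak s_2$; then there exist tangents $\nu_j=\lambda_j\mathcal C^h\llcorner\mathbb V_j$ with $\mathfrak s(\mathbb V_j)=\mathfrak s_j$, realised along infinitesimal sequences $r^{(j)}_n\to 0$ for $j=1,2$.

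The central object to monitor is the scalar function
\[
g(r):=\mathscr F_{\mathfrak s_2}(r^{-h}T_{x,r}\phi),\qquad r\in(0,+\infty).
\]
A first auxiliary step is a routine dominated-convergence argument showing that $r\mapsto r^{-h}T_{x,r}\phi$ is weak-* continuous in $r$, exploiting the compact support of test functions and the local finiteness of $\phi$; combined with \cref{F_scontinuous} this yields continuity of $g$ on $(0,+\infty)$. Along $r^{(1)}_n$ one then gets $g(r^{(1)}_n)\to\mathscr F_{\mathfrak s_2}(\nu_1)\geq\lambda_1\gimel\geq\Theta^h_*(\phi,x)\gimel>0$ by $1$-homogeneity of $\mathscr F_{\mathfrak s_2}$ in its argument and \cref{prop:gimel}, while along $r^{(2)}_n$ one has $g(r^{(2)}_n)\to\mathscr F_{\mathfrak s_2}(\nu_2)=0$ (take $\mathbb W=\mathbb V_2$ in the infimum defining $\mathscr F_{\mathfrak s_2}$).

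Fix any $c\in(0,\Theta^h_*(\phi,x)\gimel)$. Interlacing the two sequences and applying the intermediate value theorem to $g$ on the intervals between consecutive terms produces an infinitesimal sequence $t_n\to 0$ with $g(t_n)=c$. Weak-* precompactness of $\{t_n^{-h}T_{x,t_n}\phi\}$, guaranteed by $\Theta^{h,*}(\phi,x)<\infty$, yields a subsequential limit $\nu\in\Tan_h(\phi,x)$, and \cref{F_scontinuous} forces $\mathscr F_{\mathfrak s_2}(\nu)=c$. But $\nu=\lambda\mathcal C^h\llcorner\mathbb V$ with $\lambda\geq\Theta^h_*(\phi,x)>0$, and the dichotomy $\mathfrak s(\mathbb V)=\mathfrak s_2$ versus $\mathfrak s(\mathbb V)\neq\mathfrak s_2$, combined with \cref{prop:gimel}, gives $\mathscr F_{\mathfrak s_2}(\nu)\in\{0\}\cup[\Theta^h_*(\phi,x)\gimel,+\infty)$, contradicting the choice of $c$.

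The only delicate step I expect is the weak-* continuity of the rescaling $r\mapsto r^{-h}T_{x,r}\phi$, which will follow by dominated convergence once one pins down a single bounded neighborhood in $\mathbb G$ containing all the relevant preimages as $r$ varies in a compact sub-interval of $(0,+\infty)$. Everything else is a clean sandwich between the value $0$ of $\mathscr F_{\mathfrak s_2}$ on flat measures with stratification $\mathfrak s_2$ and the uniform lower bound $\gimel$ on flat measures of any other stratification (of which there are only finitely many by \cref{def:stratification}). A slightly more conceptual alternative would be to observe that $\Tan_h(\phi,x)$ is weak-* connected as the $\omega$-limit set at $0$ of the continuous precompact one-parameter family $r\mapsto r^{-h}T_{x,r}\phi$, while the map $\lambda\mathcal C^h\llcorner\mathbb V\mapsto\mathfrak s(\mathbb V)$ is locally constant on $\Tan_h(\phi,x)$ by \cref{prop:CompGrassmannian} and \cref{prop:htagliato}, and then conclude by finiteness of $\mathfrak S(h)$; I prefer the $\mathscr F_{\mathfrak s}$ route since it leverages the very tools the paper has just introduced.
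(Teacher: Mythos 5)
Your proposal is correct and follows essentially the same route as the paper: the paper also monitors the continuous function $r\mapsto\mathscr{F}_{\mathfrak s}(r^{-h}T_{x,r}\phi)$ (with $\mathfrak s$ the stratification of one of the two competing tangents), selects intermediate scales where it equals a fixed value in $(0,\gimel\Theta^h_*(\phi,x))$, extracts a tangent limit at those scales by the same compactness, and derives the identical dichotomy contradiction from \cref{prop:gimel} and \cref{F_scontinuous}. The only cosmetic differences are which of the two stratifications you plug into $\mathscr F$ and your explicit dominated-convergence justification of the weak-* continuity in $r$, which the paper leaves implicit.
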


\begin{osservazione}
In the notation of the above proposition, since for $\phi$-almost every $x\in\mathbb{G}$ we have $\mathrm{Tan}_h(\phi,x)\subseteq \mathfrak{M}(h)$, the symbol $\mathfrak{s}(\mathrm{Tan}_h(\phi,x))$ is well defined $\phi$-almost everywhere.
\end{osservazione}

\begin{proof}
Suppose by contradiction there exists a point $x\in \mathbb{G}$ where
\begin{itemize}
    \item[(i)] $0<\Theta^h_*(\phi,x)\leq\Theta^{h,*}(\phi,x)<\infty$,
    \item[(ii)] $\mathrm{Tan}_h(\phi,x)\subseteq \mathfrak{M}(h)$,
    \item[(iii)] there are $\mathbb{V}_1,\mathbb{V}_2\in \G(h)$ with $\mathfrak{s}(\mathbb{V}_1)\neq \mathfrak{s}(\mathbb{V}_2)$ and $\lambda_1,\lambda_2\geq 0$ such that
    $\lambda_1\mathcal{C}^h\llcorner \mathbb{V}_1,\lambda_2\mathcal{C}^h\llcorner \mathbb{V}_2\in \mathrm{Tan}_h(\phi,x)$.
\end{itemize}
Assume that $\{r_i\}_{i\in\N}$ and $\{s_i\}_{i\in\N}$ are two infinitesimal sequences such that $r_i\leq s_i$ and for which
\begin{equation}
    \frac{T_{x,r_i}\phi}{r_i^h}\rightharpoonup \lambda_1\mathcal{C}^h\llcorner \mathbb{V}_1,\qquad \text{and}\qquad \frac{T_{x,s_i}\phi}{s_i^h}\rightharpoonup \lambda_2\mathcal{C}^h\llcorner \mathbb{V}_2.\nonumber
\end{equation}
Note that thanks to \cref{prop:density}, we have in particular that
$\Theta_*^h(\phi,x)\leq \lambda_1,\lambda_2\leq \Theta^{h,*}(\phi,x)$.
Throughout the rest of the proof we let $\mathfrak{s}:=\mathfrak{s}(\mathbb{V}_1)$ and we define
$$
f(r):=\inf_{\substack{\mathbb{W}\in \G(h)\\\mathfrak{s}(\mathbb{W})=\mathfrak{s}}}\int \varphi(z) \dist(z,\mathbb{W} ) d\frac{T_{x,r}\phi}{r^h}.
$$
Thanks to \cref{prop:gimel} and \cref{F_scontinuous} we infer that the function $f$ is continuous on $(0,\infty)$ and that
$$\lim_{i\to\infty} f(r_i)=0\qquad \text{and}\qquad \lim_{i\to\infty}f(s_i)>\gimel \lambda_2\geq \gimel \Theta^{h}_*(\phi,x).$$
Let us choose, for $i$ sufficiently large, $\sigma_i\in[r_i,s_i]$ in such a way that $f(\sigma_i)=\gimel\Theta^{h}_*(\phi,x)/2$ and $f(r)\leq \gimel\Theta^{h}_*(\phi,x)/2$ for any $r\in [r_i,\sigma_i]$. Up to passing to a non re-labelled subsequence, since $\phi$ is $\mathscr{P}^{*}_h$-rectifiable, we can assume that $\sigma_i^{-h}T_{x,\sigma_i}\rightharpoonup \lambda_3\mathcal{C}^h\llcorner \mathbb{V}_3$ for some $\lambda_3>0$ and some $\mathbb{V}_3\in\G(h)$. Thanks to \cref{prop:density}, we infer that $\Theta_*^h(\phi,x)\leq \lambda_3\leq \Theta^{h,*}(\phi,x)$ and thanks to the continuity of the functional $\mathscr{F}_\mathfrak{s}$ in \cref{F_scontinuous}, we conclude that
\begin{equation}
    \gimel\Theta^{h}_*(\phi,x)/2=\lim_{i\to\infty} f(\sigma_i)=\lim_{i\to\infty} \mathscr{F}_{\mathfrak{s}}(\sigma_i^{-h}T_{x,\sigma_i} \phi)=\lambda_3\mathscr{F}_{\mathfrak{s}}(\mathcal{C}^h\llcorner \mathbb{V}_3).
    \label{eqquifin}
\end{equation}
The chain of identities \eqref{eqquifin} together with the bounds on $\lambda_3$ imply
\begin{equation}
    0<\gimel\Theta^{h}_*(\phi,x)/2\Theta^{h,*}(\phi,x)\leq \mathscr{F}_{\mathfrak{s}}(\mathcal{C}^h\llcorner \mathbb{V}_3)\leq \gimel/2.
    \label{eqqui2}
\end{equation}
Since $\mathbb{V}_3\in\G(h)$, \eqref{eqqui2} on the one hand implies by means of \cref{prop:gimel} that $\mathfrak{s}(\mathbb{V}_3)=\mathfrak{s}$. On the other hand, since $\mathscr{F}_{\mathfrak{s}}(\mathcal{C}^h\llcorner \mathbb{V}_3)>0$, we have that $\mathfrak{s}(\mathbb{V}_3)\neq\mathfrak{s}$, resulting in a contradiction.
\end{proof}

\begin{definizione}
Assume $\phi$ is a $\mathscr{P}^{*}_h$-rectifiable measure. For every $x\in \mathbb{G}$ we define the map $\mathfrak{s}(\phi,x)\in \N^\kappa$ in the following way
\begin{equation}
    \begin{split}
        \mathfrak{s}(\phi,x):=\begin{cases}
        \mathfrak{s} &\text{if }\mathrm{Tan}_h(\phi,x)\subseteq \mathfrak{M}(h)\text{ and }\mathfrak{s}(\mathrm{Tan}_h(\phi,x))\text{ is the singleton }\{\mathfrak{s}\},\\
        0 &\text{otherwise}.
        \end{cases}
        \nonumber
    \end{split}
\end{equation}
\end{definizione}

\begin{osservazione}
The map $\mathfrak{s}(\phi,\cdot)$ is well defined and non-zero $\phi$-almost everywhere thanks to \cref{struct:strat}.
\end{osservazione}

\begin{proposizione}\label{propmeasuS}
Assume $\phi$ is a $\mathscr{P}^{*}_h$-rectifiable measure. Then, the map $x\mapsto \mathfrak{s}(\phi,x)$ is $\phi$-measurable.
\end{proposizione}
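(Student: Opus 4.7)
My plan is to reduce the $\phi$-measurability of $x\mapsto \mathfrak s(\phi,x)$, which takes values in the finite set $\mathfrak S(h)\cup\{0\}$, to showing that for each $\mathfrak s\in\mathfrak S(h)$ the preimage $E_{\mathfrak s}:=\{x\in\mathbb G:\mathfrak s(\phi,x)=\mathfrak s\}$ is $\phi$-measurable; the complementary level set $\{\mathfrak s(\phi,\cdot)=0\}$ is $\phi$-null by \cref{struct:strat}. The core idea is to use the functional $\mathscr F_{\mathfrak s}$ from \cref{F_scontinuous}, which by \cref{prop:gimel} vanishes on a flat measure $\lambda\mathcal C^h\llcorner\mathbb V$ precisely when $\mathfrak s(\mathbb V)=\mathfrak s$, and is bounded below by $\lambda\gimel$ otherwise.

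For every $r>0$ I will consider $g_{r,\mathfrak s}(x):=\mathscr F_{\mathfrak s}(r^{-h}T_{x,r}\phi)$, and the first task is to prove that $g_{r,\mathfrak s}$ is continuous in $x$ for every fixed $r$ and continuous in $r$ for every fixed $x$. I expect this to follow from a dominated convergence argument, in the same spirit as the one behind \cref{rem:dxrContinuous}: the measure-valued map $(x,r)\mapsto r^{-h}T_{x,r}\phi$ is weak-$*$ continuous, and on this family $\mathscr F_{\mathfrak s}$ is weak-$*$ continuous by \cref{F_scontinuous}. The infimum over $\{\mathbb W\in\G(h):\mathfrak s(\mathbb W)=\mathfrak s\}$ is well-behaved because by \cref{prop:htagliato} this slice is clopen in the compact space $\G(h)$ of \cref{prop:CompGrassmannian}, hence compact, so the infimum is attained and joint continuity in $(x,\mathbb W)$ upgrades to continuity of $g_{r,\mathfrak s}$ in $x$. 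Once this is established, the function
$$G_{\mathfrak s}(x):=\inf_{n\in\mathbb N}\,\sup_{\substack{r\in\mathbb Q\\0<r<1/n}}g_{r,\mathfrak s}(x)$$
is Borel measurable, and by continuity in $r$ it coincides with $\limsup_{r\downarrow 0}g_{r,\mathfrak s}(x)$ at every $x\in\mathbb G$.

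The second step is to identify $E_{\mathfrak s}$, up to a $\phi$-null set, with $\{G_{\mathfrak s}=0\}$. On the $\phi$-conull set $A$ where $\Tan_h(\phi,x)\subseteq\mathfrak M(h)$ and $0<\Theta^h_*(\phi,x)\leq\Theta^{h,*}(\phi,x)<+\infty$, the weak-$*$ precompactness of the blow-ups $\{r^{-h}T_{x,r}\phi\}_{r>0}$ (guaranteed by the finite upper density) combined with the continuity of $\mathscr F_{\mathfrak s}$ yields
$$\limsup_{r\downarrow 0}g_{r,\mathfrak s}(x)=\sup_{\nu\in\Tan_h(\phi,x)}\mathscr F_{\mathfrak s}(\nu).$$
From here the equivalence $E_{\mathfrak s}\cap A=\{G_{\mathfrak s}=0\}\cap A$ is immediate: if $\mathfrak s(\phi,x)=\mathfrak s$ every tangent $\lambda\mathcal C^h\llcorner\mathbb V$ has $\mathfrak s(\mathbb V)=\mathfrak s$ and hence $\mathscr F_{\mathfrak s}(\nu)=0$, whereas if $\mathfrak s(\phi,x)\neq\mathfrak s$ there is a tangent with $\mathfrak s(\mathbb V)\neq\mathfrak s$ whose multiplicity $\lambda$ satisfies $\lambda\geq\Theta^h_*(\phi,x)>0$ by \cref{prop:density}, so \cref{prop:gimel} forces $\mathscr F_{\mathfrak s}(\nu)\geq\Theta^h_*(\phi,x)\gimel>0$.

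The hardest step will be the continuity of $g_{r,\mathfrak s}$ in $x$: as defined it is an infimum of continuous functions and therefore only automatically upper semicontinuous, and upgrading to continuity genuinely requires the compactness of the stratification-fixed slice of the Grassmannian, which in turn relies on the rigidity \cref{prop:htagliato}. Once this continuity is in place, the remaining arguments are clean combinations of the gap estimate \cref{prop:gimel}, the weak-$*$ continuity of $\mathscr F_{\mathfrak s}$ from \cref{F_scontinuous}, and the compactness of tangent measures from \cref{prop:CompactnessTangents}.
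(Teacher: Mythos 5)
Your proposal is correct, but it takes a genuinely different route from the paper. You detect the stratification through the functional $\mathscr F_{\mathfrak s}$ of \cref{F_scontinuous} evaluated along blow-ups: $g_{r,\mathfrak s}(x)=\mathscr F_{\mathfrak s}(r^{-h}T_{x,r}\phi)$ is continuous in $x$ (this is in fact immediate from \cref{F_scontinuous} together with the weak-$*$ continuity of $x\mapsto r^{-h}T_{x,r}\phi$, by dominated convergence, so your ``hardest step'' is easier than you expect; your alternative argument via the clopen, hence compact, stratification slice given by \cref{prop:htagliato} and \cref{prop:CompGrassmannian} also works), continuous in $r$, and the Borel function $G_{\mathfrak s}=\limsup_{r\downarrow 0}g_{r,\mathfrak s}$ has zero set equal, $\phi$-almost everywhere, to the level set $\{\mathfrak s(\phi,\cdot)=\mathfrak s\}$; the identification uses precompactness of blow-ups at points of finite upper density, the gap $\gimel$ of \cref{prop:gimel}, and the multiplicity lower bound $\lambda\geq\Theta^h_*(\phi,x)>0$ from \cref{cordenstang}. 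The paper instead works with a finite $\widetilde\alpha/3$-net $\{\mathbb V_\ell\}$ of $\G(h)$ and cone-type functions $f_{r,\ell}(x)=r^{-h}\phi(\{w\in B(x,r):\dist(x^{-1}w,\mathbb V_\ell)\geq\widetilde\alpha\|x^{-1}w\|\})$, which are only upper semicontinuous (Fatou), builds the sets $B_\ell=\{\liminf_{r\in\Q,r\to 0}f_{r,\ell}=0\}$, and shows that the $B_\ell$ cover $\phi$-almost all of $\mathbb G$ and carry constant stratification $\mathfrak s(\mathbb V_\ell)$ almost everywhere, so each level set of $\mathfrak s(\phi,\cdot)$ is a finite union of $B_\ell$'s up to null sets. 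Your route reuses the machinery already set up for \cref{struct:strat} and needs no net or cone construction, at the price of requiring genuine continuity (in both $x$ and $r$, the latter to pass to rational radii) and the blow-up compactness argument; the paper's route only needs upper semicontinuity and a liminf over rational radii, and its cone functions localize not just the stratification but an approximate tangent plane $\mathbb V_\ell$, which makes the covering argument elementary. One cosmetic remark: the compactness of $\Tan_h(\phi,x)$ from \cref{prop:CompactnessTangents} that you cite at the end is not actually needed; precompactness of the rescaled measures suffices, as you yourself use.
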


\begin{proof}
Let $\hbar_\mathbb{G}$ be the constant introduced in \cref{prop:htagliato}. Let us first prove that there exists $\widetilde\alpha:=\widetilde\alpha(\mathbb G)$ such that the following assertion holds
\begin{equation}\label{eqn:ClaimTildeAlpha}
\text{for any $1\leq h\leq Q$ and for any $\mathbb V,\mathbb W\in\G(h)$, if $\mathbb V\subseteq C_{\mathbb W}(\widetilde\alpha)$, then $d_{\mathbb G}(\mathbb V,\mathbb W)\leq \hbar_{\mathbb G}$.}
\end{equation}
Indeed, if this was not the case, we can find an $1\leq h\leq Q$ and sequences $\{\mathbb V_i\}$, $\{\mathbb W_i\}$ in $\G(h)$ such that $\mathbb V_i\subseteq C_{\mathbb W_i}(i^{-1})$ and for which $d_\mathbb G(\mathbb V_i,\mathbb W_i)>\hbar_\mathbb{G}$, for all $i\in\mathbb N$. Thus, up to non re-labelled subsequences, we can assume that $\mathbb V_i\to\mathbb V$ and $\mathbb W_i\to\mathbb W$, for some $\mathbb V,\mathbb W\in \G(h)$, thanks to \cref{prop:CompGrassmannian}. Thanks to the aforementioned convergences and the fact that $\mathbb V_i\subseteq C_{\mathbb W_i}(i^{-1})$ for every $i\in\mathbb N$ we deduce that $\mathbb V\subseteq \mathbb W$ and thus $\mathbb V=\mathbb W$ since they both have homogeneous dimension $h$. But this latter equality is readily seen to be in contradiction with the fact that $d_\mathbb G(\mathbb V_i,\mathbb W_i)>\hbar_\mathbb{G}$, for all $i\in\mathbb N$, since $\mathbb W_i\to\mathbb W$ and $\mathbb V_i\to\mathbb V$.

Let $\{\mathbb{V}_\ell\}_{\ell=1,\ldots, N}$ be a finite $\widetilde\alpha/3$-dense set in $\G(h)$, where $\widetilde\alpha$ is defined above. For any $r\in (0,1)\cap \Q$ and $\ell=1,\ldots, N$ we define the functions on $\mathbb G$
$$
f_{r,\ell}(x):=r^{-h}\phi(\{w\in B(x,r):\dist(x^{-1}w,\mathbb{V}_\ell)\geq\widetilde\alpha\lVert x^{-1}w\rVert\})=:r^{-h}\phi(I(x,r,\ell)).
$$
We claim that the functions $f_{r,\ell}$ are upper semicontinuous. Let $\{x_i\}_{i\in\N}$ be a sequence of points converging to some $x\in\mathbb{G}$ and without loss of generality we assume that $\lim_{i\to\infty}r^{-h}\phi({I(x_i,r,\ell)})$ exists. Since the sets $I(x_i,r,\ell)$ are contained in $B(x,1)$ provided $i$ is sufficiently big, we infer thanks to Fatou's Lemma that
\begin{equation}
\limsup_{i\to\infty}f_{r,\ell}(x_i)=\frac{1}{r^h}\limsup_{i\to\infty}\int \chi_{I(x_i,r,\ell)}(z)d\phi(z)\leq \frac{1}{r^h}\int\limsup_{i\to\infty} \chi_{I(x_i,r,\ell)}(z)d\phi(z).
\label{eq:::num20}
\end{equation}
Furthermore, since $x_i\to x$ and the sets $I(x_i,r,\ell)$ and $I(x,r,\ell)$ are closed, we have
$$\limsup_{i\to\infty} \chi_{I(x_i,r,\ell)}=\chi_{\limsup_{i\to+\infty} I (x_i,r,\ell)}\leq \chi_{I(x,r,\ell)},
$$ 
where the first equality is true in general. Then, from \eqref{eq:::num20}, we infer that
\begin{equation}
\limsup_{i\to\infty}f_{r,\ell}(x_i)\leq \frac{1}{r^h}\int\limsup_{i\to\infty} \chi_{I(x_i,r,\ell)}(z)d\phi(z)\leq \frac{1}{r^h}\int \chi_{I(x,r,\ell)}(z)d\phi(z)=f_{r,\ell}(x),
\nonumber
\end{equation}
and this concludes the proof that $f_{r,\ell}$ is upper semicontinuous.
This implies that the function
$$
f_\ell:=\liminf_{r\in\Q, r\to 0} f_{r,\ell},
$$ 
is $\phi$-measurable and as a consequence, since $\mathrm{Tan}_{h}(\phi,x)\subseteq \mathfrak{M}(h)$ for $\phi$-almost every $x\in\mathbb{G}$, we infer that the set
$$
B_\ell:=\{x\in\mathbb{G}:f_\ell(x)=0\}\cap \{x\in \mathbb{G}:\mathrm{Tan}_h(\phi,x)\subseteq \mathfrak{M}(h)\},
$$
is $\phi$-measurable as well.
If we prove that for $\phi$-almost any $x\in B_\ell$ there exists a non-zero Haar measure $\nu$ in $\mathrm{Tan}_h(\phi,x)$ relative to a homogeneous subgroup $\mathbb{V}$ of $\mathbb{G}$ such that $d_\mathbb{G}(\mathbb{V},\mathbb{V}_\ell)\leq \hbar_\mathbb{G}$, we infer that
\begin{equation}
    \mathfrak{s}(\mathrm{Tan}_h(\phi,x))=\{\mathfrak{s}(\mathbb{V}_\ell)\},\qquad\text{for }\phi\text{-almost any }x\in B_\ell,
    \label{eqguyes}
\end{equation}
and thus $\mathfrak{s}(\phi,x)=\mathfrak{s}(\mathbb{V}_\ell)$ for $\phi$-almost every $x\in B_\ell$. Indeed, if we are able to find such a measure $\nu$ relative to $\mathbb V$, \eqref{eqguyes} is an immediate consequence of the fact that if $d_\mathbb{G}(\mathbb{V},\mathbb{V}_\ell)\leq \hbar_{\mathbb G}$, \cref{prop:htagliato} implies that $\mathbb{V}$ and $\mathbb{V}_\ell$ have the same stratification; and the fact that, from \cref{struct:strat}, $\phi$-almost everywhere the tangent subgroups have the same stratification.

In order to construct such a non-zero Haar measure $\nu$, we fix a point $x\in B_\ell$ in the $\phi$-full-measure subset of $B_\ell$ such that the following conditions hold
\begin{itemize}
    \item[(i)] $0<\Theta^h_*(\phi,x)\leq \Theta^{h,*}(\phi,x)<\infty$,
    \item[(ii)] $\mathrm{Tan}_{h}(\phi,x)\subseteq \mathfrak{M}(h)$,
\end{itemize}
 and we let $\{r_i\}_{i\in\N}$ be an infinitesimal sequence of rational numbers such that $\lim_{i\to\infty}f_{r_i,\ell}(x)=0$.
 
Thanks to item (i) above and the compactness of measures, see \cite[Proposition 1.59]{AFP00}, we can find a non re-labelled subsequence of $r_i$ such that
$$
r_i^{-h}T_{x,r_i}\phi\rightharpoonup \nu.
$$
Such a $\nu$ belongs by definition to $\mathrm{Tan}_h(\phi,x)$ and thus there is a $\lambda>0$ and a $\mathbb{V}\in\G(h)$ such that $\nu=\lambda\mathcal{C}^h\llcorner \mathbb{V}$. Thanks to \cite[Proposition 2.7]{DeLellis2008RectifiableMeasures}, we infer that
\begin{equation}
\begin{split}
    \nu(\{w\in U(0,1):\dist(w,\mathbb{V}_\ell)>\widetilde\alpha\lVert w\rVert\})&\leq \liminf_{i\to\infty}r_i^{-h}T_{x,r_i}\phi(\{w\in U(0,1):  \dist(w,\mathbb{V}_\ell)>\widetilde\alpha\lVert w\rVert\}) \\
    &=\liminf_{i\to\infty}r_i^{-h}\phi(\{w\in U(x,r_i): \dist(x^{-1}w,\mathbb{V}_\ell)>\widetilde\alpha\lVert x^{-1}w\rVert\})=0,
    \nonumber
    \end{split}
\end{equation}
where the last identity comes from the choice of the sequence $r_i$. This shows in particular that 
$$
\mathbb{V}\subseteq \{w\in \mathbb G:\dist(w,\mathbb{V}_\ell)\leq \widetilde\alpha\lVert w\rVert\}=C_{\mathbb V_\ell}(\widetilde\alpha),
$$
and then, from \eqref{eqn:ClaimTildeAlpha} we conclude that $d_\mathbb G(\mathbb V,\mathbb V_\ell)\leq \hbar_{\mathbb G}$, that was what we wanted to prove.

An immediate consequence of \eqref{eqguyes} is that
\begin{equation}
    \text{if }\ell,m\in\{1,\ldots,N\}\text{ and }\mathfrak{s}(\mathbb{V}_\ell)\neq \mathfrak{s}(\mathbb{V}_m) \text{ then }\phi(B_\ell\cap B_m)=0.
    \label{eq:interBls}
\end{equation}
On the other hand, the $B_\ell$'s cover $\phi$-almost all $\mathbb G$. To prove this latter assertion, we note that since $\phi$ is $\mathscr{P}^{*}_h$-rectifiable, for $\phi$-almost all $x\in \mathbb{G}$ there is an infinitesimal sequence $r_i\to 0$, a $\lambda>0$ and a $\mathbb{V}\in \G(h)$ such that $r_i^{-h}T_{x,r_i}\phi\rightharpoonup \lambda\mathcal{C}^h\llcorner \mathbb{V}$. Since the set $\{\mathbb V_\ell:\ell=1,\ldots,N\}$ is $\widetilde\alpha/3$-dense in $\G(h)$, there must exist an $\ell\in\{1,\ldots,N\}$ such that
\begin{equation}\label{eqn:NONSOPIU}
\mathbb{V}\subseteq \{w\in\mathbb{G}:\dist(w,\mathbb{V}_\ell)< \widetilde\alpha\lVert w\rVert\}.
\end{equation}
This last inclusion follows since there exists $\ell$ such that $d_{\mathbb G}(\mathbb V,\mathbb V_\ell)\leq \widetilde\alpha/3$ and the observation that every point in $\partial B(0,1)\cap\mathbb V$ is such that every point at minimum distance of it from $\mathbb V_\ell$ is in $B(0,2)\cap\mathbb V_\ell$. The previous inclusion, jointly with \cite[Proposition 2.7]{DeLellis2008RectifiableMeasures}, implies that
\begin{equation}
\begin{split}
    f_\ell(x)=\liminf_{r\in\Q, r\to 0} f_{r,\ell}(x)&\leq \liminf_{i\to\infty}f_{r_i,\ell}(x) \\ &=\liminf_{i\to\infty}r_i^{-h}\phi(\{w\in B(x,r_i):\dist(x^{-1}w,\mathbb{V}_\ell)\geq \widetilde\alpha\lVert x^{-1}w\rVert\})\\
    &\leq\limsup_{i\to\infty}r_i^{-h}T_{x,r_i}\phi(\{w\in B(0,1):\dist(w,\mathbb{V}_\ell)\geq \widetilde\alpha\lVert w\rVert\})\\
    &\leq \lambda \mathcal{C}^h\llcorner \mathbb{V}(\{w\in B(0,1):\dist(w,\mathbb{V}_\ell)\geq \widetilde\alpha\lVert w\rVert\})=0,
    \label{eq:f_l0}
\end{split}
\end{equation}
where the last inequality is true since \eqref{eqn:NONSOPIU} holds. 
This proves that $x\in B_\ell$ and as a consequence that the $B_\ell$'s cover $\phi$-almost all $\mathbb{G}$. 

We are ready to prove the measurability of the map $x\mapsto \mathfrak{s}(\phi,x)$. Fix an $\mathfrak{s}\in\mathfrak{S}(h)$ and let $D(\mathfrak{s}):=\{x\in\mathbb{G}:\mathfrak{s}(\phi,x)=\mathfrak{s}\}\cap\bigcup_{\ell=1}^N B_\ell$. Since by the previous step the $B_\ell$'s cover $\phi$-almost all $\mathbb{G}$ we know that $\{x\in\mathbb{G}:\mathfrak{s}(\phi,x)=\mathfrak{s}\}\setminus \bigcup_{l=1}^N B_\ell$ is $\phi$-null and thus it is $\phi$-measurable. Furthermore, thanks to \eqref{eqguyes} and \eqref{eq:interBls} we know that up to $\phi$-null sets we have
$$D(\mathfrak{s})=\bigcup_{\mathfrak s\in\mathfrak S(h)}\{B_\ell:\mathfrak{s}(\mathbb{V}_\ell)=\mathfrak{s}\}.$$
Since the sets $B_\ell$ are $\phi$-measurable, this concludes the proof that $\{x\in\mathbb G:\mathfrak s(\phi,x)=\mathfrak s\}$ is $\phi$-measurable for every $\mathfrak s\in \mathfrak S(h)$, taking also into account that $\mathfrak{s}(\phi,\cdot)^{-1}(0)$ is $\phi$-null.
\end{proof}

\subsection{Proof of \cref{thm:MMconormale}}
This long and technical section is devoted to the proof of \cref{thm:MMconormale}. 
\begin{definizione}\label{defC6C7}
Let $C>0$ be a real number. Through the rest of this section we let
$$
\newC\label{C:proj}(C):=1+2/C,
$$
and
$$\newC\label{C:U}(C):=(10(1+\oldC{C:proj}))^{2(Q+10)}.$$
\end{definizione}

\begin{osservazione}\label{rk:contradiction5.2}
Let $\mathfrak s\in \mathfrak{S}(h)$ be fixed and let $\mathbb V\in G^{\mathfrak s}_\unlhd(h)$ with $\mathfrak e(\mathbb V)\geq C$, where $\mathfrak e$ is defined in \eqref{eqn:mathfrake}. Let $\mathbb L$ be a complement of $\mathbb V$ and $P:=P_{\mathbb V}$ the projection on $\mathbb V$ related to this splitting.  Note that with the previous choices of $\oldC{C:proj}$ and $\oldC{C:U}$, for any $h\in\{1,\ldots,Q\}$, thanks to \cref{lip:const:proj:conorm} and \cref{rem:Ch1}, we have
$$2(1+\oldC{C:proj})^h\mathcal{C}^h(P(B(0,1)))<\oldC{C:U}/2^{h+3},$$
since $\mathcal{C}^h(P(B(0,1)))\leq \mathcal{C}^h\llcorner\mathbb V(B(0,\oldC{C:proj}))=\oldC{C:proj}^h$.
\end{osservazione}

\begin{proposizione}\label{prop::5.2}
Let $h\in\{1,\ldots,Q\}$, $\mathfrak{s}\in\mathfrak{S}(h)$, and let $\mathscr{G}$ be a subset of $\G_\unlhd^{\mathfrak s}(h)$ such that there exists a constant $C>0$ for which
$$
\text{$\mathfrak e(\mathbb V)\geq C$ for all $\mathbb V\in\mathscr{G}$},
$$ 
where we recall that $\mathfrak e$ was defined in \eqref{eqn:mathfrake}. 
Further let $r>0$, $\varepsilon\in (0,5^{-h-5}\oldC{C:U}^{-3h}]$, $r_1:=(1-\varepsilon/h)r$, and $\mu:=2^{-7}h^{-3}\oldC{C:U}^{-5h}\varepsilon^2$, where $\oldC{C:proj}$ and $\oldC{C:U}$ are defined in terms of $C$ in \cref{defC6C7}.

Let $\phi$ be a Radon measure and let $z\in\supp(\phi)$. We define $Z(z,r_1)$ to be the set of the triplets $(x,s,\mathbb{V})\in B(z,\oldC{C:U}r_1)\times (0,\oldC{C:U}r]\times \G^{\mathfrak s}_\unlhd(h)$ such that
\begin{equation}
    \phi(B(y,t))\geq (1-\varepsilon)(t/\oldC{C:U}r)^h\phi(B(z,\oldC{C:U}r)),
    \label{eq:2.9mm}
\end{equation}
whenever $y\in B(x,\oldC{C:U}s)\cap x\mathbb{V}$ and $t\in [\mu s, \oldC{C:U}s]$. The geometric assumption we make on $\phi$ is that we can find a compact subset $E$ of $B(z,\oldC{C:U}r_1)$ such that $z\in E$, 
\begin{equation}
    \phi(B(z,\oldC{C:U}r_1)\setminus E)\leq \mu^{h+1}\oldC{C:U}^{-h}\phi(B(z,\oldC{C:U}r_1)),
    \label{eq:densityE}
\end{equation}
and such that for any $x\in E$ and every $s\in(0,\oldC{C:U}r-d(x,z)]$ there is a $\mathbb{V}\in\G^{\mathfrak s}_\unlhd(h)$ such that $(x,s,\mathbb{V})\in Z(z,r_1)$. Furthermore we assume that there exists $\mathbb{W}\in\mathscr G$ such that $(z,r,\mathbb{W})\in Z(z,r_1)$, and let us fix $\mathbb L$ a normal complementary subgroup of $\mathbb W$ such that \cref{prop:projhom} holds. Let us denote $P:=P_\mathbb{W}$ the projection on $\mathbb W$ related to the splitting $\mathbb G=\mathbb W\mathbb{L}$.

Let us recall that with the notation $T(u,r)$ we mean the cylinder with center $u\in\mathbb G$ and radius $r>0$ related to the projection $P=P_\mathbb W$, see \cref{def:CYLINDER}. For any $u\in P(B(z,r_1))$ let $s(u)\in[0,r]$ be the smallest number with the following property: for any $s(u)<s\leq r$ we have
\begin{enumerate}
    \item $E\cap T(u,s/4h)\neq \emptyset$, and
    \item $\phi\big(B(z,\oldC{C:U}r)\cap T(u,\oldC{C:proj}s))\leq \mu^{-h}(s/\oldC{C:U}r)^h\phi(B(z,\oldC{C:U}r))$.
\end{enumerate}
Finally, we define
\begin{itemize}
    \item[($\alpha$)] $\text{ }\text{ }A:=\{u\in P(B(z,r_1)):s(u)=0\}$,
    \item[($\beta$)] $A_1:=\Big\{u\in P(B(z,r_1)): s(u)>0,\text{ and }\phi\big(B(z,\oldC{C:U}r)\cap T(u,\oldC{C:proj}s(u))\big)\geq \varepsilon^{-1}(s(u)/\oldC{C:U}r)^h\phi(B(z,\oldC{C:U}r))\Big\}$,
    \item[($\gamma$)]$A_2:=\Big\{u\in P(B(z,r_1)): s(u)>0,\text{ and }\phi\big((B(z,\oldC{C:U}r)\setminus E)\cap T(u,s(u)/4h) \big)\geq 2^{-1}(s(u)/4h\oldC{C:U}r)^h\phi(B(z,\oldC{C:U}r))\Big\}$.
\end{itemize}
Then we have 
\begin{itemize}
    \item[(i)] $s(u)\leq  \oldC{C:U}h\mu r$ for every $u\in P(B(z,r_1))$,
    \item[(ii)] The function  $u\mapsto s(u)$ is lower semicontinuous on $P(B(z,r_1))$ and as a consequence $A$ is compact,
    \item[(iii)] $ P(B(z,r_1))\subseteq A\cup A_1\cup A_2$,
    \item[(iv)]$\mathcal{C}^h(P(B(z,r))\setminus A)\leq 5^{h+3}\oldC{C:U}^{3h}\mathcal{C}^h(P(B(0,1)))\varepsilon r^h$,
    \item[(v)] $P(E\cap P^{-1}(A))=A$, $\mathcal{S}^h(E\cap P^{-1}(A))>0$ and there is a constant $\mathfrak{C}>1$ such that
    $$\mathfrak{C}^{-1}\mathcal{S}^h(E\cap P^{-1}(A))\leq \phi(E\cap P^{-1}(A))\leq \mathfrak{C}\mathcal{S}^h(E\cap P^{-1}(A)).$$
    \end{itemize}
\end{proposizione}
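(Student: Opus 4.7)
My proof plan adapts the argument of Preiss~\cite[Lemma 5.2]{Preiss1987GeometryDensities} to the co-normal Carnot setting. The central tool, used in every item, is the following construction. Given any $u \in P(B(z, r_1))$, pick $\tilde y \in B(z, r_1)$ with $P(\tilde y) = u$ and set $y := z \cdot P(z^{-1} \tilde y)$. Since $P$ is a Lipschitz homogeneous homomorphism (\cref{prop:projhom}) with kernel $\mathbb L$ (because $\mathbb L$ is normal), one has $y \in z\mathbb W$, $P(y) = u$, $d(z,y) \leq \oldC{C:proj} r_1$, and $u^{-1} y \in \ker(P) = \mathbb L$, so $y \in u\mathbb L$. \cref{prop:structurecyl} then yields the crucial inclusion $B(y, t) \subseteq T(u, t)$ for every $t > 0$: every cylinder contains a ball of the same radius centered on $z\mathbb W$, to which the lower density bound \eqref{eq:2.9mm} applies directly.

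With this construction in hand, item (i) is essentially direct. For $s \in (\oldC{C:U}h\mu r, r]$, the scale $t := s/4h$ lies in $[\mu r, \oldC{C:U}r]$, so \eqref{eq:2.9mm} gives $\phi(B(y, s/4h)) \geq (1-\varepsilon)(s/(4h\oldC{C:U}r))^h\phi(B(z,\oldC{C:U}r))$. The numerical choice of $\mu$ forces this to exceed $\phi(B(z,\oldC{C:U}r_1)\setminus E) \leq \mu^{h+1}\oldC{C:U}^{-h}\phi(B(z,\oldC{C:U}r_1))$ from \eqref{eq:densityE}, so $E \cap B(y,s/4h) \neq \emptyset$, proving (1). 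For condition (2) one needs an upper bound on $\phi(B(z,\oldC{C:U}r)\cap T(u,\oldC{C:proj}s))$; this is obtained by packing $T(u, \oldC{C:proj}s) \cap B(z, \oldC{C:U}r)$ by disjoint balls $B(y_i, \mu s)$ with $y_i \in z\mathbb W$, using the lower bound \eqref{eq:2.9mm} on each $\phi(B(y_i, \mu s))$ together with the projection volume estimate $\mathcal C^h\llcorner z\mathbb W(B(P(u), \oldC{C:proj}^2 s)) = O(s^h)$ from \cref{prop:TopDimMetricDim} to cap the number of such balls by $O(\mu^{-h})$. Item (ii) is a routine semicontinuity check: the failure of (1) or (2) at a fixed scale is stable under $u_n \to u$ by compactness of $E$ and the cylinder monotonicity $\bigcap_{\eta>0} T(u,\cdot+\eta) = T(u,\cdot)$, so $s(\cdot)$ is lower semicontinuous and $A = s^{-1}(\{0\}) \cap P(B(z,r_1))$ is closed in the compact set $P(B(z,r_1))$.

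For item (iii), fix $u$ with $s(u) > 0$ and choose $\sigma_n \nearrow s(u)$ at which (1) or (2) fails. In the (1)-failing case, the $y$-construction at scale $\sigma_n$ exhibits a ball $B(y,\sigma_n/4h) \subseteq T(u,\sigma_n/4h)$ with $\phi$-mass at least $(1-\varepsilon)(\sigma_n/(4h\oldC{C:U}r))^h\phi(B(z,\oldC{C:U}r))$; since $E\cap T(u,\sigma_n/4h) = \emptyset$, the entire mass lies in $(B(z,\oldC{C:U}r)\setminus E) \cap T(u,\sigma_n/4h)$, and letting $n\to\infty$ places $u$ in $A_2$. In the (2)-failing case the limit of the defining inequality lands $u$ in $A_1$ directly, because $\mu^{-h} > \varepsilon^{-1}$.

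Item (iv) is the principal technical hurdle. By items (i) and (ii), $s(\cdot)$ is bounded and lower semicontinuous, so a standard Vitali-type covering applies to $\{P(B(u, 5 s(u)))\}_{u\in A_1\cup A_2}$: extract a disjoint subfamily $\{P(B(u_i, s(u_i)))\}_i$ whose 5-fold enlargement covers $P(B(z,r))\setminus A$. The $\mathcal C^h$-mass of each $P(B(u_i, 5 s(u_i)))$ is $\sim s(u_i)^h$ by \cref{cor:2.2.19}. The $A_1$-contribution $\sum s(u_i)^h$ is capped using the concentration $\phi(T(u_i,\oldC{C:proj}s(u_i))\cap B(z,\oldC{C:U}r)) \geq \varepsilon^{-1}(s(u_i)/\oldC{C:U}r)^h\phi(B(z,\oldC{C:U}r))$ together with the almost-disjointness of the selected cylinders, yielding a bound proportional to $\varepsilon(\oldC{C:U}r)^h$; the $A_2$-contribution is even smaller, controlled directly by $\phi(B(z,\oldC{C:U}r_1)\setminus E) \leq \mu^{h+1}\oldC{C:U}^{-h}\phi(B(z,\oldC{C:U}r_1))$. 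The main obstacle is the careful balancing of the constants $\oldC{C:proj}, \oldC{C:U}, \mu, \varepsilon$: unlike Preiss's Euclidean setup where projections are canonical orthogonal $1$-Lipschitz maps, here $P$ has Lipschitz constant $\oldC{C:proj}$ entering the covering estimates on multiple sides, and the specific quantitative choices in \cref{defC6C7} (and in the definitions of $\mu$ and $r_1$) are tailored exactly so that the Vitali estimate closes with the claimed explicit constant. Finally, item (v) follows easily from (iv): $A \subseteq P(E)$ since for $u \in A$, $s(u)=0$ implies $E \cap T(u,2^{-n}) \neq \emptyset$ for all $n$, and a cluster point of the corresponding sequence lies in $E \cap P^{-1}(u)$; the strict positivity $\mathcal S^h(E\cap P^{-1}(A)) > 0$ and the two-sided comparability with $\phi$ follow from (iv) together with \cref{prop:TopDimMetricDim} ($\mathcal S^h \asymp \mathcal C^h$ on $\mathbb W$) and \cref{prop:MutuallyEthetaGamma} ($\phi\llcorner E \asymp \mathcal S^h\llcorner E$ on the relevant $E(\vartheta,\gamma)$-type subsets implicit in the definition of $Z(z,r_1)$).
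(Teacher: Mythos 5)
Your plan breaks down at item (iii), which is the heart of the proposition. In the case where condition (1) fails just below $s(u)$ (i.e.\ $E\cap T(u,s/4h)=\emptyset$), you propose to produce the mass required for membership in $A_2$ by applying \eqref{eq:2.9mm} for the global triplet $(z,r,\mathbb W)$ to a ball $B(y,\sigma_n/4h)$ centred at a point $y\in z\mathbb W$ with $P(y)=u$. But \eqref{eq:2.9mm} for $(z,r,\mathbb W)$ is only valid for radii $t\in[\mu r,\oldC{C:U}r]$, while by item (i) one has $s(u)\leq \oldC{C:U}h\mu r$ and $s(u)$ can in fact be arbitrarily small, so the radius $\sigma_n/4h$ typically lies far below $\mu r$ and the lower mass bound is simply not available there. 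The correct argument must instead use the local hypothesis: pick $x\in E\cap T(u,s(u)/4h)$ and a plane $\mathbb V$ with $(x,s,\mathbb V)\in Z$ at the intermediate scale $s\sim s(u)/\varepsilon$; then one must first prove a quantitative transversality estimate $\|P(x^{-1}y)\|\geq\sigma\|x^{-1}y\|$ for $y\in x\mathbb V$ --- and this is exactly where the assumption $u\notin A_1$ enters, via a Fubini/pencil-of-balls argument inside $T(u,\oldC{C:proj}s(u))$ that would otherwise contradict $u\notin A_1$ --- and only then can one find $w\in x\mathbb V$ with $P(w)=u$ exactly and $d(x,w)\lesssim s(u)/\sigma$, so that $B(w,s(u)/4h)\subseteq T(u,s(u)/4h)$ (this inclusion needs $P(w)=u$, not merely $P(w)$ close to $u$, cf.\ \cref{prop:structurecyl}) and \eqref{eq:2.9mm} at the scale $s$ gives the mass. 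None of this appears in your plan, and your per-scale dichotomy (``(1)-failing $\Rightarrow A_2$, (2)-failing $\Rightarrow A_1$'') hides the fact that the $A_2$ conclusion genuinely requires $u\notin A_1$ as an input; without the transversality step the projection of $\mathbb V$ could be degenerate and no mass lands in the cylinder.

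There are also smaller defects. In item (i), condition (2) is trivially true for $s>\oldC{C:U}h\mu r$ because $\mu^{-h}(s/\oldC{C:U}r)^h>1$; the packing argument you sketch is not only unnecessary but unsound, since lower bounds on disjoint balls can never yield an upper bound for $\phi$ of the cylinder (no upper density control on $\phi$ is assumed). In item (iv), the $A_2$-sum is not controlled by \eqref{eq:densityE} alone: the sets in the definition of $A_2$ live in $B(z,\oldC{C:U}r)\setminus E$, so one also needs the annulus estimate $\phi(B(z,\oldC{C:U}r)\setminus B(z,\oldC{C:U}r_1))\leq 2\varepsilon\,\phi(B(z,\oldC{C:U}r))$, which follows from $(z,r,\mathbb W)\in Z$. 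Finally, in item (v) you cannot invoke \cref{prop:MutuallyEthetaGamma}: the present $\phi$ is an arbitrary Radon measure with no density assumptions and no $E(\vartheta,\gamma)$ sets in sight. The two-sided comparison $\mathfrak C^{-1}\mathcal S^h\leq\phi\leq\mathfrak C\mathcal S^h$ on $E\cap P^{-1}(A)$ has to be extracted from the structure already proved: the upper bound on $\phi(B(x,\oldC{C:proj}s))$ via condition (2) (valid at all small scales since $s(u)=0$ on $A$, using $B(x,\oldC{C:proj}s)\subseteq T(u,\oldC{C:proj}s)$ because $P(x)=u$), the lower bound via the $Z$-condition at points of $E$, and Federer's density comparison theorems.
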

\begin{proof} We prove each point of the proposition in a separate paragraph. For the sake of notation we write $Z:=Z(z,r_1)$, and without loss of generality we will always assume that $z=0$, since $P_{\mathbb W}$ is a homogeneous homomorphism, see \cref{prop:projhom}, and thus the statement is left-invariant.  Since it will be used here and there in the proof, we estimate $\phi(B(0,\oldC{C:U}r)\setminus B(0,\oldC{C:U}r_1))$. Since $(0,r,\mathbb W)\in Z$, we infer that
\begin{equation}
    \phi(B(0,\oldC{C:U} r_1))\geq (1-\varepsilon)(r_1/r)^h\phi(B(0,\oldC{C:U}r)).
    \nonumber
\end{equation}
This implies that
\begin{equation}
\begin{split}
      \phi(B(0,\oldC{C:U}r)\setminus B(0,\oldC{C:U}r_1))
      &= \phi(B(0,\oldC{C:U}r))-\phi(B(0,\oldC{C:U}r_1))\leq \phi(B(0,\oldC{C:U}r))(1-(1-\varepsilon)(r_1/r)^h) \\
      &=\phi(B(0,\oldC{C:U}r))(1-(1-\varepsilon)(1-\varepsilon/h)^h)\leq 2\varepsilon \phi(B(0,\oldC{C:U}r)),
\end{split}
  \label{bd:bd1}
\end{equation}
where in the last inequality we used that $h \mapsto (1-\varepsilon/h)^h$ is increasing.

\item\paragraph{Proof of (i):} Let $u\in P(B(0,r_1))$ and let $ \oldC{C:U}\mu h r<s\leq r$. Then
$$\phi(B(0,\oldC{C:U}r)\cap T(u, \oldC{C:proj}s))\leq \phi(B(0,\oldC{C:U}r))\leq \mu^{-h}(s/\oldC{C:U}r)^h\phi(B(0,\oldC{C:U}r)),$$
where the last inequality comes from the fact that $ \oldC{C:U}\mu h r<s$.
Defined $v:=u\delta_\mu(u^{-1})$, we immediately note that $v\in \mathbb{W}$ and that, from \cref{lip:const:proj:conorm}, $ d(v,u)=\mu d(u,0)\leq \oldC{C:proj}\mu r$. Furthermore, for every $\Delta\in B(0,\mu r)$ we have
\begin{equation}
    \begin{split}
    d(0,u\delta_\mu(u^{-1})\Delta)&\leq \mu\|u\|+\|u\|+\|\Delta\|\leq \mu \oldC{C:proj}r_1+\oldC{C:proj}r_1+\mu r \\
    &\leq (\oldC{C:proj}(1+\mu)+2\mu)r_1\leq \oldC{C:U}r_1,
    \label{incl:palli}
\end{split}
\end{equation}
where in the inequality above we used the fact that $r_1>r/2$, and $\oldC{C:U}>2(\oldC{C:proj}+1)>\oldC{C:proj}(1+\mu)+2\mu$.
Thus, on the one hand we have $B(v,\mu r)\subseteq B(u,(1+\oldC{C:proj})\mu r)$ and on the other, thanks to \eqref{incl:palli}, we deduce that
\begin{equation}\label{eqn:Deduzione}
B(v,\mu r)\subseteq B(0,\oldC{C:U}r_1).
\end{equation}
Since $(0,r,\mathbb W)\in Z$, this implies thanks to the definition of $Z$ and $E$ that
\begin{equation}
    \phi(B(v,\mu r))\geq (1-\varepsilon)\mu^h\oldC{C:U}^{-h} \phi(B(0,\oldC{C:U}r_1))> \phi(B(0,\oldC{C:U}r_1)\setminus E).
    \label{eq:ineq:balls}
\end{equation}
Furthermore, thanks to \eqref{eqn:Deduzione}, \eqref{eq:ineq:balls} and the definition of $T(\cdot,\cdot)$, we also infer that
$$
\emptyset\neq E\cap B(v,\mu r) \subseteq E\cap B(u,(1+\oldC{C:proj})\mu r)\subseteq E\cap T(u,s/4h),
$$
where the last inclusion is true since $(1+\oldC{C:proj})\mu r\leq \oldC{C:U}\mu r /4 <s/(4h)$.
\paragraph{Proof of (ii):} Let $u\in P(B(0,r_1))$ and let $0<s\leq s(u)$. By definition of $s(u)$, up to eventually increasing $s$ such that it still holds $0<s\leq s(u)$, there are two cases. Either
\begin{equation}
    \phi(B(0,\oldC{C:U}r)\cap T(u,\oldC{C:proj}s))>(1+\tau)^h\mu^{-h}(s/{ \oldC{C:U}}r)^h\phi(B(0,\oldC{C:U}r)),
    \label{eq:defs(u)}
\end{equation}
 for some $\tau>0$ or
\begin{equation}
    E\cap T(u,s/4h)= \emptyset.
    \label{eq:defs(u)2}
\end{equation}
If $v\in P(B(0,r_1))$ is sufficiently close to $u$ then $s+\oldC{C:proj}^{-1}d(u,v)\leq (1+\tau)s$ and $s+\oldC{C:proj}^{-1}d(u,v)\leq r$, since $s(u)\leq r$ thanks to point (i). If \eqref{eq:defs(u)} holds, this implies that
\begin{equation}\label{eqn:EXT1}
\begin{split}
        \phi(B(0,\oldC{C:U}r)\cap T(v,\oldC{C:proj}(s+\oldC{C:proj}^{-1} d(u,v))))&>\phi(B(0,\oldC{C:U}r)\cap T(u,\oldC{C:proj}s))\\
        &\geq (1+\tau)^h\mu^{-h}(s/{ \oldC{C:U}}r)^h\phi(B(0,\oldC{C:U}r)) \\
        &\geq\mu^{-h} ((s+\oldC{C:proj}^{-1}d(u,v))/{ \oldC{C:U}}r)^h\phi(B(0,\oldC{C:U}r)),
\end{split}
\end{equation}
where the last inequality is true provided $d(u,v)$ is suitably small. On the other hand, if \eqref{eq:defs(u)2} holds, then
\begin{equation}\label{eqn:EXT2}
E\cap T(v,(s-4hd(u,v))/4h)\subseteq E\cap T(u,s/4h)=\emptyset.
\end{equation}
Taking into account \eqref{eqn:EXT1} and \eqref{eqn:EXT2}, this shows that $s(v)\geq\min\{s-4hd(u,v),s+\oldC{C:proj}^{-1}d(u,v)\}=s-4hd(u,v)$ provided $v$ is sufficiently close to $u$. This implies that
$\liminf_{v\to u}s(v)\geq s$ for any $s\leq s(u)$ for which at least one between \eqref{eq:defs(u)} and \eqref{eq:defs(u)2} holds. In particular, from the definition of $s(u)$, we deduce that there exists a sequence $s_i\to s(u)^-$ such that at each $s_i$ at least one between \eqref{eq:defs(u)} and \eqref{eq:defs(u)2} holds. In conclusion we infer
$$
\liminf_{v\to u} s(v)\geq s(u).
$$

\paragraph{Proof of (iii):} Suppose that $u\in P(B(0,r_1))\setminus (A\cup A_1)$. Since $u\not\in A\cup A_1$, then $s(u)>0$ and
\begin{equation}
    \phi\big(B(0,\oldC{C:U}r)\cap T(u,\oldC{C:proj}s(u))\big)< \varepsilon^{-1}(s(u)/{ \oldC{C:U}}r)^h\phi(B(0,\oldC{C:U}r)).
    \label{eq:contro}
\end{equation}
Thanks to the definition of $s(u)$, for any $0<s<s(u)$, up to eventually increasing $s$ in such a way that it still holds $0<s<s(u)$, we have either
\begin{equation}
    \phi(B(0,\oldC{C:U}r)\cap T(u,\oldC{C:proj}s))>\mu^{-h}(s/{ \oldC{C:U}}r)^h\phi(B(0,\oldC{C:U}r)),
    \label{eq:defs(u)prim}
\end{equation}
or
\begin{equation}
    E\cap T(u,s/4h)= \emptyset.
    \label{eq:defs(u)2prim}
\end{equation}
Let us assume that $\eqref{eq:defs(u)2prim}$ does not hold for some $s<s(u)$. Then $\eqref{eq:defs(u)2prim}$ does not hold for any $t$ such that $s\leq t<s(u)$. Thus, in this case, we deduce the existence of $t_i<s(u)$ such that $t_i\to s(u)$ for which \eqref{eq:defs(u)prim} holds. Thus we have
\begin{equation}
    \begin{split}
        \mu^{-h}(s(u)/{ \oldC{C:U}}r)^h\phi(B(0,\oldC{C:U}r))&=\lim_{i\to+\infty}\mu^{-h}(t_i/{ \oldC{C:U}}r)^h\phi(B(0,\oldC{C:U}r))\\
        &\leq \limsup_{i\to +\infty}\phi(B(0,\oldC{C:U}r)\cap T(u,\oldC{C:proj}t_i)) \\& \leq \phi(B(0,\oldC{C:U}r)\cap T(u,\oldC{C:proj}s(u))) \\ 
        &\leq \varepsilon^{-1}(s(u)/{ \oldC{C:U}}r)^h\phi(B(0,\oldC{C:U}r)),
    \end{split}
\end{equation}
that is a contradiction thanks to the choice of $\mu$ and $\varepsilon$. This proves that for any  $0<\rho<s(u)$ we have $E\cap  T(u,v/4h)=\emptyset$ and thus
\begin{equation}
    \begin{split}
        E\cap \text{int}(T(u,s(u)/4h))= \emptyset.
        \nonumber
    \end{split}
\end{equation}
Let us now define the constants
$$
s:=16hs(u)/\varepsilon,\qquad\text{and}\qquad\sigma:=(2h-1)\varepsilon/32h^2.
$$
Thanks to item (i), from which $s(u)\leq { \oldC{C:U}}h\mu r$, and from the very definition of $\mu$, we deduce that
\begin{equation}
        0<s(u)\leq s=16hs(u)/\varepsilon\leq r-r_1, \qquad \text{and}\qquad \mu\leq \sigma\leq 1.
\end{equation}
Thanks to the compactness of $E$ and the definition of $s(u)$ we have that 
$$E\cap T(u,s(u)/4h)\neq \emptyset.$$
Let us fix $x\in E\cap T(u,s(u)/4h)$ and assume  $\mathbb{V}\in  \G^{\mathfrak s}_\unlhd(h)$ to be such that $(x,s,\mathbb{V})\in Z$. We claim that
\begin{equation}
    \lVert P(x^{-1}y)\rVert\geq \sigma\lVert x^{-1}y\rVert,\qquad\text{for every }y\in x\mathbb V.
    \label{eq:lipcond}
\end{equation}
Assume by contradiction that there is a $y\in x\mathbb{V}$ such that $\lVert x^{-1}y\rVert=1$ and for which $\lVert P(x^{-1}y)\rVert<\sigma$. Let us fix $w\in B(0,\sigma s)$ and let $t\in\mathbb R$ be such that $\lvert t\rvert\leq\oldC{C:proj} s(u)/(4h\sigma)$. Then, we have
\begin{equation}
    d(0,x\delta_t(x^{-1}y)w)\leq d(0,x)+\lvert t\rvert \lVert x^{-1}y\rVert+\sigma s\leq d(0,x)+\frac{\oldC{C:proj}s(u)}{4h\sigma}+\sigma s.
    \label{eq:bdB(C7)}
\end{equation}
Thanks to the choice of the constants and item (i), according to which $s(u)\leq { \oldC{C:U}}h\mu r$, we infer that
\begin{equation}
\begin{split}
    \frac{\oldC{C:proj}s(u)}{4h\sigma}+\sigma s&\leq \oldC{C:proj}s(u)(1-1/2h+8h/((2h-1)\varepsilon))\\
    &\leq \oldC{C:proj}2^{-7}h^{-2}\varepsilon^2r(1-1/2h+8h/((2h-1)\varepsilon))\leq \oldC{C:proj}\varepsilon r/h,
    \label{eq:birdi1}
\end{split}
\end{equation}
where in the first inequality above we are using the fact that $\oldC{C:proj}\geq 1$, and in the second we are using the explicit expression $\mu=2^{-7}h^{-3}\oldC{C:U}^{-5h}\varepsilon^2$ and the fact that $\oldC{C:U}^{-5h+1}<1$.
Hence, since $x\in B(0,\oldC{C:U}r_1)$ putting together \eqref{eq:bdB(C7)} and \eqref{eq:birdi1} we infer that 
\begin{equation}\label{eqn:Estdeltat}
d(0,x\delta_t(x^{-1}y)w)\leq \oldC{C:U}r_1+\oldC{C:proj}\varepsilon r/h<\oldC{C:U}r,
\end{equation}
where the second inequality comes from the definition of $r_1$ and the fact that $\oldC{C:U}\geq \oldC{C:proj}$. As a consequence of the previous computations we finally deduce that
$$
B(x\delta_t(x^{-1}y),\sigma s)\subseteq B(0,\oldC{C:U}r), \qquad\text{for any }\lvert t\rvert\leq\oldC{C:proj} s(u)/(4h\sigma).
$$
We now prove that for any $\lvert t\rvert\leq \oldC{C:proj} s(u)/(4h\sigma)$ and any $w\in B(0,\sigma s)$, we have
\begin{equation}\label{eqn:TIME}
x\delta_t(x^{-1}y) w\in T(u,\oldC{C:proj}s(u)).
\end{equation}
Indeed, thanks to \cref{prop:projhom}, we have that $$P(x\delta_t(x^{-1}y)w)=P(x)\delta_t(P(x^{-1}y))P(w)$$ and thus since $x\in T(u,s(u)/4h)$ by means of \cref{prop:structurecyl} we infer that 
$$d(u,P(x))\leq \oldC{C:proj}s(u)/4h.$$
Thanks to this, and together with the fact that $\|P(w)\|\leq \oldC{C:proj}\sigma s$ due to \cref{lip:const:proj:conorm}, we can estimate
\begin{equation}
    \begin{split}
d(u,P(x)\delta_t(P(x^{-1}y))P(w))&\leq d(u,P(x))+\lvert t\rvert\lVert P(x^{-1}y)\rVert+\oldC{C:proj}\sigma s\\
&\leq \frac{\oldC{C:proj}s(u)}{4h}+\frac{\oldC{C:proj}s(u)}{4h}+\oldC{C:proj}\sigma s\leq\frac{\oldC{C:proj}s(u)}{2h}+\oldC{C:proj}\Big(1-\frac{1}{2h}\Big)s(u)
\leq \oldC{C:proj}s(u),
\nonumber
    \end{split}
\end{equation}
where in the second inequality of the last line we are using $\sigma s=s(u)(1-1/(2h))$.
Summing up, the above computations yield that
\begin{equation}\label{eqn:TIME2}
B(x\delta_t(x^{-1}y),\sigma s)\subseteq B(0,\oldC{C:U}r)\cap T(u,\oldC{C:proj}s(u)), \quad \text{ for any }\lvert t\rvert\leq \oldC{C:proj} s(u)/(4h\sigma).
\end{equation}
Now we are in a position to write the following chain of inequalities
\begin{equation}
\begin{split}
    \phi(B(0,\oldC{C:U}r)\cap T(u,\oldC{C:proj}s(u)))&\geq (2\sigma s)^{-1}\int_{-s(u)/4h\sigma}^{s(u)/4h\sigma}\phi(B(x\delta_t(x^{-1}y),\sigma s)) dt\\
    &\geq (2\sigma s)^{-1}(s(u)/2h\sigma)(1-\varepsilon)(\sigma s/r\oldC{C:U})^h\phi(B(0,\oldC{C:U}r))\\
    &= (1-\varepsilon)(1-1/2h)^h16h^2(2h-1)^{-2}\varepsilon^{-1}(s(u)/\oldC{C:U}r)^h\phi(B(0,\oldC{C:U}r))\\
    &\geq \varepsilon^{-1}(s(u)/\oldC{C:U}r)^h\phi(B(0,\oldC{C:U}r))
\end{split}
\label{eqn:ContR}
\end{equation}
where the first inequality is true by { applying Fubini theorem} to the function $F(t,z):=\chi_{B(0,\sigma s)}(\delta_t(y^{-1}x)x^{-1}z)$ on the domain $[-s(u)/(4h\sigma),s(u)/(4h\sigma)]\times \mathbb G$, and by noticing that when $|t|\leq s(u)/(4h\sigma)$ we have \eqref{eqn:TIME2}; the second inequality is true since $x\in E$ and then $(x, s,\mathbb V)\in Z$ for some $\mathbb V\in { \G_\unlhd^{\mathfrak s}(h)}$; and the last inequality is true since $(1-\varepsilon)(1-1/(2h))^h16h^2(2h-1)^{-2}\geq 1$. Since \eqref{eqn:ContR} is a contradiction with the assumption $u\notin A_1$ we get that \eqref{eq:lipcond} holds and thus $P\lvert_{\mathbb{V}}$ is injective, since it is also a homomorphism. Furthermore, since $\mathbb{V}$ has the same stratification as $\mathbb{W}$, \cref{injcompl} implies that $\mathbb{V}\mathbb{L}=\mathbb{G}$, where $\mathbb L$ is the chosen normal complement of $\mathbb W$. Thanks to \cite[Proposition 3.1.5]{FMS14}, there exists an intrinsically linear function $\ell:\mathbb{W}\to\mathbb{L}$ such that $\mathbb{V}=\text{graph}(\ell)$ and thus $P\lvert_\mathbb{V}$ is also surjective. In particular we can find a $w\in x\mathbb{V}$ in such a way that $P(w)=u$ and, by using \eqref{eq:lipcond} and $d(u,P(x))\leq \oldC{C:proj}s(u)/4h$, that follows from \cref{prop:structurecyl}, and the fact that $P$ is a homogeneous homomorphism, we conclude that the following inequality holds
\begin{equation}\label{eqn:xw}
\lVert x^{-1}w\rVert\leq\sigma^{-1}\lVert P(x)^{-1}P(w)\rVert=\sigma^{-1}\lVert P(x)^{-1}u\rVert\leq \frac{\oldC{C:proj}s(u)}{4h\sigma}.
\end{equation}
We now claim that the inclusion
\begin{equation}
    U(w,s(u)/4h)\subseteq (B(0,\oldC{C:U}r)\setminus E)\cap \text{int}(T(u,s(u)/4h)),
    \label{eq:claimfinale3}
\end{equation}
concludes the proof of item (iii). Indeed, we have $(x,s,\mathbb V)\in Z$, and since $w\in B(x,\oldC{C:U}s)\cap x\mathbb V$, see \eqref{eqn:xw}, and we have $\mu s\leq s(u)/4h\leq \oldC{C:U}s$, we infer, by approximation and using the hypothesis, that
\begin{equation}
    \phi(U(w,s(u)/4h))\geq (1-\varepsilon)(s(u)/4h\oldC{C:U}r)^h\phi(B(0,\oldC{C:U}r)).
    \label{eq:claimfinale3.2}
\end{equation}
Putting together \eqref{eq:claimfinale3} and \eqref{eq:claimfinale3.2} we deduce that
\begin{equation}
    \phi\big((B(0,\oldC{C:U}r)\setminus E)\cap \text{int}(T(u,s(u)/4h))\big)\geq (1-\varepsilon)(s(u)/4h\oldC{C:U}r)^h\phi(B(0,\oldC{C:U}r)).
    \nonumber
\end{equation}
and thus $u\in A_2$, which proves item (iii). In order to prove the inclusion \eqref{eq:claimfinale3} we note that since $\lVert x^{-1}w\rVert\leq \oldC{C:proj}s(u)/(4h\sigma)$, see \eqref{eqn:xw}, we have thanks to the same computation we performed in \eqref{eq:bdB(C7)}, \eqref{eq:birdi1}, and \eqref{eqn:Estdeltat}, that $B(w,s(u)/(4h))\subseteq B(0,\oldC{C:U}r)$. Furthermore, since $P(w)=u$ the inclusion \eqref{eq:claimfinale3} follows thanks to the fact that $B(w,s(u)/4h)\subseteq T(u,s(u)/4h)$, see \cref{prop:structurecyl}, and the fact that $\text{int}(T(u,s(u)/4h))\cap E=\emptyset$.

\paragraph{Proof of (iv):} Let $\tau>1$. Thanks to \cite[Theorem 2.8.4]{Federer1996GeometricTheory}, we deduce that there exists a countable set $D\subseteq A_1$ such that the following two hold
\begin{itemize}
    \item[($\alpha)$] $\{B(w,\oldC{C:proj}^2s(w))\cap\mathbb W:w\in D\}$ is a disjointed subfamily of $\{B(w,\oldC{C:proj}^2s(w))\cap\mathbb W:w\in A_1\}$,
    \item[($\beta)$] for any $w\in A_1$ there exists a $u\in D$ such that $B(w,\oldC{C:proj}^2s(w))\cap B(u,\oldC{C:proj}^2s(u))\cap\mathbb W\neq \emptyset$ and $s(w)\leq \tau s(u)$.
\end{itemize} 

Furthermore, if we define for every $u\in A_1$ the set
\begin{equation}\label{eqn:Bhat}
\begin{split}
    \hat{B}(u,\oldC{C:proj}^2s(u)):=\bigcup\{B(w,\oldC{C:proj}^2s(w))\cap\mathbb W:w\in A_1,
    ~B(u,\oldC{C:proj}^2s(u))\cap B(w,\oldC{C:proj}^2s(w))\cap\mathbb W\neq \emptyset,~s(w)\leq \tau s(u) \},
\end{split}
\end{equation}
we have, thanks to \cite[Corollary 2.8.5]{Federer1996GeometricTheory}, that $A_1\subseteq \bigcup_{u\in A_1} B(u,\oldC{C:proj}^2s(u))\cap\mathbb W\subseteq \bigcup_{w\in D} \hat{B}(w,\oldC{C:proj}^2s(w))$. An easy computation based on the triangle inequality, which we omit, leads to the following inclusion
\begin{equation}
    \hat{B}(u,\oldC{C:proj}^2s(u))\subseteq \mathbb{W}\cap B(u,(1+2\tau)\oldC{C:proj}^2s(u)), \qquad \text{for every $u\in A_1$}.
    \label{eq:inclussion}
\end{equation}
Since $D\subseteq A_1$, and since $T(u,\oldC{C:proj}s(u))\subseteq P^{-1}(B(u,\oldC{C:proj}^2s(u))\cap \mathbb W)$ for every $u\in A_1$, see \cref{prop:structurecyl}, we conclude, by exploiting the fact that $\{B(w,\oldC{C:proj}^2s(w))\cap\mathbb W:w \in D\}$ is a disjointed family, the following inequality
$$\phi(B(0,\oldC{C:U}r))\geq \sum_{u\in D}\phi(B(0,\oldC{C:U}r)\cap T(u,\oldC{C:proj}s(u)))\geq \varepsilon^{-1}\sum_{u\in D}(s(u)/\oldC{C:U}r)^h\phi(B(0,\oldC{C:U}r)),$$
where the last inequality above comes from the fact that $D\subseteq A_1$. The above inequality can be rewritten as $\sum_{u\in D}s(u)^h\leq \oldC{C:U}^h \varepsilon r^h$. In particular, thanks to \cref{rem:Ch1}, and \eqref{eq:inclussion} we infer that
\begin{equation}
\begin{split}
        \mathcal{C}^h(A_1)&\leq \sum_{u\in D}\mathcal{C}^h(B(u,(1+2\tau)\oldC{C:proj}^2s(u))\cap\mathbb W)\\
        &=\oldC{C:proj}^{2h}(1+2\tau)^h\sum_{u\in D}s(u)^h\leq \oldC{C:proj}^{2h}\oldC{C:U}^h(1+2\tau)^h\varepsilon r^h.
\end{split}
        \label{eq:bdes1}
\end{equation}

With a similar argument we used to prove the existence of $D$, we can construct a countable set $D^\prime\subseteq A_2$ such that the family $\{B(u,\oldC{C:proj}s(u)/4h)\cap\mathbb W:u\in D^\prime\}$ is disjointed and the family $\{\hat{B}(u,\oldC{C:proj}s(u)/4h):u\in D^\prime\}$, constructed as in \eqref{eqn:Bhat}, covers $A_2$. In a similar way as in \eqref{eq:inclussion} we have $\hat{B}(u,\oldC{C:proj}s(u)/(4h))\subseteq \mathbb{W}\cap B(u,(1+2\tau)\oldC{C:proj}s(u)/4h)$ for every $u\in A_2$. Moreover, since $$T(u,s(u)/4h)\subseteq P^{-1}(B(u,\oldC{C:proj}s(u)/4h)\cap \mathbb W),$$
for every $u\in A_2$, see \cref{prop:structurecyl}, we conclude by exploiting the fact that $\{B(u,\oldC{C:proj}s(u)/(4h))\cap\mathbb W:w \in D'\}$ is a disjointed family, the following inequality
\begin{equation}
    \begin{split}
        \phi(B(0,\oldC{C:U}r)\setminus E)&\geq \sum_{u\in D^\prime}\phi((B(0,\oldC{C:U}r)\setminus E)\cap T(u,s(u)/4h))\\
        &\geq 2^{-1}\phi(B(0,\oldC{C:U}r))\sum_{u\in D^\prime}(s(u)/4h\oldC{C:U}r)^h,
    \end{split}
\end{equation}
where the last inequality holds since $D'\subseteq A_2$.
From the previous inequality, \eqref{bd:bd1}, and the fact that $0\in E$, we infer that
\begin{equation}
    \begin{split}
        \sum_{u\in D^\prime}(s(u)/4h\oldC{C:U}r)^h&\leq\frac{2\phi(B(0,\oldC{C:U}r)\setminus E)}{\phi(B(0,\oldC{C:U}r))}\\
        &\leq2\cdot\frac{\phi(B(0,\oldC{C:U}r)\setminus B(0,\oldC{C:U}r_1))+\phi(B(0,\oldC{C:U}r_1)\setminus E)}{\phi(B(0,\oldC{C:U}r))} \\
        &\leq2\cdot\frac{2\varepsilon \phi(B(0,\oldC{C:U}r)) +\mu^{h+1}\oldC{C:U}^{-h}\phi(B(0,\oldC{C:U}r))}{\phi(B(0,\oldC{C:U}r))}\leq 10\varepsilon.
        \end{split}
\end{equation}
Consequently, we deduce that
\begin{equation}
\begin{split}
     \mathcal{C}^h(A_2)&\leq \sum_{u\in D^\prime} \mathcal{C}^h(\mathbb{W}\cap B(u,(1+2\tau)\oldC{C:proj}s(u)/4h))\\
     &=(1+2\tau)^h\oldC{C:proj}^{h}\sum_{u\in D^\prime} (s(u)/4h)^h\leq 10(1+2\tau)^h\oldC{C:proj}^{h}\oldC{C:U}^h \varepsilon r^h.
\end{split}
    \label{eq:bdes2}
\end{equation}
Finally, putting together \eqref{eq:bdes1}, \eqref{eq:bdes2}, item (iii) of this proposition, and \cref{rem:Ch1}, we conclude the following inequality
\begin{equation}
\begin{split}
        \mathcal{C}^h(P(B(0,r))\setminus A)&\leq \mathcal{C}^h(P(B(0,r))\setminus P(B(0,r_1)))+\mathcal{C}^h(A_1)+\mathcal{C}^h(A_2)\\
        &\leq \mathcal{C}^h(P(B(0,1)))r^h(1-(1-\varepsilon/h)^h)+\oldC{C:proj}^{2h}\oldC{C:U}^h(1+2\tau)^h\varepsilon r^h+10(1+2\tau)^h\oldC{C:proj}^{h}\oldC{C:U}^h \varepsilon r^h\\
        &\leq 50(1+2\tau)^h\oldC{C:U}^{3h}\mathcal{C}^h(P(B(0,1))) \varepsilon r^h,
\end{split}
\nonumber
\end{equation}
where in the last inequality we used that $1\leq \oldC{C:proj}\leq \oldC{C:U}$, and that $\mathcal{C}^h(P(B(0,1))\geq 1$ since $P(B(0,1))\supseteq B(0,1)\cap\mathbb W$ and $\mathcal{C}^h(B(0,1)\cap\mathbb W)=1$, thanks to \cref{rem:Ch1}.
With the choice $\tau=2$, item (iv) follows.

\paragraph{Proof of (v):} Let $u\in A$ and note that since $s(u)=0$, for any $s>0$ we have that
$$
E\cap T(u,s/4h)\neq \emptyset.$$
Since the sets $E\cap T(u,s/4h)$ are compact we infer the following equality thanks to the finite intersection property
$$
\emptyset\neq E\cap \bigcap_{s>0}T(u,s/4h)=E\cap P^{-1}(u).
$$
This implies that $u\in P(E\cap P^{-1}(u))$ for every $u\in A$, and as a consequence $A\subseteq P(E\cap P^{-1}(A))$. Since the inclusion $ P(E\cap P^{-1}(A))\subseteq A$ is obvious we finally infer that
$A=P(E\cap P^{-1}(A))$. Moreover, thanks to item (iv) and to the choice of $\varepsilon<5^{-h-5}\oldC{C:U}^{-3h}$, we conclude that $\mathcal{S}^h(A)>0$ thanks to the fact that $\mathcal{C}^h\llcorner\mathbb W$ and $\mathcal{S}^h\llcorner\mathbb W$ are equivalent, see \cref{prop:haar}, and thanks to the following chain of inequalities
\begin{equation}
\begin{split}
     \mathcal{C}^h(A)&\geq \mathcal{C}^h(P(B(0,r)))-\mathcal{C}^h(P(B(0,r))\setminus A)\\
     &\geq \mathcal{C}^h(P(B(0,1)))r^h-5^{h+3}\oldC{C:U}^{3h}\mathcal{C}^h(P(B(0,1))) \varepsilon r^h\geq\frac{24}{25}r^h.
\end{split}
    \nonumber
\end{equation}
Thanks to the fact that $P$ is $\oldC{C:proj}$-Lipschitz, see \cref{prop:projhom}, we further infer that
$$0<\mathcal{S}^h(A)=\mathcal{S}^h(P(E\cap P^{-1}(A)))\leq \oldC{C:proj}^{h}\mathcal{S}^h(E\cap P^{-1}(A)).$$
For any $s$ sufficiently small and $u\in A$, by definition of $s(u)$ and $A$, we have the following chain of inequalities
$$\phi(B(x,\oldC{C:proj}s))\leq \phi\big(B(0,\oldC{C:U}r)\cap T(u,\oldC{C:proj}s))\leq \mu^{-h}(s/\oldC{C:U}r)^h\phi(B(0,\oldC{C:U}r)),$$
whenever $x\in E\cap P^{-1}(u)$, where the first inequality comes from the fact that $x\in E\subseteq B(0,\oldC{C:U}r_1)$, and \cref{prop:structurecyl}. Finally by \cite[2.10.17(2)]{Federer1996GeometricTheory} and the previous inequality we infer
\begin{equation}
    \phi\llcorner (E\cap P^{-1}(A))\leq \oldC{C:proj}^{-h}\oldC{C:U}^{-h}\mu^{-h}\frac{\phi(B(0,\oldC{C:U}r))}{r^{h}}\mathcal{S}^h\llcorner (E\cap P^{-1}(A)).
    \label{eq:bdhaus1}
\end{equation}
On the other hand, if we assume $x\in E$ and $s$ sufficiently small, we have $(x,s,\mathbb{V})\in Z$ for some $\mathbb{V}\in { \G_\unlhd^\mathfrak{s}(h)}$. This implies that, by using the very definition of $Z$, that
$$\phi(B(x,s))\geq (1-\varepsilon)(s/\oldC{C:U}r)^h\phi(B(0,\oldC{C:U}r)),$$
and thus by \cite[2.10.19(3)]{Federer1996GeometricTheory}, we have
\begin{equation}
\phi\llcorner E\geq (1-\varepsilon)\frac{\phi(B(0,\oldC{C:U}r))}{(\oldC{C:U}r)^{h}}\mathcal{S}^h\llcorner E.
    \label{eq:bdhaus2}
\end{equation}
Putting together \eqref{eq:bdhaus1} and \eqref{eq:bdhaus2}, we conclude the proof of item (v).
\end{proof}

\begin{proposizione}\label{lemma:Gx}
Let $\phi$ be a $\mathscr{P}^{*,\unlhd}_h$-rectifiable measure such that there exists an $\mathfrak s\in\mathbb N^\kappa$ for which for $\phi$-almost every $x\in\mathbb G$ we have 
\begin{equation}\label{eqn:LEIHOLD}
\mathrm{Tan}_h(\phi,x)\subseteq \{\lambda\mathcal{S}^h\llcorner\mathbb V:\lambda >0\,\,\text{and}\,\,\mathbb V\in\G_\unlhd^\mathfrak{s}(h)\}.
\end{equation}
Then, the set
\begin{equation}\label{eqn:DefinitionGx}
\mathscr{G}(x):=\{\mathbb V\in \G_\unlhd^\mathfrak{s}(h):\text{there exists}\,\,\Theta>0\,\,\text{such that}\,\,\Theta\mathcal{S}^h\llcorner\mathbb V\in \mathrm{Tan}_h(\phi,x)\},
\end{equation}
is a compact subset of $\G_\unlhd^\mathfrak{s}(h)$ for all $x\in\mathbb G$ for which \eqref{eqn:LEIHOLD} holds,
and the sets 
\begin{equation}\label{eqn:Gc}
\mathscr G_C:=\{x\in\mathbb{G}:\mathfrak{e}(\mathbb V)\in(C,\infty)\,\,\text{for every}\,\, \mathbb V\in \mathscr{G}(x)\},
\end{equation}
where $\mathfrak e$ is defined in \eqref{eqn:mathfrake}, are $\phi$-measurable for any $C>0$.
\end{proposizione}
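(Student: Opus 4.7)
The plan is to handle the two statements separately: first the compactness of $\mathscr{G}(x)$, then the $\phi$-measurability of $\mathscr{G}_C$.

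For compactness, since $\G(h)$ is compact in $d_\mathbb{G}$ (\cref{prop:CompGrassmannian}), it suffices to prove that $\mathscr{G}(x)$ is closed. Given a sequence $\mathbb{V}_n\in\mathscr{G}(x)$ with $\mathbb{V}_n\to\mathbb{V}$ in $d_\mathbb{G}$, I would pick densities $\Theta_n>0$ with $\Theta_n\mathcal{C}^h\llcorner\mathbb{V}_n\in\mathrm{Tan}_h(\phi,x)$; the passage from $\mathcal{S}^h$ to $\mathcal{C}^h$ only rescales the density, since by \cref{prop:haar} both are Haar measures on each subgroup. By \cref{cordenstang} the $\Theta_n$ lie in the compact subinterval $[\Theta^h_*(\phi,x),\Theta^{h,*}(\phi,x)]\subset(0,\infty)$, so up to a subsequence $\Theta_n\to\Theta>0$. \cref{prop:pianconv} then gives $\Theta_n\mathcal{C}^h\llcorner\mathbb{V}_n\rightharpoonup\Theta\mathcal{C}^h\llcorner\mathbb{V}$, and the weak-$*$ closedness of $\mathrm{Tan}_h(\phi,x)$ from \cref{prop:CompactnessTangents} yields $\Theta\mathcal{C}^h\llcorner\mathbb{V}\in\mathrm{Tan}_h(\phi,x)$; assumption \eqref{eqn:LEIHOLD} forces $\mathbb{V}\in\G_\unlhd^{\mathfrak{s}}(h)$, hence $\mathbb{V}\in\mathscr{G}(x)$.

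For the measurability of $\mathscr{G}_C$, set $H_C:=\{\mathbb{W}\in\G_\unlhd^{\mathfrak{s}}(h):\mathfrak{e}(\mathbb{W})\leq C\}$ and let $\overline{H_C}$ denote its closure in $\G(h)$. The lower semicontinuity of $\mathfrak{e}$ (\cref{prop:grasscompiffC3}) yields $\overline{H_C}\cap\G_\unlhd^{\mathfrak{s}}(h)=H_C$. Let $A$ be the $\phi$-measurable, $\phi$-full measure set where \eqref{eqn:LEIHOLD} holds together with the density bounds; on $A$ one has $\mathscr{G}(x)\subseteq\G_\unlhd^{\mathfrak{s}}(h)$, so
\[ \mathscr{G}_C^c\cap A=\{x\in A:\mathscr{G}(x)\cap\overline{H_C}\neq\emptyset\}. \]
For $M\in\mathbb{N}$ and $r>0$ I would introduce the auxiliary quantity
\[ G_{M,r}(x):=\inf\{F_{0,1}(r^{-h}T_{x,r}\phi,\Theta\mathcal{C}^h\llcorner\mathbb{W}):\Theta\in[1/M,M],\ \mathbb{W}\in\overline{H_C}\}, \]
which is attained by compactness of $[1/M,M]\times\overline{H_C}$. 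A subsequence extraction identical in spirit to the closedness argument above shows that $x\in\mathscr{G}_C^c\cap A$ if and only if $x\in A$ and there exists $M$ with $\liminf_{r\to 0^+}G_{M,r}(x)=0$.

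The step I expect to be the main technical point is verifying joint continuity of $(x,r)\mapsto G_{M,r}(x)$ on $\mathbb{G}\times(0,\infty)$. This reduces to weak-$*$ continuity of $(x,r)\mapsto T_{x,r}\phi$, which follows by dominated convergence using that $\phi$ is Radon; combined with \cref{prop:WeakConvergenceAndFk}, the triangle inequality from \cref{rem:ScalinfFxr}, and \cref{prop:pianconv} for the approximating term, one obtains continuity of $G_{M,r}$ in both arguments. Continuity in $r$ then lets me replace the liminf over $r>0$ by the liminf over rationals, yielding
\[ \mathscr{G}_C^c\cap A=A\cap\bigcup_{M\in\mathbb{N}}\bigcap_{n,k\in\mathbb{N}}\bigcup_{r\in\mathbb{Q}\cap(0,1/k)}\{x:G_{M,r}(x)<1/n\}, \]
so $\mathscr{G}_C$ is $\phi$-measurable. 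The subtle obstacle that this approach has to circumvent is that $H_C$ itself need not be closed in $\G(h)$ because $\G_\unlhd^{\mathfrak{s}}(h)$ need not be; the combination of lower semicontinuity of $\mathfrak{e}$ with \eqref{eqn:LEIHOLD} is precisely what ensures that working with the compact set $\overline{H_C}$ loses no information.
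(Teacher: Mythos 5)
Your argument for the compactness of $\mathscr G(x)$ is essentially the paper's: closedness in the compact metric space $(\G(h),d_{\mathbb G})$ via \cref{cordenstang}, \cref{prop:pianconv} and the weak-$*$ compactness of $\mathrm{Tan}_h(\phi,x)$ from \cref{prop:CompactnessTangents}, with \eqref{eqn:LEIHOLD} forcing the limit plane back into $\G_\unlhd^{\mathfrak s}(h)$ (like the paper, this really operates on the $\phi$-full measure set where the density bounds hold, which is all that is needed downstream).

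For the $\phi$-measurability of $\mathscr G_C$ you take a genuinely different, and correct, route. The paper introduces the Borel function $\mathcal M(x,\mathbb V)$ on $\mathbb G\times \G_\unlhd^{\mathfrak s}(h)$, identifies $\mathscr G(x)$ ($\phi$-a.e.) with its zero set, intersects with the relatively closed sublevel set $\{\mathfrak e\le C\}$, and then projects onto $\mathbb G$, invoking that projections of Borel sets are analytic and analytic sets are universally measurable. You instead minimize $F_{0,1}(r^{-h}T_{x,r}\phi,\Theta\,\mathcal C^h\llcorner\mathbb W)$ over the \emph{compact} parameter set $[1/M,M]\times\overline{H_C}$; since $(x,r)\mapsto T_{x,r}\phi$ is weak-$*$ continuous and $F_{0,1}$ satisfies the triangle inequality (\cref{rem:ScalinfFxr}), $G_{M,r}$ is continuous, so your final formula writes $\mathscr G_C^c\cap A$ as a countable Boolean combination of open sets, i.e.\ a Borel set, which is even slightly stronger than the paper's conclusion and avoids the descriptive-set-theoretic projection step entirely (as well as any question of whether $\G_\unlhd^{\mathfrak s}(h)$ is Borel in $\G(h)$). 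The key new ingredient is the compactification $\overline{H_C}$ together with the identity $\overline{H_C}\cap\G_\unlhd^{\mathfrak s}(h)=H_C$ coming from the lower semicontinuity of $\mathfrak e$ (\cref{prop:grasscompiffC3}); combined with \eqref{eqn:LEIHOLD} it guarantees that the only planes of $\overline{H_C}$ that can actually arise as tangent supports at $x\in A$ lie in $H_C$, which is exactly what makes your characterization of $\mathscr G_C^c\cap A$ via $\liminf_{r\to0^+}G_{M,r}(x)=0$ a true equivalence. The one place you are terse is the backward implication of that equivalence: one must use the density bounds at $x\in A$ to extract a weak-$*$ convergent subsequence of $r_i^{-h}T_{x,r_i}\phi$, note that its limit is a tangent and hence flat over a plane of $\G_\unlhd^{\mathfrak s}(h)$, and use the cone property of flat measures to upgrade $F_{0,1}=0$ to equality of measures; these are precisely the steps in the last part of the proof of \cref{prop:CompactnessTangents}, so the sketch fills in. In short, your approach buys a more elementary, purely topological measurability proof with a stronger (Borel up to a $\phi$-null set) conclusion, at the cost of the explicit blow-up bookkeeping; the paper's approach is more automatic once $\mathcal M$ is available and does not require reducing to a compact family of candidate planes.
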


\begin{proof}
The fact that $\mathscr{G}(x)$ is compact is an immediate consequence of \cref{prop:CompactnessTangents}, the compactness of the Grassmannian in \cref{prop:CompGrassmannian}, and the convergence result in \cref{prop:pianconv}.
For any $\lambda,k,r>0$ define the function $\mathcal{M}_{\lambda,k,r}(x,\mathbb{V}):\mathbb{G}\times \G_\unlhd^\mathfrak{s}(h)\to \R$ as
$$
\mathcal{M}_{\lambda,k,r}(x,\mathbb{V}):=F_{0,k}(r^{-h}T_{x,r}\phi,\lambda\mathcal{C}^h\llcorner \mathbb{V}),
$$
where $F_{0,k}$ is defined in \cref{def:Fk}. We claim that, for any choice of the parameters, the function $\mathcal{M}_{\lambda,k,r}$ is continuous when $\mathbb{G}\times \G_\unlhd^\mathfrak{s}(h)$ is endowed with respect to the topology induced by the metric $d+d_{\mathbb{G}}$. Indeed, assume $\{x_i\}_{i\in\N}\subseteq \mathbb{G}$ and $\{\mathbb{V}_i\}\subseteq \G_\unlhd^\mathfrak{s}(h)$ are two sequences converging to $x\in\mathbb G$ and $\mathbb{V}\in\G_\unlhd^\mathfrak{s}(h)$ respectively. Thanks to the triangle inequality we have
\begin{equation}
\begin{split}
     \limsup_{i\to\infty}\lvert \mathcal{M}_{\lambda,k,r}(x,\mathbb{V})-\mathcal{M}_{\lambda,k,r}(x_i,\mathbb{V}_i)\rvert
     &\leq \limsup_{i\to\infty}\Big(\lvert \mathcal{M}_{\lambda,k,r}(x,\mathbb{V})-\mathcal{M}_{\lambda,k,r}(x_i,\mathbb{V})\rvert \\ &+\lvert \mathcal{M}_{\lambda,k,r}(x_i,\mathbb{V})-\mathcal{M}_{\lambda,k,r}(x_i,\mathbb{V}_i)\rvert\Big)\\
     &\leq \limsup_{i\to\infty}F_{0,k}(r^{-h}T_{x,r}\phi,r^{-h}T_{x_i,r}\phi)+\limsup_{i\to\infty}F_{0,k}(\lambda\mathcal{C}^h\llcorner \mathbb{V},\lambda\mathcal{C}^h\llcorner \mathbb{V}_i)\\
     &\leq \limsup_{i\to\infty}r^{-(h+1)}d(x,x_i)\phi(B(x,kr+d(x,x_i))) \\
     &+\limsup_{i\to\infty}F_{0,k}(\lambda\mathcal{C}^h\llcorner \mathbb{V},\lambda\mathcal{C}^h\llcorner \mathbb{V}_i)=0,
     \nonumber
\end{split}
\end{equation}
where the inequality in the fourth line comes from a simple computation that we omit
and the last identity comes from \cref{prop:pianconv}. This in particular implies that the function
$$
\mathcal{M}(x,\mathbb{V}):=\sup_{\substack{ k>0\\ k\in\Q}}\inf_{\substack{\lambda>0\\ \lambda\in\Q}}\liminf_{\substack{r\to 0\\r\in \Q}}\frac{\mathcal{M}_{\lambda,k,r}(x,\mathbb{V})}{k^{h+1}},
$$
is Borel measurable. 

We now claim that for $\phi$-almost every $x\in\mathbb{G}$ we have that $\mathbb{V}\in \mathscr{G}(x)$ if and only if $\mathcal{M}(x,\mathbb{V})=0$. Indeed if $\mathbb{V}\in\mathscr{G}(x)$, there is a $\lambda>0$ and an infinitesimal sequence $\{r_i\}_{i\in\N}$ such that $\lim_{i\to\infty}F_{0,k}(r_i^{-h}T_{x,r_i}\phi,\lambda\mathcal{C}^h\llcorner \mathbb{V})=0$ for any $k>0$, see \cref{prop:WeakConvergenceAndFk}. However, by the scaling properties of $F$, see \cref{rem:ScalinfFxr}, we can choose an another infinitesimal sequence $\{s_i\}_{i\in\N}\subseteq \Q$ such that $r_i/s_i\to 1$, and then $\lim_{i\to\infty}F_{0,k}(s_i^{-h}T_{x,s_i}\phi,\lambda\mathcal{C}^h\llcorner \mathbb{V})=0$ for every $k>0$ as well, proving the first half of the claim.
Viceversa, if $\mathcal{M}(x,\mathbb{V})=0$, then for any $j\in\N$ there exists a $\lambda_j>0$, with $\lambda_j\in\mathbb Q$, and an infinitesimal sequence $\{r_i(j)\}\subseteq \Q$ such that $\lim_{i\to\infty} F_{0,1}(r_i(j)^{-h}T_{x,r_i(j)}\phi,\lambda_j\mathcal{C}^h\llcorner \mathbb{V})\leq 1/j$. Since $0<\Theta_*^h(\phi,x)\leq \Theta^{h,*}(\phi,x)<\infty$ for $\phi$-almost every $x\in\mathbb{G}$, we can argue as in the last part of the proof of \cref{prop:CompactnessTangents} and hence we can assume without loss of generality that $\lambda_j$ converge to some non-zero $\lambda$ and that, for every $j\in\mathbb N$, there exists $i_j\in\mathbb N$ such that $r_{i_j}(j)$ is an infinitesimal sequence and $r_{i_j}(j)^{-h}T_{x,r_{i_j}(j)}\phi\rightharpoonup \lambda\mathcal{C}^h\llcorner\mathbb V$. This eventually concludes the proof of the claim. 

Furthermore, since $\mathfrak{e}$ by \cref{prop:grasscompiffC3} is lower semicontinuous on $\G_\unlhd^\mathfrak{s}(h)$, we know that for any $C>0$ the set $\mathbb{G}\times \{\mathbb{W}\in\G_\unlhd^\mathfrak{s}(h):\mathfrak{e}(\mathbb{W})\leq C\}$ is closed in $\mathbb{G}\times \G_\unlhd^\mathfrak{s}(h)$ and in particular, the set
\begin{equation}
    \begin{split}
        \mathcal{M}^{-1}(0)\cap \mathbb{G}\times \{\mathbb{W}\in\G_\unlhd^\mathfrak{s}(h):\mathfrak{e}(\mathbb{W})\leq C\}
        =\{(x,\mathbb V)\in \mathbb G\times \G_\unlhd^{\mathfrak s}(h)\,\,\text{such that}\,\,\mathcal{M}(x,\mathbb V)=0\,\,\text{and}\,\,\mathfrak e(\mathbb V)\leq C\},
    \end{split}
\end{equation}
is Borel. Now, since the projection on the first component of the above set is an analytic set, by the very definition of analytic sets, and since every analytic set is universally measurable, see for example \cite[Section 2.2.4]{KarouiTan13}, we get that the set $\{x\in\mathbb G\,\,\text{such that there exists $\mathbb V\in\G^{\mathfrak s}_\unlhd(h)$ with $\mathcal{M}(x,\mathbb V)=0$ and $\mathfrak e(\mathbb V)\leq C$}\}$ is $\phi$-measurable. In particular its complement, that is $\mathscr G_C$ up to $\phi$-null sets - since $\mathcal{M}(x,\mathbb V)=0$ if and only if $\mathbb V\in\mathscr G(x)$ for $\phi$-almost every $x\in\mathbb G$ - is $\phi$-measurable as well.
\end{proof}

\begin{proposizione}\label{prop::5.2satisfied}
Let  $h\in\{1,\ldots,Q\}$, $\mathfrak{s}\in\mathfrak{S}(h)$,
and $\phi$ be a $\mathscr{P}_h^{*,\unlhd}$-rectifiable measure supported on a compact set $K$ and for which for $\phi$-almost every $x\in\mathbb{G}$ we have
\begin{equation}\label{eqn:UNLHD}
\mathrm{Tan}_h(\phi,x)\subseteq \{\lambda\mathcal{S}^h\llcorner \mathbb{V}:\lambda>0\text{ and }\mathbb{V}\in\G^{\mathfrak s}_\unlhd(h)\}.
\end{equation}
Let us further assume that there exists a constant $C>0$ such that $\phi(\mathbb G\setminus \mathscr G_C)=0$, where $ \mathscr G_C$ is defined in \eqref{eqn:Gc}.
Throughout the rest of the statement and the proof we will always assume that $\oldC{C:proj}$ and $\oldC{C:U}$  are the constants introduced in \cref{defC6C7} in terms of $C$. Furthermore, let $\varepsilon\in (0,5^{-10(h+5)}\oldC{C:U}^{-3h}]$ and $\mu:=2^{-7}h^{-3}\oldC{C:U}^{-5h}\varepsilon^2$. 

Then, there are $\vartheta,\gamma\in\N$, a $\phi$-positive compact subset $E$ of $E(\vartheta,\gamma)$, and a point $z\in E\cap\mathscr G_C$ such that 
\begin{itemize}
    \item[(i)] There exists a $\rho_z>0$ for which $\phi(B(z,\oldC{C:U}\rho)\setminus E)\leq \mu^{h+1}\oldC{C:U}^{-h}\phi(B(z,\oldC{C:U}\rho))$ for any $0<\rho<\rho_z$;
    \item[(ii)] There exists an $r_0\in(0,5^{-10(h+5)}\oldC{C:U}^{-3h}\gamma^{-1}]$ such that for any $w\in E$ and any $0<\rho\leq\oldC{C:U}r_0$ we can find a $\mathbb{V}_{w,\rho}\in \G_\unlhd^{\mathfrak s}(h)$ such that $\mathfrak e(\mathbb V_{w,\rho})\geq C$, see \eqref{eqn:mathfrake}, and
    \begin{enumerate}
    \item $F_{w,4\oldC{C:U}\rho}(\phi,\Theta\mathcal{C}^h\llcorner w\mathbb{V}_{w,\rho})\leq (4^{-1}\vartheta^{-1}\oldC{C:U}^{-1}\mu)^{(h+3)}\cdot(4\oldC{C:U}\rho)^{h+1}$ for some $\Theta>0$,
    \item whenever $y\in B(w,\oldC{C:U}\rho)\cap w\mathbb{V}_{w,\rho}$ and $t\in [\mu \rho, \oldC{C:U}\rho]$ we have
$\phi(B(y,t))\geq (1-\varepsilon)(t/\oldC{C:U}\rho)^h\phi(B(w,\oldC{C:U}\rho))$,
\item There exists a normal complement $\mathbb L_{w,\rho}$ of $\mathbb V_{w,\rho}$ as in \cref{lip:const:proj:conorm}  such that
$$
(1-\varepsilon)\phi(B(w,\oldC{C:U}\rho)\cap w T_{\mathbb{V}_{w,\rho}}(0,\rho))
        \leq\oldC{C:U}^{-h}\mathcal{C}^h(P(B(0,1)))\phi(B(w,\oldC{C:U}\rho)),
$$
where $T_{\mathbb V_{w,\rho}}$ is the cylinder related to the splitting $\mathbb G=\mathbb V_{w,\rho}\cdot\mathbb L_{w,\rho}$, see \cref{def:CYLINDER};
\end{enumerate}
    \item[(iii)] There exists an infinitesimal sequence $\{{\rho_i(z)}\}_{i\in\N}\subseteq (0,\min\{r_0,\rho_z\}]$ such that for any $i\in\N$, any $w\in E$ and any $\rho\in(0,\oldC{C:U}{\rho_i(z)}]$ we have
    $\phi(B(w,\oldC{C:U}\rho))\geq (1-\varepsilon)(\rho/{\rho_i(z)})^h\phi(B(z,\oldC{C:U}\rho_i(z)))$.
    \end{itemize}
\end{proposizione}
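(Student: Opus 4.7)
The plan is to combine inner regularity of $\phi$, an Egorov-type uniformization of \cref{prop:TanAndDxr}, and the Lebesgue differentiation property of \cref{prop:Lebesuge}, in order to extract a compact ``good set'' $E$ and a distinguished point $z\in E$ at which all three conclusions hold simultaneously. Since $\phi(\mathbb{G}\setminus\mathscr{G}_C)=0$ and, by \cref{prop::E}, $\phi(\mathbb{G}\setminus\bigcup_{\vartheta,\gamma}E(\vartheta,\gamma))=0$, one can fix $\vartheta,\gamma\in\mathbb{N}$ so that $\phi(E(\vartheta,\gamma)\cap\mathscr{G}_C)>0$; since $E(\vartheta,\gamma)$ is compact by \cref{prop:cpt} and $\mathscr{G}_C$ is $\phi$-measurable by \cref{lemma:Gx}, inner regularity produces a compact $E_0\subseteq E(\vartheta,\gamma)\cap\mathscr{G}_C$ with $\phi(E_0)>0$.

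To obtain (ii) I would apply Egorov's theorem to the $\phi$-almost-everywhere limit
\[
\lim_{r\to 0}d_{w,kr}(\phi,\mathfrak{M}(h,\G_\unlhd^{\mathfrak{s}}(h)))=0\qquad\text{for }k\in\{1,4\oldC{C:U}\},
\]
yielding a compact $E\subseteq E_0$ with $\phi(E)>0$ on which the convergence is uniform. For each $w\in E$ and each $\rho\leq\oldC{C:U}r_0$ one then picks a quasi-minimizer $(\Theta,\mathbb{V}_{w,\rho})\in(0,\infty)\times\G_\unlhd^{\mathfrak{s}}(h)$ of $d_{w,4\oldC{C:U}\rho}(\phi,\mathfrak{M}(h,\G_\unlhd^{\mathfrak{s}}(h)))$; the numerical choice $\sigma:=4^{-1}\vartheta^{-1}\oldC{C:U}^{-1}\mu$ and the uniform convergence immediately give (ii)(1) for $r_0$ small enough. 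The constraint $\mathfrak{e}(\mathbb{V}_{w,\rho})\geq C$ is then enforced as follows: since $w\in\mathscr{G}_C$, the compact set $\mathscr{G}(w)\subseteq\G_\unlhd^{\mathfrak{s}}(h)$ is contained in the open set $\{\mathfrak{e}>C\}$ (open by the lower semicontinuity in \cref{prop:grasscompiffC3}); shrinking $r_0$ further, compactness of $\G(h)$ plus the uniform weak-$*$ convergence of tangents forces $\mathbb{V}_{w,\rho}$ into a prescribed $d_{\mathbb{G}}$-neighborhood of $\mathscr{G}(w)$, hence $\mathfrak{e}(\mathbb{V}_{w,\rho})\geq C$. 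Items (ii)(2) and (ii)(3) then drop out of \cref{prop::4.4(4)}(i) and (ii) applied at $t=4\oldC{C:U}\rho$ with this $\sigma$ and $k=\oldC{C:U}$; the numerical checks $1-2^{10(h+1)}\vartheta\sigma\geq 1-\varepsilon$ and $\sigma\oldC{C:U}<2^{-10h}\vartheta^{-1}$ and $(1+4\sigma(2kh+1))\leq(1-\varepsilon)^{-1}$ all follow from the explicit forms of $\mu=2^{-7}h^{-3}\oldC{C:U}^{-5h}\varepsilon^2$ and the smallness hypothesis $\varepsilon\leq 5^{-10(h+5)}\oldC{C:U}^{-3h}$.

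For (i) I would take $z\in E$ to be a $\phi$-density point of $E$, which exists for $\phi$-almost every point of $E$ by \cref{prop:Lebesuge}; the density-point property produces $\rho_z>0$ satisfying the defect bound $\phi(B(z,\oldC{C:U}\rho)\setminus E)\leq\mu^{h+1}\oldC{C:U}^{-h}\phi(B(z,\oldC{C:U}\rho))$. Item (iii) is the main obstacle. Setting $g(w,\rho):=\rho^{-h}\phi(B(w,\oldC{C:U}\rho))$ and $L:=\inf\{g(w,\rho):w\in E,\,0<\rho\leq\oldC{C:U}r_0\}$, note that $L>0$ because $E\subseteq E(\vartheta,\gamma)$ and $\oldC{C:U}r_0<1/\gamma$; moreover the target inequality in (iii) reduces, after multiplying through by $\rho^h$, to $g(w,\rho)\geq(1-\varepsilon)g(z,\rho_i(z))$, which holds automatically once one produces a sequence $\rho_i(z)\to 0$ with $g(z,\rho_i(z))\to L$, since the left-hand side is bounded below by $L$. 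The hard part is arranging this near-minimization at the very same $z$ that is a $\phi$-density point of $E$: one takes minimizing pairs $(w_n,s_n)\in E\times(0,\oldC{C:U}r_0]$ with $g(w_n,s_n)\to L$ and $s_n\to 0$, extracts a convergent subsequence $w_n\to z^*\in E$ by compactness, and then uses the approximate flatness of $\phi$ on $B(w_n,4\oldC{C:U}s_n)$ from (ii)(1) to transfer the near-minimum density from $w_n$ to $z^*$, provided the indices are chosen diagonally so that $d(z^*,w_n)=o(s_n)$. Combining this diagonal construction with the density-point selection, so that $z^*$ simultaneously serves the roles needed in (i) and (iii), requires a careful bookkeeping argument and is the delicate core of the proposition.
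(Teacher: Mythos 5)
Your treatment of items (i) and (ii) follows the paper's route (inner regularity, Severini--Egoroff applied to \cref{prop:TanAndDxr}, then \cref{prop::4.4(4)} with $\sigma=4^{-1}\vartheta^{-1}\oldC{C:U}^{-1}\mu$ and $k=\oldC{C:U}$), but the way you enforce $\mathfrak e(\mathbb V_{w,\rho})\geq C$ is not justified as stated: the Egoroff step only controls the distance of $\phi$ from the \emph{whole} family $\mathfrak M(h,\G^{\mathfrak s}_\unlhd(h))$, and the inference that a quasi-minimizing plane at a fixed scale $\rho$ must lie in a prescribed $d_{\mathbb G}$-neighborhood of $\mathscr G(w)$ is a per-point, subsequential-limit argument (bounding $\Theta$ from above and below, passing to limits of planes, identifying the limit with a tangent); making it hold with a single $r_0$ for all $w\in E$ requires a further measurable-selection/Egoroff layer that you do not supply. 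The paper sidesteps this entirely: since $\mathrm{Tan}_h(\phi,x)\subseteq\mathfrak M(h,\G^{\mathfrak s,C}_\unlhd(h))$ for $\phi$-a.e.\ $x\in\mathscr G_C$, it applies \cref{prop:TanAndDxr} directly with $G=\G^{\mathfrak s,C}_\unlhd(h)=\{\mathbb V\in\G^{\mathfrak s}_\unlhd(h):\mathfrak e(\mathbb V)\geq C\}$, so the quasi-minimizer carries the constraint automatically.

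The genuine gap is item (iii), which you yourself leave as the ``delicate core''. Your scheme (set $L:=\inf g$, take minimizing pairs $(w_n,s_n)$, pass to $w_n\to z^*$, transfer near-minimality) does not work as described: nothing forces the minimizing pairs to have $s_n\to 0$ (the infimum over $E\times(0,\oldC{C:U}r_0]$ may be approached only at macroscopic scales), nothing forces the chosen density point $z$ (nor the limit $z^*$) to admit an \emph{infinitesimal} sequence of its own radii along which $g(z,\cdot)$ comes within a factor $1/(1-\varepsilon)$ of $L$ (its lower density may exceed that of other points of $E$), and the condition $d(z^*,w_n)=o(s_n)$ cannot be arranged by a ``diagonal'' choice of indices, since both sequences are dictated by the minimization and are not free parameters. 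The paper's mechanism is different and is the key missing idea: it decomposes $\mathbb G$, up to a $\phi$-null set, into the countably many Borel sets $\widetilde F(a,b)=\bigcap_{p}F(\oldC{C:U}a,(1-\varepsilon)b)\setminus F(\oldC{C:U}a/p,b)$ with $a,b$ rational, where $F(a,b)$ is the (compact) set of points with $\phi(B(x,r))\geq br^h$ for all $r<a$. Every point of $\widetilde F(a,b)$ simultaneously satisfies the lower bound $\phi(B(w,r))\geq(1-\varepsilon)br^h$ at \emph{all} scales $r<\oldC{C:U}a$ and the upper bound $\phi(B(z,r))\leq br^h$ along \emph{some} infinitesimal sequence of scales; picking by \cref{prop::E} a pair $(a,b)$ and $(\vartheta,\gamma)$ with $\phi(\widetilde F(a,b)\cap E(\vartheta,\gamma)\cap\mathscr G_C)>0$ and running Egoroff inside it, item (iii) then holds at \emph{every} point of the resulting compact set $E$ with $b$ as intermediary, so no conflict arises with choosing $z$ to be a $\phi$-density point for item (i). Without this (or an equivalent) pigeonhole by approximate density value, your selection of a single $z$ serving (i) and (iii) at once remains unproved.
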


\begin{proof}
For any positive $a,b\in\R$ we define ${F}(a,b)$ to be the set of those points in $K$ for which
$$
br^h\leq\phi(B(x,r)),\qquad\text{for any }r\in (0,a).
$$
One can prove, with the same argument used in the proof of \cref{prop:cpt}, see \cite[Proposition 1.14]{MarstrandMattila20}, that the sets ${F}(a,b)$ are compact. As a consequence, this implies that the sets
$$
\widetilde{{F}}(a,b):=\bigcap_{p=1}^\infty {F}(\oldC{C:U}a,(1-\varepsilon)b)\setminus F(\oldC{C:U}a/p,b),
$$
are Borel. Since $\phi$ is $\mathscr{P}^*_h$-rectifiable, $\mathbb{G}$ can be covered $\phi$-almost all by countably many sets $\widetilde{{F}}(a,b)$. Indeed, $\phi(\mathbb G\setminus\cup_{a,b\in \mathbb Q^+}\widetilde F(a,b))=0$ since $0<\Theta_*^h(\phi,x)<+\infty$ holds $\phi$-almost everywhere. In particular thanks to \cref{prop::E} we can find $a,b\in \R$ and $\vartheta,\gamma\in\N$ such that $\phi(\widetilde{{F}}(a,b)\cap E(\vartheta,\gamma))>0$. Since $\widetilde{{F}}(a,b)\cap E(\vartheta,\gamma)$ is measurable, there must exist a $\phi$-positive compact subset of  $\widetilde{{F}}(a,b)\cap E(\vartheta,\gamma)$ that we denote with $F$. Notice that since $\phi(\mathbb G\setminus \mathscr G_C)=0$ the set $F\cap\mathscr G_C$ is measurable and $\phi$-positive as well.

Let us denote by $\G^{\mathfrak s,C}_\unlhd(h)$ the set $\{\mathbb V\in \G^{\mathfrak s}_\unlhd(h)\,\,\text{such that}\,\,\mathfrak e(\mathbb V)\geq C\}$. Since by the very definition of $\mathscr G_C$ we have $\mathrm{Tan}_h(\phi,x)\subseteq \mathfrak{M}(h,\G^{\mathfrak s,C}_\unlhd(h))$ for $\phi$-almost every $x\in F\cap\mathscr G_C$, we infer that \cref{prop:TanAndDxr} together with Severini-Egoroff theorem, that can be applied since the functions $x\to d_{x,kr}(\phi,\mathfrak M(h,\G^{\mathfrak s,C}_\unlhd(h)))$ are continuous in $x$ for every $k,r>0$ - see \cref{rem:dxrContinuous} - yield a $\phi$-positive compact subset ${E}$ of $F\cap\mathscr G_C$ and an $r_0\leq5^{-10(h+5)}\oldC{C:U}^{-3h}\gamma^{-1}$ such that
 \begin{equation}\label{eq:::num4}
 \begin{split}
     d_{x,4\oldC{C:U}\rho}(\phi,\mathfrak{M}(h,\G^{\mathfrak s,C}_\unlhd(h)))\leq (4^{-1}\vartheta^{-1}\oldC{C:U}^{-1}\mu)^{(h+4)}\text{ for any }x\in E\text{ and any }0<\rho\leq \oldC{C:U}r_0.
 \end{split}
       \end{equation}
Let us fix $z$ to be a density point of $E$ with respect to $\phi$, and let us show that $E$ and $z$ satisfy the requirements of the proposition. First, by construction $E$ is $\phi$-positive and contained in $E(\vartheta,\gamma)$. Second, since $z$ is a density point of $E$, item (i) follows if we choose $\rho_z$ small enough. Moreover, the bound \eqref{eq:::num4} directly implies item (ii.1). Let us prove the remaining items.
 
Since $E\subseteq E(\vartheta,\gamma)$, $4\oldC{C:U}^2r_0<\gamma/2$ and $4^{-1}\vartheta^{-1}\oldC{C:U}^{-1}\mu\leq 2^{-10(h+1)}\vartheta$, \cref{prop::4.4(4)}(i) implies that for any $w\in E$ and any $0<\rho<\oldC{C:U}r_0$ - choosing $\sigma=4^{-1}\vartheta^{-1}\oldC{C:U}^{-1}\mu$ and $t=4\oldC{C:U}\rho$ in \cref{prop::4.4(4)} - there exists a $\mathbb{V}_{w,\rho}\in \G^{\mathfrak s,C}_\unlhd(h)$ such that
$$
\phi(B(y,r)\cap B(w\mathbb{V},4^{-1}\oldC{C:U}^{-1}\vartheta^{-2}\mu^2\rho))\geq (1-2^{10(h+1)}4^{-1}\oldC{C:U}^{-1}\mu)(r/s)^h\phi(B(v,s)),
$$
whenever $y,v\in B(w,2\oldC{C:U}\rho)\cap w\mathbb{V}_{w,\rho}$ and $\vartheta^{-1}\mu\rho\leq r,s\leq 2\oldC{C:U}\rho$. Since $$2^{10(h+1)}4^{-1}\oldC{C:U}^{-1}\mu\leq \varepsilon,$$ with the choices $s=\oldC{C:U}\rho$ and $v=w$, we finally infer
$$
\phi(B(y,r))\geq (1-\varepsilon)(r/\oldC{C:U}\rho)^h\phi(B(w,\oldC{C:U}\rho)),
$$
for any $\mu \rho\leq r\leq \oldC{C:U}\rho$ and any $y\in B(w,\oldC{C:U}\rho)\cap w\mathbb V_{w,\rho}$, and this proves item (ii.2). For any $w\in E$ and any $0<\rho<\oldC{C:U}r_0$ we choose one normal complement $\mathbb L_{w,\rho}$ of $\mathbb V_{w,\rho}$ as in \cref{lip:const:proj:conorm}, and we denote with $P:=P_{\mathbb V_{w,\rho}}$ the projection relative to this splitting. 
Eventually, \cref{prop::4.4(4)}(ii), with the choice $k:=\oldC{C:U}$, 
implies that for any $0<\rho<\oldC{C:U}r_0$  we have
\begin{equation}
\begin{split}
        \phi(B(w,\oldC{C:U}\rho)\cap w T_{\mathbb V_{w,\rho}}(0,\rho))
        &\leq (1+(2\oldC{C:U} h+1)\vartheta^{-1}\oldC{C:U}^{-1}\mu)\oldC{C:U}^{-h}\mathcal{C}^h(P(B(0,1)))\phi(B(w,\oldC{C:U}\rho))\\
        &\leq (1+\varepsilon)\oldC{C:U}^{-h}\mathcal{C}^h(P(B(0,1)))\phi(B(w,\oldC{C:U}\rho)),
        \label{eq:::num5}
\end{split}
\end{equation}
where the last inequality comes from the fact that $(2\oldC{C:U} h+1)\vartheta^{-1}\oldC{C:U}^{-1}\mu<\varepsilon$. Hence also item (ii.3) is verified. In order to verify item (iii), note that since $z\in E\subseteq \widetilde{F}(a,b)$ on the one hand then there is an infinitesimal sequence $\{\rho_i(z)\}_{i\in\N}$ such that
\begin{equation}
    \frac{\phi(B(z,\oldC{C:U}\rho_i(z)))}{(\oldC{C:U}\rho_i(z))^h}\leq b.
    \label{eq:Eab1}
\end{equation}
On the other hand for any $w\in E$, and any $0<\rho<a$ we have
\begin{equation}
   b\leq \frac{1}{1-\varepsilon}\frac{\phi(B(w,\oldC{C:U}\rho))}{(\oldC{C:U}\rho)^h}.
   \label{eq:Eab2}
\end{equation}
Putting together \eqref{eq:Eab1} and \eqref{eq:Eab2} we finally infer that for any $i\in\N$, any $w\in E$ and any $\rho\in(0,a)$ we have
$$
\frac{\phi(B(z,\oldC{C:U}\rho_i(z)))}{\rho_i(z)^h}\leq \frac{1}{1-\varepsilon}\frac{\phi(B(w,\oldC{C:U}\rho))}{\rho^h},
$$
concluding the proof of item (iii) and thus of the proposition.
\end{proof}

Before going on with the last part of the proof of \cref{thm:MMconormale}, we need a Proposition borrowed from the Preprint \cite{antonelli2020rectifiable} whose simple proof is omitted here. Before stating it, we give a couple of definitions. 

\begin{definizione}\label{def:Pixr}
Let us fix $x\in \mathbb G$, $r>0$ and $\phi$ a Radon measure on $\mathbb G$.
We define $\Pi_{\delta}(x,r)$ to be the subset of planes $\mathbb V\in\G(h)$ for which there exists a $\Theta>0$ such that
\begin{equation}\label{eqn:Previous}
F_{x,r}(\phi, \Theta \mathcal{S}^{h}\llcorner x\mathbb V)\leq 2\delta r^{h+1}.
\end{equation}
\end{definizione}

\begin{definizione}\label{eqn:DefinitionOfDeltaG}
For any $\vartheta\in\mathbb N$ we define $\delta_\mathbb{G}=\delta_{\mathbb G}(h,\vartheta):=\vartheta^{-1}2^{-(4h+5)}$.
\end{definizione}

\begin{proposizione}[{\cite[Proposition 3.1]{antonelli2020rectifiable}}]\label{prop1vsinfty}
Let $\phi$ be a Radon measure on $\mathbb G$ that is supported on a compact set, let $x\in E(\vartheta,\gamma)$, 
fix $\delta<\delta_\mathbb{G}$, where $\delta_{\mathbb G}$ is defined in \cref{eqn:DefinitionOfDeltaG}, and set $0<r<1/\gamma$. Then for every $\mathbb V\in \Pi_{\delta}(x,r)$, see \cref{def:Pixr}, we have
\begin{equation}
\sup_{w\in E(\vartheta,\gamma)\cap B(x,r/4)} \frac{\dist\big(w,x\mathbb V\big)}{r}\leq2^{1+1/(h+1)}\vartheta^{1/(h+1)}\delta^{1/(h+1)}=: \newC\label{C:b1}(\vartheta,h)\delta^{1/(h+1)}.
\end{equation}
\end{proposizione}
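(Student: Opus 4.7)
The plan is to produce a point $w$ that violates the cone estimate by testing the functional $F_{x,r}$ against an explicit bump adapted to the geometry. The underlying idea is that if a point $w\in E(\vartheta,\gamma)\cap B(x,r/4)$ lies far from the flat plane $x\mathbb V$, then $\phi$ must concentrate a non-trivial amount of mass on a ball around $w$ (by the lower density condition encoded in $E(\vartheta,\gamma)$), while the flat measure $\Theta\mathcal{S}^h\llcorner x\mathbb V$ charges that ball with nothing. A suitable Lipschitz bump will then witness a lower bound on $F_{x,r}(\phi,\Theta\mathcal{S}^h\llcorner x\mathbb V)$ that contradicts the defining bound \eqref{eqn:Previous} of $\Pi_\delta(x,r)$, unless $w$ is close to $x\mathbb V$.

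More concretely, fix $w\in E(\vartheta,\gamma)\cap B(x,r/4)$, set $d:=\dist(w,x\mathbb V)$, and put $d_0:=\min\{d,3r/4\}$. I will work with the test function
\[
f(z):=\max\{0,\,d_0-d(z,w)\},
\]
which is $1$-Lipschitz, nonnegative, and supported in $B(w,d_0)\subseteq B(x,r)$ (using $w\in B(x,r/4)$ and $d_0\leq 3r/4$); hence $f\in\mathrm{Lip}_1^+(B(x,r))$. Two elementary computations drive the estimate:
\begin{itemize}
\item On $B(w,d_0/2)$ one has $f\geq d_0/2$, and since $d_0/2\leq 3r/8<1/\gamma$, the definition of $E(\vartheta,\gamma)$ yields
\[
\int f\,d\phi\;\geq\;\tfrac{d_0}{2}\,\phi(B(w,d_0/2))\;\geq\;\vartheta^{-1}2^{-(h+1)}d_0^{\,h+1}.
\]
\item The support $B(w,d_0)$ is disjoint from $x\mathbb V$, because every point of $x\mathbb V$ is at distance at least $d\geq d_0$ from $w$. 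Therefore $\int f\,d(\Theta\mathcal{S}^h\llcorner x\mathbb V)=0$.
\end{itemize}

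Combining these with $\mathbb V\in\Pi_\delta(x,r)$ gives
\[
\vartheta^{-1}2^{-(h+1)}d_0^{\,h+1}\;\leq\;F_{x,r}(\phi,\Theta\mathcal{S}^h\llcorner x\mathbb V)\;\leq\;2\delta r^{h+1},
\]
whence $d_0\leq 2^{1+1/(h+1)}\vartheta^{1/(h+1)}\delta^{1/(h+1)}r$. The final sanity check is the removal of the truncation $d_0=\min\{d,3r/4\}$: using $\delta<\delta_{\mathbb G}=\vartheta^{-1}2^{-(4h+5)}$, a direct computation shows the right-hand side is bounded by $r/8$, so $d_0\leq r/8<3r/4$, forcing $d_0=d$. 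The bound then reads $d\leq \oldC{C:b1}(\vartheta,h)\delta^{1/(h+1)}r$, as claimed.

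There is no serious obstacle: the whole argument is a one-shot test-function comparison, and the only delicate point is tuning the parameters so that the support of $f$ stays inside $B(x,r)$ and so that the radius appearing in the $E(\vartheta,\gamma)$-lower bound falls under the admissible threshold $1/\gamma$. Both are handled by the truncation $d_0$ and the calibration of $\delta_{\mathbb G}$.
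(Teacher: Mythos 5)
Your argument is correct and is essentially the paper's own proof of this proposition: one tests $F_{x,r}$ against a tent function supported in a ball around $w$ disjoint from $x\mathbb V$, uses the lower bound $\phi(B(w,\cdot))\geq\vartheta^{-1}(\cdot)^h$ from $E(\vartheta,\gamma)$ to bound the $\phi$-integral from below while the flat measure contributes nothing, and compares with the bound $2\delta r^{h+1}$ defining $\Pi_\delta(x,r)$; your truncation $d_0=\min\{d,3r/4\}$ together with the calibration $\delta<\delta_{\mathbb G}$ correctly handles the support constraint, and the constant $2^{1+1/(h+1)}\vartheta^{1/(h+1)}$ comes out exactly as stated.
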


\begin{proposizione}\label{prop:PHIK>0}
Assume $\phi$ is a $\mathscr{P}_h^{*,\unlhd}$-rectifiable measure supported on a compact set $K$. Then, there exists a $\mathbb{W}\in \G_\unlhd(h)$, a compact set $K'\Subset \mathbb{W}$ and a Lipschitz function $f:K'\to \mathbb{G}$ such that $\phi(f(K'))>0$.
\end{proposizione}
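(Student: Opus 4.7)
The plan is to combine the technical rigidity results proved above with the projection-injectivity criterion \cref{prop:ConeAndGraph} in order to single out a compact subset $\Gamma\subseteq\supp\phi$ which is the intrinsic graph of an intrinsically Lipschitz map over an $\mathcal{S}^h$-positive compact set $A$ in some plane $\mathbb W\in\G_\unlhd(h)$; the uniform cone information will then upgrade the graph map to a metric Lipschitz map, producing the sought $f:K'\to\mathbb G$.

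First I would reduce to a favorable setting. By \cref{struct:strat} and \cref{propmeasuS} the stratification map $x\mapsto\mathfrak s(\phi,x)$ is $\phi$-measurable, and by \cref{lemma:Gx} the sets $\mathscr G_C$ are $\phi$-measurable; since $\mathfrak e>0$ on $\G_\unlhd^{\mathfrak s}(h)$ and each $\mathscr G(x)$ is compact, $\bigcup_{C>0}\mathscr G_C$ covers $\phi$-almost all of $\mathbb G$. Thus I may fix $\mathfrak s\in\mathfrak S(h)$ and $C>0$ such that $\phi$ has positive mass on a compact subset of $\{\mathfrak s(\phi,\cdot)=\mathfrak s\}\cap\mathscr G_C$, and by \cref{prop:Lebesuge} and \cref{prop:LocalityOfTangent} I may restrict $\phi$ to such a set without affecting densities or tangents. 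For a small $\varepsilon>0$, \cref{prop::5.2satisfied} then yields $\vartheta,\gamma\in\mathbb N$, a $\phi$-positive compact $E\subseteq E(\vartheta,\gamma)$, a point $z\in E$, radii $r_0,\rho_z>0$, an infinitesimal sequence $\rho_i(z)$, and, for every $w\in E$ and $0<\rho\le\oldC{C:U}r_0$, a plane $\mathbb V_{w,\rho}\in\G_\unlhd^{\mathfrak s}(h)$ with $\mathfrak e(\mathbb V_{w,\rho})\ge C$ satisfying the bounds (ii.1)--(ii.3) and the uniform comparison (iii). Set $r:=\rho_i(z)$ for $i$ large, $r_1:=(1-\varepsilon/h)r$, $\mathbb W:=\mathbb V_{z,r}$, and let $\mathbb L$ be a normal complement of $\mathbb W$ as in \cref{lip:const:proj:conorm}, with $P:=P_\mathbb W$. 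Combining (ii.2) at $w=x,\,\rho=s$ with (iii) gives $(x,s,\mathbb V_{x,s})\in Z(z,r_1)$ for every $x\in E\cap B(z,\oldC{C:U}r_1)$ and $s\in(0,\oldC{C:U}r-d(x,z)]$ (with the density constant degraded from $1-\varepsilon$ to $(1-\varepsilon)^2$). The hypotheses of \cref{prop::5.2} being in place (with $E$ replaced by $E\cap B(z,\oldC{C:U}r_1)$), I obtain a compact $A\subseteq\mathbb W$ with $\mathcal{S}^h(A)>0$ such that $\Gamma:=E\cap P^{-1}(A)$ satisfies $P(\Gamma)=A$, $\mathcal{S}^h(\Gamma)>0$, and $\mathfrak C^{-1}\mathcal{S}^h\llcorner\Gamma\le\phi\llcorner\Gamma\le\mathfrak C\mathcal{S}^h\llcorner\Gamma$ for some $\mathfrak C>1$.

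The crux is to show that $\Gamma$ is a $C_\mathbb W(\alpha)$-set for some $\alpha\le\oldep{ep:Cool}(\mathbb W,\mathbb L)$. For $x_1,x_2\in\Gamma$ and $\rho:=d(x_1,x_2)$, item (ii.1) of \cref{prop::5.2satisfied} puts $\mathbb V_{x_1,\rho}$ into $\Pi_\delta(x_1,4\oldC{C:U}\rho)$ with $\delta$ a high power of $\varepsilon$, and since $x_1,x_2\in E(\vartheta,\gamma)$, \cref{prop1vsinfty} gives $\dist(x_2,x_1\mathbb V_{x_1,\rho})\le\oldC{C:b1}\delta^{1/(h+1)}\rho$. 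The remaining point is the quantitative closeness $d_\mathbb G(\mathbb V_{x_1,\rho},\mathbb W)\to 0$ as $\varepsilon\downarrow 0$, extracted by comparing the $F$-distance bound (ii.1) at $(z,r)$ — which forces $\phi$ to look flat on $z\mathbb W$ on $B(z,4\oldC{C:U}r)$ — with its analogue at $(x_1,\rho)$, and observing that on the common ball $B(x_1,4\oldC{C:U}\rho)$ the two flat approximations of $\phi$ must almost coincide. Combining the two estimates yields $\dist(x_2,x_1\mathbb W)\le\alpha\|x_1^{-1}x_2\|$ with $\alpha$ as small as we please by shrinking $\varepsilon$, in particular $\alpha\le\oldep{ep:Cool}(\mathbb W,\mathbb L)$.

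Once the cone property is established, \cref{prop:ConeAndGraph} shows that $P|_\Gamma:\Gamma\to A$ is injective and that $\Gamma$ is the intrinsic graph of an intrinsically Lipschitz function $\psi:A\to\mathbb L$. Setting $f(u):=u\cdot\psi(u)$, the decomposition $x_1^{-1}x_2=P(x_1^{-1}x_2)\cdot P_\mathbb L(x_1^{-1}x_2)$, the cone bound $\|P_\mathbb L(x_1^{-1}x_2)\|\le(\alpha/\oldC{C:split})\|x_1^{-1}x_2\|$ coming from \cref{prop:distvsproj}, and the triangle inequality imply $\|x_1^{-1}x_2\|\le(1-\alpha/\oldC{C:split})^{-1}\|u_1^{-1}u_2\|$, so that $f:A\to\mathbb G$ is metrically Lipschitz. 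Taking $K':=A\Subset\mathbb W$ we conclude $\phi(f(K'))=\phi(\Gamma)\ge\mathfrak C^{-1}\mathcal{S}^h(\Gamma)>0$, as required. The principal technical obstacle is thus the quantitative proximity of $\mathbb V_{x_1,\rho}$ to $\mathbb W$ used in the previous paragraph: a fixed-scale rigidity statement for flat approximations of $\phi$, which is the Carnot-group analogue of Preiss's observation that nearby flat approximations of a measure must be close to one another.
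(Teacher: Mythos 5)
Your reduction to a fixed stratification and a fixed $C>0$, the invocation of \cref{prop::5.2satisfied} and \cref{prop::5.2}, and the final step (cone property $\Rightarrow$ injectivity of $P|_\Gamma$ via \cref{prop:ConeAndGraph} $\Rightarrow$ metric Lipschitz bound via \cref{prop:distvsproj}) all match the architecture of the paper up to the point where the real work starts. The gap is exactly in your ``crux'' paragraph. You claim that for $x_1,x_2\in\Gamma$ and $\rho:=d(x_1,x_2)$ one can force $d_{\mathbb G}(\mathbb V_{x_1,\rho},\mathbb W)$ to be small by comparing the flat approximation of $\phi$ at scale $r$ around $z$ with the one at scale $\rho$ around $x_1$ ``on the common ball $B(x_1,4\oldC{C:U}\rho)$''. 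This comparison does not work when $\rho\ll r$: the flatness estimate at scale $r$ only controls $F_{z,4\oldC{C:U}r}$ up to an error of order $\sigma^{h+3}r^{h+1}$, which is enormous compared with the relevant quantity $\rho^{h+1}$, so it gives no information about which plane approximates $\phi$ at scale $\rho$; and chaining through intermediate scales only bounds the drift of the planes by $\varepsilon\log(r/\rho)$, which is unbounded. Indeed, that the small-scale planes line up with a single plane is essentially the conclusion of the theorem (uniqueness of tangents), not something available at this stage; a $\mathscr{P}^*_h$-rectifiable measure may a priori have approximating planes that rotate as the scale shrinks, and nothing in \cref{prop::5.2satisfied} prevents $\mathbb V_{x_1,\rho}$ from being far from $\mathbb W=\mathbb V_{z,r}$. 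Consequently the global cone condition for $\Gamma=E\cap P^{-1}(A)$ is not established (and is not what the paper proves).

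What the paper does instead, and what is missing from your proposal, is a two-step measure-theoretic substitute. First, inside $A$ one isolates by a Vitali-type covering argument a further $\mathcal{C}^h$-positive compact set $\hat A$ of points $u$ over which thin cylinders $T(u,s)$ carry little $\phi$-mass (inequality \eqref{eq:num1uno}); the upper cylinder bound, played against the lower density bound from $Z$ by packing the balls $B(x\delta_{2ks}(w),s/2)$ along a direction $w\in\mathbb V_{x,s}$, yields the quantitative transversality \eqref{eq:::num7}: $\|P(w)\|>\|w\|/2\oldC{C:U}$ for every $w\in\mathbb V_{x,s}$, with no claim that $\mathbb V_{x,s}$ is close to $\mathbb W$. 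Second, this transversality is transferred to pairs of points not by a two-scale comparison but by a blow-up: choosing a selection $f:\hat A\to E\cap P^{-1}(A)$, one proves the \emph{local} estimate \eqref{eq:::num16} at $\phi$-a.e.\ $x$ by contradiction, extracting a tangent $\lambda\mathcal{C}^h\llcorner\mathbb V$ along the bad sequence and comparing $\mathbb V$ with $\mathbb V_{x,\rho_i}$ at the \emph{same} scale $\rho_i$ (this is where Preiss's ``nearby flat approximations are close'' is legitimately used), reaching a contradiction with \eqref{eq:::num7}. Only then, via upper semicontinuity of the admissible radius and restriction to a compact piece of small diameter, does one obtain a set on which $f$ is Lipschitz. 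So your plan is salvageable only if you replace the claimed fixed-scale rigidity by something like this cylinder-bound-plus-blow-up mechanism; as written, the decisive step would fail.
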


\begin{proof}
\cref{struct:strat} implies that for $\phi$-almost every $x\in \mathbb{G}$ the elements of $\Tan_h(\phi,x)$ all share the same stratification vector. Furthermore, thanks to \cref{propmeasuS}, for any $\mathfrak{s}\in \mathfrak{S}(h)$ the set
$\mathscr{T}_\mathfrak{s}:=\{x\in K: \mathfrak{s}(\phi,x)=\mathfrak{s}\}$ is $\phi$-measurable. Thus, if we prove that for any $\mathfrak{s}\in \mathfrak{S}(h)$ there exists a Lipschitz function as in the thesis of the proposition whose image has positive $\phi\llcorner \mathscr{T}_\mathfrak{s}$-measure, the proposition is proved since the sets $\mathscr{T}_\mathfrak{s}$ cover $\phi$-almost all $K$ and since the locality of tangents hold, see \cref{prop:LocalityOfTangent}. Thanks to this argument, we can assume without loss of generality that there exists a $\mathfrak{s}\in\mathfrak{S}(h)$ such that for $\phi$-almost every $x\in K$ we have $\mathfrak{s}(\phi,x)=\mathfrak{s}$. 

Let us further reduce ourselves to the setting in which there exists a constant $C>0$ such that $\phi(\mathbb G\setminus \mathscr G_C)=0$, where $\mathscr G_C$ is defined in \eqref{eqn:Gc}. Thanks to \cref{lemma:Gx}, we know that for $\phi$-almost every $x\in\mathbb G$ the set $\mathscr{G}(x)$ defined in \eqref{eqn:DefinitionGx} is compact. Hence, taking item (i) of \cref{prop:grasscompiffC3} into account, for $\phi$-almost every $x\in\mathbb G$ there exists a constant $C(x)>0$ such that $\mathfrak e(\mathbb V)\geq C(x)$ for every $\mathbb V\in \mathscr{G}(x)$. This readily implies that 
$$
\phi(\mathbb G\setminus \cup_{n\in\mathbb N}\mathscr G_ {1/n})=0.
$$
Hence, since $\mathscr G_{1/n}$ is $\phi$-measurable for every $n\in\mathbb N$, see \cref{lemma:Gx}, we can reduce, with the same argument used in the previous paragraph, to deal with the case in which there exists $C>0$ such that $\phi(\mathbb G\setminus \mathscr G_C)=0$. 

Let $\oldC{C:proj}:=\oldC{C:proj}(C)$ and $\oldC{C:U}:=\oldC{C:U}(C)$ be defined as in \cref{defC6C7}, and let $\widetilde\varepsilon\leq 5^{-10(h+5)}\oldC{C:U}^{-3h}$, and $\widetilde\mu:=2^{-7}h^{-3}\oldC{C:U}^{-5h}\widetilde\varepsilon^2$. Let $E\subseteq K$ be the compact set and $z\in E\cap \mathscr G_C$ the point yielded by \cref{prop::5.2satisfied} with respect to $\widetilde\varepsilon,\widetilde\mu$. Furthermore let $\widetilde\varepsilon\leq\varepsilon\leq 5^{-h-5}\oldC{C:U}^{-3h}$, and $\mu:=2^{-7}h^{-3}\oldC{C:U}^{-5h}\varepsilon^2$ such that $(1-\widetilde\varepsilon)^2\geq (1-\varepsilon)$. We define
$$
r:=\rho_1(z),\qquad \text{and}\qquad r_1:=(1-\varepsilon/h)r,$$
where $\rho_1(z)$ is the first term of the sequence $\{\rho_i(z)\}_{i\in\N}$ yielded by item (iii) of \cref{prop::5.2satisfied}.

Let us check that the compact set $E\cap B(z,\oldC{C:U}r_1)$ satisfies the hypothesis of \cref{prop::5.2} with respect to the choiches $\varepsilon,\mu,r$. First of all, since $r<\rho_z$, item (i) of \cref{prop::5.2satisfied} implies that \eqref{eq:densityE} holds since $\widetilde\mu\leq \mu$. Secondly, since $r\leq r_0$, item (ii.2) of \cref{prop::5.2satisfied} implies that for any $w\in E$ and any $0<\rho<\oldC{C:U}r$ there exists a $\mathbb{V}_{w,\rho}\in \G_\unlhd^\mathfrak{s}(h)$ such that whenever $y\in B(w,\oldC{C:U}r)\cap w\mathbb{V}_{w,\rho}$ and $t\in [\mu \rho, \oldC{C:U}\rho]$ we have
$$
\phi(B(y,t))\geq (1-\widetilde\varepsilon)(t/\oldC{C:U}\rho)^h\phi(B(w,\oldC{C:U}\rho)).
$$
Furthermore, since $r=\rho_1(z)$, thanks to item (iii) of \cref{prop::5.2satisfied} we finally infer that for any $w\in E$ and any $0<\rho<\oldC{C:U}r$ we have
\begin{equation}
    \begin{split}
\phi(B(y,t))&\geq (1-\widetilde\varepsilon)(t/\oldC{C:U}\rho)^h\phi(B(w,\oldC{C:U}\rho))\geq (1-\widetilde\varepsilon)^2(t/\oldC{C:U}r)^h\phi(B(z,\oldC{C:U}r))\\
&\geq (1-\varepsilon)(t/\oldC{C:U}r)^h\phi(B(z,\oldC{C:U}r)),        
    \end{split}
\end{equation}
whenever $y\in B(w,\oldC{C:U}r)\cap w\mathbb{V}_{w,\rho}$ and $t\in [\mu \rho, \oldC{C:U}\rho]$. 
The above paragraph shows that the hypotheses of \cref{prop::5.2} are satisfied by $z$ and $E\cap B(z,\oldC{C:U}r_1)$ with the choices of $r,r_1,\varepsilon,\mu$ as above. 

\textbf{Throughout the rest of the proof $E$ will stand for $E\cap B(z,\oldC{C:U}r_1)$}, and in order to conclude the argument we will need to use the other two pieces of information yielded by \cref{prop::5.2satisfied}. Indeed, since $r< \oldC{C:U}r_0$, item (ii.3) of \cref{prop::5.2satisfied} implies that
\begin{equation}
    (1-\varepsilon)\phi(zT_{\mathbb{V}_{z,r}}(0,r)\cap B(z,\oldC{C:U}r))\leq \mathcal{C}^h(P(B(0,1))) \oldC{C:U}^{-h}\phi(B(z,\oldC{C:U}r)),
    \label{reg:0:cil}
\end{equation}
where $T$ is the cylinder related to the splitting $\mathbb G=\mathbb V_{z,r}\cdot \mathbb L_{z,r}$, and $\mathbb L_{z,r}$ is one normal complement to $\mathbb V_{z,r}$ chosen as in item (ii.3) of \cref{prop::5.2satisfied}.
Furthermore, thanks to item (ii.1) of \cref{prop::5.2satisfied} and the fact that $r<r_0$ we know that there exists $\Theta>0$ such that
\begin{equation}
   F_{z,4\oldC{C:U} r}(\phi,\Theta\mathcal{C}^h\llcorner z\mathbb{V}_{z,r})\leq (4^{-1}\vartheta^{-1}\oldC{C:U}^{-1}\mu)^{h+3}\cdot(4\oldC{C:U}r)^{h+1}.
    \label{eq::1vsinftye}
\end{equation}
The bound \eqref{eq::1vsinftye} together with \cref{prop1vsinfty}, that we can apply since $4\oldC{C:U}r\leq \gamma^{-1}$, and $2^{-1}(4^{-1}\vartheta^{-1}\oldC{C:U}^{-1}\mu)^{h+3}\leq \delta_{\mathbb{G}}$, where $\delta_\mathbb{G}$ was introduced in \cref{eqn:DefinitionOfDeltaG},  imply that
\begin{equation}
    \sup_{w\in E\cap B(z,\oldC{C:U} r)} \frac{\dist\big(w,z\mathbb{V}_{z,r}\big)}{4\oldC{C:U}r}\leq2^{1+1/(h+1)}\vartheta^{1/(h+1)}(2^{-1}(4^{-1}\vartheta^{-1}\oldC{C:U}^{-1}\mu)^{h+3})^{\frac{1}{h+1}}\leq 2\oldC{C:U}^{-1}\mu.
    \label{eq:::num1}
\end{equation}
The above bound shows that the set $E$ inside the ball $B(z,\oldC{C:U}r)$ is very squeezed around the plane $\mathbb{V}_{z,r}$. From now on we should denote $\mathbb{W}:=\mathbb{V}_{z,r}$, $P:=P_{\mathbb{V}_{z,r}}$, $\mathbb L:=\mathbb L_{z,r}$, and $T(\cdot,r):=T_\mathbb{W}(\cdot,r)$. In order to simplify the notation, since all the statements are invariant up to substituting $\phi$ with $T_{z,1}\phi$, we can assume that $z=0$. Let us recall once more that $\mathfrak e(\mathbb V_{z,r})\geq C$ from item (ii) of \cref{prop::5.2satisfied}.

Since it will turn out to be useful later on, we estimate the distance of the points $w$ of $E\cap T(0,r_1)$ from $0$. Thanks to \cref{prop:structurecyl} and the fact that $w\in T(0,r_1)$, we have $\lVert P_\mathbb{W}(w)\rVert\leq \oldC{C:proj}r_1$. On the other hand, \eqref{eq:::num2} and \eqref{eq:::num1} imply that
$$
\lVert P_\mathbb{L}(w)\rVert\leq \oldC{C:proj}\dist(w,\mathbb{W})\leq 8\oldC{C:proj} \mu r_1.
$$
This in particular implies that
$$
d(0,w)\leq \lVert P_\mathbb{W}(w)\rVert+\lVert P_\mathbb{L}(w)\rVert\leq \oldC{C:proj}r_1+ 8\oldC{C:proj} \mu r_1\leq 2\oldC{C:proj}r_1,
$$
showing that
\begin{equation}
E\cap T(0,r_1)\subseteq B(0,2\oldC{C:proj}r_1).
\label{eq:::num3}
\end{equation}

In the following $A$, $A_1$ and $A_2$ are the sets inside $P(B(0,r_1))$ constructed in the statement of \cref{prop::5.2} with respect to the $0$ and the plane $\mathbb{W}$. Now, let $\widetilde{A}$ be the set of those $u\in A$ for which there exists $\rho(u)>0$ such that
\begin{equation}
    \phi(B(0,\oldC{C:U}r)\cap T(u,s))\leq 2(1-\varepsilon)^4(s/\oldC{C:U}r)^h\mathcal{C}^h(P(B(0,1)))\phi(B(0,\oldC{C:U}r)),
    \label{eq:num1uno}
\end{equation}
for all $0<s<\rho(u)$. We claim that $\widetilde{A}$ is a Borel set. To prove this, we note that
$$
\widetilde{A}=\bigcup_{k\in\N}\{u\in A: \eqref{eq:num1uno}\text{ holds for any }0<s<1/k\}=:\bigcup_{k\in\N}\widetilde{A}_k.
$$
Let us show that $\widetilde A_k$ is a compact set for any $k\in\N$, and in order to do this, let us assume $\{u_i\}_{i\in\N}$ is a sequence of points of $\widetilde A_k$. Since $\widetilde A_k\subseteq A$, and $A$ is compact, we can suppose that, up to a non re-labelled subsequence, $u_i$  converges to some $u\in A$. Thus, we have that for every $0<s<1/k$ the following chain of inequality holds
\begin{equation}
    \begin{split}
        \phi(B(0,\oldC{C:U} r)\cap T(u, s))&\leq  \limsup_{i\to\infty}\phi(B(0,\oldC{C:U} r)\cap T(u_i,s+d(u,u_i)))\\
        &\leq 2(1-\varepsilon)^4\mathcal{C}^h(P(B(0,1)))(s/\oldC{C:U} r)^h\phi(B(0,\oldC{C:U}r)).    \nonumber
    \end{split}
\end{equation}
This concludes the proof of the fact that $\widetilde{A}_k$ is compact and thus $\widetilde{A}$ is an $F_\sigma$ set, and thus Borel. 

Let us notice that, since $r_1<r$, by a compactness argument one finds that there exists a $\widetilde s:=\widetilde s(r_1,r)$ such that whenever $u\in P(B(0,r_1))$, then $P(B(u,\widetilde s))\subseteq P(B(0,r))$. The family
$$\mathcal{B}:=\{P(B(u,s)):u\in A\setminus \widetilde A,\,\,\text{and }s\leq \widetilde s\text{ does not satisfy \eqref{eq:num1uno}}\}$$ is a fine cover of $A\setminus \widetilde A$ by the very definition of $\widetilde A$. Thus \cite[2.8.17]{Federer1996GeometricTheory} with a routine argument
implies that $\mathcal{B}$ is a $\mathcal{S}^h\llcorner(A\setminus\widetilde A)$-Vitali relation (\cite[2.8.16]{Federer1996GeometricTheory}). Therefore, the set $A\setminus \widetilde{A}$ can be covered $\mathcal{S}^h$-almost all by a sequence of disjointed projected balls $\{P(B(u_k,s_k))\}_{k\in\N}$ such that $u_k\in A\setminus \widetilde{A}$ and
$$
\phi(B(0,\oldC{C:U}r)\cap T(u_k, s_k))> 2(1-\varepsilon)^4\mathcal{C}^h(P(B(0,1)))(s_k/\oldC{C:U}r)^h\phi(B(0,\oldC{C:U}r)),$$
for every $k\in\N$. Note that since $T(u_k, s_k)= P^{-1}(P(B(u_k,s_k)))$, see \cref{prop:structurecyl}, we get that $\{T(u_k, s_k)\}_{k\in\mathbb N}$ is a disjointed family of cylinders. Moreover, from the very definition of $\widetilde s$, since $u_k\in P(B(0,r_1))$ and $s_k\leq \widetilde s$, we have that $P(B(u_k,s_k))\subseteq P(B(0,r))$. This implies that
\begin{equation}
\begin{split}
    \phi(T(0,r)\cap B(0,\oldC{C:U}r))&\geq\sum_{k\in\mathbb N} \phi(B(0,\oldC{C:U}r)\cap T(u_k, s_k))\\
    &> 2(1-\varepsilon)^4\mathcal{C}^h(P(B(0,1))) \oldC{C:U}^{-h}r^{-h}\phi(B(0,\oldC{C:U}r))\sum_{k\in\N} s^h_k.
\end{split}
\end{equation}
Therefore, we have
\begin{equation}
    \begin{split}
\mathcal{C}^h(A\setminus \widetilde{A})&=\sum_{k\in\N}\mathcal{C}^h(P(B(u_k,s_k)))\leq \mathcal{C}^h(P(B(0,1)))\sum_{k\in\N} s^h_k\\
&< 2^{-1}(1-\varepsilon)^{-4}\frac{\phi(T(0,r)\cap B(0,\oldC{C:U}r))\oldC{C:U}^{h}r^h}{\phi(B(0,\oldC{C:U}r))} \leq 2^{-1}(1-\varepsilon)^{-5}\mathcal{C}^h(P(B(0,1)))r^h\\
&\leq \frac{27}{50}\mathcal{C}^h(P(B(0,1)))r^h,
\nonumber
    \end{split}
\end{equation}
where the second inequality on the second line above follows from \eqref{reg:0:cil}. Furthermore, from the previous inequality and from item (iv) of \cref{prop::5.2} we deduce that
\begin{equation}
\begin{split}
    \mathcal{C}^h(\widetilde{A})&=\mathcal{C}^h(P( B(0,r)))-\mathcal{C}^h(P(B(0,r))\setminus A)-\mathcal{C}^h(A\setminus\widetilde{A})\\
    &>\mathcal{C}^h(P(B(0,1)))r^h -5^{h+3}\oldC{C:U}^{3h}\varepsilon \mathcal{C}^h(P(B(0,1)))r^h-\mathcal{C}^h(P(B(0,1)))\frac{27}{50}r^h\\
    &\geq(1-1/25-27/50)\mathcal{C}^h(P(B(0,1)))r^h>\frac{2}{5}\mathcal{C}^h(P(B(0,1)))r^h.
\end{split}
    \nonumber
\end{equation}
Since $\widetilde{A}$ is measurable, we can find a compact set $\hat{A}\subseteq \widetilde A$ and a $\delta\in (0,\varepsilon r/h)$ such that $\mathcal{C}^h(\hat{A})>0$ and \eqref{eq:num1uno} holds for any $u\in \hat A$ and $s\in (0,\delta)$.
This can be done by taking an interior approximation with compact sets of $\widetilde A$.

Thanks to item (v) of \cref{prop::5.2} we know that
\begin{equation}
    \hat{A}\subseteq A= P(E\cap P^{-1}(A)),
    \label{eq:dundee}
\end{equation}
and thus for any $u\in \hat{A}$ we can find a $x\in E$ such that $P(x)=u$. We claim that for any $x\in E$ for which $P(x)\in \hat{A}$, any $s<\min\{r/4,\delta/(1+\oldC{C:U})\}$ and any $w\in \mathbb{V}_{x,s}$ we have
\begin{equation}
\lVert P(w)\rVert>\lVert w\rVert/2\oldC{C:U}.
    \label{eq:::num7}
\end{equation}
Suppose by contradiction that there are an $s<\min\{r/4,\delta/(1+\oldC{C:U})\}$ and a $w\in \mathbb{V}_{x,s}$ with $\lVert w\rVert=1$ such that $\lVert P(w)\rVert\leq {1/2\oldC{C:U}}$. This would imply that for any  $k=0,\ldots,\lfloor\oldC{C:U}/4\rfloor-1$ and any $p\in B(0,s/2)$ we have, by exploiting $P(x)=u$ and that $P$ is a homogeneous homomorphism, that
\begin{equation}
    \begin{split}
        d(P(x\delta_{2ks}(w)p),u)&=d(\delta_{2ks}(P(w))P(p),0)\leq \rVert\delta_{2ks}(P(w))\rVert+\lVert P(p)\rVert\\
        &\leq 2ks\lVert P(w)\rVert+\lVert P(p)\rVert\leq ks/\oldC{C:U}+\oldC{C:proj}s\leq (1+\oldC{C:proj})s.
        \label{godot:1}
    \end{split}
\end{equation}
Since $u\in \hat A\subseteq A\subseteq P(B(0,r_1))$, and since $P(x)=u$, we conclude that $x\in T(0,r_1)$. Hence, taking into account that $r_1<r$, thanks to the inclusion \eqref{eq:::num3}, we have
\begin{equation}
    \begin{split}
        d(x\delta_{2ks}(w)p,0)\leq     \lVert x\rVert+2ks+s\leq 2\oldC{C:proj}r+(2k+1)s<2\oldC{C:proj}r+3\oldC{C:U}r/4<\oldC{C:U}r.
        \label{godot:2}
    \end{split}
\end{equation}
Putting together \eqref{godot:1} and \eqref{godot:2}, we infer that for any $k=0,\ldots, \lfloor\oldC{C:U}/4\rfloor-1$ we have
$$
B(x\delta_{2ks}(w),s/2)\subseteq T(u,(1+\oldC{C:proj})s)\cap B(0,\oldC{C:U}r).
$$
Furthermore, since $x\in E$, $B(x\delta_{ks}(w),s/2)$ are disjoint and contained in $B(x,\oldC{C:U}s)$, we have by items (ii.2) and (iii) of \cref{prop::5.2satisfied} that
\begin{equation}
    \begin{split}
        \phi\Big(B(0,\oldC{C:U}r)\cap T(u,(1+\oldC{C:proj})s)\Big)&\geq \sum_{k=1}^{\lfloor\oldC{C:U}/4\rfloor-1}\phi(B(x\delta_{ks}(w),s/2))\\
        &\geq \frac{(1-\varepsilon)\oldC{C:U}}{8}\Big(\frac{s/2}{\oldC{C:U}s}\Big)^h\phi(B(x,\oldC{C:U}s))
        \geq \frac{(1-\varepsilon)^2\oldC{C:U}}{8}\Big(\frac{s/2}{\oldC{C:U}r}\Big)^h\phi(B(0,\oldC{C:U}r))\\
        &=(1-\varepsilon)^2\frac{\oldC{C:U}}{2^{h+3}}\Big(\frac{s}{\oldC{C:U}r}\Big)^h\phi(B(0,\oldC{C:U}r)).
        \label{eq:::num6}
    \end{split}
\end{equation}
Since by assumption $u\in \hat{A}\subseteq \widetilde A$ and $(1+\oldC{C:proj})s<\delta$, we infer thanks to \eqref{eq:::num6} and the definition of $\hat{A}$ that
\begin{equation}
    \begin{split}
        (1-\varepsilon)^2\frac{\oldC{C:U}}{2^{h+3}}\Big(\frac{s}{\oldC{C:U}r}\Big)^h\phi(B(0,\oldC{C:U}r))&\leq \phi\Big(B(0,\oldC{C:U}r)\cap T(u,(1+\oldC{C:proj})s)\Big)\\
        &\leq 2(1-\varepsilon)^4(1+\oldC{C:proj})^h\Big(\frac{s}{\oldC{C:U}r}\Big)^h\mathcal{C}^h(P(B(0,1)))\phi(B(0,\oldC{C:U}r)).
        \label{eq:::num7prim}
    \end{split}
\end{equation}
The chain of inequalities \eqref{eq:::num7prim} is however in contradiction with the choice of $\oldC{C:U}$ thanks to \cref{rk:contradiction5.2}, and thus the claim \eqref{eq:::num7} is proved.

Since $P$ restricted to $E\cap P^{-1}(A)$ is surjective on $\hat{A}$ as remarked in \eqref{eq:dundee}, thanks to the axiom of choice there exists a function $f:\hat{A}\to E\cap P^{-1}(A)$ such that $P(f(u))=u$. We claim that for $\phi$-almost every $x\in f(\hat{A})$ there exists a $\mathfrak{r}(x)>0$ such that for any $y\in f(\hat{A})\cap B(x,\mathfrak{r}(x))$ we have
\begin{equation}
\lVert P(x)^{-1}P(y)\rVert=\lVert P(x^{-1}y)\rVert>\oldC{C:U}^{-2}\lVert x^{-1}y\rVert=\oldC{C:U}^{-2}\lVert f(P(x))^{-1}f(P(y))\rVert,
\label{eq:::num16}
\end{equation}
where the last identity comes from the fact that $f$ is bijective on its image and thus the left and right inverse must coincide. In order to prove the latter claim, assume by contradiction that there exists an $x\in f(\hat{A})$ such that $\mathrm{Tan}_h(\phi,x)\subseteq \mathfrak{M}(h,\G^{\mathfrak s}_\unlhd(h))$ and a sequence $\{y_i\}_{i\in\N}\subseteq f(\hat{A})$, with $y_i\to x$, such that
\begin{equation}
    \lVert P(x^{-1}y_i)\rVert\leq \oldC{C:U}^{-2}\lVert x^{-1}y_i\rVert,\qquad \text{for any }i\in\N.
    \label{eq:numbero120}
\end{equation}
Defined $\rho_i:=\lVert x^{-1}y_i\rVert$, thanks to the hypothesis on $x$ and the definitions of $y_i$ and $\rho_i$ we can assume without loss of generality that
\begin{enumerate}
\item for any $i\in\N$ we have $\rho_i\leq \min\{r/4,\delta/(1+\oldC{C:U})\}$,
    \item the points $g_i:=\delta_{1/\rho_i}(x^{-1}y_i)$ converge to some $y\in \partial B(0,1)$ such that $\lVert P(y)\rVert\leq \oldC{C:U}^{-2}$,
    \item $\rho_i^{-h}T_{x,\rho_i}\phi\rightharpoonup \lambda \mathcal{C}^h\llcorner \mathbb{V}$ for some $\lambda>0$ and $\mathbb{V}\in\G^{\mathfrak s}_\unlhd(h)$.
\end{enumerate}
Since $\mathcal{C}^h\llcorner \mathbb{V}(\partial B(p,s))=0$, see e.g., \cite[Lemma 3.5]{JNGV20}, for any $p\in \mathbb{G}$ and any $s\geq 0$, thanks to \cite[Proposition 2.7]{DeLellis2008RectifiableMeasures} we infer that
\begin{equation}
    \begin{split}
        \lambda\mathcal{C}^h\llcorner \mathbb{V}(B(y,\rho))&=\lim_{i\to\infty}T_{x,\rho_i}\phi(B(y,\rho))/\rho_i^h\geq \lim_{i\to\infty} T_{x,\rho_i}\phi(B(g_i,\rho-d(g_i,y)))/\rho_i^h\\
        &\geq \lim_{i\to\infty}\phi(B(y_i,\rho_i\rho/2))/\rho_i^h\geq \vartheta^{-1}(\rho/2)^h>0,
        \nonumber
    \end{split}
\end{equation}
where we stress that in the second inequality in the second line we are using that there exists $\vartheta,\gamma\in\mathbb N$ such that $E\subseteq E(\vartheta,\gamma)$, since $E$ is provided by \cref{prop::5.2satisfied}.
The above computation shows that the (contradiction) assumption \eqref{eq:numbero120} implies that at $x$ there is a flat tangent measure whose support $\mathbb{V}$ contains an element $y\in\partial B(0,1)$ such that $\lVert P(y)\rVert\leq \oldC{C:U}^{-2}$. Let us prove that if
\begin{itemize}
    \item[(\hypertarget{HC}{HC})]there exists a suitably big $i_0\in\N$ such that we can find a $q_{i_0}\in \mathbb{V}_{x,\rho_{i_0}}$ such that $d(y,q_{i_0})\leq \mu$,
\end{itemize}
then we achieve a contradiction with \eqref{eq:::num7}, and thus we prove the claim \eqref{eq:::num16}. Indeed, the claim (\hyperlink{HC}{HC}) above would imply thanks to the definition of $\mu$, \eqref{eq:::num7}, \cref{lip:const:proj:conorm}, and \cref{prop:projhom}, that 
\begin{equation}
\begin{split}
(4\oldC{C:U})^{-1}&<(1-\mu)/2\oldC{C:U}\leq(\lVert y\rVert-\lVert y^{-1}q_{i_0}\rVert)/2\oldC{C:U}\leq\lVert q_{i_0}\rVert/2\oldC{C:U}\\
&< \lVert P(q_{i_0})\Vert\leq \lVert P(y)\rVert+\lVert P(y^{-1}q_{i_0})\rVert\leq \oldC{C:U}^{-2}+\oldC{C:proj}\mu  <2\oldC{C:U}^{-2},     
\end{split}
 \nonumber
\end{equation}
which is a contradiction since $\oldC{C:U}>10^Q$.

In this paragraph we prove the claim (\hyperlink{HC}{HC}), which is sufficient to conclude the proof of the claim \eqref{eq:::num16}. Let $\Theta_i$ be the positive numbers yielded by item (ii.1) of \cref{prop::5.2satisfied} with the choices $\rho:=\rho_i$ around the point $x$, and notice that
\begin{equation}
    \begin{split}
        \limsup_{i\to\infty}F_{0,4\oldC{C:U}}(\lambda\mathcal{C}^h\llcorner \mathbb{V},\Theta_i\mathcal{C}^h\llcorner \mathbb{V}_{x,\rho_i})&\leq \limsup_{i\to\infty}F_{0,4\oldC{C:U}}\Big(\frac{T_{x,\rho_i}\phi}{\rho_i^h},\lambda\mathcal{C}^h\llcorner \mathbb{V}\Big)+
        \limsup_{i\to\infty}F_{0,4\oldC{C:U}}\Big(\frac{T_{x,\rho_i}\phi}{\rho_i^h},\Theta_i\mathcal{C}^h\llcorner \mathbb{V}_{x,\rho_i}\Big)\\
        &=\limsup_{i\to\infty}F_{0,4\oldC{C:U}}\Big(\frac{T_{x,\rho_i}\phi}{\rho_i^h},\Theta_i\mathcal{C}^h\llcorner \mathbb{V}_{x,\rho_i}\Big)
        =\limsup_{i\to\infty}\frac{F_{x,4\oldC{C:U}\rho_i}\Big(\phi,\Theta_i\mathcal{C}^h\llcorner x\mathbb{V}_{x,\rho_i}\Big)}{\rho_i^{h+1}} \\ &\leq (\vartheta^{-1}\mu)^{(h+3)},
        \label{eq:::num8}
    \end{split}
\end{equation}
where the first identity in the second line above comes from \cref{prop:WeakConvergenceAndFk}, the second identity from the scaling property in \cref{rem:ScalinfFxr} and the last inequality from item (ii.1) of \cref{prop::5.2satisfied} and some algebraic computations that we omit. Defined $g(w):=(\min\{1,2-\|w\|\})_+$ by \cref{unif} for any $\mathbb{V}'\in\G(h)$ we have

\begin{equation}
    \begin{split}
        \int gd\mathcal{C}^h\llcorner \mathbb{V}'&=h\int s^{h-1} (\min\{1,2-\lvert s\rvert\})_+ds\\ &=h\left(\int_0^1 s^{h-1} + \int_1^2 s^{h-1}(2-s) ds \right)= \frac{2^{h+1}-1}{h+1}.
    \end{split}
\end{equation}
Therefore, since $\supp(g)\subseteq B(0,4\oldC{C:U})$, thanks to \eqref{eq:::num8} we infer that
\begin{equation}
    \begin{split}
        \limsup_{i\to\infty}\lvert \lambda-\Theta_i\rvert&= \limsup_{i\to\infty} \frac{\lvert \lambda\int gd\mathcal{C}^h\llcorner \mathbb{V}-\Theta_i\int gd\mathcal{C}^h\llcorner \mathbb{V}_{x,\rho_i}\rvert}{\int g d\mathcal{C}^h\llcorner \mathbb{V}} \\
        &\leq \limsup_{i\to\infty} (h+1)\frac{F_{0,4\oldC{C:U}}(\lambda\mathcal{C}^h\llcorner \mathbb{V},\Theta_i\mathcal{C}^h\llcorner \mathbb{V}_{x,\rho_i})}{2^{h+1}-1}
        \leq \frac{(h+1)(\vartheta^{-1}\mu)^{(h+3)}}{2^{h+1}-1}\\
        &\leq2(\vartheta^{-1}\mu)^{(h+3)}.
        \label{eq:::num11}
    \end{split}
\end{equation}
Let  $p\in\mathbb{V}\cap B(0,1)$ and  $\ell(w):=(\mu\|p\|-\|p^{-1}w\|)_+$. The function $\ell$ is a positive $1$-Lipschitz function whose support is contained in $B(0,(1+\mu)\lVert p\rVert)$ and therefore, thanks to \cref{rem:ScalinfFxr}, we deduce that
\begin{equation}
   \begin{split}
           \liminf_{i\to\infty}\,&\lambda\int \ell(w) d\mathcal{C}^h\llcorner\mathbb{V}_{x,\rho_i} \geq \lambda \int \ell(w)d\mathcal{C}^h\llcorner \mathbb{V}
           -\limsup_{i\to\infty}\lambda\bigg\lvert \int \ell(w)d\mathcal{C}^h\llcorner \mathbb{V}- \int \ell(w)d\mathcal{C}^h\llcorner \mathbb{V}_{x,\rho_i}\bigg\rvert\\
           &\geq \lambda \int \ell(w)d\mathcal{C}^h\llcorner \mathbb{V}-\limsup_{i\to\infty}\lvert \lambda-\Theta_i\rvert\int \ell(w)d\mathcal{C}^h\llcorner \mathbb V_{x,\rho_i} -\limsup_{i\to\infty}\bigg\lvert \int \ell(w)d\lambda\mathcal{C}^h\llcorner \mathbb{V}- \int \ell(w)d\Theta_i\mathcal{C}^h\llcorner \mathbb{V}_{x,\rho_i}\bigg\rvert\\
           &\geq\lambda \int \ell(w)d\mathcal{C}^h\llcorner \mathbb{V}-\limsup_{i\to\infty}\lvert \lambda-\Theta_i\rvert\int \ell(w)d\mathcal{C}^h\llcorner \mathbb V_{x,\rho_i} 
          -\limsup_{i\to\infty}F_{0,(1+\mu)\lVert p\rVert}(\lambda\mathcal{C}^h\llcorner \mathbb{V},\Theta_i\mathcal{C}^h\llcorner \mathbb{V}_{x,\rho_i}).
           \label{eq:::num12}
   \end{split}
\end{equation}
Let us bound separately the two last terms in the last line above. Thanks to the triangle inequality the points $q_i\in\mathbb{V}_{x,\rho_i}$ of minimal distance of $p$ from $\mathbb{V}_{x,\rho_i}$ are contained in $B(0,2\lVert p\rVert)$. This, together \cref{rem:Ch1}, implies that
\begin{equation}
    \int \ell(w)d\mathcal{C}^h\llcorner \mathbb{V}_{x,\rho_i}\leq \mu\lVert p\rVert\mathcal{C}^h\llcorner \mathbb{V}_{x,\rho_i}(B(q_i,(3+2\mu)\lVert p\rVert))\leq(3+2\mu)^{h+1}\lVert p\rVert^{h+1}.
    \label{eq:::num9}
\end{equation}
On the other hand, thanks to \cref{rem:ScalinfFxr} and the fact that $\mathcal{C}^h\llcorner \mathbb{V}$ and $\mathcal{C}^h\llcorner \mathbb{V}_{x,\rho_i}$ are invariant under rescaling, we infer that
\begin{equation}
F_{0,(1+\mu)\lVert p\rVert}(\lambda\mathcal{C}^h\llcorner \mathbb{V},\Theta_i\mathcal{C}^h\llcorner \mathbb{V}_{x,\rho_i})=\bigg(\frac{(1+\mu)\lVert p\rVert}{4\oldC{C:U}}\bigg)^{h+1}F_{0,4\oldC{C:U}}(\lambda\mathcal{C}^h\llcorner \mathbb{V},\Theta_i\mathcal{C}^h\llcorner \mathbb{V}_{x,\rho_i}).
\label{eq:::num10}
\end{equation}
Putting together \eqref{eq:::num8}, \eqref{eq:::num11}, \eqref{eq:::num12}, \eqref{eq:::num9} and \eqref{eq:::num10}  we finally infer that
\begin{equation}
\begin{split}
     \liminf_{i\to\infty}\,\lambda\int \ell(w) d\mathcal{C}^h\llcorner\mathbb{V}_{x,\rho_i}
     &\geq \lambda \int \ell(w)d\mathcal{C}^h\llcorner \mathbb{V}-2(\vartheta^{-1}\mu)^{(h+3)}(3+2\mu)^{h+1}\lVert p\rVert^{h+1}
     -\bigg(\frac{(1+\mu)\lVert p\rVert}{4\oldC{C:U}}\bigg)^{h+1}(\vartheta^{-1}\mu)^{(h+3)}\\
    &\geq\lambda \int \ell(w)d\mathcal{C}^h\llcorner \mathbb{V}-(\vartheta^{-1}\mu)^{(h+3)}\lVert p\rVert^{h+1}\big(2(3+2\mu)^{h+1}+1\big).
        \label{eq:::num13}
\end{split}
\end{equation}
Finally, \cref{prop:density} and the fact that $x\in E(\vartheta,\gamma)$ imply that $\lambda\geq \vartheta^{-1}$. This together with a simple computation that we omit, based on \cref{unif}, shows that
\begin{equation}
\lambda\int \ell(w) d\mathcal{C}^h\llcorner \mathbb{V}= \vartheta^{-1}(\mu\lVert p\rVert)^{h+1}/(h+1).
\label{eq:::num14}    
\end{equation}
Putting together \eqref{eq:::num13} and \eqref{eq:::num14} we eventually infer that
$$
\liminf_{i\to\infty}\lambda\int \ell(w) d\mathcal{C}^h\llcorner\mathbb{V}_{x,\rho_i}\geq \vartheta^{-1}(\mu^{h+1}/(h+1)-2^{2(h+2)}\mu^{(h+3)})\lVert p\rVert^{h+1}>0,
$$
proving that for any $p\in B(0,1)\cap \mathbb V$ we have $B(p,\mu\lVert p\rVert)\cap \mathbb{V}_{x,\rho_i}\neq \emptyset$ provided $i$ is chosen suitably big. Thus the claim (\hyperlink{HC}{HC}) is proved taking $p=y$.

Let us conclude the proof of the proposition exploiting the claim \eqref{eq:::num16} that we have proved. Defined $\mathcal{B}$ to be the set of full measure in $f(\hat A)$ on which \eqref{eq:::num16} holds, we note that since $B(P(x),r)\subseteq P(B(x,r))$, the \eqref{eq:::num16} implies the following one: for any $u\in P(\mathcal{B})$ there exists a $\mathfrak{r}(u)>0$ such that
\begin{equation}
\lVert f(u)^{-1}f(w)\rVert\leq \oldC{C:U}^{2}\lVert u^{-1}w\rVert, \qquad\text{whenever }w\in\hat A\cap B(u,\mathfrak{r}(u)).
\label{eq:::num16'}
\end{equation}
Furthermore, note that thanks to the proof of item (v) of \cref{prop::5.2} and recalling that $f(\hat A)\subseteq E\cap P^{-1}(A)$, we deduce that $\mathcal{S}^h\llcorner f(\hat{A})$ is mutually absolutely continuous with respect to $\phi$ and by \cref{cor:2.2.19} we finally infer that
$$
\mathcal{S}^h(\hat{A}\setminus P(\mathcal{B}))=\mathcal{S}^h(P(f(\hat{A})\setminus \mathcal{B}))=0,
$$
where the first equality above comes from the fact that $f:\hat{A}\to f(\hat{A})$ is bijective. 
 
We now prove that if $\mathfrak{r}(u)$ is chosen to be the biggest radius for which \eqref{eq:::num16'} holds, then the map $u\mapsto \mathfrak{r}(u)$ is upper semicontinuous on $\hat A$. Indeed, assume $\{u_i\}_{i\in\N}$ is a sequence in $\hat{A}$ such that $u_i\to u\in \hat A$ and $\limsup_{i\to\infty}\mathfrak{r}(u_i)=r_0\geq 0$. If $r_0=0$, then the inequality $\limsup_{i\to\infty}\mathfrak{r}(u_i)\leq \mathfrak{r}(u)$ is trivially satisfied. Thus, we can assume that $r_0>0$, and, without loss of generality, also that the $\limsup$ is actually a $\lim$. For any fixed $0<s<r_0$ there exists an $i_0\in\N$ such that
$$
s+d(u,u_i)<\mathfrak{r}(u_i)\qquad\text{for any }i\geq i_0.
$$
As a consequence $B(u,s)\subseteq B(u_i,\mathfrak{r}(u_i))$ and thus for any $y\in \hat A\cap B(u,s)$ and $i\geq i_0$ we have
\begin{equation}
\begin{split}
      \lVert f(u)^{-1}f(y)\rVert&\leq \lVert f(u)^{-1}f(u_i)\rVert+\lVert f(u_i)^{-1}f(y)\rVert\leq \oldC{C:U}^2\lVert u_i^{-1}u\rVert+\oldC{C:U}^2\lVert u_i^{-1}y\rVert.
     \label{eq:::num17}
\end{split}
\end{equation}
Sending $i$ to $+\infty$, thanks to \eqref{eq:::num17} we conclude that for any $y\in B(u,s)\cap \hat{A}$ we have $\lVert f(u)^{-1}f(y)\rVert\leq\oldC{C:U}^2\lVert u^{-1}y\rVert$ and thus $s\leq \mathfrak{r}(u)$. The arbitrariness of $s$ concludes that $\mathfrak{r}$ is upper semicontinuous and thus for any $j\in\N$ the sets
$$L_j:=\{w\in \hat{A}:\mathfrak{r}(w)\geq 1/j\},$$
are Borel. Furthermore, since $\mathfrak{r}(u)>0$ everywhere on $P(\mathcal{B})$, we infer that $P(\mathcal{B})\subseteq \cup_{j\in\mathbb N}L_j$. This, jointly with the fact that $\mathcal{S}^h(\hat A)>0$, and that $\mathcal{S}^h(\hat A\setminus P(\mathcal{B}))=0$ tells us that we can find a $j\in\N$ and compact subset $\mathcal{A}$ of $L_j$ such that $\mathcal{S}^h(\mathcal{A})>0$ and $\diam(\mathcal{A})<1/2j$. 

Let us conclude the proof by showing that $f$ is Lipschitz on $\mathcal{A}$ and that $\phi(f(\mathcal{A}))>0$. 
The fact that $f(\mathcal{A})$ is $\phi$-positive follows from \cref{lip:const:proj:conorm}, item (v) of \cref{prop::5.2} and the following computation
$$
0<\mathcal{S}^h(\mathcal{A})=\mathcal{S}^h(P(f(\mathcal{A})))\leq \oldC{C:proj}^h\mathcal{S}^h(f(\mathcal{A})).
$$
On the other hand, for any $u,v\in \mathcal{A}$ we have $d(u,v)\leq 1/2j$ and since $u,v\in L_j$ then
$$\lVert f(u)^{-1}f(v)\rVert\leq \oldC{C:U}^2\lVert u^{-1}v\rVert.$$
This eventually concludes the proof of the proposition.
\end{proof}

\begin{proof}[Proof of \cref{thm:MMconormale}]
If we prove the result for $\phi\llcorner B(0,k)$ for any $k\in\N$, the general case follows taking into account the locality of tangents \cref{prop:LocalityOfTangent} and Lebesgue differentiation theorem \cref{prop:Lebesuge}. Therefore, we can assume without loss of generality that $\phi$ is supported on a compact set.
Let us set 
\begin{equation}
    \begin{split}
        \mathscr F:=\{\cup_{i\in\mathbb N}f_i(K_i):K_i\,\,\text{is a compact subset of}\,\,\mathbb W_i
        \,\,\text{with}\,\,\mathbb W_i\in\G_\unlhd(h)\,\,\text{and}\,\,f_i:K_i\to\mathbb G\,\,\text{is Lipschitz}\}.
    \end{split}
\end{equation}
Let $m:=\inf_{F\in\mathscr F}\{\phi(\mathbb G\setminus F)\}$. We claim that if $m=0$ the proof of the proposition is concluded. Indeed, if $m=0$ we can take $F_n\in\mathscr F$ such that $\phi(\mathbb G\setminus F_n)<1/n$ and then $\phi(\mathbb G\setminus\cup_{n\in\mathbb N}F_n)=0$. Let us prove that $m=0$. Indeeed, if by contradiction $m>0$, we can take, as before, $F_n'\in\mathscr F$ such that $0<\phi(\mathbb G\setminus \cup_{n\in\mathbb N}F_n')\leq m$. Since $F':=\cup_{n\in\mathbb N}F'_n$ is Borel, we have, thanks to the locality of tangents \cref{prop:LocalityOfTangent} and Lebesgue differentiation theorem \cref{prop:Lebesuge}, that $\phi\llcorner F'$ is a $\mathscr{P}^{*,\unlhd}_h$-rectifiable measure with compact support. Thus we can apply \cref{prop:PHIK>0} to conclude that there exists $\mathbb W\in\G_\unlhd(h)$, $K$ a compact subset of $\mathbb W$ and a Lipschitz function $f:K\to \mathbb G$ such that $\phi\llcorner F'(f(K))>0$. Thus we get that $\phi(\mathbb G\setminus (f(K)\cup F'))<m$, that is a contradiction with the definition of $m$.

In order to prove the last part of the theorem, let us notice that, thanks to the locality of tangents in \cref{prop:LocalityOfTangent} and Lebesgue differentiation theorem in \cref{prop:Lebesuge}, we can reduce on $\phi\llcorner E(\vartheta,\gamma)$, thanks also to \cref{prop::E}. Moreover, taking into account that $\mathcal{S}^h\llcorner E(\vartheta,\gamma)$ is mutually absolutely continuous with respect to $\phi\llcorner E(\vartheta,\gamma)$, see \cref{prop:MutuallyEthetaGamma}, we can finally reduce to prove that $\mathcal{S}^h\llcorner f(K)$ is a $\mathscr{P}^c_h$-rectifiable measure whenever $K$ is a compact subset of $\mathbb W\in \G_\unlhd(h)$ and $f:K\to\mathbb G$ is a Lipschitz function. The fact that $\mathcal{S}^h\llcorner f(K)$ is a $\mathscr{P}^c_h$-rectifiable measure follows from the following claim: if $K$ is a compact subset of $\mathbb W\in \G_\unlhd(h)$ and $f:K\to\mathbb G$ is a Lipschitz function, then for $\mathcal{S}^h\llcorner f(K)$-almost every $x\in\mathbb G$ we have that there exists $\mathbb W(x)\in\G(h)$ such that the following convergence of measures holds
\begin{equation}\label{eqn:CONVECONVE}
r^{-h}T_{x,r}\mathcal{S}^h\llcorner f(K)\rightharpoonup \mathcal{S}^h\llcorner\mathbb W(x), \qquad \text{as r goes to $0$.}
\end{equation}
Let us finally sketch the proof of \eqref{eqn:CONVECONVE}. Since $\mathbb W\in \G_\unlhd(h)$, i.e., it admits a normal complementary subgroup, we get that $\mathbb W$ is a Carnot subgroup of $\mathbb G$, see \cite[Remark 2.1]{AM20}. Thus we can apply Pansu-Rademacher theorem to $f:K\subseteq\mathbb W\to\mathbb G$, see \cite[Theorem 3.4.11]{MagnaniPhD}, to obtain that $f$ is Pansu-differentiable $\mathcal{S}^h$-almost everywhere, with Pansu differential $df$, and the area formula holds, see \cite[Corollary 4.3.6]{MagnaniPhD}. The proof of \eqref{eqn:CONVECONVE} with $\mathbb W(x):=Df(x)(\mathbb W)$ for $\mathcal{S}^h\llcorner f(K)$-almost every $x$ is now just a routine task, building on \cite[Proposition 4.3.1 and Proposition 4.3.3]{MagnaniPhD}, and by using the area formula in \cite[Corollary 4.3.6]{MagnaniPhD}. We do not give all the details as the proof follows verbatim as in the argument contained in \cite[pages 716-717]{MatSerSC}, with the obvious substitutions taking into account that the authors in \cite{MatSerSC} only deal with Heisenberg groups $\mathbb H^n$ in the case $\mathbb W$ is horizontal.
\end{proof}

\printbibliography

\end{document}